\def\BState{\State\hskip-\ALG@thistlm}
\numberwithin{equation}{section}
\providecommand{\keywords}[1]
{
  \small	
  \textbf{\textit{Keywords---}} #1
}
\newcommand{\N}{\mathbb{N}}
\newcommand{\ufin}{\ums^\mathrm{fin}}
\newcommand{\dX}{d_X}
\newcommand{\uX}{u_X}
\newcommand{\ct}[1]{_{\mathfrak{c}\left(#1\right)}}
\newcommand{\ot}[1]{_{\mathfrak{o}\lc #1\rc}}
\newtheorem{claim}{Claim}
\newtheorem{theorem}{Theorem}[section]
\newtheorem{remark}[theorem]{Remark}
\newtheorem{lemma}[theorem]{Lemma}
\newtheorem{corollary}[theorem]{Corollary}
\newtheorem{proposition}[theorem]{Proposition}
\newtheorem{definition}[theorem]{Definition}
\newtheorem{example}[theorem]{Example}
\newtheorem{conjecture}[theorem]{Conjecture}
\newcommand{\R}{\mathbb{R}}
\newcommand{\spec}{\mathrm{spec}}
\newcommand{\dis}{\mathrm{dis}}
\newcommand{\codis}{\mathrm{codis}}
\newcommand{\disp}[1]{\dis_{#1}}
\newcommand{\hdgh}{\widehat{d}_{\mathrm{GH}}}
\newcommand{\disi}{\mathrm{dis}_\mathrm{I}}
\newcommand{\codisi}{\mathrm{codis}_\mathrm{I}}
\newcommand{\disip}[1]{\mathrm{dis}_{\mathrm{I},#1}}
\newcommand{\codisip}[1]{\mathrm{codis}_{\mathrm{I},#1}}
\newcommand{\ums}{\mathcal{U}}
\newcommand{\ms}{\mathcal{M}}
\newcommand{\msp}{\mathcal{M}_p}
\newcommand{\dlp}{\dgh^{\scriptscriptstyle{(p)}}}
\newcommand{\dghp}[1]{\dgh^{\scriptscriptstyle{(#1)}}}
\newcommand{\dip}{d_{\mathrm{I},p}}
\newcommand{\dxp}{d_X^{\scriptscriptstyle{(p)}}}
\newcommand{\dyp}{d_Y^{\scriptscriptstyle{(p)}}}
\newcommand{\dgh}{d_{\mathrm{GH}}}
\newcommand{\ugh}{u_{\mathrm{GH}}}
\newcommand{\dint}{d_{\mathrm{I}}}
\newcommand{\eps}{\varepsilon}
\newcommand{\diam}{\mathrm{diam}}
\newcommand{\dH}{d_\mathrm{H}}
 \newcommand{\ps}[1]{%
  \boxplus_{#1}}
\newcommand{\Rp}{\mathbb{R}_{\geq0}}
\newcommand{\lc}{\left(}
\newcommand{\rc}{\right)}
\newcommand{\dI}{d_\mathrm{I}}
\newcommand{\dIp}[1]{d_{\mathrm{I},#1}}
\newcommand{\ls}{\left|}
\newcommand{\rs}{\right|}
\newcommand{\lb}{\left\{}
\newcommand{\rb}{\right\}}
\title{On $p$-metric spaces and the $p$-Gromov-Hausdorff distance}
\author[1]{Facundo M\'emoli}
\author[2]{Zhengchao Wan}
\affil[1]{Department of Mathematics and Department of Computer Science and Engineering,
 	The Ohio State University\\ 	\texttt{memoli@math.osu.edu}}
\affil[2]{Department of Mathematics,
 	The Ohio State University\\
 	\texttt{wan.252@osu.edu}}
\date{\today}
\begin{document}
\maketitle 

\begin{abstract}
For each given $p\in[1,\infty]$ we investigate certain sub-family $\mathcal{M}_p$ of the collection of all compact metric spaces $\mathcal{M}$ which are characterized by the satisfaction of a strengthened form of the triangle inequality which encompasses, for example, the strong triangle inequality satisfied by ultrametric spaces.
We identify a one parameter family of Gromov-Hausdorff like distances $\{d_{\mathrm{GH}}^{\scriptscriptstyle{(p)}}\}_{p\in[1,\infty]}$ on $\mathcal{M}_p$ and study geometric and topological properties of these distances as well as the stability of certain canonical projections $\mathfrak{S}_p:\mathcal{M}\rightarrow \mathcal{M}_p$. For the collection $\mathcal{U}$ of all compact ultrametric spaces, which corresponds to the case $p=\infty$ of the family $\msp$, we explore a one parameter family of interleaving-type distances and reveal their relationship with $\{d_{\mathrm{GH}}^{\scriptscriptstyle{(p)}}\}_{p\in[1,\infty]}$. 
\end{abstract}

\keywords{Gromov-Hausdorff distance, $p$-metric space, ultrametric space, interleaving distance}

\tableofcontents

\section{Introduction}\label{sec:intro} 
The notion of metric space is a fundamental concept in mathematics, computer science, and applied disciplines such as data science, where metric spaces serve as a model for datasets \cite{deza}. A metric space is a pair $(X,d_X)$ consisting of a set $X$ and a function $d_X:X\times X\rightarrow\R$ satisfying the following three conditions: for any $x,x',x''\in X$,
\begin{enumerate}
    \item $d_X(x,x')\geq 0$ and $d_X(x,x')=0$ if and only if $x=x'$.
    \item $d_X(x,x')=d_X(x',x)$.
    \item $d_X(x,x')+d_X(x',x'')\geq d_X(x,x'')$.
\end{enumerate}
The function $d_X$ is referred to as the metric (or distance function) on $X$. Common examples of metric spaces include subsets of Euclidean spaces, Riemannian manifolds, and metric graphs. In this paper, we are mostly interested in compact metric spaces.

An important notion regarding metric spaces is that of isometric embedding.

\begin{definition}
A set map $f:X\rightarrow Y$ between two metric spaces is called an \textit{isometric embedding} if for any $x,x'\in X$, $ d_Y\left(f(x),f(x')\right)=d_X(x,x'). $ We use the notation $f:X\hookrightarrow Y$ to denote isometric embeddings. If moreover $f$ is bijective,  we then say that $f$ is an \textit{isometry}. Whenever an isometry exists between $X$ and $Y$ we say that $X$ is isometric to $Y$ and  denote this as $X\cong Y$.
\end{definition}

We denote by $\ms$ the collection of all (isometry classes of) compact metric spaces.\label{sym:ms}

\nomenclature{$\ms$}{The collection of all (isometry classes of) compact metric spaces}

One natural question in metric geometry and in data analysis is how to compare two given metric spaces, or more precisely, how to define a metric structure on $\ms$ that quantifies how far two spaces are from being isometric. Edwards \cite{edwards1975structure} and Gromov \cite{gromov1981groups} independently introduced the notion called \textit{Gromov-Hausdorff distance} to compare metric spaces. This distance is based on the Hausdorff distance.

\begin{definition}[Hausdorff distance]\label{def:hauss}
Given a metric space $Z$, the Hausdorff distance $\dH^Z$ between two subsets $A,B\subseteq Z$ is defined as  
$$d_\mathrm{H}^Z(A,B)=\inf\{r>0:\,B\subseteq A^r,A\subseteq B^r\}, $$
where $A^r\coloneqq\{x\in X:\,d_X(x,A)\leq r\}$ is called the $r$-neighborhood of $A$.
\end{definition}

To compare two metric spaces, we then first isometrically embed them into a common ambient metric space, compute the Hausdorff distance, and then infimize over all such ambient spaces and embeddings. More precisely, we have the following definition.

\begin{definition}[Gromov-Hausdorff distance]\label{def:dgh}
The Gromov-Hausdorff distance $\dgh$ between two compact metric spaces $X$ and $Y$ is defined as 
\begin{equation}\label{eq:dgh-hausdorff}
    \dgh(X,Y)=\inf d^Z_\mathrm{H}(\varphi(X),\psi(Y)),
\end{equation}
where the infimum is taken over all $Z\in\ms$ and isometric embeddings $\varphi:X\hookrightarrow Z$ and $\psi:Y\hookrightarrow Z$.
\end{definition}

\begin{example}
An \textit{$\eps$-net} $S$ of a compact metric space $X$ for $\eps>0$ is a set such that for any $x\in X$, there exists $s\in S$ with $d_X(x,s)\leq \eps$. In other words, $\dH^X(S,X)\leq\eps$ and thus $\dgh(S,X)\leq \eps$.
\end{example}

\begin{remark}\label{rem:dgh-du}
In the definition above, it is enough to restrict $Z$ to the disjoint union $X\sqcup Y$, and then infimize over all metrics $d$ on the disjoint union such that $d|_{X\times X} = d_X$ and $d|_{Y\times Y} = d_Y$ \cite{burago2001course}. We denote by $\mathcal{D}(d_X,d_Y)$ the collection of all such metrics $d$.
\end{remark}

\begin{remark}\label{rem:ub}
For any metric spaces $X$ and $Y$, one can then see that always 
\begin{equation}\label{eq:diamdgh}
    \dgh(X,Y)\leq \frac{1}{2}\max\left(\diam(X),\diam(Y)\right).
\end{equation}

Indeed, by \Cref{rem:dgh-du}, it is enough to consider the metric $d\in\mathcal{D}(d_X,d_Y)$ such that 
$$d(x,y) = \frac{1}{2} \max\left(\diam(X),\diam(Y)\right)$$
for $x\in X$ and $y\in Y$. That the resulting $d$ is a proper metric on the disjoint union $X\sqcup Y$ is easy to see. That the claim in Equation (\ref{eq:diamdgh}) above is true follows now from Definition \ref{def:hauss}.
\end{remark}

It is not hard to check that $\dgh(X,Y)=0$ if and only if $X$ is isometric to $Y$. Moreover, $\dgh$ is a legitimate metric on the collection $\ms$ of isometric classes of compact metric spaces. 

\begin{theorem}[Theorem 7.3.30 in \cite{burago2001course}]\label{thm:legmetric}
$\dgh$ defines a metric on the space $\ms$ of isometry classes of compact metric spaces.
\end{theorem}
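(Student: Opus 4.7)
Three axioms need checking, together with non-negativity and finiteness. Non-negativity of $\dgh$ is immediate from that of the Hausdorff distance, symmetry follows directly from the symmetry of \Cref{def:dgh} in $X$ and $Y$, and finiteness for compact $X,Y$ is \Cref{rem:ub}. So the actual work lies in the triangle inequality and in showing $\dgh(X,Y)=0\Rightarrow X\cong Y$.

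For the triangle inequality $\dgh(X,Z)\leq \dgh(X,Y)+\dgh(Y,Z)$, I would use \Cref{rem:dgh-du} to reduce to a gluing construction. Given $\eta>0$, choose $d_1\in\mathcal{D}(d_X,d_Y)$ and $d_2\in\mathcal{D}(d_Y,d_Z)$ whose Hausdorff distances are within $\eta$ of the respective optima. Then define a candidate metric $d$ on $X\sqcup Y\sqcup Z$ that agrees with $d_1$ and $d_2$ on the respective pairs and interpolates across $Y$ via
\[
d(x,z):=\inf_{y\in Y}\bigl(d_1(x,y)+d_2(y,z)\bigr)
\]
for $x\in X$ and $z\in Z$. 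A short verification, using compactness of $Y$ to ensure the infimum is positive when $x\neq z$, shows that $d$ is an honest metric whose restriction to $X\sqcup Z$ lies in $\mathcal{D}(d_X,d_Z)$. The Hausdorff distance between $X$ and $Z$ inside $(X\sqcup Y\sqcup Z,d)$ is then bounded by the sum of the two chosen Hausdorff distances, and letting $\eta\to 0$ finishes the argument.

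The main obstacle is definiteness. Assuming $\dgh(X,Y)=0$, I would produce, for each $n\geq 1$, a map $f_n\colon X\to Y$ of small metric distortion, by picking $d_n\in\mathcal{D}(d_X,d_Y)$ with Hausdorff distance at most $1/n$ and sending each $x\in X$ to a $d_n$-nearest point in $Y$. A direct triangle-inequality estimate gives
\[
\sup_{x,x'\in X}\bigl\lvert d_Y(f_n(x),f_n(x'))-d_X(x,x')\bigr\rvert \leq 2/n.
\]
The heart of the proof is then an Arzel\`a--Ascoli-style diagonal extraction: fix a countable dense $S\subseteq X$ (which exists by compactness) and, using sequential compactness of $Y$, extract a subsequence of $\{f_n\}$ converging pointwise on $S$ to a distance-preserving $f\colon S\to Y$, which extends by uniform continuity to an isometric embedding $f\colon X\hookrightarrow Y$. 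The symmetric construction in the other direction yields $g\colon Y\hookrightarrow X$.

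To conclude $X\cong Y$, I would invoke the standard fact that a compact metric space admits no proper isometric self-embedding: the composition $g\circ f$ is an isometric self-map of the compact space $X$, hence must be surjective (otherwise the forward orbit of a missed point under iteration of $g\circ f$ would stay a definite distance from the image and contradict total boundedness), and this surjectivity together with the injectivity of $g$ forces $f$ itself to be surjective, hence an isometry. The subtle point throughout is the diagonal step, since the $f_n$ need not be continuous; what makes it go through is the uniform distortion bound combined with sequential compactness of $Y$.
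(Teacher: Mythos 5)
Your proposal is correct, and since the paper offers no proof of this statement (it is quoted as Theorem 7.3.30 of \cite{burago2001course}), the relevant comparison is with the cited source: your argument --- gluing metrics across $Y$ for the triangle inequality, and extracting an isometry from maps of distortion $2/n$ via a diagonal argument on a countable dense set plus the no-proper-isometric-self-embedding fact for compact spaces --- is exactly the standard proof given there. No gaps; the only phrasing worth tightening is that surjectivity of $f$ follows most cleanly by applying the self-embedding fact to $f\circ g$ as well, though your route via injectivity of $g$ also works.
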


It turns out that the Gromov-Hausdorff distance admits a characterization in terms of  distortion of correspondences \cite{burago2001course} as follows. Given two metric spaces $(X,d_X)$ and $(Y,d_Y)$, a correspondence $R$ between the underlying sets $X$ and $Y$ is any subset of $X\times Y$ such that the images of $R$ under the canonical projections are full: $p_X(R)=X$ and $p_Y(R)=Y$. We define the distortion of $R$ with respect to $d_X$ and $d_Y$ as follows:
\begin{equation}\label{eq:dist}
    \dis\lc R,d_X,d_Y\rc\coloneqq\sup_{(x,y),(x',y')\in R}|d_X(x,x')-d_Y(y,y')|.
\end{equation}
We will abbreviate $\dis\lc R,d_X,d_Y\rc$ to $\dis(R)$ whenever the metric structures are clear from the context. Then, the Gromov-Hausdorff distance can be characterized via distortion of correspondences as follows:
\begin{equation}\label{eq:dgh-distortion}
    \dgh(X,Y)=\frac{1}{2}\inf_{R}\dis(R).
\end{equation}
It is shown in \cite{chowdhury2018explicit} that the infimum can always be realized by a closed correspondence.

\begin{remark}\label{rem:lb}
Let $*$ denote the one point metric space. Then, one can prove that for all $X\in\mathcal{M}$, 
$$\dgh(X,\ast) = \frac{1}{2}\diam(X).$$
Furthermore, for all $X$ and $Y$ in $\mathcal{M}$ one has the bound 
$$\frac{1}{2}\big|\diam(X)-\diam(Y)\big|\leq \dgh(X,Y) .$$

To prove the first claim note that the unique correspondence between $X$ and $\ast$ is $R_\ast = X\times \{\ast\}.$ Its distortion is $\dis(R_\ast) = \sup_{x,x'\in X} d_X(x,x') = \diam(X)$ hence the first claim holds.  To prove the second claim note that, since $\dgh$ satisfies the triangle inequality, then $|\dgh(X,\ast)-\dgh(Y,\ast)|\leq \dgh(X,Y) $. The second claim then follows by invoking the first claim.
\end{remark}

Though being theoretically interesting, it is known that computing $\dgh$ is equivalent to solving a quadratic assignment problem \cite{memoli2007use} which turns out to be NP-hard \cite{schmiedl2015shape,schmiedl2017computational,agarwal2018computing}. More precisely,  Schmiedl proved in \cite{schmiedl2015shape,schmiedl2017computational} the following computational complexity for approximating $\dgh$:
\begin{theorem}[{\cite[Corollary 3.8]{schmiedl2017computational}}]\label{thm:NP-hard}
The Gromov-Hausdorff distance between general finite metric spaces cannot be approximated within any factor in $[1,3)$
in polynomial time, unless $\mathcal{P}=\mathcal{NP}$.
\end{theorem}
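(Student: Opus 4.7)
The plan is to establish Theorem~\ref{thm:NP-hard} via a \emph{gap-preserving} polynomial-time reduction from a known $\mathcal{NP}$-hard decision problem $\Pi$. For each instance $I$ of $\Pi$ I would construct, in time polynomial in $|I|$, two finite metric spaces $X_I,Y_I$ and a positive threshold $\alpha=\alpha(|I|)$ satisfying the following two promises: (completeness) if $I$ is a \textsc{yes}-instance then $\dgh(X_I,Y_I)\le \alpha$; and (soundness) if $I$ is a \textsc{no}-instance then $\dgh(X_I,Y_I)\ge 3\alpha$. Any polynomial-time algorithm approximating $\dgh$ within a factor strictly less than $3$ would then decide $\Pi$, forcing $\mathcal{P}=\mathcal{NP}$.

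Using the distortion reformulation~\eqref{eq:dgh-distortion}, this shifts the burden onto constructing two finite metric spaces whose minimum correspondence distortion differs by the factor $6$ depending on whether $I$ is \textsc{yes} or \textsc{no}. The natural sources for $\Pi$ are combinatorially rigid problems such as bounded-occurrence 3-\textsc{sat}, bounded-degree graph isomorphism or embedding, or a structured variant of bandwidth; in each of these a positive answer admits a structured certificate (an assignment, isomorphism, or embedding), while a negative answer must manifest as an irreducible global obstruction. I would encode the combinatorial structure of $I$ as the inter-point distances of $X_I$ and $Y_I$, drawing these distances from a small palette of values carefully tuned so that the triangle inequality is automatic, a yes-certificate yields a correspondence of distortion at most $2\alpha$, and every correspondence between $X_I$ and $Y_I$ necessarily suffers distortion at least $6\alpha$ in the no-case.

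The execution has three stages. First I would fix the gadget geometry and verify that the candidate distance functions genuinely define metrics (a step that is typically automatic once one restricts distances to a sufficiently narrow range). Second I would establish completeness, converting each yes-certificate of $I$ into an explicit correspondence between $X_I$ and $Y_I$ whose distortion is controlled by $2\alpha$. Third I would establish soundness, proving a lower bound on the distortion of \emph{every} correspondence in the no-case. Optimising constants would then deliver the threshold $\alpha$ and the target ratio $3$.

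The hard part is the soundness step. Correspondences are far more flexible than bijections: a point in $X_I$ may be related to arbitrarily many points in $Y_I$ and vice versa, so one cannot reduce to a bijective matching problem and must instead control all many-to-many relations at once. The standard countermeasure is to engineer the gadgets so that a pigeonhole argument forces any correspondence to identify two points within a single cluster of $X_I$ with two points spread across distinct clusters of $Y_I$ (or symmetrically), precipitating the promised distortion gap via~\eqref{eq:dist}. Making this quantitative, and in particular verifying that the worst-case ratio of soundness to completeness is precisely $3$, is the technical heart of the argument and is what is carried out in~\cite{schmiedl2017computational}.
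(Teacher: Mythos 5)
The statement you are asked to prove is imported by the paper verbatim from \cite{schmiedl2017computational}; the paper offers no proof of its own, so the only question is whether your argument could stand independently. It cannot: what you have written is the standard template for a hardness-of-approximation result (a gap-preserving reduction), not a proof. Every load-bearing component is left unspecified. You never commit to a source problem $\Pi$; you never construct $X_I$, $Y_I$, or the threshold $\alpha$; you never verify completeness for a concrete gadget; and the soundness step --- a lower bound on $\dis(R)$ over \emph{all} correspondences $R$ in the no-case, which you yourself identify as the technical heart --- is not attempted but explicitly deferred back to \cite{schmiedl2017computational}. A reduction of this kind lives or dies on the gadget geometry: the claim that one can tune a ``small palette'' of distances so that the triangle inequality holds, yes-instances give distortion at most $2\alpha$, and no-instances force distortion at least $6\alpha$, \emph{is} the theorem, and asserting that such a tuning exists is not evidence that it does.

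Two smaller points. First, the phrase ``minimum correspondence distortion differs by the factor $6$'' is off: with $\dgh\le\alpha$ versus $\dgh\ge 3\alpha$, Equation (\ref{eq:dgh-distortion}) gives distortions $\le 2\alpha$ versus $\ge 6\alpha$, a multiplicative gap of $3$, and it is this ratio of $3$ (not $6$) that determines the inapproximability factor. Second, the paper relies, immediately after the theorem statement, on the fact that the hard instances in Schmiedl's construction can be taken to be \emph{ultrametric} spaces --- this is what powers \Cref{thm:approximate}. A generic reduction of the shape you describe would not automatically have this property, so even a completed version of your plan might prove less than what the paper needs downstream.
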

In fact, the proof of this result reveals that the claim still holds even in the case of \emph{ultrametric spaces}. An ultrametric space $(X,d_X)$ is a metric space which satisfies the \emph{strong triangle inequality}: 
\[\forall x,x',x''\in X, \, d_X(x,x')\leq\max\lc d_X(x,x''),d_X(x'',x')\rc.\]
Ultrametric spaces often arise in the context of data analysis in the form of \textit{dendrograms}: a dendrogram is a certain hierarchical representation of a dataset. It is shown in \cite{carlsson2010characterization} that there exists a structure preserving bijection between the set of dendrograms and the set of ultrametrics on a given finite set.

Being a well understood and highly structured type of metric spaces, we are particularly interested in exploiting possible advantages associated to \emph{adapting} $\dgh$ to the collection of ultrametric spaces. In our previous work \cite{memoli2021gromov}, we identified a method for lowering the factor 3 eventually to 1 in \Cref{thm:NP-hard} by considering a specially tailored family of variants of $\dgh$ on the collection of ultrametric spaces. We explain this as follows.

Let $(X,d_X)$ and $(Y,d_Y)$ be two metric spaces. We reexamine \Cref{thm:NP-hard} via the following modification of \Cref{eq:dgh-distortion}: given $p\in[1,\infty)$, define a quantity $\dghp{p}(X,Y)$ as follows:

\begin{equation}\label{eq:dghp-no-dist}
    \dghp{p}(X,Y)\coloneqq 2^{-\frac{1}{p}}\inf_{R}\sup_{(x,y),(x',y')\in R}\left|(d_X (x,x'))^p- (d_Y (y,y'))^p\right|^\frac{1}{p}.
\end{equation}

It is easy to see that whenever $d_X$ and $d_Y$ are ultrametrics, $(d_X)^p$ and $(d_Y)^p$ are also ultrametrics and thus metrics for any $p>1$. Then, as a consequence of Theorem \ref{thm:NP-hard} and \Cref{eq:dgh-distortion}, we have the following complexity results regarding approximating $\dghp{p}$ when restricted to $\ufin$, the collection of all finite ultrametric spaces:

\nomenclature{$\ufin$}{The collection of all (isometry classes of) finite ultrametric spaces}

\begin{corollary}[{\cite[Corollary 2]{memoli2021gromov}}]\label{thm:approximate}
For each $p\in[1,\infty)$ and for any $X,Y\in\ufin$, $\dghp{p}(X,Y)$ cannot be approximated within any factor in $\left[1,3^\frac{1}{p}\right)$
in polynomial time, unless $\mathcal{P}=\mathcal{NP}$.
\end{corollary}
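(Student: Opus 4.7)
The plan is to reduce the approximation problem for $\dgh$ on ultrametric spaces---which is NP-hard within any factor less than $3$ by \Cref{thm:NP-hard}---to the analogous problem for $\dghp{p}$, exploiting the fact that the $p$-th power of an ultrametric is again an ultrametric. The reduction will rest on a clean algebraic identity between the two quantities.

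First I would rewrite the definition in \Cref{eq:dghp-no-dist}. Since $t\mapsto t^{1/p}$ is monotone on $[0,\infty)$, it commutes with both the inner supremum and the outer infimum, giving
$$\dghp{p}(X,Y) = 2^{-1/p}\left(\inf_R \sup_{(x,y),(x',y')\in R}\ls d_X(x,x')^p - d_Y(y,y')^p\rs\right)^{1/p}.$$
Now observe that if $d_X$ is an ultrametric then so is $d_X^p$: the strong triangle inequality is preserved under $t\mapsto t^p$ since $\max$ commutes with this monotone function. Applying \Cref{eq:dgh-distortion} to the ultrametrics $d_X^p$ and $d_Y^p$, the bracketed expression equals $2\,\dgh((X,d_X^p),(Y,d_Y^p))$, and hence
$$\dghp{p}(X,Y) \;=\; \dgh\bigl((X,d_X^p),(Y,d_Y^p)\bigr)^{1/p}.$$

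Armed with this identity, I would argue by contradiction. Suppose some polynomial-time algorithm $\mathcal{A}$ produces a $C$-approximation of $\dghp{p}$ on $\ufin$ for some $C\in[1,3^{1/p})$. Given ultrametrics $(X,d_X),(Y,d_Y)\in\ufin$ for which we wish to approximate $\dgh$, feed $(X,d_X^{1/p})$ and $(Y,d_Y^{1/p})$ (themselves ultrametrics) to $\mathcal{A}$; raising the output to the $p$-th power and invoking the identity yields a $C^p$-approximation of $\dgh(X,Y)$ with $C^p<3$, contradicting \Cref{thm:NP-hard}.

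The main obstacle is bit-complexity: the snowflake distances $d_X^{1/p}$ may be irrational even for integer-valued $d_X$, so this na\"ive composition of reductions may fail to be polynomial time. I would resolve this by not composing reductions at all. Instead, I would revisit Schmiedl's reduction underlying \Cref{thm:NP-hard} and note that it produces explicit ultrametric instances $(X_G,d_{X_G}),(Y_G,d_{Y_G})$ exhibiting a multiplicative gap of $3$ in the value of $\dgh$ between yes- and no-instances. Applying the displayed identity to these same instances, the gap for $\dghp{p}$ becomes exactly $3^{1/p}$, so the very same reduction witnesses the claimed hardness for $\dghp{p}$ within any factor in $[1,3^{1/p})$.
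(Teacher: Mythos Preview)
Your core argument—deriving the identity $\dghp{p}(X,Y) = \dgh\bigl((X,d_X^p),(Y,d_Y^p)\bigr)^{1/p}$ and combining it with the fact that powers of ultrametrics remain ultrametrics—is exactly what the paper has in mind: the text immediately preceding the corollary says it follows ``as a consequence of Theorem~\ref{thm:NP-hard} and \Cref{eq:dgh-distortion}'', and the identity itself is later recorded as \Cref{thm:dgh-dlp-eq}. The paper does not address bit-complexity at all.

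Your final paragraph, however, does not actually resolve the concern you raise. The displayed identity reads $\dghp{p}(X,Y) = \dgh\bigl(S_p(X),S_p(Y)\bigr)^{1/p}$, so evaluating it \emph{at} Schmiedl's instances $(X_G,Y_G)$ relates $\dghp{p}(X_G,Y_G)$ to $\dgh$ on the $p$-snowflake $(X_G,d_{X_G}^p)$, for which no gap has been established. To obtain a $3^{1/p}$ gap for $\dghp{p}$ you must feed the $1/p$-snowflake of Schmiedl's instances to the $\dghp{p}$ algorithm—so the snowflake step is not avoided, and ``the very same reduction'' has in fact been modified. The honest fix for the bit-complexity issue is that Schmiedl's hard instances use only a bounded set of distance values, so their $1/p$-th roots can be carried symbolically (or approximated well enough to preserve the strict gap) in polynomial time.
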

Note that as $p\rightarrow\infty$, the interval $\left[1,3^\frac{1}{p}\right)$ consisting of `bad' multiplicative factors shrinks to the empty set. In this way, whereas for any $p\in[1,\infty)$, $\dghp p$ is NP-hard to approximate, $\dghp{\infty}\coloneqq\lim_{p\rightarrow\infty}\dghp{p}$ could potentially be a computationally tractable quantity. This turns out to be the case. In \cite{memoli2021gromov} we have established that $\dghp{\infty}$ between any finite ultrametric spaces $X$ and $Y$ can be computed in time $O(n\log(n))$, where $n\coloneqq\max(\#X,\#Y)$, and we discovered that $\dghp{\infty}$ turns out to coincide with the Gromov-Hausdorff ultrametric $\ugh$ first defined by Zarichnyi in \cite{zarichnyi2005gromov} (cf. \Cref{rmk:ugh zarichnyi}) when restricted to the collection $\ums$ of all compact ultrametric spaces. In the sequel, we will hence use $\ugh$ and $\dghp{\infty}$ interchangeably. 

\nomenclature[M]{$\ugh$}{The Gromov-Hausdorff ultrametric}

This observation motivated us to investigate in depth the one parameter family $\{\dghp{p}\}_{p=1}^\infty$. Whereas in our previous work \cite{memoli2021gromov} we focused on devising algorithms for computing $\dghp{p}$ between finite ultrametric spaces, in this paper we focus on the theoretical properties of the one parameter family $\{\dghp{p}\}_{p=1}^\infty$ for general compact metric spaces. We remark that although $\dghp{p}$ is defined for general metric spaces, it is most `compatible' with the collection of the so-called $p$-metric spaces: a metric space $(X,d_X)$ is a $p$-metric space if for any $x,x',x''$ we have that
$$ (d_X(x,x'))^p\leq (d_X(x,x''))^p+(d_X(x'',x'))^p.$$
This point will be discussed in detail in \Cref{sec:dghp}. We summarize our main contributions in the section below.

\subsection{Overview of our results}
We now provide an overview of our results and a discussion of related work.

\paragraph{\Cref{sec:p-arithmetic}.} In this section, we study some elementary properties of $p$-metric spaces and determine notation which will be used throughout the paper. We in particular provide a detailed introduction to ultrametric spaces, including its relationship with \textit{dendrograms} and one of the fundamental operations called \textit{closed quotient operation}.

\paragraph{Section \ref{sec:pms}.} For each $p\in[1,\infty]$, we denote by $\msp$ the collection of all (isometry classes of) compact $p$-metric spaces. Associated with each $p\in[1,\infty]$, there is a natural projection\label{sym:sp} $\mathfrak{S}_p:\ms\rightarrow\msp$ (whose restriction to finite spaces was studied in \cite{Segarra2016metric}) which generalizes the construction of the so-called maximal subdominant ultrametric. Projections such as $\mathfrak{S}_p$ encode a certain notion of \emph{simplification} of a metric space.  We study the relationship between projections with different parameters $p$ and explore some properties of the \emph{kernel} of $\mathfrak{S}_p:$ the kernel of $\mathfrak{S}_p$ is defined as the set of all those metric spaces which are mapped to the one point metric space  under $\mathfrak{S}_p$. Understanding the kernel of $\mathfrak{S}_p$ is interesting because it tells us which metric spaces will be simplified ``too much". 

\nomenclature{$\msp$}{The collection of all (isometry classes of) compact $p$-metric spaces}

\paragraph{Section \ref{sec:dghp}.} In this section, we focus on metric properties of $\dghp{p}$. In particular, in analogy with Theorem \ref{thm:legmetric} we prove that $\dlp$ is a $p$-metric on $\ms$. It is known \cite{kalton1999distances} that the Gromov-Hausdorff distance can be characterized by distortions of maps. We found similar characterizations for $\dlp$. It turns out that when restricted to $\msp$, $\dghp p$ can be characterized by a formula involving Hausdorff distances and isometric embeddings in a way similar to \Cref{eq:dgh-hausdorff} for $\dgh$. We further establish the continuity of the family $\dghp p$ w.r.t. $p$ under mild conditions and show various types of relationships between $\dghp p$ and $\dgh$. Finally, we end this section by establishing connections between $\dghp p$ and approximate isometries.

\paragraph{Section \ref{sec:structural and computation of ugh}.} The ultrametric $\ugh$  has some special properties that make it quite singular among all the metrics $\dlp$. In particular, we establish a structural result (\Cref{thm:ugh-structure}) for $\ugh$ whose restriction to finite spaces allowed us to find a poly time  algorithm for its computation in \cite{memoli2021gromov}. We remark that this structural result is the main tool for the second author to prove that $(\ums,\ugh)$ is an Urysohn universal ultrametric space in \cite{wan2021novel}. In this section, we also relate $\ugh$ to the curvature sets defined by Gromov \cite{gromov2007metric} and study $\widehat{u}_{\mathrm{GH}}$, a modified version of $\ugh$ thus extending work from \cite{memoli2012some}. We also establish a structural result for Hausdorff distances on ultrametric spaces. Finally, we prove that the usual \emph{codistortion} terms in the Gromov-Hausdorff distance is unnecessary for $\ugh$ and that thus $\ugh =\widehat{u}_{\mathrm{GH}}$ (cf. Theorem \ref{thm:ughm}).

\paragraph{Section \ref{sec:interleavings}.} The authors of \cite{carlsson2010characterization} have established a bijective equivalence between ultrametric spaces and dendrograms. It turns out that there exists a natural distance called the \textit{interleaving distance} $\dint$ which can be used to measure discrepancy between two dendrograms. The collection of ultrametric spaces thus inherits this interleaving distance through the bijection mentioned above. The interleaving distance has been widely used in the community of topological data analysis \cite{chazal2009proximity,morozov2013interleaving,bubenik2014categorification,bubenik2015metrics} for comparing persistence modules (a notion which in some instances is related to dendrograms). Because of this, it is interesting to compare the structure and properties of the interleaving distance for ultrametric spaces with those of the Gromov-Hausdorff type distances -- the central object in this paper. In this section, we first reformulate the interleaving distance between ultrametric spaces in a clear form in Theorem
\ref{thm:ibu} which allows us to obtain a characterization of $\dint$ in terms of distortions of maps. We extend this usual interleaving distance to $p$-interleaving distance $\dip$ in a manner similarly to how we define $\dlp$. It turns out that the characterization of $\dint$ in terms of distortions of maps can be extended  to $\dip$. With the help of this characterization, we prove that when restricted to $\ums$, $\dlp$ and $\dip$ are bi-Lipschitz equivalent and in particular $\dgh^{\scriptscriptstyle{(\infty)}}=\ugh$.

\paragraph{Section \ref{sec:topology}.} 
We study convergent sequences of $\dlp$ and establish a pre-compactness theorem for $\dlp$: we prove that any class $\mathfrak{X}$ of $p$-metric spaces  satisfying mild conditions is pre-compact. This implies that $\mathfrak{X}$ is actually \emph{totally bounded}, i.e., for any $\eps>0$, there exists a positive integer $K(\eps)$ and $p$-metric spaces $X_1,\cdots,X_{K(\eps)}$ in $\mathfrak{X}$ such that for any $X\in\mathfrak{X}$, one can find $1\leq i\leq K(\eps)$ such that $\dlp(X,X_i)\leq\eps$. The concept of total boundedness is interesting from the point of view of studying geometric methods for data analysis in that it guarantees that for any given scale parameter $\eps$, one can shatter a given dataset into a \emph{finite} number of pieces each with size not larger than $\eps$.

The pre-compactness theorem also provides us with tools to study the topology of $(\msp,\dlp)$. In particular, we show that $(\msp,\dlp)$ is complete and separable for $1\leq p<\infty$ and, once again $(\ums,\ugh)$ exhibits singular behavior in that it is complete but not separable. This suggests that $\ums$ is rather singular among all other $\msp$. Moreover, we study the subspace topology of $\msp\subseteq\ms_q$ when $p>q$.

We will further study one geometric property of these metric spaces, the geodesic property. It is known \cite{ivanov2016gromov,chowdhury2018explicit} that $(\ms,\dgh)$ is a geodesic space. We introduce a notion called $p$-geodesic spaces which is a generalization of geodesic spaces. We prove that $(\msp,\dlp)$ is a $p$-geodesic space when $p\in[1,\infty)$. Though $(\ums,\ugh)$ is not geodesic, as an application of stability result of the projection $\mathfrak{S}_\infty$, we show that $(\ums,\dgh)$ is geodesic. In the end, we show that $(\ums,\dint)$ is not geodesic.


\paragraph{Related work}

Segarra thoroughly studied finite ultrametric and finite $p$-metric spaces in his PhD thesis \cite{Segarra2016metric}; see also \cite{segarra2015metric}. He was particularly interested in projecting finite networks onto $p$-metric spaces, in the process of which he identified a  canonical projection map $\mathfrak{S}_p$ which we will define in the next section. In the context of finite metric spaces, Segarra proved that such a projection is unique under certain conditions. Segarra considered generalizations of metric spaces beyond $p$-metric spaces and in particular he identified the so-called \textit{dioid metric spaces}. The idea was to generalize not only the addition operator but also the multiplication operator of $\mathbb{R}$. He also studied some theoretical properties of projection maps between different classes of dioid metric spaces. 

The Gromov-Hausdorff ultrametric $\ugh$ on the collection of all compact ultrametric spaces was first introduced by Zarichnyi \cite{zarichnyi2005gromov} in 2005 as an ultrametric counterpart of Gromov-Hausdorff distance $\dgh$. He defined $\ugh$ via the Hausdorff distance formulation (Definition \ref{def:dgh}). He proved that $\ugh$ is an ultrametric on the collection of isometry classes of ultrametric spaces and showed that the space $(\mathcal{U},\ugh)$ is complete but not separable.

Qiu further studied theoretical properties of metric structure induced by $\ugh$ in his 2009 paper \cite{qiu2009geometry}. He found a distortion based description of $\ugh$ in analogy to Equation (\ref{eq:dgh-distortion}), where the infimum is taken over a certain special subset of all correspondences which he called \textit{strong correspondences}. Qiu also established several characterizations of $\ugh$ as Burago et al. did for $\dgh$ in Chapter 7 of \cite{burago2001course}. For example, Qiu modified the definition of $\eps$-isometry and $(\eps,\delta)$-approximation to the so-called strong $\eps$-isometry and strong $\eps$-approximation. He proved that $\ugh(X,Y)<\eps$ if and only if there exists a strong $\eps$-isometry between $X$ and $Y$ if and only if $X$ is a strong $\eps$-approximation of $Y$ which are counterparts to Corollary 7.3.28 and Proposition 7.4.11 of \cite{burago2001course}. More interestingly, Qiu has also found a suitable version of Gromov’s pre-compactness theorem for $(\ums,\ugh)$.


\section{$p$-metric spaces, ultrametric spaces and pseudometric spaces}\label{sec:p-arithmetic}

In this section, we introduce some preliminary results about $p$-metric spaces, ultrametric spaces and pseudometric spaces.

\subsection{$p$-arithmetic and $p$-metric spaces}
We generalize the usual addition operator on $\R_{\geq 0}$ to the $p$-sum as follows.
$$ a\ps p b \coloneqq  
\begin{cases}
(a^p+b^p)^\frac{1}{p},& p\in[1,\infty)\\
\max(a,b),&p=\infty
\end{cases}\quad \forall a,b\geq 0.$$
In fact, $\lc\R_{\geq 0},\ps{p}\rc$ is a commutative monoid (see \cite{howie-book} for general background on monoids), that is, for any $a,b,c\geq 0$, there is an identity element 0 such that $0\ps p a=a\ps p 0=a$; $\big(a\boxplus_p b\big)\boxplus_p c=a\boxplus_p\big(b\boxplus_p c\big)$; and $a\ps p b=b\ps p a$. By associativity, we can add several numbers simultaneously and use the symbol $\ps p$ in the same way as the summation symbol $\Sigma$:
$$\mathop{\ps{\mathrlap{p}}}_{i=1}^n\,a_i=a_1\boxplus_p a_2\boxplus_p \cdots\boxplus_p a_n. $$
An immediate computation will give us that for any $a>0$, $1\leq p\leq \infty$ and $n\in\mathbb{N}$, 
$\displaystyle\mathop{\ps{\mathrlap{p}}}_{i=1}^n\,a=(n)^\frac{1}{p}a$, where we adopt the convention that $\frac{1}{\infty}=0$.

For convenience, we represent the \emph{absolute $p$-difference} between non-negative numbers $a$ and $b$ as \label{sym:lambdap}
$$\Lambda_p(a,b) \coloneqq |a^p-b^p|^\frac{1}{p},\quad \mbox{for }p\in[1,\infty)  $$
and as follows for $p=\infty$ :

\begin{equation*}
\Lambda_\infty(a,b) \coloneqq\begin{cases}
\max(a,b),&a\neq b\\
0, & a= b
\end{cases}
\end{equation*}

\begin{remark}
It is not hard to show that for any $a,b\geq 0$,  $\lim_{p\rightarrow\infty} a\ps{p} b = a\ps{\infty} b$ and $\lim_{p\rightarrow\infty} \Lambda_p(a,b) = \Lambda_\infty(a,b)$.
\end{remark}

Note that for any $a\geq 0$ and any $p\in[0,\infty]$ one has $\Lambda_p(a, 0) = a.$ 

\begin{lemma}[Monotonicity of $\Lambda_p$]\label{lm:increasing lambdap}
For any $a>b> 0$, $\Lambda_p(a,b)$ is an increasing function w.r.t. $p\in[1,\infty]$.
\end{lemma}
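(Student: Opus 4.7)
The plan is to reduce the claim to the standard monotonicity $b \ps{p} c \geq b \ps{q} c$ for $1\leq p\leq q\leq\infty$ and fixed $b,c\geq 0$ (equivalent to the familiar fact that $\ell^p$-norms on $\mathbb{R}^2$ are non-increasing in $p$). The key enabling observation is the representation
\[
a \;=\; b \ps{p} \Lambda_p(a,b) \qquad \text{for every } p \in [1,\infty].
\]
For $p\in[1,\infty)$ this is just the definition $\Lambda_p(a,b)^p = a^p-b^p$ rearranged; for $p=\infty$ it reads $\max(b,a)=a$, which holds since $a>b$.

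With this identity in hand, I would fix $1\leq p<q<\infty$ and write $c_p := \Lambda_p(a,b)$, $c_q:=\Lambda_q(a,b)$. Applying the monotonicity of $\ps{\cdot}$ to the pair $(b,c_q)$ yields
\[
b \ps{p} c_q \;\geq\; b \ps{q} c_q \;=\; a \;=\; b \ps{p} c_p.
\]
Since the map $c\mapsto b \ps{p} c = (b^p+c^p)^{1/p}$ is strictly increasing on $[0,\infty)$ for any finite $p$, cancelling the common $b$ in the $p$-sum gives $c_q\geq c_p$, which is precisely $\Lambda_p(a,b)\leq \Lambda_q(a,b)$.

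The only subtle point is the right endpoint $q=\infty$: there the map $c\mapsto b\ps{\infty} c = \max(b,c)$ is only weakly increasing in $c$, so the cancellation step above is invalid and must be bypassed. I would treat it as a separate and essentially trivial direct estimate: for any $p\in[1,\infty)$,
\[
\Lambda_p(a,b)^p \;=\; a^p - b^p \;\leq\; a^p,
\]
hence $\Lambda_p(a,b)\leq a = \Lambda_\infty(a,b)$. Combining the two cases gives monotonicity on the full interval $[1,\infty]$. No genuine obstacle arises; the whole argument is driven by $p$-sum monotonicity together with separate verification at $\infty$.
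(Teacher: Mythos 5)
Your proof is correct, but it takes a genuinely different route from the paper's. The paper argues by direct calculus: it writes $\Lambda_p(a,b)=a\bigl(1-(b/a)^p\bigr)^{1/p}\leq a=\Lambda_\infty(a,b)$ for the endpoint, and for finite exponents it differentiates $f(p)=(1-x^p)^{1/p}$ with $x=b/a$ fixed and checks $f'(p)>0$. You instead exploit the algebraic identity $a=b\ps{p}\Lambda_p(a,b)$ to transfer the question to the (standard, $\ell^p$-norm) monotonicity of the $p$-sum in $p$, then cancel $b$ using strict monotonicity of $c\mapsto b\ps{p}c$ for finite $p$, handling $q=\infty$ separately exactly as the paper does. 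Both arguments are sound; yours is arguably cleaner in that it isolates the single structural fact doing the work ($p$-sum monotonicity) and avoids a derivative computation for $\Lambda_p$ itself, while the paper's is self-contained at that point in the text. One small remark: the $p$-sum monotonicity you invoke is proved in this paper only later, inside the proof of Proposition \ref{prop:inclusionpq} (itself by a derivative computation), so if you wanted your argument to slot into the paper verbatim you would either cite that inequality as a known fact about $\ell^p$-norms or reorder the two proofs; there is no circularity, since that proposition does not depend on Lemma \ref{lm:increasing lambdap}. You are also right to flag that the cancellation step fails at $q=\infty$ because $\max(b,\cdot)$ is only weakly increasing, and your separate direct estimate $\Lambda_p(a,b)^p=a^p-b^p\leq a^p$ closes that case correctly.
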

\begin{proof}
When $p<\infty$, 
\[\Lambda_p(a,b)=|a^p-b^p|^\frac{1}{p}=a\cdot\ls 1-\lc\frac{b}{a}\rc^p\rs^\frac{1}{p}\leq a=\Lambda_\infty(a,b).\]
Consider the function $f(p)\coloneqq(1-x^p)^\frac{1}{p}$, for a fixed $0< x<1$. Then,
\[f'(p) = \frac{1}{p^2}(1-x^p)^{\frac{1-p}{p}}\left(-x^p\ln(x^p)-(1-x^p)\ln(1-x^p)\right)> 0.\]
Therefore, $f$ is increasing on $[1,\infty)$. If we let $x=\frac{b}{a}$, then we have that for any $1\leq p<q<\infty$
\[\Lambda_p(a,b)\leq \Lambda_q(a,b).\]
Thus, $\Lambda_p(a,b)$ is an increasing function w.r.t. $p\in[1,\infty]$.
\end{proof}

The following result will be used in the sequel.
\begin{proposition}\label{prop:simple-inv-triang}
Let $a,b,c\geq 0$ and let $p\in[1,\infty]$. Then, $a\ps{p} b\geq c$ and $a\ps{p} c\geq b$ hold if and only if $a\geq \Lambda_p(b,c).$
\end{proposition}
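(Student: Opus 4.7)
The plan is to split the argument into the two regimes $p\in[1,\infty)$ and $p=\infty$, since the definition of $\ps{p}$ and $\Lambda_p$ differ notably at infinity. Each direction is essentially a translation of the inequalities into their $p$-th power form.

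For $p\in[1,\infty)$, I would take $p$-th powers throughout. The two hypotheses $a\ps{p}b\geq c$ and $a\ps{p}c\geq b$ unravel to $a^p+b^p\geq c^p$ and $a^p+c^p\geq b^p$, i.e., $a^p\geq c^p-b^p$ and $a^p\geq b^p-c^p$. Since $a^p\geq 0$ automatically, these two bounds together are equivalent to the single bound $a^p\geq |b^p-c^p|$, which is exactly $a\geq \Lambda_p(b,c)$. This gives the equivalence in both directions simultaneously.

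For $p=\infty$, I would first dispose of the degenerate case $b=c$: here $\Lambda_\infty(b,c)=0$, so $a\geq \Lambda_\infty(b,c)$ holds trivially, and conversely $\max(a,b)\geq c=b$ and $\max(a,c)\geq b=c$ also hold trivially, so both sides are tautologies. In the case $b\neq c$, say WLOG $b>c$, then $\Lambda_\infty(b,c)=\max(b,c)=b$. If $a\geq b$, then $\max(a,b)\geq a\geq b>c$ and $\max(a,c)\geq a\geq b$, establishing the ``if'' direction. Conversely, assuming both $\max(a,b)\geq c$ and $\max(a,c)\geq b$, the second inequality forces $a\geq b$ (since $c<b$ rules out the other alternative in the max), i.e., $a\geq \Lambda_\infty(b,c)$.

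I do not expect any genuine obstacle: the only mild subtlety is that one must verify that in the $p=\infty$, $b\neq c$ case, the second inequality $\max(a,c)\geq b$ is the one that yields the constraint $a\geq \max(b,c)$, whereas the first one $\max(a,b)\geq c$ is automatic. Beyond that, the proof is a direct algebraic translation and case split.
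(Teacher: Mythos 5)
Your proposal is correct and follows essentially the same route as the paper: the finite-$p$ case is the direct algebraic translation via $p$-th powers (which the paper simply declares trivial), and your case split for $p=\infty$ (handling $b=c$ separately, then using $\max(a,c)\geq b$ with $b>c$ to force $a\geq b$) is exactly the paper's argument.
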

\begin{proof}
The statement holds trivially when $p\neq \infty$. Hence, we assume that $p=\infty$.

We first assume that $a\ps{\infty} b\geq c$ and $a\ps{\infty} c\geq b$. Then, we have the following two cases.
\begin{enumerate}
    \item If $b=c$, then $a\geq 0=\Lambda_\infty(b,c).$
    \item If $b\neq c$, we assume without loss of generality that $b>c$. Then, $\max(a,c)=a\ps{\infty} c\geq b$ implies that $a\geq b=\max(b,c)=\Lambda_\infty(b,c)$.
\end{enumerate}

Conversely, assume that $a\geq \Lambda_\infty(b,c).$ Then, we also have two cases.
\begin{enumerate}
    \item If $b=c$, then $a\ps{\infty} b\geq b=c$ and similarly, $a\ps{\infty} c\geq b$.
    \item If $b\neq c$, we assume without loss of generality that $b>c$. Then, $a\geq \Lambda_\infty(b,c)=b$. Therefore, $a\ps{\infty} b\geq b>c$ and $a\ps{\infty} c\geq b\ps{\infty} c=b>c$.
\end{enumerate}
\end{proof}

We also define an asymmetric version of $p$-difference which we will use later.\label{sym:ap}
\begin{equation}\label{eq:asymmetric}
    A_p(a, b) \coloneqq  
\begin{cases}
\Lambda_p(a,b),& a> b\\
0, &a\leq b
\end{cases}
\end{equation}

\begin{proposition}\label{prop:asyineq}
Given $a,b,c\geq 0$ and $p\in[1,\infty]$, assume that $\Lambda_p(a,b)\leq c$. Then,
$$a\geq  A_p(b,c)\text{ and } b\geq  A_p(a,c).$$
\end{proposition}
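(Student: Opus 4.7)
The plan is to reduce the desired inequalities directly to Proposition~\ref{prop:simple-inv-triang} and then unpack the definition of $A_p$ with a quick case split. By symmetry of $\Lambda_p$ in its two arguments, it suffices to prove $a\geq A_p(b,c)$; the statement $b\geq A_p(a,c)$ then follows by swapping the roles of $a$ and $b$.

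First I would rewrite the hypothesis $\Lambda_p(a,b)\leq c$ in the form supplied by Proposition~\ref{prop:simple-inv-triang}. Renaming the variables in that proposition (send $a\mapsto c$, $b\mapsto a$, $c\mapsto b$), the proposition states that $\Lambda_p(a,b)\leq c$ is equivalent to the pair of inequalities
\[
c\ps{p} a \geq b \qquad\text{and}\qquad c\ps{p} b \geq a.
\]
So from the hypothesis I immediately obtain $c\ps{p} a\geq b$, which for $p<\infty$ reads $a^p\geq b^p-c^p$, and for $p=\infty$ reads $\max(c,a)\geq b$.

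Now split on whether $b\leq c$ or $b>c$. If $b\leq c$, then by the definition of $A_p$ in \eqref{eq:asymmetric} we have $A_p(b,c)=0\leq a$, and there is nothing to prove. If $b>c$, then $A_p(b,c)=\Lambda_p(b,c)$. For $p\in[1,\infty)$, $\Lambda_p(b,c)=(b^p-c^p)^{1/p}$, and combining this with $a^p\geq b^p-c^p$ gives $a\geq\Lambda_p(b,c)=A_p(b,c)$. For $p=\infty$ the case $b>c$ forces $b\neq c$, so $\Lambda_\infty(b,c)=\max(b,c)=b$; on the other hand $\max(c,a)\geq b>c$ forces $a\geq b$, which is exactly $a\geq A_\infty(b,c)$.

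The second inequality $b\geq A_p(a,c)$ is obtained by the same argument applied to the symmetric form $c\ps{p} b\geq a$. The only mildly subtle point, and the place where I would be most careful, is the $p=\infty$ branch, because of the piecewise definition of $\Lambda_\infty$ at equal arguments; but the case analysis above handles it cleanly, since the equal-argument ambiguity only occurs when $b=c$, which falls into the trivial $b\leq c$ case.
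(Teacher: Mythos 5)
Your proof is correct and follows essentially the same route as the paper's: the same case split on $b\leq c$ versus $b>c$, with the trivial case handled by the definition of $A_p$ and the nontrivial case reduced to $a^p\geq b^p-c^p$ for finite $p$ and to $a\geq b$ for $p=\infty$. The only cosmetic difference is that you extract the inequality $c\ps{p} a\geq b$ by citing Proposition~\ref{prop:simple-inv-triang}, whereas the paper unpacks $\Lambda_p(a,b)\leq c$ directly from the definition (using a short contradiction argument in the $p=\infty$ branch); both routes land on the same computation.
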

\begin{proof}
We only need to prove the leftmost inequality. The rightmost inequality follows from essentially the same proof.

When $b\leq c$, by Equation (\ref{eq:asymmetric}), we have $a\geq 0=  A_p(b,c).$

When $b> c$, by Equation (\ref{eq:asymmetric}), we have $  A_p(b,c)=\Lambda_p(b,c).$ We need to consider the following two cases:
\begin{enumerate}
    \item $p=\infty$. If $a< b$, then $\Lambda_\infty(a,b)=b\leq c$ contradicts with $b>c$. So $a\geq b$ and thus $a\geq b=\Lambda_\infty(b,c)$ since $b>c$.
    \item $p\in[1,\infty)$. Then, $\Lambda_p(a,b)\leq c$ results in $ \left|a^p-b^p\right|\leq c^p$. Hence $a^p\geq b^p-c^p=|b^p-c^p|$ and thus, $a\geq \Lambda_p(b,c)$.
\end{enumerate}
\end{proof}

Now, for all $p\in[1,\infty]$, we have the following definition of $p$-metric spaces.

\begin{definition}[$p$-metric space]\label{def:pms}
For $1\leq p \leq \infty$, a $p$-metric space is a metric space $(X,d_X)$ satisfying the \emph{$p$-triangle inequality}:
for all $x,x',x''$  
$$d_X(x,x'')\leq d_X(x,x')\ps{p} d_X(x',x'').$$
\end{definition}

Note that the $1$-triangle inequality is simply the usual triangle inequality and the $\infty$-triangle inequality is exactly the strong triangle inequality for defining ultrametric spaces. We let $\msp$ denote the collection of all (isometry classes of) compact $p$-metric spaces. Then, it is obvious that $\ms_1=\ms$ and $\ms_\infty=\ums$.

\begin{example}[Product $p$-metric]\label{rmk:product-pm}
Let $X,Y\in\msp$. Then, $\lc X\times Y,d_X\ps{p} d_Y\rc\in\msp$. 
\end{example}

\begin{example}\label{ex:pm is pm}
$\Lambda_p$ actually defines a $p$-metric on $\R_{\geq 0}$. $\R^n_{\geq 0}\coloneqq \underbrace{\Rp\times\cdots\times\Rp}_{n\text{ copies of }\Rp}$ has a natural $p$-metric $\Lambda_p^n$ as the product $p$-metric of $\Lambda_p$ defined in Remark \ref{rmk:product-pm}.
\end{example}

\begin{example}
As a generalization of the fact that $(\ums,\ugh)$ is an ultrametric space, we will establish later that $(\msp,\dghp p)$ is a $p$-metric space for all $p\in[1,\infty]$.
\end{example}

\begin{remark}
In the setting of standard metric spaces (i.e. when $p=1$) one has the inequality $|d_X(x,x'')-d_X(x',x'')|\leq d_X(x,x')$ for all $x,x',x''$ in $X$. By Proposition \ref{prop:simple-inv-triang} we have the following general inequality for   any $p\in[1,\infty]$ and any $(X,d_X)\in\mathcal{M}_p$:
$$\Lambda_p(d_X (x,x''),d_X (x',x''))\leq d_X(x,x'),$$
for all $x,x',x'' \in X$.
\end{remark}

The following relation utilizing the $p$-triangle inequality turns out to be useful in the sequel.

\begin{lemma}\label{lm:trick p triangle}
For any $p\in[1,\infty]$, let $X\in\msp$. Then, for any $x_1,x_2,x_3,x_4\in X$ we have that
\[\Lambda_p(d_X(x_1,x_2),d (x_3,x_4)) \leq d_X (x_1,x_3)\ps{p} d_X (x_2,x_4).\]
\end{lemma}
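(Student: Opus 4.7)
The plan is to invoke Proposition \ref{prop:simple-inv-triang} with $a = d_X(x_1,x_3)\ps{p} d_X(x_2,x_4)$, $b = d_X(x_1,x_2)$, and $c = d_X(x_3,x_4)$. According to that proposition, the desired inequality $a \geq \Lambda_p(b,c)$ is equivalent to the pair of inequalities $a \ps{p} b \geq c$ and $a \ps{p} c \geq b$, so it suffices to establish these two.

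Each of these is a straightforward consequence of applying the $p$-triangle inequality twice (together with associativity and commutativity of $\ps{p}$). For the first one, I would write
\[
d_X(x_3,x_4) \;\leq\; d_X(x_3,x_1)\ps{p} d_X(x_1,x_4) \;\leq\; d_X(x_3,x_1)\ps{p} d_X(x_1,x_2)\ps{p} d_X(x_2,x_4),
\]
which upon rearrangement is precisely $c \leq a \ps{p} b$. For the second one, the symmetric computation
\[
d_X(x_1,x_2) \;\leq\; d_X(x_1,x_3)\ps{p} d_X(x_3,x_2) \;\leq\; d_X(x_1,x_3)\ps{p} d_X(x_3,x_4)\ps{p} d_X(x_4,x_2)
\]
gives $b \leq a \ps{p} c$.

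There is no genuine obstacle: the whole proof is a direct reduction to Proposition \ref{prop:simple-inv-triang}, whose point is exactly to package the absolute $p$-difference $\Lambda_p$ in a form compatible with the monoid operation $\ps{p}$. The only thing to be slightly careful about is that the proposition covers the $p=\infty$ case uniformly, so no separate case analysis for ultrametrics is needed.
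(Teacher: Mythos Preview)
Your proof is correct and is essentially identical to the paper's own argument: both establish the two inequalities $a\ps{p} b\geq c$ and $a\ps{p} c\geq b$ by a double application of the $p$-triangle inequality and then invoke Proposition~\ref{prop:simple-inv-triang}.
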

\begin{proof}
Note that by the $p$-triangle inequality, we have that
\[\lc d_X (x_1,x_3)\ps{p} d_X (x_2,x_4)\rc\ps{p}d_X(x_3,x_4)\geq d_X(x_1,x_2) \]
and
\[\lc d_X (x_1,x_3)\ps{p} d_X (x_2,x_4)\rc\ps{p}d_X(x_1,x_2)\geq d_X(x_3,x_4). \]
Therefore, by \Cref{prop:simple-inv-triang} we have that
\[\Lambda_p(d_X(x_1,x_2),d (x_3,x_4)) \leq d_X (x_1,x_3)\ps{p} d_X (x_2,x_4).\]
\end{proof}

Finally, we establish the following relationship between collections $\ms_p$ with different values of $p$:

\begin{proposition}\label{prop:inclusionpq}
For $1\leq q\leq p\leq\infty$ the following inclusions hold:  
$$\mathcal{U}\subseteq\mathcal{M}_p\subseteq \mathcal{M}_q\subseteq \mathcal{M}.$$
\end{proposition}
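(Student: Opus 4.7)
The plan is to reduce the entire chain of inclusions to a single monotonicity fact about the $p$-sum operator, namely that $a \boxplus_p b$ is non-increasing in $p \in [1,\infty]$ for fixed $a,b \geq 0$. Once this is established, the $p$-triangle inequality for a space $(X,d_X)$ immediately implies the $q$-triangle inequality whenever $q \leq p$, since for any $x,x',x'' \in X$ we would have
\[ d_X(x,x'') \leq d_X(x,x') \ps{p} d_X(x',x'') \leq d_X(x,x') \ps{q} d_X(x',x''). \]
This yields $\mathcal{M}_p \subseteq \mathcal{M}_q$ for $q \leq p$. The endpoints $\mathcal{U} = \mathcal{M}_\infty$ (by definition of the strong triangle inequality) and $\mathcal{M}_1 = \mathcal{M}$ (the $1$-triangle inequality is the ordinary triangle inequality) then give the two outer inclusions for free.

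The core step is thus to verify $a \ps{p} b \leq a \ps{q} b$ for all $1 \leq q \leq p \leq \infty$ and $a,b \geq 0$. I would split this into two cases. When $p = \infty$, we need $\max(a,b) \leq (a^q + b^q)^{1/q}$, which is immediate since $a^q, b^q \geq 0$ and hence $a^q + b^q \geq \max(a,b)^q$. When $1 \leq q \leq p < \infty$ and $a,b > 0$, this is the classical monotonicity of $\ell^p$ norms on $\mathbb{R}^2$: writing $t = a^q/(a^q+b^q) \in (0,1)$ and $s = p/q \geq 1$, the inequality $(a^p+b^p)^{1/p} \leq (a^q+b^q)^{1/q}$ is equivalent to $(t^s + (1-t)^s)^{1/s} \leq 1$, which follows from $t^s \leq t$ and $(1-t)^s \leq 1-t$ since $t, 1-t \in [0,1]$ and $s \geq 1$. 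Degenerate cases ($a=0$ or $b=0$) are trivial.

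There is no significant obstacle here since the argument is a short chain of implications from a well-known inequality about $\ell^p$ norms; the main thing to be careful about is the $p=\infty$ case, which must be handled separately because the $p$-sum is defined by a different formula. I would state the monotonicity of $\boxplus_p$ as a short standalone lemma (analogous in spirit to \Cref{lm:increasing lambdap}) and then conclude the proposition in one line.
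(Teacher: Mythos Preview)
Your proposal is correct and follows the same overall strategy as the paper: both reduce the inclusion $\mathcal{M}_p \subseteq \mathcal{M}_q$ to the monotonicity fact $a \ps{p} b \leq a \ps{q} b$ for $q \leq p$, and then the outer inclusions follow from $\mathcal{U} = \mathcal{M}_\infty$ and $\mathcal{M} = \mathcal{M}_1$.

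The only difference lies in how the monotonicity of $\ps{p}$ is verified. The paper fixes $x = b/a$ and shows by a derivative computation that $f(p) = (1+x^p)^{1/p}$ is non-increasing on $[1,\infty)$. You instead give a purely algebraic argument via the substitution $t = a^q/(a^q+b^q)$, $s = p/q$, reducing to $t^s + (1-t)^s \leq 1$ for $t \in [0,1]$ and $s \geq 1$. Your route is slightly more elementary (no calculus) and you explicitly treat the case $p = \infty$, which the paper's derivative argument does not directly cover. Either verification is perfectly adequate for this standard inequality.
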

\begin{proof}
Given $X\subseteq\msp$, we need to show that $X$ satisfies the $q$-triangle inequality for $q\leq p$. Now, for any $x,x',x''\in X$, we have $d_X(x,x')\leq d_X(x,x'')\ps{p} d_X(x'',x')$. Then, it is sufficient to show that $d_X(x,x'')\ps{p} d_X(x'',x')\leq d_X(x,x'')\ps{q} d_X(x'',x')$. We will show in general that $a\ps{p} b\leq a\ps{q} b$ for any $a,b\geq 0$.

The case when $a=0$ is trivial that both sides equal $b$ and the equality will hold. Now, we assume $a>0$. We will consider the function $f(p)\coloneqq(1+x^p)^{\frac{1}{p}}$, for a fixed $x>0$,  $p\geq 1.$ The derivative of this function is
$$f'(p) = \frac{1}{p^2}(1+x^p)^{\frac{1-p}{p}}\left(x^p\ln(x^p)-(1+x^p)\ln(1+x^p)\right)\leq 0.$$
Hence $f$ is non-increasing. Therefore, if we take $x=\frac{b}{a}$, we have $\left(1+\left(\frac{b}{a}\right)^p\right)^\frac{1}{p}\leq \left(1+\left(\frac{b}{a}\right)^q\right)^\frac{1}{q} $ and $a\ps{p} b\leq a\ps{q} b$ for $a,b\geq 0$.
\end{proof}


\paragraph{The snowflake transform}\label{sec:snowflake}

$p$-metric spaces are a special case of a more general notion called \emph{$p$-snowflake metric spaces}. A $p$-snowflake metric space $X$ is a metric space that is bi-Lipschitz equivalent to a $p$-metric space \cite{tyson2005characterizations}. The name snowflake stems from the classical example of fractal metric spaces, the Koch snowflake (see Figure \ref{fig:koch}), which turns out to be a $\lc\log_3{4}\rc$-snowflake metric space.

One way of generating $p$-metric spaces is via the following \textit{snowflake transform}.

\begin{definition}[Snowflake transform \cite{david1997fractured}]\label{ex:snowflake}
For each $p>0$, we define a function $S_p:\Rp\rightarrow\Rp$ by $x\mapsto x^p$. 
For any metric space $(X,d_X)$ and $0<p<\infty$,  we abuse the notation and denote by $S_p(X)$ the new space $(X,S_p\circ\dX)$, where
$$S_p\circ\dX(x,x'):=\lc d_X(x,x')\rc^p,\quad\forall x,x'\in X.$$ 
The map sending $(X,d_X)$ to $\lc X, S_p\circ\dX \rc$ is called the $p$-snowflake transform. 
\end{definition}

Fix any $p\in[1,\infty)$. If $X$ is a compact metric space, it is easy to check that $S_\frac{1}{p}(X)$ is a compact $p$-metric space. Conversely, if $X$ is a compact $p$-metric space, then $S_p(X)$ is a compact metric space. Therefore, the snowflake transform defines the following two maps:
\[S_\frac{1}{p}:\ms\rightarrow \ms_{p}\text{ and }S_p:\ms_p\rightarrow \ms, \,\forall p\in[1,\infty).\]

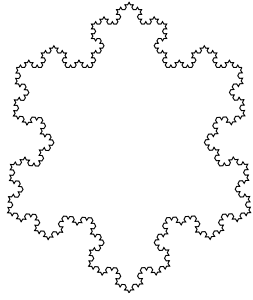
\begin{figure}
    \centering
    \begin{tikzpicture}[decoration=Koch snowflake]
    \draw decorate{decorate{decorate{decorate{(0,0) -- (3,0)}}}};
    \draw decorate{decorate{decorate{decorate{(3,0) -- (1.5,-3)}}}};
    \draw decorate{decorate{decorate{decorate{(1.5,-3) -- (0,0)}}}};
    \end{tikzpicture}
    \caption{\textbf{Koch snowflake.}}
    \label{fig:koch}
\end{figure}

\subsection{Ultrametric spaces}\label{sec:pre-ultrametric}

One of the central topics of this paper is about the ultrametric spaces. Recall that an ultrametric space $(X,d_X)$ is a metric space which satisfies the \emph{strong triangle inequality}: 
\[\forall x,x',x''\in X, \, d_X(x,x')\leq\max\lc d_X(x,x''),d_X(x'',x')\rc.\]
We henceforth use $\uX$ instead of $d_X$ to denote an ultrametric.

The following basic properties of ultrametric spaces are direct consequences of the strong triangle inequality.

\begin{proposition}[Basic properties of ultrametric spaces]\label{prop:basic property ultrametric}
Let $X$ be an ultrametric space. Then, $X$ satisfies the following basic properties:
\begin{enumerate}
    \item (\emph{Isosceles triangles}) Any three distinct points $x,x',x''\in X$ constitute an isosceles triangle, i.e., two of $u_X(x,x'),u_X(x,x'')$ and $u_X(x',x'')$ are the same and are greater than the rest.
    \item (\emph{Center of closed balls}) Consider the closed ball $B_t(x)$ centered at $x\in X$ with radius $t\geq 0$. Then, for any $x'\in B_t(x)$ we have that $B_t(x')=B_t(x)$.
    \item (\emph{Relation between closed balls}) For any two closed balls $B$ and $B'$ in $X$, if $B\cap B'\neq\emptyset$, then either $B\subseteq B'$ or $B'\subseteq B$.
    \item (\emph{Cardinality of spectrum}) Suppose $X$ is a finite space. Then, $\#\spec(X)\leq \#X$. Here the symbol $\#A$ denotes the cardinality of the set $A$.
\end{enumerate}
\end{proposition}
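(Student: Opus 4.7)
The plan is to verify each part using direct consequences of the strong triangle inequality, with (4) requiring a short induction.

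For part (1), I would set $a = u_X(x,x')$, $b = u_X(x,x'')$, $c = u_X(x',x'')$ and assume without loss of generality $a \leq b \leq c$. The strong triangle inequality applied to the pair $(x',x'')$ gives $c \leq \max(a,b) = b$, forcing $b = c$. Hence the two largest values coincide, which is the isosceles conclusion. For part (2), assume $x' \in B_t(x)$, so $u_X(x,x') \leq t$. For any $y \in B_t(x)$, the strong triangle inequality gives
\[u_X(x',y) \leq \max(u_X(x',x), u_X(x,y)) \leq t,\]
so $y \in B_t(x')$, yielding $B_t(x) \subseteq B_t(x')$. Since $x \in B_t(x')$ by symmetry of $u_X$, interchanging the roles of $x$ and $x'$ gives the reverse inclusion.

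Part (3) is immediate from (2): if $y \in B \cap B'$ with $B = B_t(x)$ and $B' = B_s(x')$, then by (2) one has $B = B_t(y)$ and $B' = B_s(y)$, so the ball of smaller radius is contained in the ball of larger radius.

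For part (4), I would proceed by induction on $n = \#X$. The case $n=1$ is trivial since $\spec(X) = \{0\}$. For $n \geq 2$ with $D = \diam(X) > 0$, define the relation $x \sim x'$ on $X$ by $u_X(x,x') < D$. Reflexivity and symmetry are clear, and transitivity follows from the strong triangle inequality exactly as in the proof of (2). This relation partitions $X$ into classes $X_1,\dots,X_k$, and the choice of $D$ forces $k \geq 2$ (any two points realizing the diameter lie in different classes). Each $X_i$ has diameter strictly less than $D$, so $D \notin \spec(X_i)$, and any pair of points from distinct classes has distance exactly $D$ (it is $\leq D$ by the diameter and $\geq D$ by inequivalence). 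Hence
\[\spec(X) = \{D\} \cup \bigcup_{i=1}^{k} \spec(X_i).\]
The main subtlety is the counting, since $0$ is counted in each $\spec(X_i)$ and needs to be consolidated. Using the inductive hypothesis $\#\spec(X_i) \leq \#X_i$ and extracting the shared element $0$ yields
\[\#\spec(X) \leq 1 + 1 + \sum_{i=1}^{k}(\#\spec(X_i) - 1) \leq 2 + \sum_{i=1}^{k}(\#X_i - 1) = 2 + n - k \leq n,\]
using $k \geq 2$ at the last step. The only mildly delicate point in the entire proposition is this bookkeeping in (4); everything else is an essentially one-line consequence of the strong triangle inequality.
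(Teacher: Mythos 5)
Your proof is correct in all four parts. Parts (1)--(3) are exactly the ``direct consequences of the strong triangle inequality'' that the paper asserts without writing out, and your arguments are the standard ones (for (1), note only that when all three distances coincide the triangle is equilateral, which the statement's slightly loose phrasing still intends to cover). The real difference is part (4): the paper does not prove it at all but defers to a citation (Gurvich--Vyalyi), whereas you supply a complete inductive argument. Your decomposition of $X$ at the diameter scale via the equivalence relation $u_X(x,x')<D$ is precisely the hierarchical (dendrogram/closed-quotient) structure the paper develops later in \Cref{sec:quotient operation}, so your proof of (4) is both self-contained and consonant with the paper's machinery; the bookkeeping $\#\spec(X)\leq 2+\sum_i(\#X_i-1)=2+n-k\leq n$ is handled correctly, including the consolidation of the shared value $0$ and the use of $k\geq 2$. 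What your route buys is independence from the external reference; what the paper's route buys is brevity. No gaps.
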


The first three properties follow easily from the strong triangle inequality. As
for the fourth property, see for example \cite[Corollary 3]{gurvich2012characterizing}.

Next, we introduce two important notions for ultrametric spaces: \emph{dendrograms} and \emph{quotient operations}.

\subsubsection{Dendrograms}\label{sec:ultra-dendrogram}

One essential mental picture to evoke when thinking about ultrametric spaces is that of a \emph{dendrogram} (see Figure \ref{fig:ultra-dendro}). To introduce the notion of dendrograms, we first define partitions of a set.
\begin{definition}[Partitions]
Given any set $X$, a \emph{partition} $P$ of $X$ is a set of nonempty subsets $P=\{B_i\subseteq X:\,i\in I\}$, where $I$ denote an index set, such that $X=\cup_{i\in I}B_i$ and $B_i\cap B_j=\emptyset$ when $i\neq j$. We call each $B_i\in P$ a \emph{block} of $P$. We denote by $\mathbf{Part}(X)$ the collection of all partitions of $X$. Given two partitions $P_1,P_2\in\mathbf{Part}(X)$, we say that $P_1$ is a \emph{refinement} of $P_2$, or equivalently, that $P_2$ is \emph{coarser} than $P_1$, if every block in $P_1$ is contained in some block in $P_2$.
\end{definition}

Next we recall the concept that the authors of \cite{carlsson2010characterization} called \textit{dendrograms}. We chose to rename their dendrograms as \textit{finite dendrograms} because their definition assumed all underlying sets to be finite. Since in this paper we need to contend with possibly infinite underlying sets, we will reserve the name `dendrograms' for a more general construction which we introduce in the next section.

\begin{definition}[Finite dendrograms]\label{def:dendrogram}
A finite dendrogram $\theta_X$ over a \emph{finite} set $X$ is any function $\theta_X:[0,\infty)\rightarrow \mathbf{Part}(X)$ satisfying the following conditions:
\begin{enumerate}
    \item[(1)] $\theta_X(0)=\{\{x_1\},\ldots,\{x_n\}\}.$
    \item[(2)] For any $s<t$, $\theta_X(s)$ is a refinement of $\theta_X(t)$.
    \item[(3)] There exists $t_X>0$ such that $\theta_X(t_X)=\{X\}.$
    \item[(4)] For any $t\geq0$, there exists $\eps>0$ such that $\theta_X(t')=\theta_X(t)$ for $t'\in[t,t+\eps].$
\end{enumerate}
\end{definition}

There exists a close relationship between finite dendrograms and finite ultrametric spaces. Fix a finite set $X$, by $\mathcal{U}(X)$ denote the collection of all ultrametrics over $X$ and by $\mathcal{D}(X)$ denote the collection of all dendrograms over $X$. We define a map $\Delta_X:\mathcal{U}(X)\rightarrow\mathcal{D}(X)$ by sending $u$ to a finite dendrogram $\theta_u$ as follows:
for any $t\geq 0$, consider an equivalence relation $\sim\ct{t}$ (this is called $t$-closed equivalence relation in \Cref{sec:quotient operation}) defined by 
$$x\sim\ct{t}x' \text{ iff } u(x,x')\leq t.$$
Then, we define $\theta_u(t)$ to be the partition induced by this equivalence relation. It turns out that the map $\Delta_X$ is bijective. In fact, the inverse $\Upsilon_X:\mathcal{D}(X)\rightarrow\mathcal{U}(X)$ of $\Delta_X$ is the following map: for any dendrogram $\theta$, $u_\theta\coloneqq\Upsilon(\theta_X)$ is defined by 
$$    u_\theta(x,x')\coloneqq\inf\{t\geq 0:\, [x]_{t}^{\theta}=[x']_{t}^{\theta}\}\text{ for any }x,x'\in X,$$
where $[x]_{t}^{\theta}\in\theta(t)$ denotes the block containing $x$\footnote{Sometimes, we abbreviate $[x]_t^\theta$ to $[x]_t$ when the underlying dendrogram is clear from the context.}. We summarize our discussion above into the following theorem.

\nomenclature[S]{$[x]_t$}{The block in the $t$-slice of a dendrogram containing $x$}

\begin{figure}[ht]
    \centering
    \includegraphics[width=0.7\textwidth]{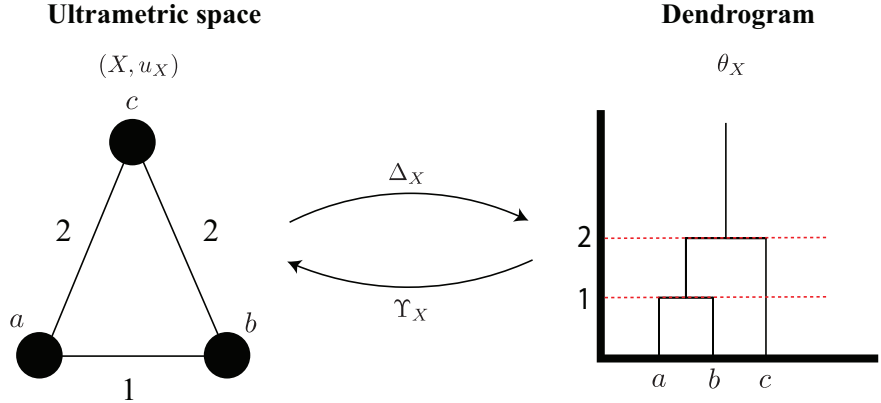}
    \caption{\textbf{Transforming ultrametric spaces into dendrograms.}}
    \label{fig:ultra-dendro}
\end{figure}

\begin{theorem}[Finite dendrograms as finite ultrametric spaces,  {\cite[Theorem 9]{carlsson2010characterization}}]\label{thm:dendroultra}
Given a finite set $X$, then $\Delta_X:\mathcal{U}(X)\rightarrow\mathcal{D}(X)$ is bijective with inverse $\Upsilon_X:\mathcal{D}(X)\rightarrow\mathcal{U}(X)$.
\end{theorem}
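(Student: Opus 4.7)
The plan is to verify (i) that $\Delta_X$ lands in $\mathcal{D}(X)$ and $\Upsilon_X$ lands in $\mathcal{U}(X)$, and then (ii) that the two compositions $\Upsilon_X \circ \Delta_X$ and $\Delta_X \circ \Upsilon_X$ are the identity maps on their respective domains. Both pieces are essentially a matter of unwinding definitions, with the strong triangle inequality and the right-continuity condition (axiom (4) in the definition of a finite dendrogram) doing the essential work.

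For the well-definedness of $\Delta_X$, I would first check that $\sim\ct{t}$ really is an equivalence relation, which is precisely where the strong triangle inequality enters: if $u(x,x')\leq t$ and $u(x',x'')\leq t$, then $u(x,x'')\leq\max(u(x,x'),u(x',x''))\leq t$. Axioms (1) and (2) then follow from $u(x,x')=0\iff x=x'$ and the trivial implication that $u(x,x')\leq s\leq t$ yields $u(x,x')\leq t$. Axiom (3) holds by choosing any $t_X$ exceeding the (finite) diameter of $X$, and axiom (4) holds because $t\mapsto \theta_u(t)$ is in fact locally constant: finiteness of $X$ forces $u$ to take only finitely many values, so $\theta_u$ is piecewise constant and therefore right continuous at every $t\geq 0$.

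For the well-definedness of $\Upsilon_X$, symmetry of $u_\theta$ is immediate, and $u_\theta(x,x)=0$ follows from axiom (1). The separation property $u_\theta(x,x')=0\Rightarrow x=x'$ uses axiom (4): by (4), $\theta(s)=\theta(0)$ on some interval $[0,\eps]$, and for any $s'\leq \eps$ with $[x]_{s'}^\theta=[x']_{s'}^\theta$ axiom (1) forces $x=x'$; since the infimum is $0$, such $s'$ exists. For the strong triangle inequality, setting $t\coloneqq\max(u_\theta(x,x''),u_\theta(x'',x'))$ and again invoking axiom (4), I can pick $s$ arbitrarily close to $t$ from above so that $[x]_s^\theta=[x'']_s^\theta$ and $[x'']_s^\theta=[x']_s^\theta$, hence $[x]_s^\theta=[x']_s^\theta$ and therefore $u_\theta(x,x')\leq t$. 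The composition $u_{\theta_u}(x,x')=\inf\{t\geq 0:u(x,x')\leq t\}=u(x,x')$ is immediate. For the other composition I would show $\theta_{u_\theta}(t)=\theta(t)$ by checking the two directions block-wise: one direction follows from the monotonicity axiom (2), and the other uses axiom (4) to handle the borderline case in which the infimum defining $u_\theta(x,x')$ equals $t$ exactly.

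I expect the main obstacle to be precisely that borderline issue: when $u_\theta(x,x')$ equals a level $t$, the definition as an infimum does not on its own guarantee $[x]_t^\theta=[x']_t^\theta$, only that these blocks agree at all levels strictly above $t$. Without the right-continuity axiom (4), both the separation property of $u_\theta$ and the inversion $\Delta_X\circ \Upsilon_X=\mathrm{id}$ could break down. Consequently, the careful use of axiom (4) at these closure-type steps is the conceptual heart of the argument and what I would spend the most time verifying in detail.
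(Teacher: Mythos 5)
Your proof is correct. The paper does not actually prove this statement --- it is quoted from Theorem 9 of Carlsson--M\'emoli and stated as a summary of the preceding definitions --- so there is no in-paper argument to compare against; your verification is the standard one, and you correctly isolate the only delicate point, namely that the right-continuity axiom (4) is what handles the borderline case $u_\theta(x,x')=t$ in proving $\Delta_X\circ\Upsilon_X=\mathrm{id}$ and the separation property of $u_\theta$. One minor remark: in the strong triangle inequality step axiom (4) is not actually needed, since knowing $[x]_s^{\theta}=[x']_s^{\theta}$ for all $s>t$ already forces the infimum defining $u_\theta(x,x')$ to be at most $t$.
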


Theorem \ref{thm:dendroultra} above establishes that finite dendrograms and finite ultrametric spaces are equivalent concepts -- a point of view which helps to formulate subsequent ideas in this paper.

\subsubsection{Compact ultrametric spaces and dendrograms}

In order to generalize \Cref{thm:dendroultra} to the case of compact ultrametric spaces, we consider the following notion of \textit{dendrograms} as a generalization of finite dendrograms to the case of sets with possibly infinite cardinality. This concept of dendrograms was first studied and called \textit{proper dendrograms} in \cite{memoli2021ultrametric}.

\begin{definition}[Dendrograms]\label{def:proper dendrogram}
Given a set $X$ (not necessarily finite), a \emph{dendrogram} $\theta_X:[0,\infty)\rightarrow \mathbf{Part}(X)$ is a map satisfying the following conditions:
\begin{enumerate}
\item[(1)] $\theta_X(0)$ is the finest partition consisting only singleton sets;
\item[(2)] $\theta_X(s)$ is finer than $\theta_X(t)$ for any $0\leq s<t<\infty$;
\item[(3)] There exists $t_X>0$ such that for any $t\geq t_X$, $\theta_X(t_X)=\{X\}$ is the trivial partition;			
\item[(4')] For each $t> 0$, there exists $\eps>0$ such that $\theta_X(t)=\theta_X(t')$ for all $t'\in[t,t+\eps]$.
	\item[(5)] For any distinct points $x,x'\in X$, there exists $t_{xx'}>0$ such that $x$ and $x'$ belong to different blocks in $\theta_X(t_{xx'})$.
			\item[(6)] For each $t>0$, $\theta_X(t)$ consists of only finitely many blocks.
			\item[(7)] Let $\{t_n\}_{n\in\mathbb N}$ be a decreasing sequence such that $\lim_{n\rightarrow\infty}t_n=0$ and let $X_n\in \theta_X(t_n)$. If for any $1\leq n<m$, $X_m\subseteq X_n$, then $\bigcap_{n\in\mathbb N}X_n\neq\emptyset$. 
		\end{enumerate}
	\end{definition}
	
\begin{remark}[Finite dendrograms v.s. dendrograms]
First notice the difference between condition (4') in \Cref{def:proper dendrogram} and condition (4) in \Cref{def:dendrogram} that the inequality for $t$ in condition (4') is strict. When the underlying set $X$ is finite, it is not hard to see that condition (4') and condition (5) together imply condition (4) in \Cref{def:dendrogram}. Hence, a dendrogram over a finite set is exactly a finite dendrogram defined in \Cref{def:dendrogram}. Conversely, a finite dendrogram is obviously a dendrogram over a finite set.
\end{remark}

Similar to \Cref{thm:dendroultra}, who considered the relationship between finite ultrametric spaces and {dendrograms}, there is a structure preserving bijection between compact ultrametric spaces and dendrograms \cite[Theorem 2.2]{memoli2021ultrametric}. 

\begin{theorem}\label{thm:compact ultra-dendro}
Given a set $X$, denote by $\mathcal{U}(X)$ the collection of all compact ultrametrics on $X$ and $\mathcal{D}(X)$ the collection of all dendrograms over $X$. For any $\theta\in\mathcal{D}(X)$, consider $u_\theta$ defined as follows:
\[\forall x,x'\in X,\,\,\, u_\theta(x,x')\coloneqq\inf\{t\geq 0\,:\,x,x' \text{ belong to the same block of } \theta(t)\}.\]
Then, $u_\theta\in\mathcal{U}(X)$ and the map $\Upsilon_X:\mathcal{D}(X)\rightarrow\mathcal{U}(X)$ sending $\theta$ to $u_\theta$ is a bijection.
\end{theorem}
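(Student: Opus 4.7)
My plan is to split the proof into three stages: first, verify that $u_\theta$ is a legitimate ultrametric on $X$; second, establish that $(X,u_\theta)$ is compact; and third, construct an inverse map $\Delta_X:\mathcal{U}(X)\to\mathcal{D}(X)$ and check that $\Upsilon_X$ and $\Delta_X$ are mutually inverse. Symmetry and non-negativity of $u_\theta$ are immediate, and $u_\theta(x,x)=0$ because $x$ sits in its own block at every level. To see $u_\theta(x,x')>0$ whenever $x\neq x'$, invoke condition (5) to obtain $t_{xx'}>0$ separating $x$ and $x'$ at level $t_{xx'}$, then use the refinement condition (2) to propagate this separation to every smaller level, forcing $u_\theta(x,x')\geq t_{xx'}$.

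For the strong triangle inequality, fix any $t$ strictly larger than $\max(u_\theta(x,x''),u_\theta(x'',x'))$. By the infimum definition there exist levels $s_1,s_2\leq t$ at which $\{x,x''\}$ and $\{x'',x'\}$ lie in common blocks; condition (2) promotes both coincidences up to level $t$, so $x,x'$ lie in a common block of $\theta(t)$, proving $u_\theta(x,x')\leq t$. Letting $t$ decrease to the maximum yields the required inequality.

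To prove compactness of $(X,u_\theta)$, I handle total boundedness and completeness separately. At scale $\varepsilon>0$, condition (6) ensures that $\theta(\varepsilon)$ has only finitely many blocks, each of $u_\theta$-diameter at most $\varepsilon$ directly from the definition of $u_\theta$, so picking one representative per block gives a finite $\varepsilon$-net. For completeness, take a Cauchy sequence $\{x_n\}$, fix $t_k\searrow 0$, and after passing to a subsequence arrange $u_\theta(x_m,x_n)<t_k$ for all $m,n\geq N_k$. Then $\{x_n\}_{n\geq N_k}$ sits inside a single block $X_k\in\theta(t_k)$, and since $\theta(t_{k+1})$ refines $\theta(t_k)$ one may arrange $X_{k+1}\subseteq X_k$; condition (7) then delivers $x^\star\in\bigcap_k X_k$, which is the required limit.

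For the bijection, I define $\Delta_X(u)(t)$ to be the partition induced by the equivalence relation $x\sim_t x'\iff u(x,x')\leq t$, which is indeed an equivalence relation precisely because $u$ obeys the strong triangle inequality. Axioms (1)--(3), (5), (6), and (7) for $\Delta_X(u)$ are straightforward consequences of $(X,u)$ being a compact ultrametric space: (6) follows because an infinite $t$-separated set would contradict compactness, and (7) is the nested-compact-set property applied to closed balls of radii $t_n\searrow 0$. The equalities $\Upsilon_X\circ\Delta_X=\mathrm{id}$ and $\Delta_X\circ\Upsilon_X=\mathrm{id}$ then reduce to matching blocks with sublevel sets of $u$; the second direction uses (4') to upgrade ``same block at some level slightly above $t$'' to ``same block at level $t$''. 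I expect the main obstacle to be verifying condition (4') for $\Delta_X(u)$, which requires the fact that the spectrum $\{u(x,y):x\neq y\}$ of a compact ultrametric space can accumulate only at $0$; I plan to establish this by a compactness extraction paired with the isosceles property \Cref{prop:basic property ultrametric}(1), forcing a contradiction from any putative accumulation at a positive value.
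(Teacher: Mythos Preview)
The paper does not actually supply a proof of this theorem: it is stated and attributed to \cite[Theorem 2.2]{memoli2021ultrametric}, so there is no in-paper argument to compare against. Your proposal is therefore being assessed on its own merits, and it is essentially correct and complete.

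A few minor remarks. In the completeness step, the phrase ``after passing to a subsequence'' is unnecessary: for a Cauchy sequence you can directly choose indices $N_1\le N_2\le\cdots$ with $u_\theta(x_m,x_n)<t_k$ for $m,n\ge N_k$, and the nesting $X_{k+1}\subseteq X_k$ then follows from condition (2) together with $x_{N_{k+1}}\in X_k\cap X_{k+1}$. In verifying $\Delta_X\circ\Upsilon_X=\mathrm{id}$, the case $u_\theta(x,x')=t=0$ should be handled separately (it reduces to $x=x'$ by the positive-definiteness you already established), since condition (4') only applies for $t>0$. Finally, your planned proof that the spectrum of a compact ultrametric space cannot accumulate at a positive value is the right idea: if $u(x_n,y_n)\searrow t>0$ with $u(x_n,y_n)>t$, extract convergent subsequences $x_n\to x$, $y_n\to y$; for large $n$ one has $u(x_n,x),u(y_n,y)<t$ and $u(x,y)=t$, whence the strong triangle inequality forces $u(x_n,y_n)\le\max(u(x_n,x),u(x,y),u(y,y_n))\le t$, a contradiction. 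This closes the only gap you flagged as a potential obstacle.
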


\subsubsection{The closed quotient operation}\label{sec:quotient operation}
There is one special and fundamental equivalence relation on ultrametric spaces, whose induced quotient operation will be useful in later sections.

\paragraph{A `closed' equivalence relation.}\label{closed relation}For any ultrametric space $(X,u_X)$, we introduce a relation $\sim\ct{t}$ on $X$ such that 
\begin{equation}\label{eq:closed relation}
    x\sim\ct{t} x'\text{ iff }u_X(x,x')\leq t.
\end{equation}
This equivalence relation was used in defining the bijective map $\Delta_X:\mathcal{U}(X)\rightarrow\mathcal{D}(X)$ in the previous section. Due to the strong triangle inequality, $\sim\ct{t}$ is an equivalence relation, which we call the \emph{closed equivalence relation}. For each $x\in X$ and $t\geq 0$, denote by $[x]\ct{t}^X$ the equivalence class of $x$ under $\sim\ct{t}$. We abbreviate $[x]\ct{t}^X$ to $[x]\ct{t}$ whenever the underlying set is clear from the context. Consider the set $X\ct{t}\coloneqq\{[x]\ct{t}:\,x\in X\}$ of all $\sim\ct{t}$ equivalence classes.

\nomenclature[S]{$X\ct{t}$}{$t$-closed quotient of $X$}

\nomenclature[S]{$[x]\ct{t}$}{$t$-closed equivalence class of $x$}

\begin{remark}[Relationship with closed balls]\label{rmk:relation with closed ball}
Note that for any $x\in X$ and any $t\geq 0$, the equivalence class $[x]\ct{t}^X$ satisfies $[x]\ct{t}^X=\{x'\in X:\,u_X(x,x')\leq t\}$. This implies that $[x]\ct{t}$ coincides with the closed ball $ {B}_t(x)\coloneqq\{x'\in X:\,u_X(x,x')\leq t\}$. 
\end{remark}

Now, we introduce the function $u_{X\ct{t}}:X\ct{t}\times X\ct{t}\rightarrow\mathbb R_{\geq 0}$ defined as follows: 
\begin{equation}\label{eq:closed quotient}
    u_{X\ct{t}}\left([x]\ct{t},[x']\ct{t}\right) \coloneqq  \left\{
\begin{array}{cl}
u_X(x,x') & \mbox{if $[x]\ct{t}\neq[x']\ct{t}$}\\
0 & \mbox{if $[x]\ct{t}=[x']\ct{t}$.}
\end{array}
\right.
\end{equation}

It is clear that $u_{X\ct{t}}$ is an ultrametric on $X\ct{t}$. 
\begin{definition}[$t$-closed quotient]\label{def:ultraquotient}
For any ultrametric space $(X,u_X)$ and $t\geq 0$, 
we call $\lc X\ct{t},u_{X\ct{t}}\rc $ the $t$-\emph{closed quotient of $X$}.
\end{definition}

\begin{remark}
Consider the dendrogram $\theta_X$ associated to $(X,u_X)$. Then, for any $t\geq 0$ and $x\in X$, it turns out that the block $[x]_{t}^{\theta}\in\theta_X(t)$ introduced in the previous section coincides with the equivalence class $[x]\ct{t}$. Moreover, the dendrogram associated to $X\ct{t}$ is closely related to $\theta_X$ as shown in \Cref{fig:closed quotient}.
\end{remark}

\begin{remark}[Open equivalence relation]
One can also define an open equivalence relation by requiring strict inequality in \Cref{eq:closed relation}. In \cite{memoli2021gromov}, this open equivalence relation turns out to be useful in devising algorithms for computing $\dgh$.
\end{remark}

\begin{figure}
    \centering
    \includegraphics[width=0.6\linewidth]{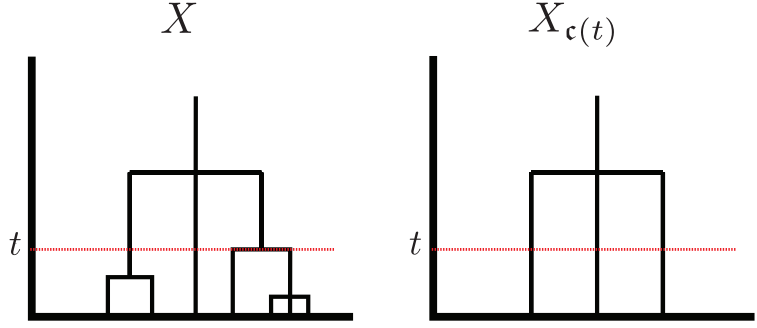}
    \caption{\textbf{Illustration of the $t$-closed quotient.} The dendrogram associated to $X\ct{t}$ is obtained by forgetting details below $t$ of the dendrogram associated to $X$.}
    \label{fig:closed quotient}
\end{figure}

It is worth noting that the quotient of a compact ultrametric space is still compact, and the quotient of a Polish ultrametric space remains Polish \cite{wan2021novel}. Furthermore, we have the following two more refined results. The first proposition was already mentioned in \cite[Lemma A.7]{memoli2021ultrametric} and we include it here for completeness.

\begin{proposition}\label{lm:compact-quotient}
Let	$X$ be a complete ultrametric space. Then, $X$ is \emph{compact} if and only if for any $t>0$, $X\ct{t}$ is a \emph{finite} space. 
\end{proposition}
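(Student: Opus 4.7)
The plan is to reduce both directions to the standard characterization that a complete metric space is compact if and only if it is totally bounded, and to exploit the fact (Remark \ref{rmk:relation with closed ball}) that each equivalence class $[x]\ct{t}$ coincides with the closed ball $B_t(x) = \{x'\in X:\, u_X(x,x')\leq t\}$. In particular, the equivalence classes comprising $X\ct{t}$ form a partition of $X$ by closed balls of radius $t$.

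For the forward implication, assume $X$ is compact and fix $t>0$. The key observation is that in an ultrametric space every closed ball is \emph{open}: indeed, by part~(2) of \Cref{prop:basic property ultrametric}, for any $y\in B_t(x)$ we have $B_t(y)=B_t(x)$, so $B_t(x)$ contains an open neighborhood of each of its points. Therefore the family $\{[x]\ct{t}:\,x\in X\}=\{B_t(x):\,x\in X\}$ is an open cover of $X$ by pairwise disjoint sets. Compactness produces a finite subcover, and since the sets are pairwise disjoint the finite subcover must exhaust the entire family. Hence $X\ct{t}$ is finite.

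For the reverse implication, assume that $X\ct{t}$ is finite for every $t>0$, and fix $\eps>0$. Choosing $t=\eps$, the finitely many equivalence classes $[x_1]\ct{\eps},\ldots,[x_n]\ct{\eps}$ are exactly the closed balls $B_\eps(x_1),\ldots,B_\eps(x_n)$, and they cover $X$. Thus $X$ is totally bounded. Since $X$ is also assumed complete, it is compact.

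I expect no real obstacle beyond making the link between the quotient $X\ct{t}$ and closed balls via Remark \ref{rmk:relation with closed ball}, together with the openness of closed balls in ultrametric spaces, which is immediate from the ``center of closed balls'' property in \Cref{prop:basic property ultrametric}. The only delicate conceptual point is recognizing that a disjoint open cover of a compact space is automatically finite, which bypasses the need to pass to a strictly smaller $\eps$-scale.
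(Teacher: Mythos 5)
Your proof is correct, but the forward direction follows a genuinely different route from the paper's. The paper argues via total boundedness: compactness yields a finite $t$-net $X_N$, and since every $x\in X$ satisfies $u_X(x,x_N)\leq t$ for some $x_N\in X_N$, one gets $[x]\ct{t}=[x_N]\ct{t}$, so the classes of the net points exhaust $X\ct{t}$. You instead observe that each class $[x]\ct{t}=B_t(x)$ is \emph{open} (by the center-of-closed-balls property), so the classes form a pairwise disjoint open cover, and a finite subcover of a disjoint cover must be the whole cover. Both arguments are sound; yours is more topological and highlights the total disconnectedness underlying ultrametric spaces, while the paper's is purely metric and has the virtue of being the exact mirror image of the reverse implication (finite net $\Rightarrow$ finitely many classes, finitely many classes $\Rightarrow$ finite net), which makes the equivalence read as a single statement about $t$-nets. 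Your reverse direction coincides with the paper's. One small point worth making explicit in your forward direction: a finite subcover of a pairwise disjoint cover by nonempty sets must contain every member of the family, since a point of an omitted class would lie in no other class; you assert this correctly but it is the one step a reader might want spelled out.
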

\begin{proof}
Assume that $X$ is compact. Then, for any $t>0$ there exists a finite $t$-net $X_N\subseteq X$, i.e., for any $x\in X$, there exists $x_N\in X_N$ such that $u_X(x,x_N)\leq t$. Hence, $[x]\ct{t}=[x_N]\ct{t}$. Therefore, $X\ct{t}\subseteq\{[x_N]\ct{t}:\,x_N\in X_N\}$ and thus $X\ct{t}$ is a finite set.
	    
Conversely, we assume that $X\ct{t}$ is finite for all $t>0$. We only need to prove that $X$ is totally bounded to conclude that $X$ is compact. For any $t>0$, $X\ct{t}$ is a finite set and thus there exists $x_1,\ldots,x_n\in X$ such that $X\ct{t}=\left\{[x_1]\ct{t},\ldots,[x_n]\ct{t}\right\}.$ Now, for any $x\in X$, there exists $i\in\{1,\ldots,n\}$ such that $x\in[x_i]\ct{t}$. This implies that $u_X(x,x_i)\leq t$. Therefore, the set $\{x_1,\ldots,x_n\}\subseteq X$ is a $t$-net of $X$. Hence, $X$ is totally bounded and thus compact. 
\end{proof}

\begin{proposition}\label{lm:countable-quotient}
Let $X$ be a complete ultrametric space. Then, $X$ is \emph{separable} (and thus Polish) if and only if for any $t>0$, $X\ct{t}$ is a \emph{countable} space.
\end{proposition}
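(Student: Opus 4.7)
The plan is to mimic the structure of the proof of \Cref{lm:compact-quotient}, replacing finite $t$-nets with countable ones. Both directions are fairly direct, and no new tools are needed; the key is that the closed equivalence class $[x]\ct{t}$ coincides with the closed ball $B_t(x)$ (\Cref{rmk:relation with closed ball}), so that membership in the same $\sim\ct{t}$-class is the same as being within ultrametric distance $\leq t$.

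For the forward direction, assume $X$ is separable and let $D\subseteq X$ be a countable dense subset. Fix $t>0$. Given any $x\in X$, by density there exists $d\in D$ with $u_X(x,d)\leq t$, so $[x]\ct{t}=[d]\ct{t}$. Hence the assignment $d\mapsto [d]\ct{t}$ surjects $D$ onto $X\ct{t}$, which is therefore countable.

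For the converse, assume $X\ct{t}$ is countable for every $t>0$. For each $n\in\mathbb{N}$, enumerate $X\ct{1/n}$ and pick one representative from each equivalence class, yielding a countable set $D_n\subseteq X$. Set $D\coloneqq\bigcup_{n\in\mathbb{N}}D_n$, which is countable. To see $D$ is dense, fix $x\in X$ and $\varepsilon>0$; choose $n$ with $1/n<\varepsilon$. Since $D_n$ meets every $\sim\ct{1/n}$-class, there exists $d\in D_n$ with $[x]\ct{1/n}=[d]\ct{1/n}$, i.e.\ $u_X(x,d)\leq 1/n<\varepsilon$. Thus $D$ is a countable dense subset of $X$, so $X$ is separable; combined with the assumed completeness this yields that $X$ is Polish.

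The argument is essentially routine once the observation $[x]\ct{t}=B_t(x)$ is in hand, and the main (very mild) subtlety is simply choosing a sequence $t_n\downarrow 0$ in the converse direction so that a single countable set $D$ can approximate every point to arbitrary precision; this is handled cleanly by taking $t_n=1/n$. No separate argument for ``Polish'' is required beyond noting that a complete separable metric space is, by definition, Polish.
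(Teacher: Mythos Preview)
Your proof is correct and follows essentially the same approach as the paper: the forward direction uses a countable dense set to surject onto $X\ct{t}$, and the converse chooses representatives of each class along a sequence $t_n\downarrow 0$ (the paper uses an arbitrary such sequence, you use $t_n=1/n$) to build a countable dense set. There is no substantive difference.
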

\begin{proof}
Assume that $X$ is separable. Let $X_c\subseteq X$ be a countable dense subset. Then, for any $t>0$, $\{[x_c]\ct{t}:\,x_c\in X_c\}$ is a countable subset of $X\ct{t}$. For any $x\in X$, there exists $x_c\in X_c$ such that $u_X(x,x_c)\leq t$ since $X_c$ is dense. Hence, $[x]\ct{t}=[x_c]\ct{t}$. Therefore, $X\ct{t}\subseteq\{[x_c]\ct{t}:\,x_c\in X_c\}$ and thus $X\ct{t}$ is countable.

Conversely, consider any positive sequence $\{t_n\}_{n=1}^\infty$ strictly decreasing to $0$. For each $t_n$, we choose for each equivalence class in $X\ct{t_n}$ an arbitrary representative, and denote by $\hat{X}\ct{t_n}$ the set of all the chosen representatives. Then, $\hat{X}\ct{t_n}$ is of course a countable set. Let $X_c\coloneqq \cup_{n=1}^\infty \hat{X}\ct{t_n}$. Then, $X_c$ is obviously countable since each $\hat{X}\ct{t_n}$ is countable. Now, for any $x\in X$, there exist $x_1,x_2,\cdots\in X_c$ such that for each $n=1,\ldots$, $x\in[x_n]\ct{t_n}$. Then, 
\[\lim_{n\rightarrow\infty}u_X(x,x_n)\leq\lim_{n\rightarrow\infty}t_n=0. \]
This implies that $X_c$ is dense in $X$ and thus $X$ is separable.
\end{proof}

\nomenclature[S]{$\ums$}{The collection of all (isometry classes) of compact ultrametric spaces}

The following lemma is useful in constructing maps between closed quotient spaces.

\begin{lemma}\label{lm:induce}
Let $f:X\rightarrow Y$ be a 1-Lipschitz map between two ultrametric spaces. For any $t\geq 0$, define $f_t:X\ct{t}\rightarrow Y\ct{t}$ by $f_t\lc[x]^X\ct{t}\rc\coloneqq [f(x)]^Y\ct{t}$. Then, $f_t$ is well-defined, i.e., whenever $[x]\ct{t}^X=[x']\ct{t}^X$, we have that $[f(x)]^Y\ct{t}=[f(x')]^Y\ct{t}$.
\end{lemma}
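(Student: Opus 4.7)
The plan is to verify well-definedness directly from the definition of the closed equivalence relation (\Cref{eq:closed relation}) combined with the 1-Lipschitz property of $f$. This is a routine unpacking of definitions rather than a structural result, so I expect no real obstacle.

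First, I would fix $t\geq 0$ and take $x,x'\in X$ with $[x]\ct{t}^X = [x']\ct{t}^X$. By the definition of the $t$-closed equivalence relation, this means exactly that $u_X(x,x')\leq t$. Next, since $f:X\to Y$ is 1-Lipschitz, we have
\[u_Y(f(x),f(x'))\leq u_X(x,x')\leq t.\]
Applying the definition of the closed equivalence relation on $Y$, this chain of inequalities gives $f(x)\sim\ct{t} f(x')$ in $Y$, i.e., $[f(x)]^Y\ct{t} = [f(x')]^Y\ct{t}$. This is precisely the well-definedness condition required for the map $f_t:X\ct{t}\to Y\ct{t}$, so the lemma follows.

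The only thing worth flagging is the edge case $t=0$: here $\sim\ct{0}$ collapses to equality of points (by \Cref{prop:basic property ultrametric} style reasoning, since $u_X(x,x')\leq 0$ forces $x=x'$), so $f_0$ is just $f$ itself under the canonical identifications $X\ct{0}\cong X$ and $Y\ct{0}\cong Y$, and well-definedness is trivial. No subtlety beyond this arises, so the complete argument is essentially the two-line implication above.
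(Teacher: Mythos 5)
Your argument is correct and is essentially identical to the paper's own proof: both unpack $[x]\ct{t}^X=[x']\ct{t}^X$ into $u_X(x,x')\leq t$, apply the 1-Lipschitz hypothesis to get $u_Y(f(x),f(x'))\leq t$, and conclude. The remark about $t=0$ is harmless but unnecessary, since the general argument already covers it.
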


\begin{proof}
If $x'\in [x]^X\ct{t}$, then we have $u_X(x,x')\leq t$. Since $f$ is 1-Lipschitz, we have that 
$$u_Y(f(x'),f(x))\leq u_X(x,x')\leq t,$$
and thus $f(x')\in[f(x)]^Y\ct{t}$. Therefore, $f_t$ is well-defined.
\end{proof}


\subsection{Pseudometric spaces}\label{sec:pseudometric space}
In this section, we provide certain details to address nuances between the notion of metric spaces and the notion of pseudometric spaces. A pseudometric space is a pair $(X,d_X)$ consisting of a set $X$ and a function called pseudometric $d_X:X\times X\rightarrow\R$ satisfying the following three conditions, for any $x,x',x''\in X$:
\begin{enumerate}
    \item $d_X(x,x')\geq 0$.
    \item $d_X(x,x')=d_X(x',x)$.
    \item $d_X(x,x')+d_X(x',x'')\geq d_X(x,x'')$.
\end{enumerate}
A pseudometric $d_X$ is a metric if $d_X$ satisfies that $d_X(x,x')=0$ implies $x=x'$.

There is a canonical way of transforming a pseudometric space into a metric space \cite[Proposition 1.1.5]{burago2001course}. Given any pseudometric space $(X,d_X)$, we introduce an equivalence relation $\sim_0$\footnote{If $(X,d_X)$ is moreover an ultrametric space, then $\sim_0$ is exactly the same as the $0$-closed equivalence relation $\sim\ct{0}$ introduced in \Cref{sec:quotient operation}.} as follows: $x\sim_0 x'$ if $d_X(x,x')=0$. Then, we let $[x]$ denote the equivalence class (and thus an element in $X_0$) of $x\in X$ and define the quotient space $X_0:=X/\sim_0$. Define a function $d_{X_0}:X_0\times X_0\rightarrow\mathbb R_{\geq 0}$ as follows:
\begin{equation}\label{eq:quotientmetric}
    d_{X_0}([x],[x'])\coloneqq\begin{cases}
 	d_X(x,x')&\text{if }d_X(x,x')\neq 0\\
 	0&\text{otherwise}
	\end{cases}.
\end{equation}
$d_{X_0}$ turns out to be a metric on $X_0$.
In the sequel, the metric space $(X_0,d_{X_0})$ is referred to as the \emph{metric space induced by the pseudometric space} $(X,d_X)$.

\paragraph{Quotient metric.}
One way of generating pseudometric spaces out of a metric space is through the quotient metric (cf. \cite[Definition 3.1.12]{burago2001course}). 

\begin{definition}[Quotient metric]\label{def:quotientmetric}
Let $(X,d_X)$ be a metric space and let $\sim_R$ be any equivalence relation on $X$. We denote by $X_R\coloneqq X/\sim_R$ the set of equivalence classes. Then, we define the quotient metric $d_{X_R}:X_R\times X_R\rightarrow\Rp$ as follows (see Figure \ref{fig:quotientmetric} for an illustration):
$$d_{X_R} ([x],[x'])\coloneqq \inf \sum_{i=1}^nd_X(x_i,y_i),$$
where the infimum is taken over all sequences of points $x=x_1,y_1,x_2,y_2,\cdots,x_n,y_n=x'$ in $X$ such that $y_i\sim_R x_{i+1}$ for all $i=1,\cdots,n-1$.
\end{definition}

It is not hard to see that $d_{X_R}$ is indeed a pseudometric on $X_R$ (cf. \cite[Exercise 3.1.13]{burago2001course}).

\begin{figure}
    \centering
    \includegraphics[width=12cm]{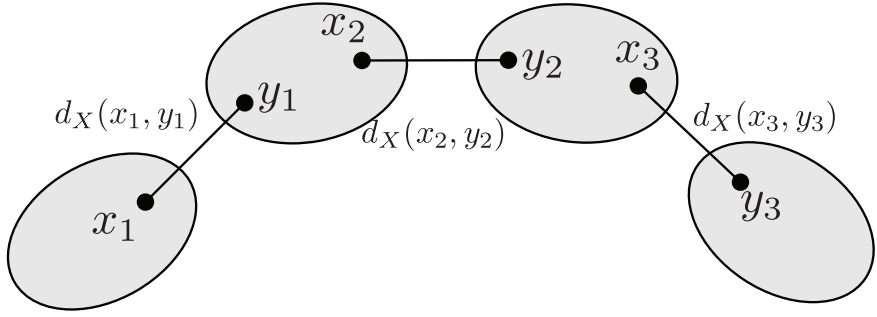}
    \caption{\textbf{Illustration of the quotient metric.} In this figure, each ball represents an equivalence class of $\sim_R$ on a metric space $X$. Here, we represent one sequence of points $x=x_1,y_1,\cdots,y_n=x'$ between $x$ and $x'$ with $n=3$. Then, $d_{X_R} ([x],[x'])$ is the infimum of the sum $\sum_{i=1}^n d_X(x_i,y_i)$ over all such sequences of points.}
    \label{fig:quotientmetric}
\end{figure}
\begin{remark}\label{rmk:quotientineq}
It is easy to see that for any $x,x'\in X$, we have that
$$d_{X_R}([x],[x'])\leq d_X(x,x'). $$
\end{remark}

\section{The projections $\mathfrak{S}_p:\mathcal{M}\rightarrow \msp$}\label{sec:pms}

For each $p\in[1,\infty]$, we define below a canonical projection ${\mathfrak{S}_p}:{\ms}\rightarrow{\msp}$ sending a compact metric space $X$ to a $p$-metric space $\lc\hat{X},\hat{d}_X^{\scriptscriptstyle{(p)}}\rc $.

Given $({X},{d}_X)\in {\ms}$, define for any $x,x'\in X$

\begin{equation}\label{eq:defdxp}
    \dxp(x,x')\coloneqq \inf\left\{\mathop{\ps{\mathrlap{p}}}_{\,i=0}^{n-1}d_X(x_i,x_{i+1}):\,x=x_0,x_1,\cdots,x_n=x'\right\}.
\end{equation}

\begin{remark}\label{rmk:dpleqd}
Obviously, $\dxp(x,x')\leq d_X(x,x')$ for any $x,x'\in X$.
\end{remark}

$(X,\dxp)$ may happen to be a pseudometric space instead of a metric space, i.e., there may exist $x\neq x'\in X$ such that $\dxp(x,x')=0$. To remedy this, consider the induced metric space $\lc\hat{X},\hat{d}_X^{\scriptscriptstyle{(p)}}\rc $ (cf. \Cref{sec:pseudometric space}) and define $\mathfrak{S}_p((X,d_X))\coloneqq\lc\hat{X},\hat{d}_X^{\scriptscriptstyle{(p)}}\rc .$

\begin{remark}\label{rmk:finitesp}
If $X$ is a finite space, then $\lc\hat{X},\hat{d}_X^{\scriptscriptstyle{(p)}}\rc =(X,\dxp)$.
\end{remark}

\begin{remark}[Single-linkage ultrametric]\label{rmk:single linkage ultrametric}
When $X$ is a finite space, the metric $\dX^{\scriptscriptstyle{(\infty)}}=\hat{d}_X^{\scriptscriptstyle{(\infty)}}$ is an ultrametric and it is called the \emph{single-linkage ultrametric} induced by $d_X$.
\end{remark}

We now verify that indeed $\mathfrak{S}_p$ maps compact metric spaces to compact $p$-metric spaces and thus $\mathfrak{S}_p$ is indeed a map $\mathfrak{S}_p:\ms\rightarrow\msp$.

\begin{proposition}
For every $(X,d_X)\in\ms$,  $\lc\hat{X},\hat{d}_X^{\scriptscriptstyle{(p)}}\rc $ is a compact $p$-metric space.
\end{proposition}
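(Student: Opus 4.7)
The plan is to verify three things in sequence: that $d_X^{(p)}$ is a pseudo-$p$-metric on $X$, that the $p$-triangle inequality descends to the induced metric $\hat{d}_X^{(p)}$ on the quotient $\hat{X}$, and finally that $(\hat{X},\hat{d}_X^{(p)})$ is compact. Most of the work is in the first step; the rest is essentially bookkeeping.

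For the first step, non-negativity of $d_X^{(p)}$ is immediate, and symmetry follows from reversing any chain (together with commutativity of $\boxplus_p$). The main point is the $p$-triangle inequality. Given $x,x',x''\in X$ and any two chains $x=x_0,\ldots,x_n=x'$ and $x'=y_0,\ldots,y_m=x''$, their concatenation is a chain from $x$ to $x''$, and by associativity of $\boxplus_p$ its $p$-sum equals the $\boxplus_p$ of the two individual $p$-sums. Taking infima over both chains yields
\[
d_X^{(p)}(x,x'')\;\leq\; d_X^{(p)}(x,x')\,\boxplus_p\, d_X^{(p)}(x',x'').
\]
This is the $p$-triangle inequality, and by \Cref{prop:inclusionpq} it implies the ordinary triangle inequality, so $d_X^{(p)}$ is a pseudo-$p$-metric on $X$. (Since $d_X^{(p)}(x,x')\leq d_X(x,x')$ by the trivial one-step chain, it is also finite.)

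For the second step, I need to check that passing to the quotient $\hat{X}=X/\!\!\sim_0$ via the construction of \Cref{sec:pseudometric space} preserves the $p$-triangle inequality. This is a short case analysis on whether the three classes $[x],[x'],[x'']$ are distinct: if all three differ, $\hat{d}_X^{(p)}$ agrees with $d_X^{(p)}$ and the inequality is inherited; if two coincide, say $[x]=[x']$, then $\hat{d}_X^{(p)}([x],[x'])=0$ and the inequality reduces to $\hat{d}_X^{(p)}([x],[x''])\leq \hat{d}_X^{(p)}([x'],[x''])$, which follows from the $p$-triangle inequality of $d_X^{(p)}$ applied with $d_X^{(p)}(x,x')=0$ (noting $0\,\boxplus_p\,a=a$); and the fully degenerate case is trivial.

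For the third step, the natural projection $\pi:X\to\hat{X}$, $x\mapsto[x]$, is surjective by construction. Moreover it is $1$-Lipschitz from $(X,d_X)$ to $(\hat{X},\hat{d}_X^{(p)})$, since $\hat{d}_X^{(p)}([x],[x'])\leq d_X^{(p)}(x,x')\leq d_X(x,x')$ (the last inequality coming from the trivial chain, as noted in \Cref{rmk:dpleqd}). In particular $\pi$ is continuous, and hence $\hat{X}=\pi(X)$ is the continuous image of the compact space $X$ and is itself compact. The main obstacle, if any, is just being careful in the case analysis of step two so that the $p$-triangle inequality carries over without quietly using that $d_X$ is itself a $p$-metric (which it need not be); the chain-concatenation argument for $d_X^{(p)}$ already handled the essential content.
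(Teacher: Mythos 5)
Your proposal is correct and follows essentially the same route as the paper: the $p$-triangle inequality for $\dxp$ via chain concatenation and associativity of $\ps{p}$, descent to the quotient via Equation (\ref{eq:quotientmetric}) (which the paper leaves implicit and you spell out as a case analysis), and compactness of $\hat{X}$ as the continuous image of the compact space $(X,d_X)$ under the $1$-Lipschitz projection, which the paper presents as the composition of two continuous maps.
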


\begin{proof}
We first prove that $\hat{d}_X^{\scriptscriptstyle{(p)}}$ is a $p$-metric. Given $x,x',x''\in X$, for any two sequences of points $x=x_0,\cdots,x_n=x'$ and $x'=y_0,\cdots,y_m=x''$ in $X$, we concatenate them to obtain the following sequence of points between $x$ and $x''$: $x=z_0,\cdots,z_{m+n+1}=x''$. Then,
$$\lc\mathop{\ps{\mathrlap{p}}}_{i=0}^{n-1}d_X(x_i,x_{i+1})\rc\ps{p} \lc\mathop{\ps{\mathrlap{p}}}_{\,j=0}^{m-1}d_X(y_j,y_{j+1})\rc= \mathop{\ps{\mathrlap{p}}}_{\,k=0}^{n+m}d_X(z_k,z_{k+1})\geq \dxp(x,x'').$$
Infimizing the left-hand side over all possible sequences of points, one has that
$$\dxp(x,x')\ps{p}\dxp(x',x'')\geq\dxp(x,x''). $$
It then follows directly from \Cref{eq:quotientmetric} that $\hat{d}_X^{\scriptscriptstyle{(p)}}$ satisfies the $p$-triangle inequality and thus is a $p$-metric.

The identity map $\iota:(X,d_X)\rightarrow(X,\dxp)$ is continuous due to Remark \ref{rmk:dpleqd}. The canonical projection $\Psi:(X,\dxp)\rightarrow\lc\hat{X},\hat{d}_X^{\scriptscriptstyle{(p)}}\rc $ is also continuous (cf. \Cref{sec:pseudometric space}). Since $(X,d_X)$ is compact, we then have that $\lc\hat{X},\hat{d}_X^{\scriptscriptstyle{(p)}}\rc =\Psi\circ\iota\left((X,d_X)\right)$ is compact.
\end{proof}

\begin{proposition}[Basic facts about ${\mathfrak{S}}_p$]\label{prop:propofsp}
We have the following properties about ${\mathfrak{S}}_p$:

\begin{enumerate}
    \item For $1\leq p\leq \infty$, when restricted to ${\msp}$, ${\mathfrak{S}}_p$ coincides with the identity map. 
    \item For $1\leq q<p\leq \infty$, one has ${\mathfrak{S}}_p\circ{\mathfrak{S}}_q={\mathfrak{S}}_p ={\mathfrak{S}}_q\circ{\mathfrak{S}}_p$.
    \item Given $X\in{\msp}$ and $c>0$, ${\mathfrak{S}}_p(c\cdot X)=c\cdot {\mathfrak{S}}_p(X)$, where $c\cdot X$ denotes the metric space $(X,c \cdot d_X ).$
    
    \item $\mathfrak{S}_\infty$  commutes with the snowflake transform $S_p$ for any $1\leq p<\infty$. More precisely, for any $X\in \msp$, we have
    $$\mathfrak{S}_\infty\circ S_p(X)=S_p\circ\mathfrak{S}_\infty(X).$$
\end{enumerate}
\end{proposition}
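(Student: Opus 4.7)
The plan is to unpack each of the four items directly from the infimum-of-$p$-sums definition of $\dxp$, exploiting the algebraic properties of $\ps{p}$ collected in Section 2. For item (1), suppose $X\in\msp$. The inequality $\dxp(x,x')\leq d_X(x,x')$ follows from Remark 3.1 via the trivial one-step sequence, and the reverse inequality comes from iterating the $p$-triangle inequality: by induction on $n$, every sequence $x=x_0,\ldots,x_n=x'$ satisfies $d_X(x,x')\leq\mathop{\ps{\mathrlap{p}}}_{\,i=0}^{n-1}d_X(x_i,x_{i+1})$, so infimizing yields $d_X\leq\dxp$. Thus $\dxp=d_X$ is already a metric, the $\sim_0$-quotient is trivial, and $(\hat X,\hat d_X^{(p)})=(X,d_X)$.

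For item (2), the equality $\mathfrak{S}_q\circ\mathfrak{S}_p=\mathfrak{S}_p$ is immediate from item (1) and Proposition 2.7: $\mathfrak{S}_p(X)\in\msp\subseteq\ms_q$ lies in the fixed locus of $\mathfrak{S}_q$. For $\mathfrak{S}_p\circ\mathfrak{S}_q=\mathfrak{S}_p$, one first observes that the $\sim_0$-equivalence classes of $(d_X^{(q)})^{(p)}$ and $\dxp$ on $X$ coincide (using the pointwise bound $\dxp\leq d_X^{(q)}$, which follows from $\mathop{\ps{\mathrlap{p}}}_{\,i}a_i\leq\mathop{\ps{\mathrlap{q}}}_{\,i}a_i$ in Proposition 2.7), so it suffices to establish the pointwise pseudometric identity $(d_X^{(q)})^{(p)}=\dxp$ on $X$. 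The direction $\leq$ is immediate from $d_X^{(q)}\leq d_X$ (Remark 3.1) and monotonicity of $\ps{p}$. For $\geq$, commute the inner infimum with $\ps{p}$ to write
\[
\mathop{\ps{\mathrlap{p}}}_{\,i}d_X^{(q)}(x_i,x_{i+1})=\inf\,\mathop{\ps{\mathrlap{p}}}_{\,i}\mathop{\ps{\mathrlap{q}}}_{\,j}d_X(y_i^j,y_i^{j+1}),
\]
over refining subsequences $x_i=y_i^0,\ldots,y_i^{m_i}=x_{i+1}$; since $q\leq p$ gives $\ps{q}\geq\ps{p}$ termwise (Proposition 2.7) and $\ps{p}$ is associative, the inner quantity dominates the $p$-sum of the $d_X$ values along the concatenated sequence, which is itself at least $\dxp(x,x')$.

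For item (3), homogeneity of $\ps{p}$ yields $(c\,d_X)^{(p)}=c\,\dxp$ pointwise, and multiplication by $c>0$ preserves the $\sim_0$-classes; combined with item (1) this covers the stated case $X\in\msp$ and in fact extends verbatim to any $X\in\ms$. For item (4), a direct unfolding gives
\[
(\mathfrak{S}_\infty\circ S_p)(X)(x,x')=\inf\max_{i}\bigl(d_X(x_i,x_{i+1})\bigr)^p=\Bigl(\inf\max_{i}d_X(x_i,x_{i+1})\Bigr)^p=(S_p\circ\mathfrak{S}_\infty)(X)(x,x'),
\]
using that $t\mapsto t^p$ is a continuous strictly increasing bijection of $[0,\infty)$ and hence commutes with both $\max$ and $\inf$; the $\sim_0$-quotients agree because $a=0\Leftrightarrow a^p=0$.

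The only genuine obstacle is the $\geq$ direction of the pseudometric identity in item (2), where a single $p$-sum must be matched with a nested $p$-of-$q$-sum across two layers of infima. The resolution above --- swap the inner infimum with the outer $\ps{p}$, then downgrade $\ps{q}$ to $\ps{p}$ using Proposition 2.7 and flatten via associativity so that the expression becomes a $p$-sum along a single admissible sequence in $X$ --- is the cleanest route I see.
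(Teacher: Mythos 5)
The paper states Proposition \ref{prop:propofsp} without proof, so there is nothing to compare against; judged on its own, your argument is correct and supplies all the essential content. Item (1) via induction on the $p$-triangle inequality, item (3) via homogeneity of $\ps{p}$, and item (4) via the fact that $t\mapsto t^p$ is increasing and continuous (hence commutes with both $\max$ and $\inf$) are all sound, and the one genuinely nontrivial point --- the inequality $(d_X^{\scriptscriptstyle{(q)}})^{\scriptscriptstyle{(p)}}\geq \dxp$ in item (2) --- is handled correctly: the inner infima over independent refinements commute with the outer $\ps{p}$ by monotonicity and continuity, the termwise bound $\ps{q}\geq\ps{p}$ from the proof of \Cref{prop:inclusionpq} downgrades the inner sums, and associativity flattens the result into a single admissible $p$-sum dominating $\dxp(x,x')$. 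The only step you leave implicit is that $\mathfrak{S}_p\circ\mathfrak{S}_q(X)$ is literally computed on the quotient $\hat{X}=X/\sim_{d_X^{\scriptscriptstyle{(q)}}}$ rather than on $X$, so one should note that chains in $\hat{X}$ lift to chains in $X$ (and conversely project) with identical $q$-pseudometric values, whence $(\hat{d}_X^{\scriptscriptstyle{(q)}})^{\scriptscriptstyle{(p)}}$ on $\hat{X}$ agrees with $(d_X^{\scriptscriptstyle{(q)}})^{\scriptscriptstyle{(p)}}$ on $X$; this is routine and in the spirit of \Cref{prop:subd}, but worth a sentence.
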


The following theorem shows that $p$-metric spaces can be viewed as a certain interpolation between metric spaces and ultrametric spaces. 

\begin{proposition}
Given any finite metric space $X$, the curve $\gamma:[0,\infty]\rightarrow(\ms,\dgh)$ defined by $p\mapsto \mathfrak{S}_p(X)$ is continuous.
\end{proposition}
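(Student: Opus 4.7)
The plan is to exploit the fact that, since $X$ is finite, \Cref{rmk:finitesp} gives $\mathfrak{S}_p(X) = (X,\dxp)$ on the common underlying set $X$ for every $p \in [1,\infty]$. I can therefore estimate $\dgh\bigl(\mathfrak{S}_p(X),\mathfrak{S}_{p_0}(X)\bigr)$ using the diagonal correspondence $\Delta = \{(x,x) : x \in X\}$ together with \eqref{eq:dgh-distortion}, obtaining
\[
\dgh\bigl(\mathfrak{S}_p(X),\mathfrak{S}_{p_0}(X)\bigr) \;\leq\; \tfrac{1}{2}\sup_{x,x'\in X}\bigl|\dxp(x,x') - d_X^{\scriptscriptstyle(p_0)}(x,x')\bigr|.
\]
It therefore suffices to show, for each fixed pair $x,x'\in X$, that $p \mapsto \dxp(x,x')$ is continuous on $[1,\infty]$; continuity of $\gamma$ at $p_0$ will then follow because the finite supremum over $X\times X$ of continuous functions vanishing at $p_0$ also vanishes there.

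The key step is to reduce the infimum in \eqref{eq:defdxp} to one over \emph{simple} paths, i.e.\ sequences $x = x_0, x_1, \ldots, x_n = x'$ with pairwise distinct entries. Whenever $x_i = x_j$ with $i < j$, deleting $x_{i+1}, \ldots, x_j$ yields a new valid sequence from $x$ to $x'$; the associated $p$-sum is obtained from the original by omitting the non-negative terms $d_X(x_i,x_{i+1}), \ldots, d_X(x_{j-1},x_j)$, so by the (easily checked) monotonicity of $\boxplus_p$ in each argument for every $p \in [1,\infty]$, the new $p$-sum is no larger. Iterating, the infimum in \eqref{eq:defdxp} is achieved on the \emph{finite} set of simple paths from $x$ to $x'$ in $X$.

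Hence $\dxp(x,x')$ equals the minimum, over finitely many simple paths $\sigma$, of the $p$-sum $\boxplus_p$ of the edge lengths along $\sigma$. For any fixed tuple $(a_1,\ldots,a_n) \in \R^n_{\geq 0}$, the map $p \mapsto (a_1^p+\cdots+a_n^p)^{1/p}$ is continuous on $[1,\infty)$ and converges to $\max(a_1,\ldots,a_n)$ as $p \to \infty$, so $p \mapsto \boxplus_p(a_1,\ldots,a_n)$ is continuous on all of $[1,\infty]$. A minimum of finitely many continuous real-valued functions is continuous, so $p \mapsto \dxp(x,x')$ is continuous on $[1,\infty]$, closing the argument.

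The only non-routine step is the reduction to simple paths, and it is what makes the finiteness of $X$ essential: for a general compact metric space the approximating sequences in \eqref{eq:defdxp} can have unbounded length and this finite-minimum argument would need to be replaced by a more delicate uniform estimate. Everything else is elementary continuity on a finite state space.
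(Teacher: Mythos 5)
Your proof is correct and follows essentially the same route as the paper: continuity in $p$ of the $p$-sum for each fixed chain, finiteness to pass to the infimum, and then uniform closeness of the two metrics on the fixed finite set (your diagonal-correspondence bound is exactly the content of the Example 7.4.2 of Burago--Burago--Ivanov that the paper cites). The one place you go beyond the paper is the reduction of the infimum in \eqref{eq:defdxp} to the finitely many simple paths; this is a valid and welcome elaboration, since it is precisely the justification for the paper's unelaborated assertion that continuity of $\dxp(x,x')$ holds ``due to the finiteness of $X$''.
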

\begin{proof}
Since $X$ is finite, by \Cref{rmk:finitesp}, we have that $\hat{d}_X^{\scriptscriptstyle{(p)}}={d}_X^{\scriptscriptstyle{(p)}}$. Now,
for any $x,x'\in X$ and any sequence of points $x=x_0,x_1,\cdots,x_n=x'$ in $X$, $\displaystyle\mathop{\ps{\mathrlap{p}}}_{i=0}^{n-1}\,d_X(x_i,x_{i+1})$ is continuous with respect to change of $p\in[1,\infty]$. Then, we have by definition
$$\dxp(x,x')\coloneqq \inf\left\{\mathop{\ps{\mathrlap{p}}}_{\,i=0}^{n-1}d_X(x_i,x_{i+1}):\,x=x_0,x_1,\cdots,x_n=x'\right\}$$
is also continuous with respect to $p\in[1,\infty]$ due to the finiteness of $X$. Then, by finiteness again, $\sup_{x,x'\in X}\dxp(x,x')$ is continuous with respect to $p\in[1,\infty]$. Therefore, by Example 7.4.2 in \cite{burago2001course}, $\gamma$ is also continuous with respect to $\dgh$.
\end{proof}

Below,  $\ms^{\mathrm{fin}}$ and $\msp^{\mathrm{fin}}$ denote the collection of finite metric spaces and $p$-metric spaces, respectively.

In the setting of finite spaces, Segarra et al. \cite{segarra2015metric} proved that $\mathfrak{S}_p$ is actually the unique projection satisfying the following two reasonable conditions.

\begin{theorem}[{\cite{segarra2015metric,Segarra2016metric}}]
Let $p\in[1,\infty]$ and $\Phi_p:\ms^{\mathrm{fin}}\rightarrow\msp^{\mathrm{fin}}$ be any map satisfying the following two conditions:
\begin{enumerate}
    \item Any $p$-metric space is a fixed point of $\Phi_p$.
    \item Any $1$-Lipschitz map in $\ms^{\mathrm{fin}}$ remains $1$-Lipschitz in $\msp^{\mathrm{fin}}$ after applying $\Phi_p$.
\end{enumerate} Then, $\Phi_p$ exactly coincides with the restriction $\mathfrak{S}_p|_{\ms^{\mathrm{fin}}}.$
\end{theorem}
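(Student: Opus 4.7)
Fix an arbitrary $(X, d_X) \in \ms^{\mathrm{fin}}$, write $\Phi_p(X, d_X) = (X, d'_X)$ (using compatible representatives of the relevant isometry classes), and read condition (2) in its natural functorial sense: if $f \colon (X, d_X) \to (Y, d_Y)$ is $1$-Lipschitz then the same underlying function $f$ is a $1$-Lipschitz map $(X, d'_X) \to (Y, d'_Y)$. I will prove $d'_X = \dxp$ by two opposite inequalities; this immediately gives $\Phi_p(X, d_X) = \mathfrak{S}_p(X, d_X)$.

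For the inequality $\dxp \leq d'_X$: since $(X, \dxp) \in \msp^{\mathrm{fin}}$, condition (1) gives $\Phi_p(X, \dxp) = (X, \dxp)$, and the identity map $\mathrm{id}_X \colon (X, d_X) \to (X, \dxp)$ is $1$-Lipschitz because $\dxp \leq d_X$ (\Cref{rmk:dpleqd}). Condition (2) therefore promotes it to a $1$-Lipschitz map $\mathrm{id}_X \colon (X, d'_X) \to (X, \dxp)$, which is precisely the claimed inequality.

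For the reverse inequality $d'_X \leq \dxp$: I first establish the pointwise bound $d'_X \leq d_X$ by probing with two-point subspaces. For each pair $x, x' \in X$ the two-point space $(\{x, x'\}, d_X|_{\{x,x'\}})$ is trivially an ultrametric, hence a $p$-metric for every $p\in[1,\infty]$, so it is fixed by condition (1). The inclusion $\iota \colon (\{x, x'\}, d_X|_{\{x,x'\}}) \hookrightarrow (X, d_X)$ is $1$-Lipschitz, and condition (2) promotes it to a $1$-Lipschitz map into $(X, d'_X)$; this yields $d'_X(x, x') \leq d_X(x, x')$. Since $(X, d'_X) \in \msp^{\mathrm{fin}}$, the metric $d'_X$ satisfies the $p$-triangle inequality. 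Iterating it along any chain $x = x_0, x_1, \ldots, x_n = x'$ in $X$ and then applying the pointwise bound yields
$$
d'_X(x, x') \;\leq\; \mathop{\ps{\mathrlap{p}}}_{k=0}^{n-1} d'_X(x_k, x_{k+1}) \;\leq\; \mathop{\ps{\mathrlap{p}}}_{k=0}^{n-1} d_X(x_k, x_{k+1}).
$$
Taking the infimum of the right-hand side over all such chains produces $d'_X(x, x') \leq \dxp(x, x')$.

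Combining the two bounds gives $d'_X = \dxp$, i.e., $\Phi_p = \mathfrak{S}_p|_{\ms^{\mathrm{fin}}}$. The only real subtlety is the functorial interpretation of condition (2), which must silently guarantee that the same underlying function before and after $\Phi_p$ represents the morphism being preserved; once this is agreed upon, the essential mathematical content is the two-point probe that converts the $p$-triangle inequality of the output metric into a chain bound matching the defining infimum of $\dxp$, together with the one-line identity argument that supplies the reverse inequality.
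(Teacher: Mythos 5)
The paper does not prove this theorem itself --- it defers entirely to the cited references of Segarra et al. --- so there is no internal proof to compare against. Your argument is correct and is essentially the original one from those references: condition (1) applied to $(X,\dxp)$ together with condition (2) applied to the identity map yields $\dxp\leq d_X'$, while two-point probes plus the $p$-triangle inequality of the output metric yield $d_X'\leq\dxp$; your caveat about reading condition (2) functorially (same underlying set and map) is exactly the reading under which the statement is meant.
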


\paragraph{An alternative description of $\mathfrak{S}_p$.}
We now provide an alternative description of $\mathfrak{S}_p$. Let $\sim_{\dxp}$ denote the canonical equivalence relation on $X$ induced by the pseudometric $\dxp$ (cf. \Cref{sec:pseudometric space}). We denote by $\hat{d}_X$ the quotient metric on $\hat{X}\coloneqq X/\sim_{\dxp}$ defined in \Cref{def:quotientmetric}.

\begin{lemma}\label{lm:quotientdxpineq}
For any compact metric space $X$, we have that $\hat{d}_X\geq \hat{d}_X^{\scriptscriptstyle{(p)}}$.
\end{lemma}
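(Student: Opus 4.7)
The plan is to unwind the two definitions and reduce the inequality to chaining the usual triangle inequality for $\hat d_X^{(p)}$ along any candidate sequence of points that realizes (approximately) the infimum in the definition of the quotient metric $\hat d_X$. Explicitly, fix two equivalence classes $[x],[x']\in\hat X$. If $[x]=[x']$ then $\hat d_X^{(p)}([x],[x'])=0\leq \hat d_X([x],[x'])$, so there is nothing to prove; assume $[x]\neq [x']$, so that $\hat d_X^{(p)}([x],[x']) = d_X^{(p)}(x,x')$ by \eqref{eq:quotientmetric}.

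By \Cref{def:quotientmetric}, $\hat d_X([x],[x'])$ is the infimum of $\sum_{i=1}^n d_X(x_i,y_i)$ over all sequences $x=x_1,y_1,x_2,y_2,\ldots,x_n,y_n=x'$ satisfying $y_i\sim_{d_X^{(p)}} x_{i+1}$, i.e.\ $d_X^{(p)}(y_i,x_{i+1})=0$. For any such chain, I will bound $d_X^{(p)}(x,x')$ by $\sum_{i=1}^n d_X(x_i,y_i)$. First, \Cref{prop:inclusionpq} gives $\msp\subseteq\ms_1=\ms$, so the $p$-triangle inequality for $d_X^{(p)}$ implies the ordinary triangle inequality; applying it iteratively along the chain yields
\[d_X^{(p)}(x,x')\;\leq\; \sum_{i=1}^n d_X^{(p)}(x_i,y_i)\;+\;\sum_{i=1}^{n-1}d_X^{(p)}(y_i,x_{i+1})\;=\;\sum_{i=1}^n d_X^{(p)}(x_i,y_i),\]
where the second sum vanishes because $y_i\sim_{d_X^{(p)}} x_{i+1}$. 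Next, by \Cref{rmk:dpleqd} each term $d_X^{(p)}(x_i,y_i)$ is bounded above by $d_X(x_i,y_i)$, so
\[d_X^{(p)}(x,x')\;\leq\;\sum_{i=1}^n d_X(x_i,y_i).\]
Taking the infimum of the right-hand side over all admissible chains produces $d_X^{(p)}(x,x')\leq \hat d_X([x],[x'])$, which is precisely $\hat d_X^{(p)}([x],[x'])\leq \hat d_X([x],[x'])$.

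There is no real obstacle here — the proof is a direct bookkeeping exercise; the only mildly subtle point is remembering that a $p$-metric is \emph{a fortiori} a $1$-metric, so the ordinary triangle inequality (rather than the $p$-triangle inequality, which would not telescope cleanly across a long chain) is available for $d_X^{(p)}$ and therefore usable term-by-term along the quotient-metric sequence.
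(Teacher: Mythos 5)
Your proof is correct and follows essentially the same route as the paper's: both start from the defining infimum of $\hat d_X$ over chains, use \Cref{rmk:dpleqd} to replace $d_X(x_i,y_i)$ by $\dxp(x_i,y_i)$, exploit $\dxp(y_i,x_{i+1})=0$ to collapse the chain, and finish with the (ordinary) triangle inequality for $\dxp$. The extra care you take in justifying that the $p$-triangle inequality implies the usual one is a fine, if minor, elaboration of what the paper leaves implicit.
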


\begin{proof}
By Remark \ref{rmk:dpleqd}, we have that for any equivalence classes $[x],[x']\in \hat{X}$ 
\begin{align*}
    \hat{d}_X ([x],[x'])&= \inf \sum_{i=1}^nd_X(x_i,y_i)\geq \inf \sum_{i=1}^n\dxp(x_i,y_i)\\
    &=\inf \sum_{i=1}^{n}\dxp(x_i,x_{i+1})\geq \dxp(x,x')=\hat{d}_X^{\scriptscriptstyle{(p)}}([x],[x']),
\end{align*}
where the second equality follows from the fact that $\dxp(y_i,x_{i+1})=0$.
\end{proof}

\begin{corollary}\label{coro:diam}
Given $(X,d_X)\in\mathcal{M}$, for any $1\leq p\leq \infty$ we have 
$$\diam\left(\mathfrak{S}_p(X)\right)\leq \diam(X). $$
\end{corollary}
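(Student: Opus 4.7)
The plan is to chain two inequalities that have already been established just above the corollary, namely Lemma 3.2 and Remark 2.24. Writing $\mathfrak{S}_p(X) = (\hat{X}, \hat{d}_X^{(p)})$, the diameter on the left-hand side is by definition
\[
\diam(\mathfrak{S}_p(X)) = \sup_{[x],[x']\in\hat{X}} \hat{d}_X^{(p)}([x],[x']).
\]
First I would invoke Lemma 3.2 to replace $\hat{d}_X^{(p)}$ by the (possibly larger) quotient metric $\hat{d}_X$ on the same quotient set $\hat{X}=X/\!\sim_{\dxp}$, giving
\[
\sup_{[x],[x']\in\hat{X}} \hat{d}_X^{(p)}([x],[x']) \;\leq\; \sup_{[x],[x']\in\hat{X}} \hat{d}_X([x],[x']).
\]

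Second, I would apply Remark 2.24, which asserts $\hat{d}_X([x],[x']) \leq d_X(x,x')$ for every choice of representatives $x\in[x]$ and $x'\in[x']$. Taking suprema over all $x,x'\in X$ turns the right-hand side into $\diam(X)$, completing the chain
\[
\diam(\mathfrak{S}_p(X)) \;\leq\; \sup_{[x],[x']\in\hat{X}} \hat{d}_X([x],[x']) \;\leq\; \sup_{x,x'\in X} d_X(x,x') \;=\; \diam(X).
\]

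There is really no obstacle here: both ingredients are already in hand, so the proof collapses to two lines. A slightly more direct alternative, which bypasses Lemma 3.2, would be to observe that by Remark 3.10 we have $\dxp(x,x') \leq d_X(x,x')$ and then, by the definition of the induced metric on the quotient in equation (2.3), $\hat{d}_X^{(p)}([x],[x']) \leq \dxp(x,x') \leq d_X(x,x')$, after which the same supremum argument yields the conclusion. I would include this alternative only as a parenthetical remark, since the chain through Lemma 3.2 keeps the proof aligned with the flow of the section.
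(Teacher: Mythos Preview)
Your proof is correct and follows exactly the paper's approach: the paper's own proof reads ``This follows directly from Remark~\ref{rmk:quotientineq} and Lemma~\ref{lm:quotientdxpineq},'' which is precisely the two-step chain you wrote out. Your parenthetical alternative via Remark~\ref{rmk:dpleqd} is also valid and arguably more direct, but the paper chose the route through the quotient metric $\hat{d}_X$.
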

\begin{proof}
This follows directly from Remark \ref{rmk:quotientineq} and \Cref{lm:quotientdxpineq}.
\end{proof}

\begin{lemma}\label{lm:quotientmetric}
$\hat{d}_X$ is a metric on $\hat{X}$.
\end{lemma}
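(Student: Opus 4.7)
The plan is to reduce the claim to the positivity axiom, since $\hat{d}_X$ is already a pseudometric by the general construction of the quotient (pseudo)metric discussed in \Cref{def:quotientmetric} and \Cref{sec:pseudometric space}. Non-negativity, symmetry, and the triangle inequality hold automatically for the quotient; only the implication $\hat{d}_X([x],[x'])=0 \Rightarrow [x]=[x']$ requires an argument.

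For this implication, I would invoke \Cref{lm:quotientdxpineq}, which gives $\hat{d}_X \geq \hat{d}_X^{\scriptscriptstyle{(p)}}$ pointwise on $\hat{X}\times\hat{X}$. Thus if $\hat{d}_X([x],[x'])=0$ for some $[x],[x']\in\hat{X}$, then $\hat{d}_X^{\scriptscriptstyle{(p)}}([x],[x'])=0$ as well. By the construction of $\hat{d}_X^{\scriptscriptstyle{(p)}}$ as the metric canonically induced by the pseudometric $\dxp$ on the quotient $\hat{X}=X/\sim_{\dxp}$, the quantity $\hat{d}_X^{\scriptscriptstyle{(p)}}$ is a bona fide metric, and in particular $\hat{d}_X^{\scriptscriptstyle{(p)}}([x],[x'])=0$ forces $[x]=[x']$. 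This yields the desired conclusion.

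Putting the pieces together, the argument is essentially a one-liner once \Cref{lm:quotientdxpineq} is available, so there is no real obstacle. The only subtle point is to remember that $\hat{X}$ is defined as the quotient by the equivalence relation $\sim_{\dxp}$ coming from $\dxp$ (not from $d_X$), which is precisely what makes the induced quotient $\hat{d}_X^{\scriptscriptstyle{(p)}}$ separate points of $\hat{X}$ and therefore forces $\hat{d}_X$ to do the same via the sandwich $\hat{d}_X \geq \hat{d}_X^{\scriptscriptstyle{(p)}}$.
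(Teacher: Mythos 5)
Your proposal is correct and follows essentially the same route as the paper: both reduce the claim to the identity of indiscernibles (since the quotient construction already yields a pseudometric) and deduce it from the inequality $\hat{d}_X\geq \hat{d}_X^{\scriptscriptstyle{(p)}}$ of \Cref{lm:quotientdxpineq} together with the fact that $\hat{d}_X^{\scriptscriptstyle{(p)}}$ is a genuine metric on $\hat{X}$. No gaps.
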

\begin{proof}
Since $\hat{d}_X$ is known to be a pseudometric, we only need to prove that $\hat{d}_X$ satisfies identity of indiscernibles, i.e., $\hat{d}_X(x,x')$ implies that $x=x'$. This follows directly from Lemma \ref{lm:quotientdxpineq} and the fact that $\hat{d}_X^{\scriptscriptstyle{(p)}}$ is a metric on $\hat{X}$.
\end{proof}

\begin{proposition}\label{prop:subd}
For a compact metric space $X$, we have that $\hat{d}_X^{\scriptscriptstyle{(p)}}=\left(\hat{d}_X\right)^{\scriptscriptstyle{(p)}}$, that is, we can obtain $\mathfrak{S}_p((X,d_X))$ by first taking the quotient of $X$ with respect to $\sim_{\dxp}$ and then applying the transformation defined in Equation (\ref{eq:defdxp}). See Figure \ref{fig:spillustration} for an illustration.
\end{proposition}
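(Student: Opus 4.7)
The plan is to prove the equality $\hat{d}_X^{\scriptscriptstyle{(p)}}=(\hat{d}_X)^{\scriptscriptstyle{(p)}}$ as two metrics on the common underlying set $\hat{X}$, by establishing the two inequalities separately.

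For the direction $(\hat{d}_X)^{\scriptscriptstyle{(p)}}\geq \hat{d}_X^{\scriptscriptstyle{(p)}}$, I invoke \Cref{lm:quotientdxpineq}, which gives the pointwise bound $\hat{d}_X\geq \hat{d}_X^{\scriptscriptstyle{(p)}}$ on $\hat{X}\times\hat{X}$. For any chain $[x]=[z_0],[z_1],\ldots,[z_n]=[x']$ in $\hat{X}$, the monotonicity of $\ps{p}$ together with the $p$-triangle inequality applied to the $p$-metric $\hat{d}_X^{\scriptscriptstyle{(p)}}$ yields
$$\mathop{\ps{\mathrlap{p}}}_{\,i=0}^{n-1}\hat{d}_X([z_i],[z_{i+1}])\;\geq\;\mathop{\ps{\mathrlap{p}}}_{\,i=0}^{n-1}\hat{d}_X^{\scriptscriptstyle{(p)}}([z_i],[z_{i+1}])\;\geq\;\hat{d}_X^{\scriptscriptstyle{(p)}}([x],[x']).$$
Taking the infimum over all such chains in $\hat{X}$ delivers $(\hat{d}_X)^{\scriptscriptstyle{(p)}}([x],[x'])\geq \hat{d}_X^{\scriptscriptstyle{(p)}}([x],[x'])$.

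For the reverse direction $(\hat{d}_X)^{\scriptscriptstyle{(p)}}\leq \hat{d}_X^{\scriptscriptstyle{(p)}}$, I start from an arbitrary chain $x=x_0,x_1,\ldots,x_n=x'$ in $X$. Its projection yields a chain $[x]=[x_0],\ldots,[x_n]=[x']$ in $\hat{X}$, and by \Cref{rmk:quotientineq} we have $\hat{d}_X([x_i],[x_{i+1}])\leq d_X(x_i,x_{i+1})$ for every $i$. Hence
$$(\hat{d}_X)^{\scriptscriptstyle{(p)}}([x],[x'])\;\leq\;\mathop{\ps{\mathrlap{p}}}_{\,i=0}^{n-1}\hat{d}_X([x_i],[x_{i+1}])\;\leq\;\mathop{\ps{\mathrlap{p}}}_{\,i=0}^{n-1}d_X(x_i,x_{i+1}),$$
and infimizing the rightmost expression over all chains in $X$ from $x$ to $x'$ produces $\dxp(x,x')=\hat{d}_X^{\scriptscriptstyle{(p)}}([x],[x'])$, completing this direction.

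The argument is conceptually direct once the structures are lined up; the main (if modest) obstacle is notational bookkeeping: one must carefully track whether a chain lives in $X$ or in $\hat{X}$ at each step, and remember that $\hat{d}_X^{\scriptscriptstyle{(p)}}$ (the metric quotient of the pseudometric $\dxp$) and $\hat{d}_X$ (the quotient metric of $d_X$ under the equivalence relation $\sim_{\dxp}$) are a priori two different metrics on the same quotient set $\hat{X}$, related only through \Cref{lm:quotientdxpineq}.
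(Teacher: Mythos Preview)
Your proof is correct and follows essentially the same approach as the paper: both establish the two inequalities using \Cref{lm:quotientdxpineq} for one direction and \Cref{rmk:quotientineq} for the other, passing through chains and the definition in Equation~(\ref{eq:defdxp}). The only cosmetic difference is that where you directly apply the $p$-triangle inequality for $\hat{d}_X^{\scriptscriptstyle{(p)}}$ to collapse the chain, the paper phrases the same step as $\big(\hat{d}_X^{\scriptscriptstyle{(p)}}\big)^{\scriptscriptstyle{(p)}}=\hat{d}_X^{\scriptscriptstyle{(p)}}$ via item~1 of Proposition~\ref{prop:propofsp}.
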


\begin{proof}
By Lemma \ref{lm:quotientdxpineq}, we have that $\hat{d}_X\geq \hat{d}_X^{\scriptscriptstyle{(p)}}$. Then, it is obvious from Equation (\ref{eq:defdxp}) that $$\left(\hat{d}_X^{\scriptscriptstyle{(p)}}\right)^{\scriptscriptstyle{(p)}}\leq\left(\hat{d}_X\right)^{\scriptscriptstyle{(p)}}. $$
By item 1 of Proposition \ref{prop:propofsp}, we have that $\left(\hat{d}_X^{\scriptscriptstyle{(p)}}\right)^{\scriptscriptstyle{(p)}}=\hat{d}_X^{\scriptscriptstyle{(p)}}$. Thus $\hat{d}_X^{\scriptscriptstyle{(p)}}\leq\left(\hat{d}_X\right)^{\scriptscriptstyle{(p)}}.$

On the other hand, we know by Remark \ref{rmk:quotientineq} that for any $[x],[x']\in\hat{X}$, $\hat{d}_X([x],[x'])\leq d_X(x,x')$. Then, it follows again from Equation (\ref{eq:defdxp}) that 
$$\left(\hat{d}_X\right)^{\scriptscriptstyle{(p)}}([x],[x'])\leq \dxp(x,x')=\hat{d}_X^{\scriptscriptstyle{(p)}}([x],[x']) $$ which  concludes the proof.
\end{proof}

\begin{figure}
    \centering
    \begin{tikzcd}
    \ms \arrow[r,"\tilde{\mathfrak{S}}_p"] \arrow[d,"\mathfrak{T}_p"] \arrow[dr, dashed, "\mathfrak{S}_p"]
    & \tilde{\ms}_p \arrow[d,"\mathfrak{T}"] \\
    \ms \arrow[r,"\tilde{\mathfrak{S}}_p"]
    & \msp
    \end{tikzcd}
    \caption{\textbf{Illustration of two ways of generating $\mathfrak{S}_p$}. By $\tilde{\mathcal{M}}_p$ denote the collection of $p$-pseudometric spaces, by $\tilde{\mathfrak{S}}_p:\ms\rightarrow\tilde{\ms}_p$ denote the map sending $(X,d_X)$ to $(X,\dxp)$, by $\mathfrak{T}:\tilde{\ms}_p\rightarrow\msp$ denote the canonical quotient map sending a pseudometric space to its induced metric space (cf. \Cref{sec:pseudometric space}), and by $\mathfrak{T}_p:\ms\rightarrow\ms$ denote the map sending $(X,d_X)$ to $(\hat{X},\hat{d}_X)$ under the relation $\sim_{\dxp}$. Then, $\mathfrak{S}_p$ is such that the above diagram commutes.}
    \label{fig:spillustration}
\end{figure}

\paragraph{Subdominant properties.} We establish the subdominant property of $\hat{d}_X^{\scriptscriptstyle{(p)}}.$ For $p=\infty$, the subdominant property of $\mathfrak{S}_\infty$ restricted to the collection of all finite metric spaces was already established in \cite{carlsson2010characterization}. First of all, we introduce some notation. For any set $X$, let $\mathcal{D}_p(X)$ denote the collection of all $p$-metrics on $X$. We define a partial order $\leq$ on $\mathcal{D}_p(X)$ by letting $d_1\leq d_2$ for $d_1,d_2\in \mathcal{D}_p(X)$ if and only if $\forall x,x'\in X$, $d_1(x,x')\leq d_2(x,x')$.

\begin{proposition}[Maximal subdominant $p$-metric]
Given $(X,d_X)\in\ms$ and $p\in[1,\infty]$, consider $\lc\hat{X},\hat{d}_X^{\scriptscriptstyle{(p)}}\rc $, the $p$-metric space generated by $\mathfrak{S}_p$. Then, $$\hat{d}_X^{\scriptscriptstyle{(p)}}=\max\lb d:\,d\in\mathcal{D}_p\lc\hat{X}\rc,\text{ and }d\leq \hat{d}_X\rb.$$
\end{proposition}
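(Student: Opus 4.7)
Three claims are bundled into this statement: (i) $\hat{d}_X^{\scriptscriptstyle{(p)}}\in\mathcal{D}_p(\hat{X})$, (ii) $\hat{d}_X^{\scriptscriptstyle{(p)}}\leq \hat{d}_X$, and (iii) every $d\in\mathcal{D}_p(\hat{X})$ with $d\leq \hat{d}_X$ satisfies $d\leq \hat{d}_X^{\scriptscriptstyle{(p)}}$. Claim (i) was verified earlier in this section (when we showed that $\mathfrak{S}_p$ lands in $\msp$), and claim (ii) is exactly \Cref{lm:quotientdxpineq}. Consequently, the supremum on the right-hand side is in fact attained by $\hat{d}_X^{\scriptscriptstyle{(p)}}$, so the whole argument reduces to establishing (iii).

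To prove (iii), the plan is to unfold the definition of $\hat{d}_X^{\scriptscriptstyle{(p)}}$ through chains in $X$ and then apply the $p$-triangle inequality of the candidate $d$. Fix $d\in\mathcal{D}_p(\hat{X})$ with $d\leq \hat{d}_X$, together with $[x],[x']\in\hat{X}$. For an arbitrary chain $x=x_0,x_1,\ldots,x_n=x'$ in $X$, iterating the $p$-triangle inequality of $d$ (using associativity of $\ps{p}$ recorded at the start of \Cref{sec:p-arithmetic}) yields
\[d([x],[x'])\leq \mathop{\ps{\mathrlap{p}}}_{\,i=0}^{n-1} d([x_i],[x_{i+1}]).\]
By \Cref{rmk:quotientineq} one has $\hat{d}_X([x_i],[x_{i+1}])\leq d_X(x_i,x_{i+1})$, and the hypothesis $d\leq \hat{d}_X$ then delivers $d([x_i],[x_{i+1}])\leq d_X(x_i,x_{i+1})$. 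Since $\ps{p}$ is monotone in each coordinate, combining these bounds gives
\[d([x],[x'])\leq \mathop{\ps{\mathrlap{p}}}_{\,i=0}^{n-1} d_X(x_i,x_{i+1}).\]
Taking the infimum over all chains from $x$ to $x'$ on the right produces $d([x],[x'])\leq \dxp(x,x')=\hat{d}_X^{\scriptscriptstyle{(p)}}([x],[x'])$, which is precisely what (iii) demands.

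The only mildly delicate point is the iterated $p$-triangle inequality used in the first displayed bound, which follows by a short induction on the chain length $n$ from associativity and commutativity of $\ps{p}$ together with monotonicity in each entry. Beyond that bookkeeping the argument is mechanical, and I do not anticipate any substantive obstacle.
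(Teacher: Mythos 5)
Your proof is correct, and all three of the claims you isolate are exactly what the statement requires; the only remark worth making is that your route to claim (iii) differs from the paper's in which prior results it leans on. The paper argues abstractly: for a competitor $d\in\mathcal{D}_p(\hat{X})$ with $d\leq\hat{d}_X$, it uses idempotence ($d^{\scriptscriptstyle{(p)}}=d$, item 1 of \Cref{prop:propofsp}), monotonicity of the operation $e\mapsto e^{\scriptscriptstyle{(p)}}$, and the identity $\bigl(\hat{d}_X\bigr)^{\scriptscriptstyle{(p)}}=\hat{d}_X^{\scriptscriptstyle{(p)}}$ from \Cref{prop:subd} to conclude $d=d^{\scriptscriptstyle{(p)}}\leq\bigl(\hat{d}_X\bigr)^{\scriptscriptstyle{(p)}}=\hat{d}_X^{\scriptscriptstyle{(p)}}$. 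You instead unfold \Cref{eq:defdxp} directly: iterate the $p$-triangle inequality of $d$ along a chain in $X$, bound each link via $d\leq\hat{d}_X$ together with \Cref{rmk:quotientineq}, and infimize. This is more elementary and self-contained --- in particular it does not require \Cref{prop:subd}, whose nontrivial content (that chains in $\hat{X}$ weighted by $\hat{d}_X$ and chains in $X$ weighted by $d_X$ yield the same infimum) you sidestep by only needing the one inequality $\hat{d}_X([x_i],[x_{i+1}])\leq d_X(x_i,x_{i+1})$. The paper's version buys brevity and reuses machinery already established; yours buys independence from that machinery at the cost of the explicit induction on chain length, which you correctly flag and which indeed goes through by monotonicity and associativity of $\ps{p}$.
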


\begin{proof}
By Lemma \ref{lm:quotientdxpineq}, we have $\hat{d}_X^{\scriptscriptstyle{(p)}}\leq \hat{d}_X$. Now, let $d\in\mathcal{D}_p\lc\hat{X}\rc$ , then $\hat{d}^{\scriptscriptstyle{(p)}}=d$ by item 1 of Proposition \ref{prop:propofsp}. Moreover, if $d\leq \hat{d}_X$, then it is easy to check that $d^{\scriptscriptstyle{(p)}}\leq \left(\hat{d}_X\right)^{\scriptscriptstyle{(p)}}=\dxp$ where the last equality follows from Proposition \ref{prop:subd}. Therefore, $d=d^{\scriptscriptstyle{(p)}}\leq\dxp$.
\end{proof}

\subsection{Stability of $\mathfrak{S}_p$}
For the projection $\mathfrak{S}_\infty:\ms\rightarrow\ums$, the following  stability result is already in the literature\footnote{Although \cite[Proposition 26]{carlsson2010characterization} states it only for finite cases, the same proof works for compact spaces.}.

\begin{proposition}[{\cite[Proposition 26]{carlsson2010characterization}}]\label{thm:slstab} The map $\mathfrak{S}_\infty:\mathcal{M}\rightarrow \mathcal{U}$ is $1$-Lipschitz, i.e., 
for any $X,Y\in\ms$,
$$\dgh(\mathfrak{S}_\infty(X),\mathfrak{S}_\infty(Y))\leq\dgh(X,Y). $$
\end{proposition}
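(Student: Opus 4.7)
The plan is to use the distortion characterization $\dgh(X,Y) = \tfrac{1}{2}\inf_R \dis(R)$ of Equation~(1.8) on both sides of the inequality, and to lift each correspondence $R$ between $X$ and $Y$ to a correspondence $\hat{R}$ between $\mathfrak{S}_\infty(X)$ and $\mathfrak{S}_\infty(Y)$ whose distortion (measured with $\hat{d}_X^{\scriptscriptstyle{(\infty)}}$ and $\hat{d}_Y^{\scriptscriptstyle{(\infty)}}$) is at most $\dis(R,d_X,d_Y)$. Infimizing will then give the stated inequality. The definition of $\hat{R}$ is the obvious pushforward: $\hat{R} := \{([x],[y]) : (x,y)\in R\} \subseteq \hat{X}\times\hat{Y}$, where $[\cdot]$ denotes the equivalence class under the appropriate relation $\sim_{d^{(\infty)}}$. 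Surjectivity of the canonical projections $X\to\hat{X}$ and $Y\to\hat{Y}$ immediately implies $\hat{R}$ is a correspondence.

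For the key distortion bound, set $\eta := \dis(R,d_X,d_Y)$ and pick $([x],[y]),([x'],[y'])\in\hat{R}$ with representatives satisfying $(x,y),(x',y')\in R$. Given any chain $y=y_0,y_1,\ldots,y_n=y'$ in $Y$, select for each intermediate index $i$ a partner $x_i\in X$ with $(x_i,y_i)\in R$, and set $x_0:=x$, $x_n:=x'$. The defining bound $|d_X(x_i,x_{i+1})-d_Y(y_i,y_{i+1})|\leq\eta$ yields the termwise inequality $d_X(x_i,x_{i+1})\leq d_Y(y_i,y_{i+1})+\eta$, so
$$\max_{0\leq i<n}d_X(x_i,x_{i+1}) \leq \max_{0\leq i<n}d_Y(y_i,y_{i+1})+\eta.$$
Infimizing the left side over $X$-chains from $x$ to $x'$ and the right over $Y$-chains from $y$ to $y'$ (noting that the left side is bounded by the infimum of any such choice of companions, in particular for any $Y$-chain) gives $d_X^{\scriptscriptstyle{(\infty)}}(x,x')\leq d_Y^{\scriptscriptstyle{(\infty)}}(y,y')+\eta$. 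A symmetric argument gives the reverse bound, so $|d_X^{\scriptscriptstyle{(\infty)}}(x,x')-d_Y^{\scriptscriptstyle{(\infty)}}(y,y')|\leq \eta$. Since by construction $\hat{d}_X^{\scriptscriptstyle{(\infty)}}([x],[x'])=d_X^{\scriptscriptstyle{(\infty)}}(x,x')$ (and similarly for $Y$), we conclude $\dis(\hat{R})\leq\eta$, and therefore $\dgh(\mathfrak{S}_\infty(X),\mathfrak{S}_\infty(Y))\leq \tfrac{1}{2}\dis(\hat{R})\leq\tfrac{1}{2}\dis(R)$. Taking the infimum over $R$ proves the claim.

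I do not anticipate a serious obstacle: the only delicate point is checking that the pushforward $R\mapsto\hat{R}$ really is a correspondence (immediate from surjectivity) and that the termwise control $d_X(x_i,x_{i+1})\leq d_Y(y_i,y_{i+1})+\eta$ interacts cleanly with the path-infimum structure defining $d^{\scriptscriptstyle{(\infty)}}$. This interaction genuinely exploits $p=\infty$: for finite $p$, the same termwise inequality combined with Minkowski's inequality only yields $\mathop{\boxplus_p}_{i}d_X(x_i,x_{i+1})\leq \mathop{\boxplus_p}_{i}d_Y(y_i,y_{i+1})+n^{1/p}\eta$, and the unbounded factor $n^{1/p}$ prevents this argument from extending verbatim to general $p$, which is consistent with the result being asserted only for $\mathfrak{S}_\infty$.
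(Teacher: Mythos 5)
Your proof is correct, and it is essentially the argument the paper itself uses: the paper only cites \cite[Proposition 26]{carlsson2010characterization} for this statement, but its own proof of the finite-$p$ generalization (\Cref{thm:optimal}) is exactly your correspondence-plus-chain-lifting scheme, which at $p=\infty$ loses the $k^{1/p}$ factor for precisely the reason you note. The only additions you make are the (necessary, and correctly handled) bookkeeping for non-finite compact spaces: pushing $R$ forward to the quotients $\hat X,\hat Y$ and infimizing over chains rather than assuming a minimizing chain exists.
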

 
As a generalization of \Cref{thm:slstab}, we prove the following stability result for $\mathfrak{S}_p$ for all $p\in(1,\infty].$

\begin{proposition}\label{thm:optimal}
Given two finite metric spaces $X$ and $Y$ with $\#X=m$ and $\#Y=n$, and $p> 1$, we have 
$$\dgh(\mathfrak{S}_p(X),\mathfrak{S}_p(Y))\leq \left(\max(m,n)-1\right)^{\frac{1}{p}}\,\dgh(X,Y). $$
\end{proposition}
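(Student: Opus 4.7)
The plan is to use the distortion formulation \Cref{eq:dgh-distortion} of $\dgh$ together with a path-lifting argument through any correspondence $R\subseteq X\times Y$. Since $X$ and $Y$ are finite, \Cref{rmk:finitesp} gives $\mathfrak{S}_p(X)=(X,\dxp)$ and $\mathfrak{S}_p(Y)=(Y,\dyp)$, so the same $R$ remains a correspondence between the projected spaces, and it suffices to bound $\dis(R;\dxp,\dyp)$ in terms of $c\coloneqq\dis(R;d_X,d_Y)$.

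Set $N\coloneqq \max(m,n)-1$ and fix $(x,y),(x',y')\in R$. To control $\dyp(y,y')$ in terms of $\dxp(x,x')$, I would first choose an $\varepsilon$-optimal \emph{simple} sequence $x=x_0,\ldots,x_{k'}=x'$ in $X$ (repeated vertices can always be excised without increasing a $p$-sum of non-negative terms, so $k'\leq m-1\leq N$) and then \emph{pad} it with null steps (repetitions of a vertex, each contributing $d_X=0$) to obtain a sequence $x=\tilde x_0,\tilde x_1,\ldots,\tilde x_N=x'$ of exactly $N$ edges with the same $p$-sum. Lift this to $y=\tilde y_0,\tilde y_1,\ldots,\tilde y_N=y'$ in $Y$ by selecting $\tilde y_i$ with $(\tilde x_i,\tilde y_i)\in R$. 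The distortion bound $|d_X(\tilde x_i,\tilde x_{i+1})-d_Y(\tilde y_i,\tilde y_{i+1})|\le c$ together with Minkowski's inequality applied to the non-negative vectors $(d_X(\tilde x_i,\tilde x_{i+1}))_i$ and $(c,\ldots,c)\in\R^N$ gives
\[
\dyp(y,y') \leq \mathop{\ps{\mathrlap{p}}}_{\,i=0}^{N-1} d_Y(\tilde y_i,\tilde y_{i+1}) \leq \mathop{\ps{\mathrlap{p}}}_{\,i=0}^{N-1} d_X(\tilde x_i,\tilde x_{i+1}) + N^{1/p}c \leq \dxp(x,x')+\varepsilon+N^{1/p}c.
\]
Sending $\varepsilon\to 0$ and running the symmetric argument produce $|\dxp(x,x')-\dyp(y,y')|\leq N^{1/p}c$, hence $\dis(R;\dxp,\dyp)\leq N^{1/p}c$. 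Infimizing over $R$ (so $c\to 2\dgh(X,Y)$) and invoking \Cref{eq:dgh-distortion} once more yields the claim.

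The subtlety is the choice of the uniform padding length $N=\max(m,n)-1$: the naive simple-path reduction only gives the asymmetric constants $(m-1)^{1/p}$ and $(n-1)^{1/p}$, but one must pad both sequences up to the larger length so that a single Minkowski factor controls both directions at once. Padding is essential in the corner case $x=x'$ with $y\neq y'$, where the optimal simple $X$-sequence is trivial but the $Y$-lift requires at least one edge; fortunately $y\neq y'$ forces $n\geq 2$ and hence $N\geq 1$, so there is always room to pad. The Minkowski step is the heart of the quantitative loss, and since $(\max(m,n)-1)^{1/p}\to 1$ as $p\to\infty$, the bound specializes to the $1$-Lipschitz estimate of \Cref{thm:slstab} in the ultrametric limit.
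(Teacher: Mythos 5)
Your proof is correct and follows essentially the same route as the paper: fix a (near-)optimal correspondence, lift a (near-)optimal chain realizing $\dxp$ or $\dyp$ through it, and control the resulting $p$-sum with Minkowski's inequality to extract the factor $\left(\max(m,n)-1\right)^{1/p}$. The only cosmetic difference is your padding to the uniform length $N=\max(m,n)-1$; the paper instead derives the two one-sided bounds with constants $(m-1)^{1/p}$ and $(n-1)^{1/p}$ separately and takes the maximum at the end, so padding is not actually \emph{necessary} (contrary to your remark), though it is a clean way to handle the degenerate case $x=x'$, $y\neq y'$ that the paper glosses over.
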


\begin{remark}
\Cref{thm:optimal} does not include the case $p=1$ because $\mathfrak{S}_1$ is just the identity map by Proposition \ref{prop:propofsp}. Obviously, Theorem  \ref{thm:optimal} recovers \Cref{thm:slstab} by choosing $p=\infty$ when the spaces are finite.
\end{remark}

\begin{proof}[Proof of \Cref{thm:optimal}]
The case when $m=n=1$ reduces to comparing two one-point sets, a case in which the equality obviously holds. If either $m=1$ or $n=1$, then we obtain the inequality by invoking Corollary \ref{coro:diam} and the fact that $\dgh(X,*)=\frac{1}{2}\diam(X)$, where $*$ denotes the one point metric space.

Now, we assume that $m,n>1$. By Remark \ref{rmk:finitesp} we know that $\mathfrak{S}_p(X)=(X,\dxp)$ and $\mathfrak{S}_p(Y)=(Y,\dyp)$. Let $R\in\mathcal{R}(X,Y)$ be an optimal correspondence for $\dgh(X,Y)$. Let $\eta\coloneqq\mathrm{dis}(R,d_X,d_Y)=2\dgh(X,Y)$. Then, for any $(x,y),(x',y')\in R$, $|d_X(x,x')-d_Y(y,y')|\leq\eta$. Now, let us bound $\big|\dxp(x,x')-\dyp(y,y')\big|$. Let $y=y_0,y_1,\cdots,y_k=y'$ be a sequence of points in $Y$ such that
$\displaystyle\dyp(y,y')=\mathop{\ps{\mathrlap{p}}}_{i=0}^{k-1}d_Y(y_i,y_{i+1}),$
whose existence follows from the fact that $Y$ is finite. Then, we choose any sequence of points $x=x_0,x_1,\cdots,x_k=x'$ in $X$ such that $(x_i,y_i)\in R$ for all $i=0,\cdots,k$. Therefore, by definition of $\mathfrak{S}_p$, we have

\begin{align*}
\dxp(x,x')& \leq\mathop{\ps{\mathrlap{p}}}_{i=0}^{k-1}d_X(x_i,x_{i+1})\leq \mathop{\ps{\mathrlap{p}}}_{i=0}^{k-1}\left(d_Y(y_i,y_{i+1})+\eta\right)\\
& \leq \mathop{\ps{\mathrlap{p}}}_{i=0}^{k-1}d_Y(y_i,y_{i+1})+\mathop{\ps{\mathrlap{p}}}_{i=0}^{k-1}\eta=\dyp(y,y')+k^{\frac{1}{p}}\eta,
\end{align*}
where the third inequality follows from the Minkowski inequality. 

Note that sequences of points determining $\dyp$ in $Y$ can always be chosen such that $k\leq m-1$ hence we have 
$$\dxp(x,x')\leq \dyp(y,y')+(m-1)^{\frac{1}{p}}\,\eta.$$
Similarly, $\dyp(y,y')\leq \dxp(x,x')+(n-1)^{\frac{1}{p}}\eta.$ Therefore, we have
$$ \big|\dxp(x,x')-\dyp(y,y')\big|\leq \left(\max(m,n)-1\right)^{\frac{1}{p}}\,\eta.$$

Then, 
\begin{align*}
    &\dgh(\mathfrak{S}_p(X),\mathfrak{S}_p(Y))\leq\frac{1}{2}\mathrm{dis}(R,\dxp,\dyp)\\
    \leq&\frac{1}{2}\left(\max(m,n)-1\right)^{\frac{1}{p}}\,\mathrm{dis}(R,d_X,d_Y)= \left(\max(m,n)-1\right)^{\frac{1}{p}}\,\dgh(X,Y).
\end{align*}

\end{proof}

\begin{example}[The coefficient in \Cref{thm:optimal} is optimal]\label{ex:opt-bound}
Let $(L_n,d_{n})$ be the finite subspace $\{0,1,\cdots,n\}\subseteq\R$ endowed with the Euclidean metric $d_{n}$. Let $p>1$ and let $L_n^{\scriptscriptstyle{(p)}}\coloneqq \mathfrak{S}_p(L_n)$. Then, $\diam(L_n^{\scriptscriptstyle{(p)}})=d_{n}^{\scriptscriptstyle{(p)}}(0,n)=n^\frac{1}{p}$ and $d_{n}^{\scriptscriptstyle{(p)}}(i,i+1)=1$ for $i=0,\cdots,n-1$.

Let $\tilde{L}_n^{\scriptscriptstyle{(p)}}\coloneqq  n^{\frac{1}{p}-1}\cdot L_n$. Then, $\diam\!\lc\tilde{L}_n^{\scriptscriptstyle{(p)}}\rc=\diam(L_n^{\scriptscriptstyle{(p)}})$.  
By considering the diagonal correspondence $R$ between $L_n^{\scriptscriptstyle{(p)}}$ and $\tilde{L}_n^{\scriptscriptstyle{(p)}}$, we have $\dgh\!\lc L_n^{\scriptscriptstyle{(p)}},\tilde{L}_n^{\scriptscriptstyle{(p)}}\rc\leq\frac{1}{2}\mathrm{dis}(R)=\frac{1}{2}\left(1-n^{\frac{1}{p}-1}\right)$.

Note that  $L_n^{\scriptscriptstyle{(p)}}\in\mathcal{M}_p$. Thus by Proposition \ref{prop:propofsp} we have $\mathfrak{S}_p(L_n^{\scriptscriptstyle{(p)}})=L_n^{\scriptscriptstyle{(p)}}$. For $\tilde{L}_n^{\scriptscriptstyle{(p)}}$, we have that $\diam\left(\mathfrak{S}_p\!\lc\tilde{L}_n^{\scriptscriptstyle{(p)}}\rc\right)=n^{\frac{2}{p}-1}$. Hence by \Cref{rem:lb} we have
$$\dgh\!\left(\mathfrak{S}_p(L_n^{\scriptscriptstyle{(p)}}),\mathfrak{S}_p\!\lc\tilde{L}_n^{\scriptscriptstyle{(p)}}\rc\right)\geq\frac{1}{2}\big|\diam(L_n^{\scriptscriptstyle{(p)}})-\diam\left(\mathfrak{S}_p\!\lc\tilde{L}_n^{\scriptscriptstyle{(p)}}\rc\right)\big|=\frac{1}{2}\left(n^{\frac{1}{p}}-n^{\frac{2}{p}-1}\right).$$

Therefore, we have that 
$$\frac{\dgh\!\left(\mathfrak{S}_p(L_n^{\scriptscriptstyle{(p)}}),\mathfrak{S}_p\!\lc\tilde{L}_n^{\scriptscriptstyle{(p)}}\rc\right)}{\dgh\!\lc L_n^{\scriptscriptstyle{(p)}},\tilde{L}_n^{\scriptscriptstyle{(p)}}\rc}\geq \frac{n^{\frac{1}{p}}-n^{\frac{2}{p}-1}}{1-n^{\frac{1}{p}-1}}=n^{\frac{1}{p}}, $$
which can be rewritten as 
 $${\dgh\!\left(\mathfrak{S}_p(L_n^{\scriptscriptstyle{(p)}}),\mathfrak{S}_p\!\lc\tilde{L}_n^{\scriptscriptstyle{(p)}}\rc\right)}\geq n^{\frac{1}{p}}\, {\dgh\!\lc L_n^{\scriptscriptstyle{(p)}},\tilde{L}_n^{\scriptscriptstyle{(p)}}\rc}. $$

By \Cref{thm:optimal}, we have ${\dgh\!\left(\mathfrak{S}_p(L_n^{\scriptscriptstyle{(p)}}),\mathfrak{S}_p\!\lc\tilde{L}_n^{\scriptscriptstyle{(p)}}\rc\right)}\leq \left((n+1)-1\right)^{\frac{1}{p}}\, {\dgh\!\lc L_n^{\scriptscriptstyle{(p)}},\tilde{L}_n^{\scriptscriptstyle{(p)}}\rc}. $ Hence we have that 
$${\dgh\!\left(\mathfrak{S}_p(L_n^{\scriptscriptstyle{(p)}}),\mathfrak{S}_p\!\lc\tilde{L}_n^{\scriptscriptstyle{(p)}}\rc\right)}= n^{\frac{1}{p}}\, {\dgh\!\lc L_n^{\scriptscriptstyle{(p)}},\tilde{L}_n^{\scriptscriptstyle{(p)}}\rc}. $$
Therefore, the bound in \Cref{thm:optimal} is tight. Since for $1<p<\infty$ the sequence $\{n^\frac{1}{p}\}_{n\in\mathbb{N}}$ is unbounded. This example also shows that the map $\mathfrak{S}_p:\ms\rightarrow\msp$ is not Lipschitz.
\end{example}

Finally, if we restrict $\mathfrak{S}_\infty$ to $\msp\subseteq\ms$ for any $p\in[1,\infty]$, we have the following stability result utilizing the $p$-Gromov-Hausdorff distance $\dghp{p}$ which we define later in \Cref{sec:dghp}. We postpone its proof to \Cref{sec:relation with dgh}.

\begin{theorem}\label{thm:p-stable}
For any $p\in[1,\infty]$ and any $X,Y\in\msp$, we have that
$$\dghp{p}\left(\mathfrak{S}_\infty(X),\mathfrak{S}_\infty(Y)\right)\leq \dghp{p}(X,Y). $$
\end{theorem}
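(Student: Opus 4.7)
The strategy is to work with the distortion-based characterization of $\dghp{p}$ (cf. Equation~(\ref{eq:dghp-no-dist})) and then lift a near-optimal correspondence through the quotient maps $\pi_X:X\to\hat X$ and $\pi_Y:Y\to\hat Y$ induced by $\sim_{d_X^{\scriptscriptstyle{(\infty)}}}$ and $\sim_{d_Y^{\scriptscriptstyle{(\infty)}}}$. Given $\delta>0$, fix a correspondence $R\subseteq X\times Y$ with $\sup_{(x,y),(x',y')\in R}\Lambda_p(d_X(x,x'),d_Y(y,y'))\leq 2^{1/p}\dghp{p}(X,Y)+\delta=:\eta$, and define $\hat R\coloneqq\{(\pi_X(x),\pi_Y(y)):(x,y)\in R\}\subseteq\hat X\times\hat Y$. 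Surjectivity of the quotient maps makes $\hat R$ a correspondence between $\mathfrak{S}_\infty(X)$ and $\mathfrak{S}_\infty(Y)$, so it suffices to prove the uniform bound $\Lambda_p(\hat d_X^{\scriptscriptstyle{(\infty)}}([x],[x']),\hat d_Y^{\scriptscriptstyle{(\infty)}}([y],[y']))\leq\eta$ for every pair in $\hat R$; letting $\delta\to 0$ then yields the theorem.

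The core step is a path-lifting argument. Pick any $([x],[y]),([x'],[y'])\in\hat R$ with chosen preimages $(x,y),(x',y')\in R$, and assume, after possibly swapping the roles of $X$ and $Y$, that $d_X^{\scriptscriptstyle{(\infty)}}(x,x')\geq d_Y^{\scriptscriptstyle{(\infty)}}(y,y')$. For arbitrary $\varepsilon>0$, use the infimum definition (\ref{eq:defdxp}) of $d_Y^{\scriptscriptstyle{(\infty)}}$ to pick a chain $y=y_0,\ldots,y_k=y'$ in $Y$ with $\max_i d_Y(y_i,y_{i+1})\leq d_Y^{\scriptscriptstyle{(\infty)}}(y,y')+\varepsilon$, and then select $x_i$ with $(x_i,y_i)\in R$, $x_0=x$, $x_k=x'$. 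The distortion bound $\Lambda_p(d_X(x_i,x_{i+1}),d_Y(y_i,y_{i+1}))\leq\eta$ yields, for $p\in[1,\infty)$, the inequality $d_X(x_i,x_{i+1})^p\leq d_Y(y_i,y_{i+1})^p+\eta^p$ and, for $p=\infty$, the inequality $d_X(x_i,x_{i+1})\leq\max(\eta,d_Y(y_i,y_{i+1}))$ (from a case split on whether the two terms in $\Lambda_\infty$ agree). Taking the maximum over $i$, passing $\varepsilon\to 0$, and using $d_X^{\scriptscriptstyle{(\infty)}}(x,x')\leq\max_i d_X(x_i,x_{i+1})$ produces $d_X^{\scriptscriptstyle{(\infty)}}(x,x')^p\leq d_Y^{\scriptscriptstyle{(\infty)}}(y,y')^p+\eta^p$ for finite $p$, and $d_X^{\scriptscriptstyle{(\infty)}}(x,x')\leq\max(\eta,d_Y^{\scriptscriptstyle{(\infty)}}(y,y'))$ for $p=\infty$, which in either case rearranges to $\Lambda_p(d_X^{\scriptscriptstyle{(\infty)}}(x,x'),d_Y^{\scriptscriptstyle{(\infty)}}(y,y'))\leq\eta$.

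Next I would pass the bound through the quotient. When $[x]\neq[x']$ and $[y]\neq[y']$, the quotient distances agree with the pseudometric values by the construction of $\hat d_X^{\scriptscriptstyle{(\infty)}}$ and $\hat d_Y^{\scriptscriptstyle{(\infty)}}$, so the previous inequality directly bounds $\Lambda_p(\hat d_X^{\scriptscriptstyle{(\infty)}}([x],[x']),\hat d_Y^{\scriptscriptstyle{(\infty)}}([y],[y']))$. When $[x]=[x']$, i.e.\ $d_X^{\scriptscriptstyle{(\infty)}}(x,x')=0$, one re-runs the path argument along a chain $x=x_0,\ldots,x_k=x'$ with $\max_i d_X(x_i,x_{i+1})\leq\varepsilon$ and lifts it through $R$ to obtain $d_Y^{\scriptscriptstyle{(\infty)}}(y,y')\leq\eta$; the degenerate case $[y]=[y']$ is symmetric. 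Combining the cases gives the desired uniform bound on $\hat R$ and finishes the proof.

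The step I expect to require the most care is the $p=\infty$ case of the path-lifting inequality, since $\Lambda_\infty$ is not a uniform $\max$ but collapses to $0$ when its arguments coincide; this forces an internal case split inside the sequence estimate rather than the clean Minkowski-type computation available for $p\in[1,\infty)$. Checking that the resulting sup-norm bound still descends cleanly to $\hat X$ and $\hat Y$ (rather than leaking an extra factor depending on chain length, as happens in \Cref{thm:optimal}) is the main place where the $p$-triangle inequality for $X,Y\in\msp$ is genuinely used.
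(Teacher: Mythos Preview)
Your argument is correct, but it is not the route the paper takes. The paper's proof is a three-line reduction to the already-known $p=1$ case (\Cref{thm:slstab}) via the snowflake transform: using \Cref{thm:dgh-dlp-eq} one writes $\dghp{p}(\mathfrak{S}_\infty(X),\mathfrak{S}_\infty(Y))=\big(\dgh(S_p\mathfrak{S}_\infty(X),S_p\mathfrak{S}_\infty(Y))\big)^{1/p}$, then swaps $S_p$ and $\mathfrak{S}_\infty$ by item~4 of \Cref{prop:propofsp}, and finally applies the $1$-Lipschitz bound for $\mathfrak{S}_\infty$ under $\dgh$. So the paper outsources all the work to the snowflake isometry and the known stability for $p=1$, whereas you redo the chain-lifting argument of \Cref{thm:slstab}/\Cref{thm:optimal} directly at the level of $p$-distortion. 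Your approach is more elementary and self-contained; the paper's is slicker but depends on the commutation lemma for $S_p$ and $\mathfrak{S}_\infty$.

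Two minor remarks on your write-up. First, the case split on the quotient is unnecessary: by construction $\hat d_X^{\scriptscriptstyle{(\infty)}}([x],[x'])=d_X^{\scriptscriptstyle{(\infty)}}(x,x')$ for \emph{all} pairs, so once you have bounded $\Lambda_p\big(d_X^{\scriptscriptstyle{(\infty)}}(x,x'),d_Y^{\scriptscriptstyle{(\infty)}}(y,y')\big)$ you are done. Second, your closing claim that the $p$-triangle inequality for $X,Y\in\msp$ is ``genuinely used'' is not accurate: your chain-lifting estimate only uses the definition of $d^{\scriptscriptstyle{(\infty)}}$ as an infimum over $\max$-chains, and the absence of a chain-length factor comes from the $\max$ structure of $\mathfrak{S}_\infty$, not from any $p$-triangle inequality on $X$ or $Y$. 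In fact your argument proves the inequality for arbitrary $X,Y\in\ms$, which is slightly more than the paper states.
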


\subsection{The kernel of $\mathfrak{S}_p$}\label{sec:kernel}

In this section we study the notion of \emph{kernel} of maps $\mathfrak{S}_p:\ms\rightarrow\msp$ for $p\in[1,\infty]$. The \emph{kernel} $\ker(\mathfrak{S}_p)$ is defined as the collection of compact metric spaces whose image under $\mathfrak{S}_p$ is the one point space. It is obvious that $X\in\ker(\mathfrak{S}_p)$ iff $\dxp(x,x')= 0$ for all $x,x'\in X$.

A metric space $(X,d_X)$ is said to be \emph{chain connected} if for any $x,x'\in X$ and any $\varepsilon>0$ there exists a finite sequence $x=x_0,x_1,\ldots,x_n=x'$ such that $d_X(x_i,x_{i+1})\leq\varepsilon$ for all $i$. Then, it follows directly from the definition of $\mathfrak{S}_\infty$ that $\ker(\mathfrak{S}_\infty) = \mathcal{M}^\mathrm{chain}$, where $\mathcal{M}^\mathrm{chain}$ refers to the collection of all compact chain connected metric spaces. Since we are only considering compact metric spaces, a result in \cite{alonso2008varepsilon} shows that chain connectedness is equivalent to connectedness. Therefore, we have the following result for the kernel of $\mathfrak{S}_\infty$. Below, $\ms^\mathrm{conn}$ denotes the collection of all compact connected  metric spaces.

\begin{proposition}\label{prop:inftykernel}
$\ker(\mathfrak{S}_\infty)= \mathcal{M}^\mathrm{conn}.$ 
\end{proposition}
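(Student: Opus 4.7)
The plan is to read the statement as a direct unwinding of the definition of $\mathfrak{S}_\infty$. Since $\mathfrak{S}_\infty(X)$ is, by construction, the metric quotient of $(X, d_X^{(\infty)})$, we have $X\in\ker(\mathfrak{S}_\infty)$ if and only if $d_X^{(\infty)}(x,x')=0$ for every pair $x,x'\in X$. From \Cref{eq:defdxp} with $p=\infty$, the quantity $d_X^{(\infty)}(x,x')$ is the infimum of $\max_i d_X(x_i,x_{i+1})$ over all finite chains $x=x_0,\ldots,x_n=x'$. Hence this vanishes for all pairs precisely when, for every $\varepsilon>0$ and every $x,x'$, there is such a chain with $\max_i d_X(x_i,x_{i+1})\le\varepsilon$, which is the definition of chain connectedness. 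Thus $\ker(\mathfrak{S}_\infty)=\ms^{\mathrm{chain}}$.

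Next, I would invoke the fact (for compact metric spaces, referenced in the excerpt via \cite{alonso2008varepsilon}) that chain connectedness coincides with connectedness. The easy direction is that connected implies chain connected for arbitrary metric spaces: fix $\varepsilon>0$, let $A$ be the set of points reachable from a given $x$ via $\varepsilon$-chains; then $A$ is both open (each point has an $\varepsilon$-ball of chain-reachable points) and closed (its complement is open for the same reason), so $A=X$. The converse — chain connected plus compact implies connected — is the content one needs from \cite{alonso2008varepsilon}: if $X=U\sqcup V$ were a disconnection into disjoint nonempty closed (hence compact) subsets, then by compactness the distance $\varepsilon_0:=d(U,V)>0$, and no $\varepsilon$-chain with $\varepsilon<\varepsilon_0$ can cross from $U$ to $V$, contradicting chain connectedness. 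Combining, $\ms^{\mathrm{chain}}=\ms^{\mathrm{conn}}$, and therefore $\ker(\mathfrak{S}_\infty)=\ms^{\mathrm{conn}}$.

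The argument is essentially definitional, so no step is genuinely an obstacle; the only part that needs care is the compactness-based direction of the equivalence between chain connectedness and connectedness, which we invoke as a known fact rather than re-deriving. The structure of the write-up would therefore be: (i) identify $\ker(\mathfrak{S}_\infty)$ with $\ms^{\mathrm{chain}}$ by unwinding \Cref{eq:defdxp}, (ii) cite the compact-case equivalence of chain connectedness and connectedness, (iii) conclude.
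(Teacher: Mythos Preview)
Your proposal is correct and follows exactly the paper's approach: the paper also first observes (immediately before the proposition) that $\ker(\mathfrak{S}_\infty)=\mathcal{M}^{\mathrm{chain}}$ directly from the definition of $\mathfrak{S}_\infty$, and then invokes the result from \cite{alonso2008varepsilon} that chain connectedness and connectedness coincide for compact metric spaces. You actually supply more detail than the paper, since you sketch both directions of the chain-connected/connected equivalence rather than merely citing it.
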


\begin{remark}\label{rmk:kernel-inf-proj}
Any geodesic metric space is connected. Therefore, any compact geodesic metric space lies in the kernel of $\mathfrak{S}_\infty.$
\end{remark}

Proposition \ref{prop:inftykernel} will not hold for $p<\infty.$ In fact, we have the following result.

\begin{proposition}\label{prop:inclusion}
Given $1<q<p\leq\infty$, we have that
$$ \ker(\mathfrak{S}_q)\subsetneq\ker(\mathfrak{S}_p).$$
\end{proposition}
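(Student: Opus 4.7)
The plan is to handle the two sides of the strict inclusion separately: first the easy containment $\ker(\mathfrak{S}_q)\subseteq\ker(\mathfrak{S}_p)$ and then the strictness, which requires constructing a separating example.

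For the containment, I will show the pointwise inequality $\dxp(x,x')\leq d_X^{\scriptscriptstyle{(q)}}(x,x')$ on $X\times X$ whenever $q\leq p$ and then use it to compare the two kernels. The key input is that $a\ps{p}b\leq a\ps{q}b$ for all $a,b\geq 0$, which is precisely the computation already carried out inside the proof of \Cref{prop:inclusionpq}. By associativity of $\ps{p}$, this extends by induction to any finite number of terms: $\boxplus_{p,i}a_i\leq \boxplus_{q,i}a_i$. Applying this term-by-term inside the defining infimum of \Cref{eq:defdxp} gives $\dxp\leq d_X^{\scriptscriptstyle{(q)}}$, and passing to the quotients gives $\hat{d}_X^{\scriptscriptstyle{(p)}}\leq \hat{d}_X^{\scriptscriptstyle{(q)}}$. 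Hence $X\in\ker(\mathfrak{S}_q)$ forces $\hat{d}_X^{\scriptscriptstyle{(q)}}\equiv 0$, which in turn forces $\hat{d}_X^{\scriptscriptstyle{(p)}}\equiv 0$, i.e., $X\in\ker(\mathfrak{S}_p)$.

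For strictness, the plan is to exhibit the explicit space $X\coloneqq S_{1/q}([0,1])$, i.e., $[0,1]$ endowed with $d_X(x,x')\coloneqq |x-x'|^{1/q}$. Since $1/q\in(0,1]$, subadditivity of $t\mapsto t^{1/q}$ makes $d_X$ a metric, and raising the $q$-triangle inequality $d_X(x,z)^q\leq d_X(x,y)^q+d_X(y,z)^q$ to the $q$-th power recovers the ordinary Euclidean triangle inequality on $[0,1]$; thus $X\in\ms_q$. By item 1 of \Cref{prop:propofsp}, $\mathfrak{S}_q(X)=X$, and since $X$ contains more than one point, $X\notin\ker(\mathfrak{S}_q)$.

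It remains to verify that $X\in\ker(\mathfrak{S}_p)$. For arbitrary $x<x'$ in $[0,1]$ and $n\in\mathbb N$, use the equispaced partition $x_i\coloneqq x+i(x'-x)/n$. Each consecutive distance equals $\big((x'-x)/n\big)^{1/q}$, so for $p<\infty$,
\[\mathop{\ps{\mathrlap{p}}}_{\,i=0}^{n-1} d_X(x_i,x_{i+1}) = \big(n\cdot((x'-x)/n)^{p/q}\big)^{1/p} = (x'-x)^{1/q}\cdot n^{1/p-1/q},\]
and since $p>q$ we have $1/p-1/q<0$, so this tends to $0$ as $n\to\infty$; for $p=\infty$ the value equals $\big((x'-x)/n\big)^{1/q}\to 0$ (or alternatively, since $X$ is connected, \Cref{prop:inftykernel} directly gives $X\in\ker(\mathfrak{S}_\infty)$). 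Either way $\dxp(x,x')=0$ for all $x,x'\in X$, so $X\in\ker(\mathfrak{S}_p)$, completing the separation. The main conceptual step is recognizing that snowflakes provide a natural family interpolating between $p$-metric structures, so that $S_{1/q}([0,1])$ is simultaneously rigid enough to be fixed by $\mathfrak{S}_q$ and soft enough to be collapsed by $\mathfrak{S}_p$ for every $p>q$; the computations themselves are straightforward.
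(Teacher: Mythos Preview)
Your proof is correct and takes essentially the same approach as the paper: both use the snowflake $S_{1/q}([0,1])$ as the separating example, with the identical equispaced-partition computation yielding the factor $n^{1/p-1/q}\to 0$. The only cosmetic difference is in the containment step, where the paper invokes the composition identity $\mathfrak{S}_p\circ\mathfrak{S}_q=\mathfrak{S}_p$ from \Cref{prop:propofsp} while you argue the pointwise inequality $d_X^{\scriptscriptstyle{(p)}}\leq d_X^{\scriptscriptstyle{(q)}}$ directly; both are immediate.
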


\begin{proof}
By Proposition \ref{prop:propofsp}, $\mathfrak{S}_p\circ\mathfrak{S}_q=\mathfrak{S}_p$. Thus $\ker(\mathfrak{S}_q)\subseteq\ker(\mathfrak{S}_p).$

Now, consider the following example. Let $X=([0,1],d_X)\subseteq\R$ be a Euclidean subspace. Then, as mentioned in \Cref{sec:p-arithmetic}, $S_{\frac{1}{p}}(X)=\left([0,1],(d_X)^\frac{1}{p}\right)$ is a $p$-metric space for $1<p<\infty$. Hence, $S_{\frac{1}{p}}(X)\notin\ker\mathfrak{S}_p$ since $\mathfrak{S}_p\lc S_{\frac{1}{p}}(X)\rc=S_{\frac{1}{p}}(X)$. Obviously, $S_{\frac{1}{p}}(X)$ is connected, thus $S_{\frac{1}{p}}(X)\in\ker(\mathfrak{S}_\infty)$ which implies $ \ker(\mathfrak{S}_p)\subsetneq\ker(\mathfrak{S}_\infty).$ 

Next, we show that $S_{\frac{1}{q}}(X)\in\ker(\mathfrak{S}_p)$ for $1\leq q<p<\infty.$ For any $0\leq x<x'\leq 1$ let $l\coloneqq x'-x$. Subdivide the interval $[x,x']$ into $n$ equal subintervals to  obtain $x=x_0,\cdots,x_n=x'$ such that $x_{i+1}-x_i=\frac{l}{n}$ for $i=0,\cdots,n-1.$ Then, we have that
\begin{align*}
    \lc(d_X)^\frac{1}{q}\rc^{\scriptscriptstyle{(p)}}(x,x')\leq\mathop{\ps{\mathrlap{p}}}_{i=0}^{n-1}(d_X)^\frac{1}{q}(x_i,x_{i+1})=n^{\frac{1}{p}-\frac{1}{q}}\,l^\frac{1}{q}.
\end{align*}
Since $\frac{1}{p}-\frac{1}{q}<0$, as $n\rightarrow\infty$, we derive that $((d_X)^\frac{1}{q})^{\scriptscriptstyle{(p)}}(x,x')=0$. Therefore, $S_{\frac{1}{q}}(X)\in\ker(\mathfrak{S}_p)$. Since $S_{\frac{1}{q}}(X)\notin\ker(\mathfrak{S}_q)$, we have that $\ker(\mathfrak{S}_q)\subsetneq\ker(\mathfrak{S}_p).$
\end{proof}

Proposition \ref{prop:inclusion} above leads us to considering the following object: $$\bigcap_{p>1}\ker(\mathfrak{S}_p).$$ We have not yet fully described this set but we conjecture that it coincides exactly with $\ms^1$, the collection of all  $1$-connected compact metric spaces (defined below):

\begin{conjecture}\label{conj:M1 = intersection}
 $\ms^1= \mathop{\bigcap}_{p>1}\ker(\mathfrak{S}_p).$
\end{conjecture}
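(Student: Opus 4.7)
The plan is to prove the conjectured identity by establishing both inclusions separately, with the forward direction handled by direct subdivision and the reverse direction by a compactness/diagonal argument.

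\emph{Forward inclusion} $\ms^1 \subseteq \bigcap_{p > 1} \ker(\mathfrak{S}_p)$. Under the most natural reading of \emph{1-connectedness} --- that any two points of $X$ are joined by a rectifiable curve --- the proof is a direct subdivision. Given $x, x' \in X$ linked by a curve of length $L$, select $n + 1$ equally arc-length spaced sample points $x = x_0, \dots, x_n = x'$ along the curve so that $d_X(x_i, x_{i+1}) \leq L/n$. Then
\[
\mathop{\ps{\mathrlap{p}}}_{\,i=0}^{n-1} d_X(x_i, x_{i+1}) \;\leq\; \bigl(n \cdot (L/n)^p\bigr)^{1/p} \;=\; L\, n^{1/p - 1},
\]
which vanishes as $n \to \infty$ for every $p > 1$. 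Hence $\dxp(x, x') = 0$ and $X \in \ker(\mathfrak{S}_p)$.

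\emph{Reverse inclusion} $\bigcap_{p > 1} \ker(\mathfrak{S}_p) \subseteq \ms^1$. Given $X$ in the intersection and $x, x' \in X$, for each $n \geq 1$ one can extract a chain $C_n = (x = x_0^n, \ldots, x_{k_n}^n = x')$ whose $(1 + 1/n)$-sum is below $2^{-n}$. The control $\ell_\infty(C_n) \leq \ell_{1+1/n}(C_n) \to 0$ forces the maximum step to vanish; the natural strategy is to reparameterize each $C_n$ by arc length on a fixed interval and to use compactness of $X$ together with an Arzel\`a--Ascoli-type argument to extract a continuous limit curve $\gamma: [0,1] \to X$ whose length is controlled, witnessing the 1-connectedness of $X$.

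\emph{The main obstacle.} The total chain lengths $\ell_1(C_n) \leq k_n^{n/(n+1)}\,\ell_{1+1/n}(C_n)$ are not automatically bounded, since the edge count $k_n$ may grow without bound; and in non-length metric spaces one cannot linearly interpolate between consecutive vertices of $C_n$, so the Arzel\`a--Ascoli step requires substantial adaptation. As a concrete warning, consider $X := ([0,1], d)$ with $d(x, y) := |x - y| \log(1 + 1/|x - y|)$. The function $\phi(t) := t \log(1 + 1/t)$ is concave on $(0,\infty)$ with $\phi(0^+) = 0$, so $d$ is a metric, and $(X, d)$ is homeomorphic to Euclidean $[0,1]$, hence compact. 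The uniform $n$-subdivision yields $p$-sum $n^{1/p - 1}\log(n+1) \to 0$ for every $p > 1$, placing $X \in \bigcap_{p > 1}\ker(\mathfrak{S}_p)$; yet by concavity of $\phi$, the supremum of $\sum_i \phi(|t_i - t_{i-1}|)$ over partitions of $[0,1]$ is infinite, so no continuous path in $X$ is rectifiable. Consequently the conjecture is tenable only if the (forthcoming) definition of $\ms^1$ is broad enough to encompass such ``weakly rectifiable'' spaces in which chains with vanishing $p$-sum exist for all $p > 1$; otherwise the example above is a counterexample and the reverse inclusion --- which is the real content of the statement --- must be either reproved under the precise definition or refuted.
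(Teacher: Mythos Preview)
The statement you are attempting to prove is labelled a \emph{conjecture} in the paper; the paper does not prove it. What the paper does establish is only the forward inclusion $\ms^1\subseteq\bigcap_{p>1}\ker(\mathfrak{S}_p)$, and it explicitly leaves the reverse inclusion open. So there is no ``paper's own proof'' to compare against for the full statement.

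More importantly, you have worked with the wrong definition of $\ms^1$. The paper does \emph{not} define $1$-connected via rectifiable curves. Its definition is: $X\in\ms^1$ iff for every $x,x'\in X$ there is a closed \emph{connected} set $C\subseteq X$ containing both points with $\dim_{\mathcal H}(C)=1$. Your forward-inclusion argument (equispaced subdivision of a rectifiable curve) handles only a subclass of $\ms^1$; the paper's argument is different and more general: it uses that $C^p_{\mathcal H}(C)=0$ for $p>1$ to obtain a finite cover with small $p$-sum of radii, and then exploits connectedness of $C$ via the nerve to thread a chain from $x$ to $x'$ through the covering balls.

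Your candidate counterexample $X=\big([0,1],\,|x-y|\log(1+1/|x-y|)\big)$ illustrates exactly this gap. It contains no rectifiable arc, as you observe, but it \emph{does} lie in $\ms^1$ under the paper's definition: the identity map $([0,1],d)\to([0,1],|\cdot|)$ is Lipschitz (since $t\leq t\log(1+1/t)$ for small $t$), giving $\dim_{\mathcal H}(X)\geq 1$, while the identity in the other direction is $\alpha$-H\"older for every $\alpha<1$ (since $t\log(1+1/t)/t^\alpha\to 0$), giving $\dim_{\mathcal H}(X)\leq 1$. Thus $\dim_{\mathcal H}(X)=1$, $X$ is connected, and so $X\in\ms^1$. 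Your example therefore does not refute the conjecture; it only refutes the rectifiable-curve reading of $\ms^1$, which is not what the paper intends.

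In summary: your forward inclusion is correct but for a strictly smaller class than the paper's $\ms^1$; the paper's Hausdorff-content argument is what is actually needed there. Your reverse-inclusion attempt correctly identifies the real difficulty, but the proposed counterexample fails once the correct definition is used, and that direction remains an open problem.
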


Next, we will define $1$-connected compact metric spaces and provide a partial answer to the conjecture above.

We first recall the definition of \emph{Hausdorff dimension}. See for example \cite[Section 1.7]{burago2001course} for more details. Let $(X,d_X)\in\mathcal{M}$ and let $s\geq 0$. For any $A\subseteq X$ the \emph{$s$-dimensional Hausdorff content} of $A$ is defined by

$$C_\mathcal{H}^s(A)\coloneqq \inf\left\{\sum_{i\in I} r_i^s :\,A\subseteq \bigcup_{i\in I} B\ot{r_i}(x_i),\text{ where }I\text{ is a countable index set} \right\}.$$

The Hausdorff dimension of $A$ is defined as
$$\dim_\mathcal{H}(A)\coloneqq \inf\{s\geq 0:\,C^s_\mathcal{H}(A)=0\}.$$

\begin{definition}[$1$-connected metric spaces]
We say that $(X,d_X)\in \mathcal{M}$ is \emph{$1$-connected} if for all $x$ and $x'$ in $X$, there exists a closed connected subset $C \subseteq X$ such that $C\ni x,x'$ and $\dim_\mathcal{H}(C)=1$. By $\ms^1$ we denote the collection of all compact 1-connected metric spaces.
\end{definition}

\begin{example}
Compact geodesic spaces are obviously $1$-connected. An example of a compact metric space which is $1$-connected yet not geodesic is   the unit circle in $\mathbb{R}^2$ endowed with the Euclidean metric. 
\end{example}

\begin{example}[Nonexamples via the snowflake transform] For any $1< p<\infty$, the space $S_{\frac{1}{p}}(X)=\left([0,1],(d_X)^\frac{1}{p}\right)$ constructed in the proof of Proposition \ref{prop:inclusion} has Hausdorff dimension $p$ (e.g., see \cite{semmes2003notes}) and is therefore not $1$-connected.
\end{example}

As a partial answer to \Cref{conj:M1 = intersection}, we have:
\begin{proposition}
 $\ms^1\subseteq \bigcap_{p>1}\ker(\mathfrak{S}_p).$
\end{proposition}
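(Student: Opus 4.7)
The plan is to show that for any $X\in\ms^1$, any $p>1$, and any $x,x'\in X$, one has $\dxp(x,x')=0$; this is enough because it forces $\mathfrak{S}_p(X)$ to be the one-point space, i.e., $X\in\ker(\mathfrak{S}_p)$. The strategy is to convert the witnessing closed connected subset of Hausdorff dimension $1$ containing $x$ and $x'$ into a chain in $X$ whose $p$-sum is controlled by the $p$-dimensional Hausdorff content, which vanishes precisely because $p>1$.

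First I would fix $x,x'\in X$ and, by $1$-connectedness, choose a closed connected $C\subseteq X$ with $x,x'\in C$ and $\dim_\mathcal{H}(C)=1$. Since $p>1$, the $p$-dimensional Hausdorff content satisfies $C^p_\mathcal{H}(C)=0$. Thus for any $\varepsilon>0$ there is a cover of $C$ by open balls $\{B\ot{r_i}(z_i)\}_{i\in\mathbb N}$ with $\sum_i r_i^p<\varepsilon^p$. Because $C$ is a closed subset of the compact space $X$ and hence compact, I would extract a finite subcover $\{B\ot{r_i}(z_i)\}_{i=1}^N$ (discarding balls that miss $C$), still satisfying $\sum_{i=1}^N r_i^p<\varepsilon^p$.

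Next I would form the nerve-type graph $G$ on $\{1,\ldots,N\}$ by joining $i\sim j$ exactly when $B\ot{r_i}(z_i)\cap B\ot{r_j}(z_j)\cap C\neq\emptyset$. A standard argument shows $G$ is connected: otherwise the union of balls in each component would give two nonempty, disjoint, relatively open subsets of $C$ that cover $C$, contradicting the connectedness of $C$. Pick a simple path $i_0,i_1,\ldots,i_K$ in $G$ with $x\in B\ot{r_{i_0}}(z_{i_0})$ and $x'\in B\ot{r_{i_K}}(z_{i_K})$, and for each $k=1,\ldots,K$ select a point $w_k\in B\ot{r_{i_{k-1}}}(z_{i_{k-1}})\cap B\ot{r_{i_k}}(z_{i_k})$; set $w_0=x$ and $w_{K+1}=x'$.

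Finally, since both $w_k$ and $w_{k+1}$ lie in $B\ot{r_{i_k}}(z_{i_k})$, the triangle inequality gives $d_X(w_k,w_{k+1})<2r_{i_k}$, and simplicity of the path ensures that each index $i_k$ appears at most once. Hence
\[
\dxp(x,x')\;\leq\;\mathop{\ps{\mathrlap{p}}}_{k=0}^{K} d_X(w_k,w_{k+1})\;\leq\;2\left(\sum_{k=0}^K r_{i_k}^p\right)^{1/p}\;\leq\;2\left(\sum_{i=1}^N r_i^p\right)^{1/p}\;<\;2\varepsilon,
\]
and letting $\varepsilon\to 0$ gives $\dxp(x,x')=0$. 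The main obstacle I anticipate is verifying the connectedness of the nerve graph cleanly and ensuring the bookkeeping in which each ball contributes to at most one edge of the chain; this is exactly the step that leverages the hypothesis $p>\dim_\mathcal{H}(C)=1$, without which the Hausdorff-content estimate collapses and no bound on the $p$-chain length is available.
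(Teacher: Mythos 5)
Your proposal is correct and follows essentially the same route as the paper's proof: use $C^p_{\mathcal H}(C)=0$ for $p>1$ to get a finite ball cover with small $\ell^p$-mass of radii, pass to a connected nerve and a path joining the balls containing $x$ and $x'$, and chain points through consecutive intersections to bound $\dxp(x,x')$ by a constant times $\varepsilon$. The only cosmetic difference is that the paper routes the chain through the ball centers (each radius counted twice, giving the constant $2^{1/p}$), whereas you route it through the intersection points (each segment bounded by $2r_{i_k}$, giving the constant $2$); both vanish as $\varepsilon\to 0$.
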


\begin{proof}
Let $X\in\ms^1$. For any $x,x'\in X$, let $K$ be a 1-dimensional connected closed (and thus compact) subset of $X$ containing them. Then, for any $p>1$, $C_\mathcal{H}^p(K)=0$. Hence for any $0<\eps<1$ we may find a finite\footnote{Finiteness follows from compactness of $K$.} cover $\{B\ot{r_i}(x_i)\}_{i\in I}$ of $K$ such that $\sum_{i\in I}r_i^p<\eps^p$. By connectedness of $K$, the 1-skeleton of the nerve of this cover is a connected graph, hence any two vertices (balls) are connected by a path on the graph. Without loss of generality, assume that $x\in B\ot{r_1}(x_1)$ and $x'\in B\ot{r_k}(x_k)$, and that $\{B\ot{r_1}(x_1),B\ot{r_2}(x_2),\cdots,B\ot{r_k}(x_k)\}$ is a path in the nerve. Choose $y_i\in B\ot{r_i}(x_i)\cap B\ot{r_{i+1}}(x_{i+1})$ for $i=1,\cdots,k-1$ and then construct a sequence of points $x,x_1,y_1,x_2,y_2,\cdots,y_{k-1},x_k,x'$. Then, we have 
\begin{align*}
\dxp(x,x')\leq&d_X(x,x_1)\ps{p} d_X(x_1,y_1)\ps{p} d_X(y_1,x_2)\ps{p}\cdots\ps{p} d_X(x_k,x') \\
< & r_1\ps{p} r_1\ps{p} r_2\ps{p} r_2\ps{p}\cdots \ps{p} r_k=2^\frac{1}{p}\mathop{\ps{\mathrlap{p}}}_{i=1}^k\,r_i<2^\frac{1}{p}\eps.
\end{align*}
Since $\eps$ is arbitrary, we have that $d_X^{\scriptscriptstyle{(p)}}(x,x')=0$.
\end{proof}


\section{$\dghp{p} $: $p$-Gromov-Hausdorff distance} \label{sec:dghp}
Recall that in \Cref{sec:intro}, we have defined for any metric spaces $X$ and $Y$ and for any $p\in[1,\infty)$ a Gromov-Hausdorff like quantity
\[\dghp{p}(X,Y)\coloneqq 2^{-\frac{1}{p}}\inf_{R}\sup_{(x,y),(x',y')\in R}\left|(d_X (x,x'))^p- (d_Y (y,y'))^p\right|^\frac{1}{p}.\]
In this section, we study various properties and characterizations of $\dghp p$.

Note that we only define $\dghp p$ for $p\in[1,\infty)$ through the above equation. Below, we introduce the notion of $p$-distortion for all $p\in[1,\infty]$ and redefine $\dghp p$ for all $p\in[1,\infty]$ in a unified way.

Given any $p\in[1,\infty]$, for two metric spaces $(X,d_X)$ and $(Y,d_Y)$, the $p$-distortion of a correspondence $R$ between $X$ and $Y$ is defined as 

\begin{equation}\label{eq:distortionp}
    \disp{p}(R,d_X,d_Y)\coloneqq \sup_{(x,y),(x',y')\in R}\Lambda_p( d_X (x,x'), d_Y (y,y')).
\end{equation}
When the underlying metric structures are clear, we will abbreviate $\disp{p}(R,d_X,d_Y)$ as $\disp{p}(R)$.

Note that for $p=1$, $\disp{1}$ coincides with the usual notion of distortion $\dis$ of a correspondence given in Equation (\ref{eq:dist}).  Via the $p$-distortion, $\dghp{p}$ can be rewritten as follows:
\begin{equation}\label{eq:dghp distortion formula}
    \dghp{p}(X,Y)\coloneqq 2^{-\frac{1}{p}}\inf_{R}\disp{p}(R).
\end{equation}
Similarly, when $p=\infty$, we define
\begin{equation}\label{eq:dgh infinity distortion formula}
    \dghp{\infty}(X,Y)\coloneqq \inf_{R}\disp{\infty}(R),
\end{equation}
where  the implicit coefficient is $2^{-\frac{1}{\infty}}=1$. 

\begin{remark}[Monotonicity of $\dghp p$]\label{rmk:increading dghp}
By \Cref{lm:increasing lambdap} and the definition of $\dghp p$, for any metric spaces $X,Y$ and any $1\leq p<q\leq \infty$, we have that
\[\dghp p(X,Y)\leq \dghp q(X,Y).\]
\end{remark}

It turns out that $\dghp{p}$ is indeed a metric on $\ms$. More precisely, $\dghp p$ is a $p$-metric. We will hence call $\dghp p$ the $p$-Gromov-Hausdorff distance.

\begin{theorem}
For any $p\in[1,\infty]$, $(\ms,\dghp p)$ is a $p$-metric space.
\end{theorem}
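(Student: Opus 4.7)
The plan is to verify the three axioms for $(\ms, \dghp p)$ to be a $p$-metric space: symmetry, nondegeneracy, and the $p$-triangle inequality. Symmetry is immediate because reversing a correspondence $R \mapsto R^T = \{(y,x) : (x,y) \in R\}$ does not change the sup defining $\disp p(R)$. For nondegeneracy, if $X \cong Y$ via an isometry $f$ then the graph $R_f = \{(x, f(x))\}$ satisfies $d_X(x, x') = d_Y(f(x), f(x'))$ throughout, so $\disp p(R_f) = 0$ and $\dghp p(X, Y) = 0$. Conversely, if $\dghp p(X, Y) = 0$ then one has correspondences $R_n$ with $\disp p(R_n) \to 0$. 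Since $X$ and $Y$ are compact, every distance involved lies in some bounded interval $[0, M]$, and for $1 \leq p < \infty$ the map $s \mapsto s^{1/p}$ is uniformly continuous on $[0, M^p]$, so $\dis(R_n) = \disp 1(R_n) \to 0$ as well; for $p = \infty$ the definition of $\Lambda_\infty$ directly forces matching distances. Either way \Cref{thm:legmetric} yields $X \cong Y$.

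The heart of the argument is the $p$-triangle inequality, and the plan is to reduce it to the following auxiliary fact: $\Lambda_p$ itself satisfies a $p$-triangle inequality on $\R_{\geq 0}$,
\[\Lambda_p(a, c) \leq \Lambda_p(a, b) \ps p \Lambda_p(b, c), \qquad \forall\, a, b, c \geq 0.\]
For $p < \infty$ this follows by raising both sides to the $p$-th power and invoking the ordinary triangle inequality on $\R$ applied to $a^p$, $b^p$, $c^p$. For $p = \infty$ one checks it by a short case analysis according to whether $a = c$ and on the relative order of $a, b, c$; the key observation is that whenever $a \neq c$ at least one of $b \neq a$, $b \neq c$ holds, forcing $\Lambda_\infty(a,b) \ps \infty \Lambda_\infty(b,c) \geq \max(a,c) = \Lambda_\infty(a,c)$.

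Granting the auxiliary inequality, let $X, Y, Z \in \ms$ and take correspondences $R_1 \subseteq X \times Y$ and $R_2 \subseteq Y \times Z$. Form the composite
\[R_3 \coloneqq R_2 \circ R_1 = \{(x, z) \in X \times Z : \exists\, y \in Y \text{ with } (x, y) \in R_1 \text{ and } (y, z) \in R_2\},\]
which is easily verified to be a correspondence between $X$ and $Z$. For any $(x, z), (x', z') \in R_3$ pick witnesses $y, y' \in Y$ and apply the auxiliary inequality with $a = d_X(x, x')$, $b = d_Y(y, y')$, $c = d_Z(z, z')$. Taking suprema gives $\disp p(R_3) \leq \disp p(R_1) \ps p \disp p(R_2)$. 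Multiplying by $2^{-1/p}$ and using the distributivity identity $2^{-1/p}(\alpha \ps p \beta) = (2^{-1/p}\alpha) \ps p (2^{-1/p}\beta)$ (checked directly: $(2^{-1/p}\alpha)^p + (2^{-1/p}\beta)^p = 2^{-1}(\alpha^p + \beta^p)$), and then infimizing over $R_1$ and $R_2$, yields $\dghp p(X, Z) \leq \dghp p(X, Y) \ps p \dghp p(Y, Z)$. The only real obstacle is the bookkeeping needed to treat $p = \infty$ uniformly with $p < \infty$, since the piecewise definition of $\Lambda_\infty$ forces a separate case analysis at each step; everything else is a direct $p$-adapted transcription of the classical argument for $\dgh$.
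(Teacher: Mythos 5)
Your proposal is correct and follows essentially the same route as the paper: the $p$-triangle inequality is obtained by composing correspondences through the middle space and invoking the fact that $\Lambda_p$ is itself a $p$-metric on $\R_{\geq 0}$ (the paper's Example \ref{ex:pm is pm}), and nondegeneracy is reduced to $\dgh(X,Y)=0$ and Theorem \ref{thm:legmetric}. The only cosmetic difference is that the paper gets $\dgh\leq\dghp{p}$ directly from the monotonicity of $\Lambda_p$ in $p$ (Lemma \ref{lm:increasing lambdap}) rather than via your uniform-continuity argument, but both are immediate.
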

\begin{proof}
When $X\cong Y$, it is easy to see that $\dghp{p}(X,Y)=0$. Conversely, recall that by Remark \ref{rmk:increading dghp}, $\dghp{p}\geq \dgh$. Hence, $\dghp{p}(X,Y)=0$ implies that $\dgh(X,Y)=0$ and thus $X\cong Y$. 

Given any $X,Y,Z\in\mathcal{M}$, we prove that  $$\dghp{p}(X,Y)\leq\dghp{p}(X,Z)\ps{p} \dghp{p}(Y,Z).$$

Let $R_1\in\mathcal{R}(X,Z)$ and $R_2\in\mathcal{R}(Y,Z)$. Define the correspondence $R$ between $X$ and $Y$ by 
$$R:=\{(x,y)\in X\times Y:\,\exists z\in Z, \text{ such that } (x,z)\in R_1\text{ and }(y,z)\in R_2\}. $$ 
Now, for any $(x_1,y_1),(x_2,y_2)\in R$, there exist $z_1,z_2\in Z$ such that $(x_i,z_i)\in R_1$ and $(y_i,z_i)\in R_2$ for $i=1,2$. Then,
\begin{align*}
    \Lambda_p(d_X(x_1,x_2),d_Y(y_1,y_2))&\leq \Lambda_p(d_X(x_1,x_2),d_Z(z_1,z_2))\ps p\Lambda_p(d_Y(y_1,y_2),d_Z(z_1,z_2))\\
    &\leq \disp p(R_1)\ps p\disp p(R_2).
\end{align*}
Thus, $\disp{p}(R)\leq\disp{p}(R_1)\ps{p} \disp{p}(R_2)$, which implies that $$\dghp{p}(X,Y)\leq\dghp{p}(X,Z)\ps{p}\dghp{p}(Y,Z).$$ 
\end{proof}

\paragraph{A Kalton and Ostrovskii type characterization for $\dghp p$.} Given two metric spaces $X$ and $Y$, we define the distortion of any map $\varphi:X\rightarrow Y$ by
\begin{equation}\label{eq:dismap}
    \dis(\varphi,d_X,d_Y)\coloneqq\sup_{x,x'\in X}\big|d_X(x,x')-d_Y(\varphi(x),\varphi(x'))\big|.
\end{equation}
Given another map $\psi:Y\rightarrow X$, we define the codistortion of the pair of maps  $(\varphi,\psi)$ by
\begin{equation}\label{eq:codismap}
    \codis(\varphi,\psi,d_X,d_Y)\coloneqq \sup_{x\in X,y\in Y}\big|d_X(x,\psi(y))-d_Y(\varphi(x),y)\big|. 
\end{equation}
When the underlying metric structures are clear from the context, we will usually abbreviate $\dis(\varphi,d_X,d_Y)$ and $\codis(\varphi,\psi,d_X,d_Y)$ to $\dis(\varphi)$ and $\codis(\varphi,\psi)$, respectively.

Then, one has the following formula, \cite[Theorem 2.1]{kalton1999distances}:

\begin{equation}\label{eq:distmap}
    \dgh(X,Y)=\frac{1}{2}\inf_{\substack{\varphi:X\rightarrow Y \\ \psi:Y\rightarrow X}}\max\left(\dis(\varphi),\dis(\psi),\codis(\varphi,\psi)\right). 
\end{equation}

In the case of $\dghp p$, a similar formula also holds. Let $X,Y\in\ms$. We then define the $p$-distortion of any map $\varphi:X\rightarrow Y$ by
\begin{equation}\label{eq:disp}
    \disp{p}(\varphi,d_X,d_Y)\coloneqq \sup_{x,x'\in X}\Lambda_p( d_X (x,x'), d_Y (\varphi(x),\varphi(x'))).
\end{equation}
Similarly, given any map $\psi:Y\rightarrow X$, we define the $p$-codistortion of the pair $(\varphi,\psi)$ by
\begin{equation}
    \codis_p(\varphi,\psi,d_X,d_Y)\coloneqq \sup_{x\in X,y\in Y}\Lambda_p(d_X (x,\psi(y)), d_Y (\varphi(x),y)).
\end{equation}
We will use abbreviations $\disp{p}(\varphi)$ and $\codis_p(\varphi,\psi)$ when the underlying metric structures are clear from the context.

Then, one can easily derive from \Cref{eq:dghp distortion formula} and \Cref{eq:dgh infinity distortion formula} the following Kalton and Ostrovskii type formula for $\dghp p$, which is analogous to Equation (\ref{eq:distmap}). 

\begin{proposition}\label{thm:dist}
For $X,Y\in\ms$ and $p\in[1,\infty]$, one has that
\begin{equation}
    \dghp{p}(X,Y)= 2^{-\frac{1}{p}}\inf_{\substack{\varphi:X\rightarrow Y \\ \psi:Y\rightarrow X}}\max\left(\disp{p}(\varphi),\disp{p}(\psi),\codis_p(\varphi,\psi)\right).
\end{equation}
\end{proposition}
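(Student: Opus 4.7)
The plan is to prove the identity by establishing both inequalities, following the template of the classical Kalton–Ostrovskii argument for $\dgh$ but systematically replacing absolute differences with $\Lambda_p$ and distortions with $p$-distortions as defined in Equations \ref{eq:distortionp} and \ref{eq:disp}. Because the right-hand side of the claimed formula carries the same prefactor $2^{-1/p}$ as appears in Equations \ref{eq:dghp distortion formula} and \ref{eq:dgh infinity distortion formula}, the proof reduces to showing that
\[\inf_R\disp{p}(R)\;=\;\inf_{\varphi,\psi}\max\bigl(\disp{p}(\varphi),\disp{p}(\psi),\codis_p(\varphi,\psi)\bigr),\]
where $R$ ranges over correspondences between $X$ and $Y$ and $(\varphi,\psi)$ over arbitrary pairs of set maps $\varphi:X\to Y$ and $\psi:Y\to X$.

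For the inequality $\leq$, I would associate to any pair $(\varphi,\psi)$ the graph-union correspondence
\[R_{\varphi,\psi}\;\coloneqq\;\{(x,\varphi(x)):x\in X\}\cup\{(\psi(y),y):y\in Y\},\]
which is indeed a correspondence since its projections are surjective onto $X$ and $Y$ respectively. I would then bound $\disp{p}(R_{\varphi,\psi})$ by a case analysis on a pair of points $(x_1,y_1),(x_2,y_2)\in R_{\varphi,\psi}$: when both come from the first piece, $\Lambda_p(d_X(x_1,x_2),d_Y(\varphi(x_1),\varphi(x_2)))\leq \disp{p}(\varphi)$; when both come from the second, we similarly obtain an upper bound by $\disp{p}(\psi)$; and in the mixed case (say $y_1=\varphi(x_1)$, $x_2=\psi(y_2)$) we get $\Lambda_p(d_X(x_1,\psi(y_2)),d_Y(\varphi(x_1),y_2))\leq \codis_p(\varphi,\psi)$. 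Taking the supremum over all such pairs yields $\disp{p}(R_{\varphi,\psi})\leq\max(\disp{p}(\varphi),\disp{p}(\psi),\codis_p(\varphi,\psi))$, and infimizing gives the $\leq$ direction.

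For the reverse inequality, given any correspondence $R$ I would apply the axiom of choice to extract maps: for each $x\in X$ pick $\varphi(x)\in Y$ with $(x,\varphi(x))\in R$, and for each $y\in Y$ pick $\psi(y)\in X$ with $(\psi(y),y)\in R$. Then for any $x_1,x_2\in X$ the pairs $(x_i,\varphi(x_i))$ lie in $R$, so $\disp{p}(\varphi)\leq\disp{p}(R)$; analogously $\disp{p}(\psi)\leq\disp{p}(R)$; and for any $x\in X$ and $y\in Y$ the pairs $(x,\varphi(x))$ and $(\psi(y),y)$ are both in $R$, yielding $\codis_p(\varphi,\psi)\leq\disp{p}(R)$. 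Taking the max and then the infimum over $R$ closes the loop. No serious obstacle is expected; the main subtlety to watch is that every inequality above is phrased purely in terms of $\Lambda_p$ and the $p$-distortion/codistortion, so the argument runs uniformly for all $p\in[1,\infty]$, including the piecewise-defined $p=\infty$ case, and hence also recovers the Kalton–Ostrovskii identity (Equation \ref{eq:distmap}) at $p=1$.
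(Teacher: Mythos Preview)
Your proof is correct and is precisely the standard Kalton--Ostrovskii argument the paper has in mind; the paper does not spell out a proof for this proposition, merely stating that it ``can easily derive'' from the distortion formulas (\ref{eq:dghp distortion formula}) and (\ref{eq:dgh infinity distortion formula}), and your graph-union correspondence together with the choice-based extraction of $(\varphi,\psi)$ from $R$ is exactly the expected derivation.
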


\subsection{Characterization of $\dghp p$ on $\msp$}\label{sec:charact dghp on msp}
Recall from \Cref{def:dgh} that the Gromov-Hausdorff distance $\dgh$ is defined via isometric embeddings and the Hausdorff distance. It is natural to wonder whether $\dghp p$ can be similarly characterized. The answer is yes if we restrict $\dghp p$ to $\msp$. Due to this reason, when referring to $\dghp p$ in the sequel, $\dghp p$ is usually restricted to $\msp$. 

\begin{theorem}\label{thm:dGHp}
Given any $p\in[1,\infty]$ and $X,Y\in\mathcal{M}_p$, the $p$-Gromov-Hausdorff distance between them can be characterized as follows:
$$\dghp{p}\left(X,Y\right)=\inf_{Z}\dH^Z\left(\varphi_X\left(X\right),\varphi_Y\left(Y\right)\right), $$
where the infimum is taken over all $Z\in\msp$ and isometric embeddings $\varphi_X:X\hookrightarrow Z$ and $\varphi_Y:Y\hookrightarrow Z$.
\end{theorem}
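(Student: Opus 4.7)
The plan is to prove both inequalities directly, mirroring the classical proof (cf.~\Cref{rem:dgh-du}) but replacing the ordinary triangle inequality with $\ps{p}$ throughout, with \Cref{lm:trick p triangle} and \Cref{prop:simple-inv-triang} playing the role of the usual quadrilateral and inverse-triangle inequalities.

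For the easy direction, suppose $\varphi_X:X\hookrightarrow Z$ and $\varphi_Y:Y\hookrightarrow Z$ are isometric embeddings into some $Z\in\msp$ with $\dH^Z(\varphi_X(X),\varphi_Y(Y))<\eta$, and define the correspondence $R_\eta \coloneqq \{(x,y)\in X\times Y : d_Z(\varphi_X(x),\varphi_Y(y))\leq \eta\}$, which is indeed a correspondence by the Hausdorff hypothesis. Applying \Cref{lm:trick p triangle} inside $Z$ to the four points $\varphi_X(x),\varphi_Y(y),\varphi_X(x'),\varphi_Y(y')$ and using that $\varphi_X,\varphi_Y$ are isometries gives $\Lambda_p(d_X(x,x'),d_Y(y,y'))\leq\eta\ps{p}\eta = 2^{1/p}\eta$ for all $(x,y),(x',y')\in R_\eta$. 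Hence $\disp{p}(R_\eta)\leq 2^{1/p}\eta$ and $\dghp{p}(X,Y)\leq \eta$; infimizing over $\eta$ and $Z$ yields $\dghp{p}(X,Y)\leq\inf_Z \dH^Z(\varphi_X(X),\varphi_Y(Y))$.

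For the reverse direction, fix any correspondence $R$ between $X$ and $Y$, set $s\coloneqq\disp{p}(R)$ and $t\coloneqq 2^{-1/p}s$, and define a candidate $p$-metric $d$ on the disjoint union $Z\coloneqq X\sqcup Y$ by $d|_{X\times X}=d_X$, $d|_{Y\times Y}=d_Y$, and, for $x\in X,y\in Y$, $d(x,y)\coloneqq \inf_{(x',y')\in R}\bigl(d_X(x,x') \ps{p} t \ps{p} d_Y(y',y)\bigr)$. For $(x,y)\in R$, taking $(x',y')=(x,y)$ in the infimum gives $d(x,y)\leq t$, so once $d$ is shown to be a $p$-metric on $Z$, $\dH^Z(X,Y)\leq t=2^{-1/p}\disp{p}(R)$ and infimizing over $R$ yields the desired inequality. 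If $d$ turns out only to be a $p$-pseudometric (which can happen only in the degenerate case $t=0$), one passes to the induced metric quotient from \Cref{sec:pseudometric space}, preserving all relevant bounds.

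The main obstacle is verifying the $p$-triangle inequality for $d$; I case-split by how the three points distribute between $X$ and $Y$. The purely XXX and YYY cases are trivial, while the XXY and XYY cases follow from a single application of the $p$-triangle in $X$ or $Y$ combined with the fact that $a \ps{p} b$ is monotone in each argument (so the infimum defining $d$ passes through). The XYX case is the crux: given $x_1,x_2\in X$, $y\in Y$, and $(x_i',y_i')\in R$ for $i=1,2$, iterate $p$-triangle in $X$ to get $d_X(x_1,x_2)\leq d_X(x_1,x_1')\ps{p} d_X(x_1',x_2')\ps{p} d_X(x_2',x_2)$; then \Cref{prop:simple-inv-triang} applied to $\Lambda_p(d_X(x_1',x_2'),d_Y(y_1',y_2'))\leq s= t\ps{p} t$ gives $d_X(x_1',x_2')\leq t\ps{p} t\ps{p} d_Y(y_1',y_2')$; finally use $d_Y(y_1',y_2')\leq d_Y(y_1',y)\ps{p} d_Y(y,y_2')$. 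Regrouping the resulting six $\ps{p}$-summands yields $d_X(x_1,x_2)\leq \bigl(d_X(x_1,x_1')\ps{p} t\ps{p} d_Y(y_1',y)\bigr)\ps{p}\bigl(d_X(x_2,x_2')\ps{p} t\ps{p} d_Y(y_2',y)\bigr)$, and infimizing over $(x_1',y_1')$ and $(x_2',y_2')$ separately (again using monotonicity of $\ps{p}$) gives $d_X(x_1,x_2)\leq d(x_1,y)\ps{p} d(y,x_2)$; the YXY case is symmetric. The case $p=\infty$ proceeds uniformly by reading $\ps{\infty}=\max$.
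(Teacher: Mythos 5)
Your proposal is correct and follows essentially the same route as the paper's proof: one direction extracts a correspondence from the Hausdorff bound and applies \Cref{lm:trick p triangle}, and the other builds the same gluing $p$-metric $d(x,y)=\inf_{(x',y')\in R}\bigl(d_X(x,x')\ps{p}\eta\ps{p} d_Y(y',y)\bigr)$ on $X\sqcup Y$, with \Cref{prop:simple-inv-triang} handling the crucial $X$--$Y$--$X$ triangle case. The only differences are cosmetic: you take $\eta=2^{-1/p}\disp{p}(R)$ exactly and note the degenerate pseudometric case, which the paper sidesteps by keeping $\eta>0$ strictly.
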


\begin{remark}[Alternative formulation]
The collection of $Z$ in \Cref{thm:dGHp} can be restricted as follows. For $X,Y\in\mathcal{M}_p$ let $\mathcal{D}_p(d_X,d_Y)$ denote the set of all $p$-metrics  $ d :X\sqcup Y \times X\sqcup Y\rightarrow \R_{\geq 0}$ such that $ d |_{X\times X} =  d_X $ and $ d |_{Y\times Y} =  d_Y $. Then, it is easy to see that
$$\dghp{p}\left(X,Y\right)=\inf_{ d \in\mathcal{D}_p(d_X,d_Y)}\dH^{(X\sqcup Y, d )}\left(\varphi_X\left(X\right),\varphi_Y\left(Y\right)\right). $$
\end{remark}

\begin{example}\label{ex:dlpnet}
Fix any $p\in[1,\infty]$ and any $X\in\msp$. Given any $\eps>0$ and any $\eps$-net $S$ in $X$, we have that
\[\dghp{p}(S,X)\leq\dH^X(S,X)\leq \eps.\]
\end{example}

\begin{remark}\label{rmk:msp is pms}
Since $\dghp p$ is a $p$-metric on $\ms$, its restriction to $\msp$, still denoted by $\dghp p$, is also a $p$-metric, i.e., $(\msp,\dghp p)$ is a $p$-metric space. This result is structurally satisfactory in that the collection of compact $p$-metric spaces itself is a $p$-metric spaces.
\end{remark}

Zarichnyi introduced in \cite{zarichnyi2005gromov} the \textit{Gromov-Hausdorff ultrametric}, which we denote by $\ugh$, in the same way as we characterize the $\infty$-Gromov-Hausdorff distance in \Cref{thm:dGHp}:

\begin{definition}[Gromov-Hausdorff ultrametric]
For any $X,Y\in\mathcal{U}$, the Gromov-Hausdorff ultrametric $\ugh$ between them is defined as follows:
$$\ugh\left(X,Y\right):=\inf_{Z}\dH^Z\left(\varphi_X\left(X\right),\varphi_Y\left(Y\right)\right), $$
where the infimum is taken over all $Z\in\ums$ and isometric embeddings $\varphi_X:X\hookrightarrow Z$ and $\varphi_Y:Y\hookrightarrow Z$.
\end{definition}

\Cref{thm:dGHp} is then a generalization of the following result:
\begin{proposition}[{\cite[Theorem 26]{memoli2021gromov}}]\label{rmk:ugh zarichnyi}
For any $X,Y\in\mathcal{U}$, we have that
\[\dghp{\infty}(X,Y)=\ugh(X,Y).\]
\end{proposition}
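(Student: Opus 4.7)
The proof is essentially a direct corollary of Theorem~\ref{thm:dGHp} specialized to $p=\infty$. The plan has two straightforward steps.

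First, I would verify that $\mathcal{M}_\infty = \mathcal{U}$. By Definition~\ref{def:pms}, the $\infty$-triangle inequality reads
$$d_X(x,x'') \leq d_X(x,x') \boxplus_\infty d_X(x',x'') = \max\bigl(d_X(x,x'),\, d_X(x',x'')\bigr),$$
which is precisely the strong triangle inequality characterizing ultrametric spaces. Hence the class of compact $\infty$-metric spaces coincides with $\mathcal{U}$, as already indicated right after Definition~\ref{def:pms}.

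Next, I would invoke Theorem~\ref{thm:dGHp} at $p=\infty$ for $X,Y\in\mathcal{U}=\mathcal{M}_\infty$. The conclusion of that theorem gives
$$\dghp{\infty}(X,Y) = \inf_Z \dH^Z\bigl(\varphi_X(X),\varphi_Y(Y)\bigr),$$
where the infimum ranges over $Z\in\mathcal{M}_\infty$ and over isometric embeddings $\varphi_X:X\hookrightarrow Z$ and $\varphi_Y:Y\hookrightarrow Z$. Using the identification $\mathcal{M}_\infty=\mathcal{U}$ from the previous step, the infimum becomes one over compact ultrametric ambient spaces together with isometric embeddings of $X$ and $Y$. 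This is, symbol for symbol, the definition of $\ugh(X,Y)$ recalled in the excerpt. Thus $\dghp{\infty}(X,Y) = \ugh(X,Y)$.

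There is no real obstacle localized to this proposition; the entire content is packaged inside Theorem~\ref{thm:dGHp}, and the present statement amounts to the observation that the $p=\infty$ instance of the $p$-Gromov-Hausdorff distance, when restricted to its most compatible subclass $\mathcal{M}_p$, is literally Zarichnyi's ultrametric. If instead one were asked to prove it directly without invoking Theorem~\ref{thm:dGHp}, the two inequalities would be handled symmetrically, exactly as in the classical Gromov-Hausdorff case (Remark~\ref{rem:dgh-du}): given any correspondence $R$, one builds an ultrametric in $\mathcal{D}_\infty(u_X,u_Y)$ on $X\sqcup Y$ by setting $u(x,y) \coloneqq \max\bigl(\tfrac12\disp{\infty}(R),\inf_{(x',y')\in R}\max(u_X(x,x'),u_Y(y,y'))\bigr)$ (or a similar construction), checking the strong triangle inequality case by case; and conversely, given isometric embeddings into an ultrametric $Z$, the correspondence induced by the $\eps$-neighborhoods yields $\disp{\infty}\le\dH^Z$. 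But given that Theorem~\ref{thm:dGHp} is already available in the excerpt, the short derivation above suffices.
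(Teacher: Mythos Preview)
Your proposal is correct and matches the paper's own treatment: the paper explicitly presents Proposition~\ref{rmk:ugh zarichnyi} as the $p=\infty$ special case of Theorem~\ref{thm:dGHp}, noting that Zarichnyi's definition of $\ugh$ is literally the Hausdorff-embedding characterization applied over ultrametric ambient spaces. Since the proof of Theorem~\ref{thm:dGHp} is self-contained and does not rely on this proposition, there is no circularity in your derivation.
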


We will henceforth use $\dghp\infty$ and $\ugh$ interchangeably in this paper.


Now, we finish this section by proving \Cref{thm:dGHp}.

\begin{proof}[Proof of \Cref{thm:dGHp}]
To proceed with the proof, we need the following claim which is obvious from the definition of $\dghp{p}$ and \cite[Proposition 2.1]{memoli2011gromov}.

\begin{claim}\label{claim:dghp inf R,d}
$\dghp{p}(X,Y)\coloneqq \inf_{R, d } \sup_{(x,y)\in R} d (x,y),$
where $R\in\mathcal{R}(X,Y)$ and $ d \in\mathcal{D}_p(d_X,d_Y).$
\end{claim}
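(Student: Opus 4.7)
The plan is to establish the claim by generalizing the proof of the analogous identity for $\dgh$ (\cite[Proposition 2.1]{memoli2011gromov}) with the $p$-triangle inequality replacing the usual triangle inequality. Since $\dghp{p}(X,Y) = 2^{-1/p}\inf_R\disp{p}(R)$ (with the convention $2^{-1/\infty}=1$), it suffices to show that for each fixed correspondence $R \in \mathcal{R}(X,Y)$,
\[
\inf_{d \in \mathcal{D}_p(d_X,d_Y)}\sup_{(x,y)\in R} d(x,y) \;=\; 2^{-1/p}\disp{p}(R).
\]

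For the inequality ``$\geq$'', I would start with any $d \in \mathcal{D}_p(d_X,d_Y)$, set $r := \sup_{(x,y)\in R} d(x,y)$, and for $(x,y),(x',y')\in R$ iterate the $p$-triangle inequality for $d$ to obtain
\[
d_X(x,x') \;=\; d(x,x') \;\leq\; d(x,y)\ps{p} d(y,y')\ps{p} d(y',x') \;\leq\; r\ps{p} d_Y(y,y')\ps{p} r.
\]
For $p<\infty$ this rearranges to $d_X(x,x')^p - d_Y(y,y')^p \leq 2r^p$, and by symmetry $\Lambda_p(d_X(x,x'),d_Y(y,y')) \leq 2^{1/p}r$; the $p=\infty$ case follows from the same chain using $\ps{\infty}=\max$. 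Taking suprema yields $\disp{p}(R)\leq 2^{1/p}r$, so $r \geq 2^{-1/p}\disp{p}(R)$.

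For the inequality ``$\leq$'', given $\eps > 0$, I would set $s := 2^{-1/p}\disp{p}(R) + \eps$ and define a candidate $d$ on $X\sqcup Y$ by extending $d_X,d_Y$ and declaring, for $x\in X$ and $y\in Y$,
\[
d(x,y) \;:=\; \inf_{(x',y')\in R} d_X(x,x')\ps{p} s \ps{p} d_Y(y',y).
\]
Plugging $(x',y') = (x,y)$ shows $d(x,y) \leq s$ whenever $(x,y)\in R$, and $d(x,y)\geq s>0$ guarantees identity of indiscernibles across $X$ and $Y$. The real work is to verify that this $d$ lies in $\mathcal{D}_p(d_X,d_Y)$, i.e.\ that it satisfies the $p$-triangle inequality across all mixed triples. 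The straightforward cases (such as $d(x_1,y) \leq d_X(x_1,x_2)\ps{p} d(x_2,y)$) follow directly from the $p$-triangle inequality in $X$ and the associativity of $\ps{p}$. The delicate case is $d(x_1,x_2)\leq d(x_1,y)\ps{p} d(y,x_2)$ with $x_1,x_2\in X$, $y\in Y$: here I would pick near-optimal $(x'_1,y'_1),(x'_2,y'_2)\in R$ realizing the infima defining $d(x_1,y)$ and $d(y,x_2)$, apply the $p$-triangle inequality twice in $X$ to get $d_X(x_1,x_2) \leq d_X(x_1,x'_1)\ps{p} d_X(x'_1,x'_2)\ps{p} d_X(x'_2,x_2)$, and then bound
\[
d_X(x'_1,x'_2)^p \;\leq\; d_Y(y'_1,y'_2)^p + \disp{p}(R)^p \;\leq\; d_Y(y'_1,y)^p + d_Y(y,y'_2)^p + 2s^p
\]
by successively invoking the definition of $\disp{p}(R)$, the $p$-triangle inequality in $Y$, and $2s^p \geq \disp{p}(R)^p$. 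Recombining these estimates gives $d(x_1,x_2)\leq d(x_1,y)\ps{p} d(y,x_2)$ once the approximation error in the two infima is sent to $0$. This mixed $p$-triangle inequality (and its symmetric counterpart for $y_1,y_2\in Y$, $x\in X$) is the main obstacle; the careful bookkeeping of $p$-th powers is exactly what makes the budget $2s^p\approx\disp{p}(R)^p$ the right one. Letting $\eps\to 0$ then yields the upper bound and hence the claim.
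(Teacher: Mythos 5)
Your argument is correct and follows essentially the same route as the paper: the lower bound comes from iterating the $p$-triangle inequality across the correspondence (which the paper packages as Lemma~\ref{lm:trick p triangle}), and the upper bound uses exactly the same glued $p$-metric $d(x,y)=\inf_{(x',y')\in R}\, d_X(x,x')\ps{p}\eta\ps{p} d_Y(y',y)$ together with the same two nontrivial mixed triangle inequalities. The only cosmetic caveat is that your $p$-th-power bookkeeping in the delicate case should be replaced by its $\max$-analogue when $p=\infty$, which the paper handles uniformly via Proposition~\ref{prop:simple-inv-triang}.
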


Assume that $\eta>\dghp{p}(X,Y)$ for some $\eta>0$. Let $ d \in\mathcal{D}_p(d_X,d_Y)$ and $R\in\mathcal{R}(X,Y)$ be such that $ d (x,y)<\eta$ for all $(x,y)\in R$. Then, one has for any $(x,y),(x',y')\in R$ that
$$\Lambda_p(d_X (x,x'), d_Y (y,y')) = \Lambda_p(d (x,x'),d (y,y')) \leq d (x,y)\ps{p} d (x',y')<\eta\ps{p}\eta=2^\frac{1}{p}\eta,$$
where the first inequality follows from \Cref{lm:trick p triangle} and we use the convention $\frac{1}{\infty}=0$.
Thus, by taking supremum over all pairs $(x,y),(x',y')\in R$ on the left-hand side, one has
$$\disp{p}(R)\leq 2^{\frac{1}{p}}\,\eta.$$

By taking infimum of the left-hand side over all correspondences $R$ between $X$ and $Y$ and letting $\eta$ approach $\dghp{p}(X,Y)$, we obtain that $\dghp{p}(X,Y) \geq {2^{-\frac{1}{p}}}\inf_{R}\disp{p}(R).$

For the opposite inequality, assume that $R\in\mathcal{R}(X,Y)$ and $\eta>0$ are such that $\disp{p}(R)\leq {2^{\frac{1}{p}}\,\eta}$. Consider $ d \in\mathcal{D}_p(d_X,d_Y)$ given by
\begin{enumerate}
        \item $d|_{X\times X}\coloneqq d_X$ and $d|_{Y\times Y}\coloneqq d_Y$;
        \item for any $(x,y)\in X\times Y$, $d (x,y)\coloneqq\inf_{(x',y')\in R}d_X(x,x')\ps{p} d_Y (y',y)\ps{p}\eta$;
        \item for any $(y,x)\in Y\times X$, we let $d(y,x)\coloneqq d(x,y)$.
\end{enumerate}

That $ d $ is indeed a $p$-metric on $X\sqcup Y$ is proved as follows. By the symmetric roles of $X$ and $Y$, we only need to check the following two cases:
\begin{enumerate}
\item $ d (x,y)\leq d(x,x')\ps{p} d (x',y),x,x'\in X,y\in Y$.

\item $ d (x,x')\leq d(x,y)\ps{p} d(x',y),x,x'\in X,y\in Y$.
\end{enumerate}

For the first case,
\begin{align*}
 d (x,x')\ps{p} d (x',y) & =  d (x,x')\ps{p}\inf_{(x_1,y_1)\in R}\left( d_X (x',x_1)\ps{p} d_Y (y_1,y)\ps{p}\eta\right)\\
& = \inf_{(x_1,y_1)\in R}\left( d (x,x')\ps{p} d_X (x',x_1)\ps{p} d_Y (y_1,y)\ps{p}\eta\right)\\
& \geq \inf_{(x_1,y_1)\in R}\left( d(x,x_1)\ps{p} d_Y (y_1,y)\ps{p}\eta\right)\\
& =  d (x,y).
\end{align*}

For the second case, 
\begin{align*}
&d (x,y)\ps{p} d (x',y)\\
 =& \inf_{(x_1,y_1)\in R}\left( d_X(x,x_1)\ps{p} d_Y  (y_1,y)\ps{p}\eta \right)\ps{p} \inf_{(x_2,y_2)\in R}\left( d_X  (x_2,x')\ps{p}  d_Y  (y_2,y)\ps{p} \eta \right)\\
 = & \inf_{(x_1,y_1),(x_2,y_2)\in R}\left( d_X  (x,x_1)\ps{p}  d_Y  (y_1,y)\ps{p} \eta \ps{p}  d_X  (x_2,x')\ps{p}  d_Y  (y_2,y)\ps{p} \eta \right)\\
 \geq & \inf_{(x_1,y_1),(x_2,y_2)\in R}\left( d_X  (x,x_1)\ps{p}  d_X  (x_2,x')\ps{p}  d_Y  (y_1,y_2)\ps{p} 2^\frac{1}{p}\eta \right)\\
 \geq & \inf_{(x_1,y_1),(x_2,y_2)\in R}\left( d_X  (x,x_1)\ps{p}  d_X  (x_2,x')\ps{p}  d_X  (x_1,x_2)\right)\geq d  (x,x').
\end{align*}
The second inequality follows from the fact that $\disp{p}(R)<  2^{\frac{1}{p}}\,\eta$ and \Cref{prop:simple-inv-triang}; the last inequality follows directly from the $p$-triangle inequality.

Note that $ d (x,y)=\eta$ for any $(x,y)\in R$. Therefore, by Claim \ref{claim:dghp inf R,d}, 
$\dghp{p}(X,Y)\leq \eta$. By a standard limit argument, one can then derive that $\dghp{p}(X,Y) \leq {2^{-\frac{1}{p}}}\inf_{R}\disp{p}(R).$
\end{proof}

\subsection{Relationship with $\dgh$}\label{sec:relation with dgh}
In this section, we study the relation between $\dghp p$ and $\dgh$ when restricted to $\msp$.

\paragraph{Isometry between $\dghp{p}$ and $\dgh$ via the snowflake transform.} One can directly relate $\dghp{p}$ and $\dgh$ in the following way via the snowflake transform:

\begin{proposition}\label{thm:dgh-dlp-eq}
Given $1\leq p<\infty$ and $X,Y\in\ms$, one has
$$\dgh(X,Y)=\lc\dghp{p}\lc S_\frac{1}{p}(X),S_\frac{1}{p}(Y)\rc\rc^p. $$
Conversely, 
for any $X,Y\in\msp$, one has 
$$\dghp{p}(X,Y) = \left(\dgh(S_p(X),S_p(Y))\right)^\frac{1}{p}.$$
\end{proposition}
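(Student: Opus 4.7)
The plan is to work directly from the correspondence/distortion characterizations of both $\dgh$ and $\dghp{p}$, exploiting the fact that the snowflake transform was essentially designed to intertwine the $p$-arithmetic with the usual arithmetic.

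First I would recall the distortion formula
\[
\dghp{p}(A,B)\;=\;2^{-1/p}\inf_{R\in\mathcal{R}(A,B)}\sup_{(a,b),(a',b')\in R}\bigl|d_A(a,a')^p-d_B(b,b')^p\bigr|^{1/p},
\]
together with $\dgh(A,B)=\tfrac{1}{2}\inf_R\sup_{(a,b),(a',b')\in R}|d_A(a,a')-d_B(b,b')|$. The identity of underlying sets under the snowflake transform means that correspondences $R\in\mathcal{R}(X,Y)$ are in bijection with correspondences in $\mathcal{R}(S_{1/p}(X),S_{1/p}(Y))$ and in $\mathcal{R}(S_p(X),S_p(Y))$, so the infima in both formulas range over the same objects.

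For the first identity, I would substitute $d_A=d_X^{1/p}$ and $d_B=d_Y^{1/p}$ into the $\dghp{p}$ formula applied to $S_{1/p}(X)$ and $S_{1/p}(Y)$. The exponents collapse: $d_A(x,x')^p=d_X(x,x')$ and likewise for $Y$, so the inner expression becomes $|d_X(x,x')-d_Y(y,y')|^{1/p}$. Since $t\mapsto t^{1/p}$ is a monotone bijection of $\R_{\geq 0}$, it commutes with $\inf_R\sup_{(x,y),(x',y')\in R}$, giving
\[
\dghp{p}\bigl(S_{1/p}(X),S_{1/p}(Y)\bigr)\;=\;2^{-1/p}\Bigl(\,\inf_{R}\sup_{(x,y),(x',y')\in R}|d_X(x,x')-d_Y(y,y')|\Bigr)^{1/p}\;=\;2^{-1/p}\bigl(2\dgh(X,Y)\bigr)^{1/p}\!,
\]
and raising both sides to the $p$-th power yields $\dgh(X,Y)$, as claimed. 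The second identity is then immediate by substituting $X\leftarrow S_p(X)$, $Y\leftarrow S_p(Y)$ and using $S_{1/p}\circ S_p=\mathrm{id}$ on $\msp$ (both $S_p(X)$ and $S_p(Y)$ lie in $\ms$ by the discussion following \Cref{ex:snowflake}); alternatively, the same one-line computation can be run directly with the roles of $p$ and $1/p$ swapped.

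The only thing to be careful about is verifying that $S_{1/p}(X),S_{1/p}(Y)\in\msp$ (so that $\dghp{p}$ is sensibly applied, and so that the distortion formula is legitimate) and that $S_p(X),S_p(Y)\in\ms$ in the converse direction; both facts are recorded just after \Cref{ex:snowflake}. No real obstacle arises: the entire argument is the observation that the exponent $p$ built into $\dghp{p}$ is exactly what is needed to absorb the snowflake exponent $1/p$, after which monotonicity of $t\mapsto t^{1/p}$ lets the root pass through the inf--sup.
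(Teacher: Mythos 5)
Your proposal is correct and follows essentially the same route as the paper: both arguments identify correspondences across the snowflake transform and observe that $\dis(R,d_X,d_Y)=\bigl(\disp{p}(R,(d_X)^{1/p},(d_Y)^{1/p})\bigr)^p$, after which monotonicity of $t\mapsto t^{1/p}$ passes the root through the $\inf$--$\sup$. The only (harmless) extra caution is your worry about $S_{1/p}(X)\in\msp$; since $\dghp{p}$ is defined on all of $\ms$, the first identity needs no such hypothesis.
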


\begin{proof}
Assume that $X,Y\in\ms$. For any $R\in\mathcal{R}(X,Y)$, we have
\begin{align*}
    \dis(R,d_X,d_Y)&=\sup_{(x,y),(x',y')\in R}|d_X(x,x')-d_Y(y,y')|\\
    &=\sup_{(x,y),(x',y')\in R}\left|\left((d_X)^\frac{1}{p}(x,x')\right)^p-\left((d_Y)^\frac{1}{p}(y,y')\right)^p\right|\\
    &=\left(\disp{p}\left(R,(d_X)^\frac{1}{p},(d_Y)^\frac{1}{p}\right)\right)^p
\end{align*}
Therefore, we have 
$$\dgh(X,Y)=\left(\dghp{p}\left(S_\frac{1}{p}(X),S_\frac{1}{p}(Y)\right)\right)^p.$$

Similarly, if $X,Y\in\msp$, then for any $R\in\mathcal{R}(X,Y)$, we have that
$$\disp{p}\lc R,d_X,d_Y\rc=\lc\dis\lc R,(d_X)^p,(d_Y)^p\rc\rc^\frac{1}{p}. $$
This implies that
$$ \dghp{p}(X,Y) = \lc\dgh(S_p(X),S_p(Y))\rc^\frac{1}{p}.$$
\end{proof}

This proposition has many interesting consequences. For example, the proposition immediately gives rise to the following analogue to \Cref{rem:ub} and \Cref{rem:lb} for $\dghp p$: 

\begin{proposition}[{\cite[Proposition 22]{memoli2021gromov}}]\label{prop:diam-dghp}
For any $p\in[1,\infty)$ and compact ultrametric spaces $X$ and $Y$, we have that
\[2^{-\frac{1}{p}}\Lambda_p(\diam(X),\diam(Y))\leq\dghp p(X,Y)\leq 2^{-\frac{1}{p}}\max(\diam(X),\diam(Y)). \]
\end{proposition}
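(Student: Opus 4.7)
My strategy is to reduce both inequalities to the classical Gromov--Hausdorff bounds already recorded in Remark \ref{rem:ub} and Remark \ref{rem:lb}, by pushing the problem through the snowflake isometry of Proposition \ref{thm:dgh-dlp-eq}. Since any compact ultrametric space $X$ lies in $\msp$ for every $p\in[1,\infty]$ by Proposition \ref{prop:inclusionpq}, Proposition \ref{thm:dgh-dlp-eq} applies and yields
\[\dghp{p}(X,Y)\;=\;\bigl(\dgh(S_p(X),S_p(Y))\bigr)^{1/p}.\]
I would also observe that the snowflake transform raises diameters to the $p$-th power, namely $\diam(S_p(X))=\sup_{x,x'\in X}d_X(x,x')^{p}=\diam(X)^{p}$, and likewise for $Y$.

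For the upper bound, I would apply Remark \ref{rem:ub} to the compact metric spaces $S_p(X),S_p(Y)\in\ms$ to get $\dgh(S_p(X),S_p(Y))\leq\tfrac12\max(\diam(X),\diam(Y))^{p}$, and then take $p$-th roots. For the lower bound, I would instead apply Remark \ref{rem:lb} to the same pair, giving $\dgh(S_p(X),S_p(Y))\geq\tfrac12|\diam(X)^{p}-\diam(Y)^{p}|$, whose $p$-th root is exactly $2^{-1/p}\Lambda_p(\diam(X),\diam(Y))$ by the very definition of $\Lambda_p$ for $p<\infty$.

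I do not anticipate any real obstacle; the whole argument compresses to a short invocation of the snowflake isometry together with two familiar facts about $\dgh$. As a sanity check, one can also give a direct self-contained argument that avoids the snowflake detour: the upper bound follows from the total correspondence $R=X\times Y$, using the elementary inequality $|a^{p}-b^{p}|\leq\max(a,b)^{p}$ to bound $\disp p(R)$; and the lower bound follows from the $p$-triangle inequality for $\dghp{p}$ applied through the one-point space $\ast$, for which the unique correspondence $X\times\{\ast\}$ has $p$-distortion $\diam(X)$, hence $\dghp{p}(X,\ast)=2^{-1/p}\diam(X)$, and then $\dghp{p}(X,\ast)^{p}\leq \dghp{p}(X,Y)^{p}+\dghp{p}(Y,\ast)^{p}$ (plus the symmetric inequality) rearranges to the claimed bound.
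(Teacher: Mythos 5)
Your proof is correct and follows essentially the same route as the paper, which derives this proposition as an immediate consequence of the snowflake isometry in Proposition \ref{thm:dgh-dlp-eq} combined with Remark \ref{rem:ub} and Remark \ref{rem:lb}. Your supplementary direct argument (total correspondence for the upper bound, $p$-triangle inequality through the one-point space for the lower bound) is also sound, but the main snowflake argument already matches the paper's intent.
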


\Cref{thm:dgh-dlp-eq} in particular establishes the stability result (cf. \Cref{thm:p-stable}) of $\mathfrak{S}_\infty$ when restricted to $\msp$.

\begin{proof}[Proof of \Cref{thm:p-stable}]
The inequality follows directly from Proposition \ref{prop:propofsp}, \Cref{thm:slstab} and \Cref{thm:dgh-dlp-eq}:
\begin{align*}
    \dghp{p}(\mathfrak{S}_\infty(X),\mathfrak{S}_\infty(Y))&=\lc\dgh\lc S_p(\mathfrak{S}_\infty(X)),S_p(\mathfrak{S}_\infty(Y))\rc\rc^\frac{1}{p}\\
    & = \lc\dgh\lc \mathfrak{S}_\infty(S_p(X)),\mathfrak{S}_\infty(S_p(Y))\rc\rc^\frac{1}{p}\\
    &\leq (\dgh(S_p(X),S_p(Y)))^\frac{1}{p}=\dghp{p}(X,Y).
\end{align*}
\end{proof}

As an application of \Cref{thm:dgh-dlp-eq}, we prove the continuity of $\dghp p$.
\begin{proposition}[Continuity of $\dghp{p}$]
Fix any $p_0\in[1,\infty]$. For any $X,Y\in\ms_{p_0}$, $\dghp{p}(X,Y)$ is continuous w.r.t. $p\in [1,p_0]$.
\end{proposition}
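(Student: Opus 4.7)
The strategy is to work directly from the distortion formula $\dghp{p}(X,Y)=2^{-1/p}\inf_R \disp{p}(R)$ and argue that the $p$-dependence inside the supremum defining $\disp{p}(R)$ is uniform in both the choice of correspondence and the arguments of $\Lambda_p$. The key fact I will exploit is that the distance values occurring in the definition are confined to the compact box $[0,D]^2$, where $D\coloneqq\max(\diam X,\diam Y)$ is finite by compactness of $X$ and $Y$.

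First I would establish that the map $(a,b,p)\mapsto \Lambda_p(a,b)$ is jointly continuous on the compact set $[0,D]^2\times[1,P]$ for any finite $P\leq p_0$. This reduces to joint continuity of $(t,p)\mapsto t^p$ on $[0,D]\times[1,P]$, which follows from the representation $t^p=\exp(p\ln t)$ on $(0,D]\times[1,P]$ together with the elementary estimate $t^p\leq \max(t,1)$ used to handle $t\to 0^+$. By compactness of $[0,D]^2\times[1,P]$, joint continuity upgrades to uniform continuity: for each $\eps>0$ there exists $\delta>0$ such that whenever $p,q\in[1,P]$ with $|p-q|<\delta$, we have $|\Lambda_p(a,b)-\Lambda_q(a,b)|<\eps$ for all $(a,b)\in[0,D]^2$.

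Since, for any correspondence $R\in\mathcal{R}(X,Y)$, the pairs $(d_X(x,x'),d_Y(y,y'))$ lie in $[0,D]^2$, this uniform estimate gives $|\disp{p}(R)-\disp{q}(R)|\leq \eps$ uniformly in $R$. Taking infimum over $R$ yields $|\inf_R \disp{p}(R)-\inf_R \disp{q}(R)|\leq \eps$, so the function $p\mapsto \inf_R \disp{p}(R)$ is continuous on every interval $[1,P]$ with $P<\infty$. Multiplying by the continuous factor $2^{-1/p}$ gives continuity of $\dghp{p}(X,Y)$ on $[1,p_0]$ whenever $p_0<\infty$. (Equivalently, one may reach the same conclusion via \Cref{thm:dgh-dlp-eq}, writing $\dghp{p}(X,Y)=(\dgh(S_p(X),S_p(Y)))^{1/p}$ and using uniform convergence of $d_X^p\to d_X^{p_1}$ on $X\times X$ as $p\to p_1<\infty$.)

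The main obstacle is the case $p_0=\infty$: here one needs continuity at the right endpoint, i.e.\ $\lim_{p\to\infty}\dghp{p}(X,Y)=\dghp{\infty}(X,Y)$ for $X,Y\in\ums$. The direct argument above breaks down because $\Lambda_\infty$ is discontinuous along the diagonal $\{a=b\}$, so uniform continuity of $\Lambda_p$ in $p$ fails at $p=\infty$. To handle this, I would invoke the monotonicity in \Cref{rmk:increading dghp} to obtain $L\coloneqq \lim_{p\to\infty}\dghp{p}(X,Y)\leq \dghp{\infty}(X,Y)$ for free, and for the reverse direction pick, for each $p$, a near-optimal closed correspondence $R_p$, then extract a subsequential Hausdorff limit $R_\infty$ using compactness of the space of closed subsets of the compact set $X\times Y$. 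The delicate step, which is the crux of the $p_0=\infty$ case, is showing $\disp{\infty}(R_\infty)\leq L$ despite the discontinuity of $\Lambda_\infty$; one argues that any pair $(x,y),(x',y')\in R_\infty$ with $d_X(x,x')\neq d_Y(y,y')$ is approximated by pairs in $R_p$ with the same strict inequality for large $p$, so that $\Lambda_p$ on those pairs converges to $\Lambda_\infty$ and the bound is preserved in the limit.
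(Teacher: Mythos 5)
Your proof is correct, but for the crucial case $p_0=\infty$ it takes a genuinely different route from the paper. For $p_0<\infty$ both arguments amount to the same thing: the paper cites the snowflake identity $\dghp{p}(X,Y)=\lc\dgh(S_p(X),S_p(Y))\rc^{1/p}$ from \Cref{thm:dgh-dlp-eq}, while you spell out the underlying uniform continuity of $(a,b,p)\mapsto\Lambda_p(a,b)$ on $[0,D]^2\times[1,P]$; these are interchangeable. For $p_0=\infty$ the paper also starts from monotonicity to get $L\coloneqq\lim_{p\to\infty}\dghp{p}(X,Y)\leq\ugh(X,Y)$, but then closes the gap by showing that $\tilde u_{\mathcal{GH}}\coloneqq\lim_p\dghp{p}$ is itself an ultrametric on $\ums$ agreeing with $\ugh$ on finite spaces (a fact imported from \cite{memoli2021gromov}), and approximating $X,Y$ by finite $\eps$-nets. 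You instead extract a Blaschke--Hausdorff limit $R_\infty$ of near-optimal closed correspondences $R_p$ and verify $\disp{\infty}(R_\infty)\leq L$ directly; the key point, which you identify correctly, is that on any pair of $R_\infty$ with $u_X(x,x')=a>b=u_Y(y,y')$ the approximating pairs eventually satisfy $b_p/a_p\leq c<1$, so $\Lambda_p(a_p,b_p)=a_p\lc 1-(b_p/a_p)^p\rc^{1/p}\to a=\Lambda_\infty(a,b)$ even though $\Lambda_\infty$ is discontinuous on the diagonal. Your route is more self-contained (it does not rely on the finite-space case or on the triangle-inequality verification for the limit function), at the cost of needing the standard but nontrivial facts that Hausdorff limits of closed correspondences are correspondences and that closed subsets of the compact $X\times Y$ form a compact space; the paper's route is shorter given the machinery it already has in place.
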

\begin{proof}
When $p_0<\infty$, the continuity follows directly from \Cref{thm:dgh-dlp-eq}. Now, we assume that $p_0=\infty$ and we only need to prove that for any $X,Y\in\ums$, $\ugh(X,Y)=\lim_{p\rightarrow\infty}\dghp{p}(X,Y)$.

The statement holds obviously when $X,Y$ are finite ultrametric spaces. When $X,Y\in\ums$, by \Cref{rmk:increading dghp}, $\lim_{p\rightarrow\infty}\dghp{p}(X,Y)$ exists and the limit is bounded above by $\ugh(X,Y)$. Define $\tilde{u}_\mathcal{GH}:\ums\times \ums\rightarrow\Rp$ by letting $\tilde{u}_\mathcal{GH}(X,Y)\coloneqq\lim_{p\rightarrow\infty}\dghp{p}(X,Y)$. Then, it is not hard to check that $\tilde{u}_\mathcal{GH}$ is an ultrametric on $\ums$ and that $\tilde{u}_\mathcal{GH}=\ugh$ on $\ufin\times\ufin$. By \Cref{ex:dlpnet}, for any $X,Y\in\ums$, there exist for every $n\in\N$, $X_n,Y_n\in\ufin$ such that $\ugh(X,X_n),\ugh(Y,Y_n)\leq\frac{1}{n}$. Then,
\[\ugh(X,Y)=\lim_{n\rightarrow\infty}\ugh(X_n,Y_n)=\lim_{n\rightarrow\infty}\tilde{u}_\mathcal{GH}(X_n,Y_n)=\tilde{u}_\mathcal{GH}(X,Y).\]
The last equality holds by continuity of metrics and by the fact that $\tilde{u}_\mathcal{GH}\leq\ugh$.
\end{proof}

The continuity of $\dghp{p}(X,Y)$ for finite ultrametric spaces $X$ and $Y$ are already mentioned in \cite[Proposition 17]{memoli2021gromov}. Hence, the proposition above is a generalization of \cite[Proposition 17]{memoli2021gromov} in two ways: we allow (1) possibly infinite spaces and (2) $p$-metric spaces $X$ and $Y$. 

Finally, we point out that \Cref{thm:dgh-dlp-eq} closely relates the two spaces $(\msp,\dghp{p})$ and $(\ms,\dgh)$ as follows.

\begin{corollary}\label{thm:isompms}
For any $p\in[1,\infty)$, we have $(\msp,\dghp{p})\cong \left(\ms,(\dgh)^\frac{1}{p}\right)$.
\end{corollary}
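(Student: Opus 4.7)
The plan is to exhibit an explicit distance-preserving bijection between $\msp$ and $\ms$. The natural candidate, suggested by the discussion of the snowflake transform in Section~\ref{sec:snowflake}, is the map $\Phi \coloneqq S_p : \msp \to \ms$, with candidate inverse $\Psi \coloneqq S_{1/p} : \ms \to \msp$. As recorded right after Definition~\ref{ex:snowflake}, for $p\in[1,\infty)$ both maps are well defined between these collections, and they descend to isometry classes since any isometry $(X,d_X) \to (X',d_{X'})$ remains an isometry after replacing $d_X,d_{X'}$ by $(d_X)^p,(d_{X'})^p$ or by $(d_X)^{1/p},(d_{X'})^{1/p}$.

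First I would observe that $\Phi$ and $\Psi$ are mutual inverses on the nose: on underlying sets they act as the identity, and on metrics $S_p \circ S_{1/p}$ sends $d$ to $\bigl(d^{1/p}\bigr)^p = d$, and symmetrically $S_{1/p} \circ S_p$ is the identity. Hence $\Phi$ is a bijection $\msp \to \ms$.

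Second, the distance-preserving property is exactly the content of the second identity in Proposition~\ref{thm:dgh-dlp-eq}: for every $X,Y \in \msp$,
$$\dghp{p}(X,Y) \;=\; \bigl(\dgh(S_p(X),S_p(Y))\bigr)^{1/p} \;=\; (\dgh)^{1/p}\bigl(\Phi(X),\Phi(Y)\bigr).$$
This is precisely the statement that $\Phi:(\msp,\dghp{p}) \to (\ms,(\dgh)^{1/p})$ is an isometry, so $(\msp,\dghp{p})\cong(\ms,(\dgh)^{1/p})$.

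There is no real obstacle here: the corollary is an efficient repackaging of Proposition~\ref{thm:dgh-dlp-eq} once one notices that $S_p$ and $S_{1/p}$ are mutually inverse bijections between the two collections. The only minor bookkeeping point is to confirm that $(\dgh)^{1/p}$ is actually a legitimate distance on $\ms$; this is not needed for the proof itself, but follows \emph{a posteriori} from the isometry, since $\dghp{p}$ is already known to be a ($p$-)metric on $\msp$ and this structure is transported across $\Phi$.
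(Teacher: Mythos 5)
Your proposal is correct and follows essentially the same route as the paper: both identify $S_p$ and $S_{1/p}$ as mutually inverse bijections between $\msp$ and $\ms$ and invoke Proposition~\ref{thm:dgh-dlp-eq} to see that $S_p$ is distance-preserving. The only cosmetic difference is that the paper quotes both identities of that proposition (one for each direction of the isometry) while you derive bijectivity directly and use a single identity, which is equally valid.
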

\begin{proof}
Consider the $p$-snowflake transform $S_p:\msp\rightarrow\ms$ sending $X$ to $S_p(X)$. By \Cref{thm:dgh-dlp-eq}, we have that for any $X,Y\in\msp$, $$\dghp{p}(X,Y)=(\dgh)^\frac{1}{p}(S_p(X),S_p(Y)).$$ 
Consider also the $\frac{1}{p}$-snowflake transform $S_\frac{1}{p}:\ms\rightarrow\msp$ sending $X$ to $S_\frac{1}{p}(X)$. Similarly, we will obtain 
$$(\dgh)^\frac{1}{p}(X,Y)=\dghp{p}\lc S_\frac{1}{p}(X),S_\frac{1}{p}(Y)\rc. $$
It is obvious that $S_\frac{1}{p}$ is the inverse of $S_p$ and thus $S_p$ is an isometry between $(\msp,\dghp{p})$ and $ \left(\ms,(\dgh)^\frac{1}{p}\right)$.
\end{proof}

\paragraph{H\"older equivalence between $\dghp{p}$ and $\dgh$.} We know from Remark \ref{rmk:increading dghp} that $\dghp{p}\geq\dgh$ for every $p\in[1,\infty]$. Naturally, one may wonder whether $\dgh$ could somehow upperbound $\dghp{p}$.  The answer is positive when  $p<\infty$. 

\begin{theorem}\label{thm:eqdgh}
There exist positive constants $C(p),D(p)$ and $E(p)$ depending only on $p\in[1,\infty)$ such that for any $X,Y\in\msp$, we have
$$\dghp{p}(X,Y)\leq C(p)\max\left(\diam(X),\diam(Y)\right)^{D(p)}\left(\dgh(X,Y)\right)^{E(p)}. $$

\end{theorem}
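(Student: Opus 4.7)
The plan is to reduce the problem to a single elementary inequality relating $\Lambda_p$ to ordinary absolute difference on a bounded interval, and then transfer it across correspondences.

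The key ingredient is the following observation. Let $M\coloneqq\max(\diam(X),\diam(Y))$, and let $a,b\in[0,M]$. Assume without loss of generality that $a\geq b$. Then
\[
a^p-b^p=\int_b^a p\,t^{p-1}\,dt\leq p\,a^{p-1}(a-b)\leq p\,M^{p-1}|a-b|,
\]
so $\Lambda_p(a,b)=|a^p-b^p|^{1/p}\leq p^{1/p}\,M^{(p-1)/p}\,|a-b|^{1/p}$. This is the only nontrivial estimate; everything else is bookkeeping.

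Next, fix any correspondence $R\in\mathcal{R}(X,Y)$. For every pair $(x,y),(x',y')\in R$ the numbers $d_X(x,x')$ and $d_Y(y,y')$ both lie in $[0,M]$, so the inequality above gives
\[
\Lambda_p(d_X(x,x'),d_Y(y,y'))\leq p^{1/p}\,M^{(p-1)/p}\,|d_X(x,x')-d_Y(y,y')|^{1/p}.
\]
Taking the supremum over all such pairs yields $\disp{p}(R)\leq p^{1/p}\,M^{(p-1)/p}\,\dis(R)^{1/p}$.

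Finally, using $\dghp{p}(X,Y)=2^{-1/p}\inf_R\disp{p}(R)$ from \Cref{eq:dghp distortion formula} and $\dgh(X,Y)=\tfrac12\inf_R\dis(R)$ from \Cref{eq:dgh-distortion}, and noting that $t\mapsto t^{1/p}$ is monotone so it commutes with the infimum over $R$, I infer
\[
\dghp{p}(X,Y)\leq 2^{-1/p}\,p^{1/p}\,M^{(p-1)/p}\bigl(2\,\dgh(X,Y)\bigr)^{1/p}=p^{1/p}\,M^{(p-1)/p}\,\dgh(X,Y)^{1/p}.
\]
This establishes the claim with $C(p)=p^{1/p}$, $D(p)=(p-1)/p$, and $E(p)=1/p$. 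Since no step requires more than the elementary bound above and the distortion formulations already proved, there is no real obstacle; the only thing to be careful about is that the hypothesis $X,Y\in\msp$ is never actually used in this direction of the inequality -- it is used only implicitly in guaranteeing that $\dghp{p}$ is the quantity we claim it is. Note also that the result genuinely fails for $p=\infty$, as $\Lambda_\infty(a,b)=\max(a,b)$ cannot be controlled by $|a-b|$; this is consistent with the statement, which restricts to $p\in[1,\infty)$.
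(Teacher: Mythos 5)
Your proof is correct, and it follows the same overall skeleton as the paper's (bound $\Lambda_p(a,b)$ by a power of $|a-b|$ on the interval $[0,M]$, then push this through correspondences and the distortion formulas for $\dghp{p}$ and $\dgh$), but your key elementary estimate is obtained differently and yields sharper constants. The paper first proves the bound $a^p-b^p\leq p\,M^{p-1}(a-b)$ only for \emph{integer} $p$ via the factorization $a^p-b^p=(a-b)(a^{p-1}+\cdots+b^{p-1})$, and then handles non-integer $p$ by a separate monotonicity lemma ($\Lambda_p(a,b)\leq\Lambda_{\lceil p\rceil}(a,b)$), which forces the exponents in the final statement to involve $\lceil p\rceil$: the paper obtains $\dghp{p}(X,Y)\leq\lceil p\rceil^{1/\lceil p\rceil}(2M)^{1-1/\lceil p\rceil}\dgh(X,Y)^{1/\lceil p\rceil}$. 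Your integral (mean-value) argument $a^p-b^p=\int_b^a p\,t^{p-1}\,dt\leq p\,M^{p-1}(a-b)$ works directly for every real $p\geq1$, so you get the cleaner exponents $D(p)=(p-1)/p$ and $E(p)=1/p$ in one step; since $1/p\geq1/\lceil p\rceil$, your bound is also the stronger one in the regime $\dgh(X,Y)\leq1$ that matters for topological equivalence. Your closing remarks are accurate as well: neither proof actually uses the $p$-triangle inequality on $X$ and $Y$, and the estimate genuinely breaks at $p=\infty$.
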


The proof follows from the following two simple lemmas regarding properties of the $p$-sum and the $p$-difference.

\begin{lemma}\label{lm:pminus-increasing}
For $a>b\geq 0$ and $1\leq p<\infty$, $f(p)\coloneqq \Lambda_p(a,b)$ is an increasing function with respect to $p$.
\end{lemma}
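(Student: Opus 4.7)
The plan is to reduce this to \Cref{lm:increasing lambdap}, which already treats the case $a>b>0$ on the larger interval $p\in[1,\infty]$. The only thing to handle separately is the boundary case $b=0$: here $\Lambda_p(a,0)=a$ for every $p\in[1,\infty)$, so $f$ is constant (hence trivially non-decreasing). For $b>0$, since $a>b>0$, \Cref{lm:increasing lambdap} directly gives $\Lambda_p(a,b)\leq\Lambda_q(a,b)$ for $1\leq p<q<\infty$, which is exactly the desired statement restricted to $[1,\infty)$.

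If instead a self-contained proof is preferred (so that this lemma does not textually depend on \Cref{lm:increasing lambdap}), I would factor
\[\Lambda_p(a,b)=a\bigl(1-(b/a)^p\bigr)^{1/p},\]
set $x\coloneqq b/a\in[0,1)$, and reduce to showing that $g(p)\coloneqq(1-x^p)^{1/p}$ is non-decreasing on $[1,\infty)$. The case $x=0$ yields $g\equiv 1$. For $x\in(0,1)$, a direct differentiation of $\log g(p)=\tfrac{1}{p}\log(1-x^p)$ shows that $g'(p)$ has the same sign as
\[H(p)\coloneqq -x^p\log(x^p)-(1-x^p)\log(1-x^p),\]
i.e.\ the Shannon entropy of the two-point distribution $(x^p,1-x^p)$, which is strictly positive since $x^p\in(0,1)$. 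This is essentially the computation already performed in the proof of \Cref{lm:increasing lambdap}.

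I do not expect any real obstacle here: the statement is strictly weaker (it excludes $p=\infty$) than \Cref{lm:increasing lambdap} modulo the trivial extension to $b=0$, so the shortest clean write-up is simply to dispatch $b=0$ by inspection and cite \Cref{lm:increasing lambdap} for $b>0$.
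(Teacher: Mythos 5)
Your proposal is correct, and it takes a genuinely different (and shorter) route than the paper. The paper proves \Cref{lm:pminus-increasing} from scratch by logarithmic differentiation of $g(p)=\tfrac{1}{p}\ln(a^p-b^p)$, i.e.\ essentially the same calculus as in the proof of \Cref{lm:increasing lambdap} but without factoring out $a$; you instead dispatch $b=0$ by the identity $\Lambda_p(a,0)=a$ and then simply cite \Cref{lm:increasing lambdap} for $a>b>0$, which already gives monotonicity on all of $[1,\infty]\supseteq[1,\infty)$. Your route buys two things. First, it avoids duplicating a computation the paper has already carried out, and it correctly isolates the one point where the hypotheses differ ($b=0$, where $f$ is constant, so ``increasing'' must be read in the weak sense). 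Second, it is actually the safer route: the algebraic rearrangement of $g'(p)$ displayed in the paper's proof of \Cref{lm:pminus-increasing} is not an identity (the two bracketed expressions differ by $b^p\ln\frac{a^p(a^p-b^p)}{b^{2p}}$, and the rearranged form can be negative, e.g.\ for $a^p=1$, $b^p=0.1$), whereas the positivity one actually needs, $a^p\ln a^p - b^p\ln b^p>(a^p-b^p)\ln(a^p-b^p)$, follows from superadditivity of $t\mapsto t\ln t$ or, equivalently, from the entropy expression $-x^p\ln(x^p)-(1-x^p)\ln(1-x^p)>0$ appearing in the (correct) proof of \Cref{lm:increasing lambdap}. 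Your fallback self-contained argument is exactly that entropy computation, so either version of your write-up is sound.
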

\begin{proof}
Let $g(p)=\ln f(p)=\frac{1}{p}\ln(a^p-b^p).$ Then, we have
\begin{align*}
    g'(p)& = \frac{1}{p^2}\left(\frac{a^p\ln a^p-b^p\ln b^p}{a^p-b^p} - \ln(a^p-b^p)\right)\\
    & =\frac{1}{p^2(a^p-b^p)}\left((a^p-b^p)(\ln a^p-\ln(a^p-b^p))+b^p\,(\ln b^p-\ln(a^p-b^p))\right)>0
\end{align*}
Therefore, $g$ is an increasing function and so is $f$.
\end{proof}

\begin{lemma}
For $M\geq a>b\geq 0$ and $1\leq p<\infty$, one has
$$\Lambda_p(a,b)\leq \lceil p\rceil^\frac{1}{\lceil p\rceil}\,M^{1-\frac{1}{\lceil p\rceil}}\,|a-b|^{\frac{1}{\lceil p\rceil}}, $$
where ${\lceil p\rceil}$ is the smallest integer greater than or equal to $p$.
\end{lemma}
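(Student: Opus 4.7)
The plan is to reduce the statement about real $p$ to the case of an integer exponent, exploit monotonicity of $\Lambda_p$ in $p$ established in the preceding lemma, and then use the elementary factorization $a^n-b^n=(a-b)\sum_{i=0}^{n-1}a^{n-1-i}b^i$.

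First I would set $n\coloneqq \lceil p\rceil$, so that $n$ is an integer with $n\geq p\geq 1$. By the preceding Lemma (monotonicity of $\Lambda_p$ in $p$ when $a>b\geq 0$), we have
\[
\Lambda_p(a,b)\leq \Lambda_n(a,b)=\bigl(a^n-b^n\bigr)^{1/n}.
\]
Thus it suffices to bound $(a^n-b^n)^{1/n}$.

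Next I would apply the finite geometric-sum factorization
\[
a^n-b^n=(a-b)\sum_{i=0}^{n-1}a^{n-1-i}\,b^{i}.
\]
Since $0\leq b<a\leq M$, each term in the sum is bounded by $M^{n-1}$, giving $a^n-b^n\leq n\,M^{n-1}\,(a-b)$. Taking $n$-th roots yields
\[
\Lambda_n(a,b)\leq n^{1/n}\,M^{1-1/n}\,(a-b)^{1/n}=\lceil p\rceil^{1/\lceil p\rceil}\,M^{1-1/\lceil p\rceil}\,|a-b|^{1/\lceil p\rceil},
\]
which combined with the monotonicity bound above completes the argument.

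There is essentially no obstacle here: the proof is a two-line computation once one has the idea of rounding $p$ up to $\lceil p\rceil$ in order to access the algebraic identity for integer exponents. The only subtlety worth flagging is that the monotonicity lemma requires $a>b$ strictly, but this is exactly the hypothesis of the present statement, so everything fits together cleanly.
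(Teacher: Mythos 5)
Your proof is correct and follows essentially the same route as the paper: reduce to the integer exponent $\lceil p\rceil$ via the monotonicity lemma immediately preceding this one, then apply the factorization $a^n-b^n=(a-b)\sum_{i=0}^{n-1}a^{n-1-i}b^i$ and bound each summand by $M^{n-1}$. The only cosmetic difference is the order of the two steps (the paper treats integer $p$ first and then invokes monotonicity for non-integer $p$).
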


\begin{proof}
First assume that $p$ is an integer. Then,
\begin{align*}
    a^p-b^p &=(a-b)(a^{p-1}+a^{p-2}b+\cdots+ab^{p-2}+b^{p-1})\\
    &\leq (a-b)\cdot p\,M^{p-1}.
\end{align*}
Hence $\Lambda_p(a,b)\leq  p^\frac{1}{ p}\,a^{1-\frac{1}{ p}}\,|a-b|^{\frac{1}{ p}}.$

Now, if $p\notin\N$, by the previous lemma $\Lambda_p(a,b)\leq \Lambda_{\lceil p\rceil}(a,b)$. The proof now follows.
\end{proof}

\begin{proof}[Proof of Theorem \ref{thm:eqdgh}]
Let $M\coloneqq \max\left(\diam(X),\diam(Y)\right)$.
For any $R\in\mathcal{R}(X,Y)$ and any $(x,y),(x',y')\in R$, we have that
\begin{align*}
    \Lambda_p(d_X (x,x'), d_Y (y,y'))\leq\lceil p\rceil^\frac{1}{\lceil p\rceil}\,M^{1-\frac{1}{\lceil p\rceil}}\,| d_X (x,x')- d_Y (y,y')|^{\frac{1}{\lceil p\rceil}} 
\end{align*}
Therefore, $\disp{p}(R)\leq \lceil p\rceil^\frac{1}{\lceil p\rceil}\,M^{1-\frac{1}{\lceil p\rceil}}\,\left(\dis(R)\right)^{\frac{1}{\lceil p\rceil}}$ and thus
$$\dghp{p}(X,Y)\leq \lceil p\rceil^\frac{1}{\lceil p\rceil}\,(2M)^{1-\frac{1}{\lceil p\rceil}}\,\left(\dgh(X,Y)\right)^{\frac{1}{\lceil p\rceil}}. $$
\end{proof}

Combining the inequality given by Theorem \ref{thm:eqdgh} with $\dghp{p}\geq\dgh$, one can conclude that when $p<\infty$, $\dghp{p}$ and $\dgh$ induce the same topology on $\msp$.

In contrast, the situation is quite different when $p=\infty$. The following example shows that $\ugh$ and $\dgh$ induce \emph{different} topologies on $\ums$.

\begin{example}\label{ex:2-pt-ms} 
Fix $\varepsilon> 0$. Consider the two-point metric space $\Delta_2(1)$ with interpoint distance 1 and the two-point metric space $\Delta_2(1+\eps)$ with interpoint distance $1+\eps$. These two spaces are obviously ultrametric spaces. Moreover, $\dgh(\Delta_2(1),\Delta_2(1+\varepsilon)) = \frac{\varepsilon}{2}$ and $\ugh(\Delta_2(1),\Delta_2(1+\varepsilon)) = 1+\varepsilon.$ Therefore, when $\eps$ approaches 0, $\Delta_2(1+\eps)$ will converge to $\Delta_2(1)$ in the sense of $\dgh$ but not in the sense of $\ugh$.
\end{example}

\nomenclature{$\Delta_n(a)$}{The $n$-point space with interpoint distance being $a$}

In conclusion, for $p\in[1,\infty)$, $\dghp{p}$ is topologically equivalent to $\dgh$ on $\msp$, whereas $\ugh$ induces a topology on $\ums$ which is coarser than the one induced by $\dgh$. 

\subsection{$\dghp{p}$ and approximate isometries}
Aside from \Cref{thm:dist} above,  as a counterpart to \cite[Corollary 7.3.28]{burago2001course}, there is another one-sided distortion characterization of $\dghp{p}$.

\begin{definition}\label{def:epsisom}
For $1\leq p\leq \infty$, let $X,Y\in\msp$ and let $\eps>0$. A map $f:X\rightarrow Y$ is called an \emph{$(\eps,p)$-isometry} if $\disp{p}(f)\leq \eps$ and $f(X)$ is an $\eps$-net of $Y$.
\end{definition}

We then have:

\begin{proposition}\label{coro:isometrychar}
For $1\leq p\leq \infty$, let $X,Y\in\msp$ and let $\eps>0$. Then,
\begin{enumerate}
    \item If $\dghp{p}(X,Y)<\eps$, then there exists a $\lc 2^\frac{1}{p}\eps,p\rc$-isometry from $X$ to $Y$.
    \item If there exists an $(\eps,p)$-isometry from $X$ to $Y$, then $\dghp{p}(X,Y)<2^\frac{1}{p}\eps$.
\end{enumerate}
\end{proposition}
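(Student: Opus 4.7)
My plan is to establish both items by using the distortion characterization $\dghp{p}(X,Y)=2^{-\frac{1}{p}}\inf_R\disp{p}(R)$ of \Cref{eq:dghp distortion formula,eq:dgh infinity distortion formula}, mimicking the structure of the classical argument for $\dgh$ in \cite[Corollary 7.3.28]{burago2001course} but with the triangle inequality replaced by the $p$-triangle inequality of $\Lambda_p$.

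For item 1, the hypothesis $\dghp{p}(X,Y)<\eps$ produces a correspondence $R\in\mathcal{R}(X,Y)$ with $\disp{p}(R)<2^{\frac{1}{p}}\eps$. I would then pick, for each $x\in X$, a point $f(x)\in Y$ with $(x,f(x))\in R$. Applying the bound on $\disp{p}(R)$ to pairs $(x,f(x)),(x',f(x'))\in R$ directly gives $\disp{p}(f)\leq\disp{p}(R)<2^{\frac{1}{p}}\eps$. For the net condition, given $y\in Y$ I choose $x\in X$ with $(x,y)\in R$; the pair $(x,f(x)),(x,y)\in R$ together with the identity $\Lambda_p(0,a)=a$ then yields $d_Y(f(x),y)\leq\disp{p}(R)<2^{\frac{1}{p}}\eps$, so $f(X)$ is a $2^{\frac{1}{p}}\eps$-net and $f$ is the desired $(2^{\frac{1}{p}}\eps,p)$-isometry.

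For item 2, given an $(\eps,p)$-isometry $f:X\rightarrow Y$, I would define the correspondence
\[R\coloneqq\{(x,y)\in X\times Y:\,d_Y(f(x),y)\leq\eps\}.\]
This is a correspondence since $(x,f(x))\in R$ for every $x$ and the $\eps$-net condition on $f(X)$ supplies coverage on the $Y$-side. For any $(x,y),(x',y')\in R$, the $p$-triangle inequality for $\Lambda_p$ (which is itself a $p$-metric on $\Rp$ by \Cref{ex:pm is pm}), chained through the intermediate value $d_Y(f(x),f(x'))$, yields
\[\Lambda_p(d_X(x,x'),d_Y(y,y'))\leq \disp{p}(f)\ps{p}\Lambda_p(d_Y(f(x),f(x')),d_Y(y,y')).\]
The first summand is bounded by $\eps$, and for the second \Cref{lm:trick p triangle} applied in $Y$ gives $\Lambda_p(d_Y(f(x),f(x')),d_Y(y,y'))\leq d_Y(f(x),y)\ps{p}d_Y(f(x'),y')\leq \eps\ps{p}\eps$. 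Combining, $\disp{p}(R)\leq \eps\ps{p}\eps\ps{p}\eps = 3^{\frac{1}{p}}\eps$ for $p<\infty$, hence $\dghp{p}(X,Y)\leq 2^{-\frac{1}{p}}\cdot 3^{\frac{1}{p}}\eps = (3/2)^{\frac{1}{p}}\eps$, which is strictly less than $2^{\frac{1}{p}}\eps$ because $3/2<2$.

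The main subtlety I anticipate lies in the $p=\infty$ regime, where the telescoping above only delivers $\dghp{\infty}(X,Y)\leq\eps$ rather than a strict bound (indeed, \Cref{ex:2-pt-ms} shows equality is achievable). For $p<\infty$ the strict inequality comes for free from the gap between $3/2$ and $2$, but for $p=\infty$ it must be understood as a non-strict bound or else sharpened by exploiting that $\Lambda_\infty$ vanishes on the diagonal together with the strong triangle inequality in $Y$.
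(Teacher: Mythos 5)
Your argument coincides with the paper's proof in every essential respect: item 1 by selecting $f$ from a near-optimal correspondence and reading off both the distortion and the net condition from $\disp{p}(R)$, and item 2 via the correspondence $R=\{(x,y):d_Y(f(x),y)\leq\eps\}$, bounded by chaining the $p$-triangle inequality for $\Lambda_p$ with \Cref{lm:trick p triangle} to obtain $\disp{p}(R)\leq 3^{\frac{1}{p}}\eps$ and hence $\dghp{p}(X,Y)\leq(3/2)^{\frac{1}{p}}\eps$. Your closing remark about $p=\infty$ is in fact a genuine catch rather than a mere subtlety: since $(3/2)^{\frac{1}{\infty}}=2^{\frac{1}{\infty}}=1$, the paper's final step ``$\leq(3/2)^{\frac{1}{p}}\eps<2^{\frac{1}{p}}\eps$'' breaks down there, and the conclusion of item 2 genuinely fails in strict form --- the unique map from $\Delta_2(\eps)$ to the one-point space is an $(\eps,\infty)$-isometry, yet $\ugh(\Delta_2(\eps),\ast)=\eps$ --- so for $p=\infty$ item 2 can only be asserted with a non-strict inequality, exactly as you suggest.
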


\begin{proof}
\begin{enumerate}
    \item Let $R\in\mathcal{R}(X,Y)$ be such that $\disp{p}(R)<2^\frac{1}{p}\eps$. Define $f:X\rightarrow Y$ as follows. For every $x\in X$, choose any $y\in Y$ such that $(x,y)\in R$. Then, let $f(x)\coloneqq y$. Then, obviously, $\disp{p}(f)\leq\disp{p}(R)<2^\frac{1}{p}\eps$. Now, we show that $f(X)$ is a $2^\frac{1}{p}\eps$-net for $Y.$ Indeed, for any $y\in Y$, if we choose any $x\in X$ such that $(x,y)\in R$, then $$d_Y(y,f(x))=\Lambda_p(d_X(x,x), d_Y(y,f(x)))\leq\disp{p}(R)<2^\frac{1}{p}\eps.$$
    \item Let $f$ be an $(\eps,p)$-isometry. Define $R\subseteq X\times Y$ by
    $$R\coloneqq\{(x,y)\in X\times Y:\, d_Y(y,f(x))\leq \eps\}. $$
    $R\in \mathcal{R}(X,Y)$ since $f(X)$ is an $\eps$-net of $Y$. It is easy to see that $(x,f(x))\in R$ for any $x\in X$. If $(x,y),(x',y')\in R$, then we have
    \begin{align*}
        \Lambda_p(d_X(x,x'), d_Y(y,y'))&\leq \Lambda_p(d_X(x,x'), d_Y(f(x),f(x')))\ps{p}\Lambda_p(d_Y(f(x),f(x')), d_Y(y,y'))\\
        &\leq \disp{p}(f)\ps{p} d_Y(y,f(x))\ps{p} d_Y(y',f(x'))\leq 3^\frac{1}{p}\eps,
    \end{align*}
    where the first inequality follows from the fact that $(\Rp,\Lambda_p)$ is a $p$-metric space and the second inequality follows from \Cref{lm:trick p triangle}.
    Hence, $\disp{p}(R)\leq 3^\frac{1}{p}\eps$ and thus we have $\dghp{p}(X,Y)\leq \left(\frac{3}{2}\right)^\frac{1}{p}\eps<2^\frac{1}{p}\eps$.
    \end{enumerate}
\end{proof}

\begin{remark}
When $p=\infty$, this corollary essentially recovers Theorem 2.23 in Qiu's paper \cite{qiu2009geometry}. Qiu requires a slightly different condition on $f$ called \emph{strong $\eps$-isometry}. This notion is actually a variant of  $(\eps,\infty)$-isometry which arises when one replaces $\dis_\infty(f)\leq\eps$ with $\dis_\infty(f)<\eps$ in Definition \ref{def:epsisom}.
\end{remark}



\section{Special structural properties of $\ugh$}\label{sec:structural and computation of ugh}

Recall the definition of the closed quotient in \Cref{sec:quotient operation}. Then, it turns out that $\ugh$ can be completely characterized via the closed quotients of the given ultrametric spaces.
\begin{restatable}[Structural theorem for $\ugh$]{theorem}{structthm}
\label{thm:ugh-structure}
For all $X,Y\in\mathcal{U}$ one has that 
$$\ugh(X,Y) = \min\left\{t\geq 0:\, (X\ct{t},u_{X\ct{t}})\cong (Y\ct{t},u_{Y\ct{t}})\right\}.$$
\end{restatable}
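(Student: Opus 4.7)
The plan is to let $S := \{t \geq 0 : X\ct{t} \cong Y\ct{t}\}$ and $s := \ugh(X,Y)$, and prove in turn that $s \leq \inf S$, that $\inf S \leq s$, and finally that the infimum is attained at $s$. Note that $S$ is nonempty, since for any $t \geq \max(\diam(X), \diam(Y))$ both quotients collapse to one-point spaces and are trivially isometric.

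For $s \leq \inf S$, fix $t \in S$ with a witnessing isometry $\phi : X\ct{t} \to Y\ct{t}$, and define
$$R := \{(x,y) \in X \times Y : \phi([x]\ct{t}^X) = [y]\ct{t}^Y\}.$$
Bijectivity of $\phi$ makes $R$ a correspondence. To bound $\disp{\infty}(R) \leq t$, take $(x,y), (x',y') \in R$. If $[x]\ct{t}^X = [x']\ct{t}^X$ then $u_X(x,x') \leq t$ and, since $\phi$ sends both classes to $[y]\ct{t}^Y = [y']\ct{t}^Y$, also $u_Y(y,y') \leq t$, so $\Lambda_\infty \leq t$. Otherwise, by \Cref{eq:closed quotient} and isometry of $\phi$, one has $u_X(x,x') = u_{X\ct{t}}([x]\ct{t},[x']\ct{t}) = u_{Y\ct{t}}([y]\ct{t},[y']\ct{t}) = u_Y(y,y')$, so $\Lambda_\infty = 0$. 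Therefore $\ugh(X,Y) \leq \disp{\infty}(R) \leq t$ by \Cref{eq:dgh infinity distortion formula}.

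For $\inf S \leq s$, fix $t > s$ and pick $R$ with $\disp{\infty}(R) < t$. Define $\phi : X\ct{t} \to Y\ct{t}$ by setting $\phi([x]\ct{t}^X) := [y]\ct{t}^Y$ for any choice of $y$ with $(x,y) \in R$. The key reduction is that $\Lambda_\infty(a,b) < t$ forces either $a = b$ or $\max(a,b) < t$. This elementary dichotomy immediately yields well-definedness (different choices of $x$ in the same class and of $y$ with $(x,y) \in R$ produce equal $Y$-classes), injectivity, and, by a symmetric argument, surjectivity. For the isometry property, if $[x]\ct{t}^X \neq [x']\ct{t}^X$ then $u_X(x,x') > t$, so the $\Lambda_\infty$-bound must hold via the equality clause $u_X(x,x') = u_Y(y,y')$, which via \Cref{eq:closed quotient} translates to $u_{X\ct{t}}([x]\ct{t},[x']\ct{t}) = u_{Y\ct{t}}([y]\ct{t},[y']\ct{t})$. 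Hence $t \in S$ for every $t > s$, and combining with the previous step, $\inf S = s$.

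It remains to show $s \in S$. If $s = 0$, then $X \cong Y$ and the $0$-closed quotients are just the original spaces, so $X\ct{0} \cong Y\ct{0}$ trivially. If $s > 0$, invoke condition (4') of \Cref{def:proper dendrogram} applied to the dendrograms $\theta_X, \theta_Y$ associated via \Cref{thm:compact ultra-dendro} with $X,Y$: there exists $\eps > 0$ such that $\theta_X(t) = \theta_X(s)$ and $\theta_Y(t) = \theta_Y(s)$ for all $t \in [s, s+\eps]$. Since the metric on the closed quotient depends only on the underlying partition and the ambient distances between representatives (cf.\ \Cref{eq:closed quotient}), it follows that $X\ct{t} = X\ct{s}$ and $Y\ct{t} = Y\ct{s}$ as ultrametric spaces for such $t$. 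Picking any $t \in (s, s+\eps)$ and applying the previous step yields $X\ct{s} \cong Y\ct{s}$, so $s \in S$ and $\min S = s$. The main conceptual obstacle is the case analysis at the heart of the second step, where the special shape of $\Lambda_\infty$ is essential; the attainment argument, while brief, relies crucially on the right-continuity axiom of dendrograms, which is specific to the ultrametric setting and explains why an analogous formula fails for general $\dghp{p}$ with $p < \infty$.
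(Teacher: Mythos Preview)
Your proof is correct and follows essentially the same three-step structure as the paper's: bound $\ugh$ from above via a correspondence built from a quotient isometry, bound it from below by building a quotient isometry from a correspondence, then argue attainment. The only notable difference is in the attainment step: the paper invokes \Cref{lm:compact-quotient} to conclude that $X\ct{s}$ and $Y\ct{s}$ are finite and hence eventually equal to $X\ct{t_n}$, $Y\ct{t_n}$ along a decreasing sequence, whereas you invoke the right-continuity axiom (4') of dendrograms directly---but these are two faces of the same compactness fact, so the arguments are equivalent in substance.
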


\begin{figure}[ht]
    \centering
    \includegraphics[width=\textwidth]{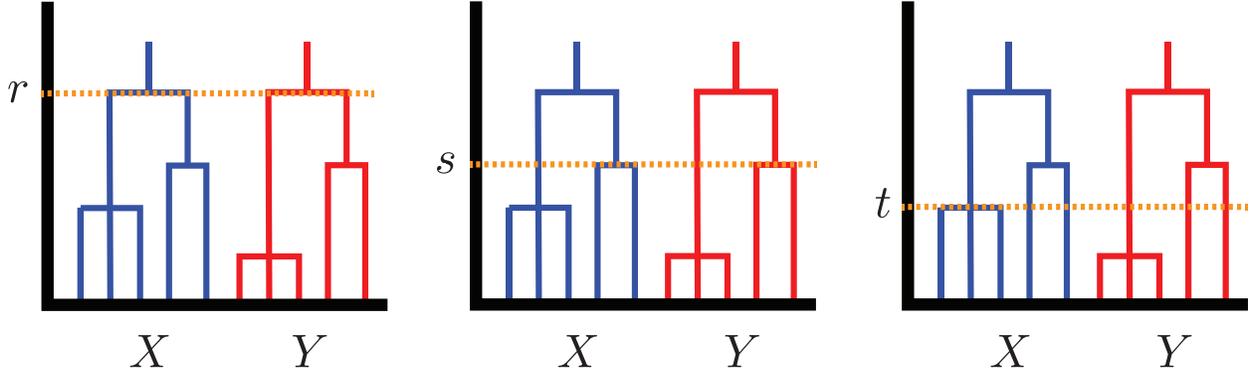}
    \caption{\textbf{Illustration of Theorem \ref{thm:ugh-structure}.} We represent two ultrametric spaces $X$ and $Y$ as dendrograms (See Theorem \ref{thm:dendroultra} for more details.). Imagine that the yellow line is scanning from right to left to obtain quotient spaces described in Definition \ref{def:ultraquotient}. It is easy to see from the figure that $X\ct{r}\cong Y\ct{r}$, $X\ct{s}\cong Y\ct{s}$, $X\ct{t}\cong Y\ct{t}$, and that $t$ is the minimum value such that the quotients are isometric. Thus, $\ugh(X,Y)=t.$ }

    \label{fig:strutural theorem}
\end{figure}

\begin{remark}[Computational implication]
The structure theorem for $\ugh$ allows us to reduce the problem of computing $\ugh(X,Y)$ between finite ultrametric spaces $X$ and $Y$ to essentially scanning all possible distance values $t$ of $X$ and $Y$ and checking whether $X\ct{t}\cong Y\ct{t}$. This process can be accomplished in time $O(n\log(n))$ where $n\coloneqq\max(\#X,\#Y)$; see \cite{memoli2021gromov} for more details.
\end{remark}

It follows easily from the structural theorem that one can directly determine the $\ugh$ distance between two ultrametric spaces with different diameters:
\begin{corollary}\label{coro:diam-ugh}
If $X$ and $Y$ are compact ultrametric spaces such that $\diam(X)\neq\diam(Y)$, then
$$\ugh(X,Y)=\max(\diam(X),\diam(Y)).$$
\end{corollary}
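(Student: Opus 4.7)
The plan is to apply the structural theorem for $\ugh$ directly. Assume without loss of generality that $\diam(X) < \diam(Y) = D$, where $D = \max(\diam(X),\diam(Y))$.

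First I would establish the upper bound $\ugh(X,Y) \leq D$. For $t = D$, every pair of points in $X$ has distance at most $\diam(X) < D \leq t$, so the equivalence relation $\sim\ct{D}$ on $X$ collapses all points, yielding $X\ct{D} \cong *$ (one-point space). Similarly, $\sim\ct{D}$ on $Y$ identifies every pair of points (since all pairwise distances are at most $\diam(Y) = D$), so $Y\ct{D} \cong *$. Hence $X\ct{D} \cong Y\ct{D}$, and by \Cref{thm:ugh-structure} we get $\ugh(X,Y) \leq D$.

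For the matching lower bound, I would show that $X\ct{t} \not\cong Y\ct{t}$ for every $t < D$, splitting into two cases based on whether $t$ exceeds $\diam(X)$:
\begin{itemize}
\item If $t < \diam(X)$, then there exist $x,x'\in X$ with $u_X(x,x') = \diam(X) > t$, so $[x]\ct{t} \neq [x']\ct{t}$ and by the definition of $u_{X\ct{t}}$ we have $\diam(X\ct{t}) = \diam(X)$. By the same reasoning applied to $Y$ (using $t < \diam(X) < D = \diam(Y)$), we obtain $\diam(Y\ct{t}) = \diam(Y) = D$. Since $\diam(X) \neq D$, these two quotients have distinct diameters and cannot be isometric.
\item If $\diam(X) \leq t < D$, then every pair in $X$ lies within distance $t$, so $X\ct{t}$ collapses to a one-point space, whereas $t < \diam(Y)$ implies $Y\ct{t}$ contains at least two points. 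Hence $X\ct{t} \not\cong Y\ct{t}$.
\end{itemize}

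Combining both cases, $\{t\geq 0 : X\ct{t}\cong Y\ct{t}\} \subseteq [D,\infty)$, so \Cref{thm:ugh-structure} gives $\ugh(X,Y) \geq D$, completing the proof. There is no real obstacle here: once the structural theorem is invoked, the argument reduces to tracking how the diameter of a closed quotient behaves as $t$ varies, which is immediate from the definition of $u_{X\ct{t}}$ in \Cref{eq:closed quotient}.
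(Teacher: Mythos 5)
Your proof is correct and follows essentially the same route as the paper: both invoke the structural theorem (\Cref{thm:ugh-structure}) and compare the closed quotients $X\ct{t}$ and $Y\ct{t}$ for $t$ below and at $\max(\diam(X),\diam(Y))$. The only difference is that you explicitly rule out isometry for $t<\diam(X)$ via a diameter comparison, a range the paper's shorter argument leaves implicit.
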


\begin{proof}
Assume without loss of generality that $\diam(X)<\diam(Y)$. Given $t=\diam(Y)$, we have $X\ct{t}\cong\ast\cong Y\ct{t}$. When $\diam(X)<t<\diam(Y)$, we have that $X\ct{t}\cong\ast$ but $Y\ct{t}\not\cong\ast$, thus $X\ct{t}\not\cong Y\ct{t}$. Therefore, by Theorem \ref{thm:ugh-structure}, $\ugh(X,Y)=\diam(Y)$.
\end{proof}

Moreover, it is direct to generalize \Cref{prop:diam-dghp} to the case of $p=\infty$.
\begin{proposition}\label{prop:diam-ugh}
For any compact ultrametric spaces $X$ and $Y$, we have that
\[\Lambda_\infty(\diam(X),\diam(Y))\leq\ugh(X,Y)\leq \max(\diam(X),\diam(Y)). \]
\end{proposition}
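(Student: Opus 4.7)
The plan is to handle the two inequalities separately, leveraging the structural theorem for $\ugh$ (Theorem~\ref{thm:ugh-structure}) for the upper bound and either Corollary~\ref{coro:diam-ugh} or the strong triangle inequality for the lower bound.

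For the upper bound $\ugh(X,Y)\leq M$ where $M\coloneqq\max(\diam(X),\diam(Y))$, I will apply the structural theorem. For any $t\geq M$, every pair of points in $X$ satisfies $u_X(x,x')\leq\diam(X)\leq t$, so $X\ct{t}$ is the one-point space; the same holds for $Y\ct{t}$. In particular $X\ct{M}\cong \ast \cong Y\ct{M}$, so $M$ belongs to the set $\{t\geq 0:X\ct{t}\cong Y\ct{t}\}$, and Theorem~\ref{thm:ugh-structure} gives $\ugh(X,Y)\leq M$.

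For the lower bound, I will split into two cases based on the definition of $\Lambda_\infty$. If $\diam(X)=\diam(Y)$, then $\Lambda_\infty(\diam(X),\diam(Y))=0\leq \ugh(X,Y)$, which is trivial. If $\diam(X)\neq\diam(Y)$, then $\Lambda_\infty(\diam(X),\diam(Y))=\max(\diam(X),\diam(Y))$, and Corollary~\ref{coro:diam-ugh} (already established above) gives precisely $\ugh(X,Y)=\max(\diam(X),\diam(Y))$, so equality holds.

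Alternatively, the unequal-diameter case can be obtained directly from the $\infty$-triangle inequality applied via the one-point space $\ast$: since $\ugh(X,\ast)=\diam(X)$ and $\ugh(Y,\ast)=\diam(Y)$, the strong triangle inequality yields $\max\bigl(\ugh(X,Y),\diam(Y)\bigr)\geq \diam(X)$, and symmetrically, so whenever the diameters differ one of these forces $\ugh(X,Y)\geq\max(\diam(X),\diam(Y))$. There is no real obstacle here; the only minor point is to verify the identity $\ugh(X,\ast)=\diam(X)$, which is immediate from the structural theorem since $X\ct{t}\cong\ast$ iff $t\geq\diam(X)$.
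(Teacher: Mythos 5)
Your proof is correct and follows essentially the same route as the paper: the upper bound comes from the structural theorem (both $M$-closed quotients collapse to a point), and the lower bound splits on whether the diameters agree, invoking Corollary~\ref{coro:diam-ugh} in the unequal case. Your alternative lower-bound argument via $\ugh(X,\ast)=\diam(X)$ and the strong triangle inequality is also valid, but it is only a cosmetic variation on the same ingredients.
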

\begin{proof}
If $\diam(X)=\diam(Y)$, then by \Cref{thm:ugh-structure} we must have that $\ugh(X,Y)\leq\diam(X)$ since the $\diam(X)$-closed quotients of both $X$ and $Y$ become the one point space. Then,
\[\Lambda_\infty(\diam(X),\diam(Y))=0\leq\ugh(X,Y)\leq \max(\diam(X),\diam(Y)). \]
Otherwise, by \Cref{coro:diam-ugh}, we have that
\[\Lambda_\infty(\diam(X),\diam(Y))=\max(\diam(X),\diam(Y))=\ugh(X,Y)= \max(\diam(X),\diam(Y)). \]
\end{proof}

Now, we provide a proof of \Cref{thm:ugh-structure}. The proof is an adaptation of the proof of the finite version of the structural theorem \cite[Theorem 3]{memoli2021gromov} to non-necessarily finite setting.

\begin{proof}[Proof of Theorem \ref{thm:ugh-structure}]
We first prove a weaker version (with $\inf$ instead of $\min$): 
\begin{equation}\label{eq:ughinf}
    \ugh(X,Y) = \inf\left\{t\geq 0:\, (X\ct{t},u_{X\ct{t}})\cong (Y\ct{t},u_{Y\ct{t}})\right\}.
\end{equation}
This version then be used to establish the desired claim. Assume that $X_{\mathfrak{c}(t)}\cong Y_{\mathfrak{c}(t)}$ for some $t\geq 0$, i.e., there exists an isometry  $f_t:X_{\mathfrak{c}(t)}\rightarrow Y_{\mathfrak{c}(t)}$. Define 
\[R_t\coloneqq \left\{(x,y)\in X\times Y:\,[y]_{\mathfrak{c}(t)}^Y= f_t\lc [x]^X_{\mathfrak{c}(t)}\rc\right\}.\]
That $R_t\in\mathcal{R}(X,Y)$ is clear since $f_t$ is bijective. For any $(x,y),(x',y')\in R_t$, if $u_X(x,x')\leq t$, then we already have $u_X(x,x')\leq \max(t,u_Y(y,y'))$. Otherwise, if $u_X(x,x')>t$, then we have $[x]^X_{\mathfrak{c}(t)}\neq[x']^X_{\mathfrak{c}(t)}$. Since $f_t$ is bijective, we have that $[y]^Y_{\mathfrak{c}(t)}=f_t\!\lc[x]^X_{\mathfrak{c}(t)}\rc\neq f_t\!\lc[x']^X_{\mathfrak{c}(t)}\rc=[y']^Y_{\mathfrak{c}(t)}.$ Then,
\[u_Y(y,y')=u_{Y_{\mathfrak{c}(t)}}\lc [y]^Y_{\mathfrak{c}(t)},[y']^Y_{\mathfrak{c}(t)}\rc=u_{X_{\mathfrak{c}(t)}}\lc [x]^X_{\mathfrak{c}(t)},[x']^X_{\mathfrak{c}(t)}\rc=u_X(x,x').\]
Therefore, $u_X(x,x')\leq\max(t,u_Y(y,y'))$. Similarly, $u_Y(y,y')\leq\max(t,u_X(x,x'))$. Then,
$$\Lambda_\infty(u_X(x,x'),u_Y(y,y'))\leq t$$
and thus $\disp{\infty}(R_t)\leq t$. This implies that 
\[\ugh(X,Y) \leq \inf\left\{t\geq 0:\, X_{\mathfrak{c}(t)}\cong Y_{\mathfrak{c}(t)}\right\}.\]

Conversely, let $R\in\mathcal{R}(X,Y)$ and let $t\coloneqq\disp{\infty}(R)$. We define a map $f_t:X_{\mathfrak{c}(t)}\rightarrow Y_{\mathfrak{c}(t)}$ as follows: for each $[x]_{\mathfrak{c}(t)}^X\in X_{\mathfrak{c}(t)}$, choose any $y\in Y$ such that $(x,y)\in R$, then we let $f_t\lc[x]_{\mathfrak{c}(t)}^X\rc\coloneqq[y]_{\mathfrak{c}(t)}^Y$. $f_t$ is well-defined. Indeed, if $[x]_{\mathfrak{c}(t)}^X=[x']_{\mathfrak{c}(t)}^X$ and $y,y'\in Y$ are such that $(x,y),(x',y')\in R$, then 
\[\Lambda_\infty(u_X(x,x'),u_Y(y,y'))\leq\disp{\infty}(R)=t.\]
This implies that $u_Y(y',y)\leq t$ which is equivalent to $[y]^Y_{\mathfrak{c}(t)}=[y']^Y_{\mathfrak{c}(t)}$. Similarly, there is a well-defined map $g_t:Y_{\mathfrak{c}(t)}\rightarrow X_{\mathfrak{c}(t)}$ sending $[y]_{\mathfrak{c}(t)}^Y\in Y_{\mathfrak{c}(t)}$ to $[x]_{\mathfrak{c}(t)}^X$ whenever $(x,y)\in R$. It is clear that $g_t$ is the inverse of $f_t$ and thus $f_t$ is bijective.
Now, consider $x,x'\in X$ such that $u_X(x,x')>t$. Let $y,y'\in Y$ be such that $(x,y),(x',y')\in R$. Then, since $\Lambda_\infty(u_X(x,x'),u_Y(y,y'))\leq\disp{\infty}(R)=t$, we must have that $u_Y(y,y')=u_X(x,x')$. Therefore,
\[u_{Y_{\mathfrak{c}(t)}}\lc f_t\lc[x]^X_{\mathfrak{c}(t)}\rc,f_t\lc[x']^X_{\mathfrak{c}(t)}\rc\rc=u_{Y_{\mathfrak{c}(t)}}\lc [y]^Y_{\mathfrak{c}(t)},[y']^Y_{\mathfrak{c}(t)}\rc=u_{X_{\mathfrak{c}(t)}}\lc[x]^X_{\mathfrak{c}(t)},[x']^X_{\mathfrak{c}(t)}\rc.\] 
This proves that $f_t$ is an isometry and thus \[\ugh(X,Y) = \inf\left\{t\geq 0:\, X_{\mathfrak{c}(t)}\cong Y_{\mathfrak{c}(t)}\right\}.\]

Now, we finish the proof by showing that the infimum in \Cref{eq:ughinf} is attainable. 

Let $\delta \coloneqq\inf\lbrace t\geq 0 :\,X\ct{t} \cong Y\ct{t}\rbrace$. If $\delta>0$, let $\{t_n\}_{n\in\mathbb{N}}$ be a decreasing sequence converging to $\delta$ such that $X\ct{t_n}\cong Y\ct{t_n}$ for all $t_n$. Since $X\ct\delta$ and $Y\ct\delta$ are finite spaces (cf. \Cref{lm:compact-quotient}), we actually have that $X\ct{t_n}=X\ct{\delta}$ and $Y\ct{t_n}=Y\ct{\delta}$ when $n$ is large enough. This immediately implies that $X\ct\delta\cong Y\ct\delta$. Now, if $\delta=0$, then by \Cref{eq:ughinf} we have that $\ugh(X,Y)=\delta=0$. This implies that $X\cong Y$ and thus $X\ct\delta\cong Y\ct\delta$. Therefore, the infimum of $\inf\left\lbrace t\geq 0 :\,X\ct{t} \cong Y\ct{t}\right\rbrace$ is always attainable.
\end{proof}

\begin{remark}\label{rmk:stru-ult-arbitrary}
Note that in the proof of \Cref{eq:ughinf} above, we have not used the compactness of the spaces. Thus, $\ugh(X,Y) = \inf\left\{t\geq 0:\, (X\ct{t},u_{X\ct{t}})\cong (Y\ct{t},u_{Y\ct{t}})\right\}$ holds for arbitrary ultrametric spaces $X$ and $Y$. 
\end{remark}

\begin{example}[$\ugh$ between $p$-adic integer rings]
One direct consequence from \Cref{rmk:stru-ult-arbitrary} is that for any different prime numbers $p$ and $q$, $\ugh$ distance between $\mathbb{Z}_p$ and $\mathbb{Z}_q$ is 1, where $\mathbb{Z}_p$ denotes the $p$-adic integer ring.

We first recall the definition of $\mathbb{Z}_p$.
For any prime number $p$, the $p$-adic integer ring $\mathbb{Z}_p$ consists of all formal power series $x=a_0+a_1p+a_2p^2+\cdots$ such that $0\leq a_i<p$ for all $i\in\mathbb{N}$. The $p$-adic valuation $|x|_p$ is defined as follows:
\[|x|_p\coloneqq\begin{cases}
0 & \text{if }x=0\\
p^{-\min\{n:\,a_n\neq 0\}}& \text{otherwise}
\end{cases}.\]
This valuation defines a metric $d$ on $\mathbb{Z}_p$: $d(x,y)\coloneqq|x-y|_p$. $d$ turns out to be an ultrametric \cite{gouvea1997p}.

It is easy to see that $\diam(\mathbb{Z}_p)=1$. Then, for any two different prime numbers $p$ and $q$, $\ugh(\mathbb{Z}_p,\mathbb{Z}_q)\leq1$. For the converse inequality, indeed, for any $\max(p^{-1},q^{-1})<t<1$, we have $(\mathbb{Z}_p)_t=\{i+p\mathbb{Z}_p:i=0,\cdots,p-1\}$ and $(\mathbb{Z}_q)_t=\{i+q\mathbb{Z}_q:i=0,\cdots,q-1\}$. So $\#(\mathbb{Z}_p)_t=p$ whereas $\#(\mathbb{Z}_q)_t=q$ and thus $(\mathbb{Z}_p)_t\not\cong(\mathbb{Z}_q)_t$. Therefore, by Remark \ref{rmk:stru-ult-arbitrary} we have that $\ugh(\mathbb{Z}_p,\mathbb{Z}_q)=1$.
\end{example}

\subsection{Curvature sets permit calculating $\ugh$.}

In this section, we show that how the curvature sets defined in \cite{gromov2007metric} can be used to  calculate $\ugh$.

\begin{definition}[Curvature sets \cite{gromov2007metric}]\label{def:curvsets}
For a metric space $X$, and a positive integer $n$, let $\Psi_X^{(n)}:X^{ n}\rightarrow \Rp^{n\times n}$ be the function given by $(x_1,\ldots,x_n)\mapsto \left(d_X(x_i,x_j)\right)_{i,j=1}^n$. Then, the curvature set of order $n$ associated to $X$ is defined as 
$$\mathrm{K}_n(X)\coloneqq \mathrm{im}\left(\Psi_X^{(n)}\right).$$
When $n=2$, any element of $\mathrm{K}_n(X)$ is of the form of $\begin{pmatrix} 
0 & d \\
d & 0 
\end{pmatrix}$ for some $x,x'\in X$ and $d=d_X(x,x')$. Thus, $\mathrm{K}_2(X)$ is equivalent to the spectrum $\spec(X)=\{d_X(x,x'):\,x,x'\in X\}$.
\end{definition}

\nomenclature[S]{$\mathrm{K}_n(X)$}{The curvature set of order $n$ associated to $X$}

\begin{theorem}[Gromov's metric space reconstruction theorem]\label{thm:gromovcurv}
Given two compact metric spaces $X$ and $Y$, if $\mathrm{K}_n(X) = \mathrm{K}_n(Y)$ for every $n \in \mathbb{N}$, then $X \cong Y$.
\end{theorem}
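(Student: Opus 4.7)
The plan is to construct an isometry $X \to Y$ by matching up a countable dense subset of $X$ with points in $Y$ via the curvature set hypothesis, and then extending by continuity and invoking a compactness argument to get surjectivity.

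First, since $X$ is compact it is separable, so choose a countable dense sequence $\{x_i\}_{i\in\mathbb{N}}\subseteq X$. For each $n\in\mathbb{N}$, the distance matrix $M_n \coloneqq (d_X(x_i,x_j))_{i,j=1}^n$ lies in $\mathrm{K}_n(X) = \mathrm{K}_n(Y)$ by hypothesis, so there exists a tuple $(y_1^{(n)},\dots,y_n^{(n)})\in Y^n$ whose mutual distances realize $M_n$. The first key step is a diagonal extraction: since $Y$ is compact, we can successively pass to subsequences so that $y_1^{(n)}$ converges, then $y_2^{(n)}$ converges, and so on, producing (along the diagonal subsequence) a single sequence $\{y_i\}_{i\in\mathbb{N}}\subseteq Y$ with $y_i^{(n_k)}\to y_i$ as $k\to\infty$ for every $i$. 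By continuity of $d_Y$, this forces $d_Y(y_i,y_j)=d_X(x_i,x_j)$ for all $i,j$.

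Second, the map $x_i\mapsto y_i$ is an isometry from $\{x_i\}$ onto $\{y_i\}$, and since $\{x_i\}$ is dense in the complete space $X$ it extends uniquely to an isometric embedding $\varphi:X\hookrightarrow Y$. The hypothesis is symmetric in $X$ and $Y$, so the same construction also produces an isometric embedding $\psi:Y\hookrightarrow X$.

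The final step, which is the main obstacle and where compactness is essential, is to upgrade one of these embeddings to a bijection. I would argue that the composition $\psi\circ\varphi:X\to X$ is an isometric self-embedding of the compact metric space $X$, and therefore surjective: otherwise, picking $x_0\notin (\psi\circ\varphi)(X)$ and letting $\varepsilon \coloneqq d_X(x_0,(\psi\circ\varphi)(X))>0$ (positive since $(\psi\circ\varphi)(X)$ is compact), the iterates $(\psi\circ\varphi)^k(x_0)$ would form an $\varepsilon$-separated sequence in $X$, contradicting total boundedness. Surjectivity of $\psi\circ\varphi$ forces $\varphi$ to be surjective, hence $\varphi$ is an isometry and $X\cong Y$.
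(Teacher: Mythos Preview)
The paper does not prove this theorem: it is stated as a known result due to Gromov and invoked as a black box (see the discussion around Definition~\ref{def:curvsets}). Your argument is the standard proof and is essentially correct.

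One small slip in the last step: surjectivity of $\psi\circ\varphi:X\to X$ yields that $\psi$ is surjective, not that $\varphi$ is. To conclude that $\varphi$ is onto, apply the same self-embedding argument to $\varphi\circ\psi:Y\to Y$; alternatively, once you have isometric embeddings $\varphi:X\hookrightarrow Y$ and $\psi:Y\hookrightarrow X$ with $X,Y$ compact, you can simply invoke \cite[Theorem~1.6.14]{burago2001course} (as the paper itself does in the proof of Theorem~\ref{thm:ughm}) to conclude both are isometries.
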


Note that $\mathrm{K}_n:\ms\rightarrow\Rp^{n\times n}$ is a metric invariant, i.e., if $X\cong Y$, then $\mathrm{K}_n(X)=\mathrm{K}_n(Y)$. Therefore, as a direct consequence of \Cref{thm:ugh-structure} and \Cref{thm:gromovcurv}, we have that 
\begin{corollary}\label{coro:curvineq}
Given two compact ultrametric spaces $X$ and $Y$, we have
\begin{equation}\label{eq:curvugh}
    \ugh(X,Y) \geq \inf\lb t\geq 0:\,\mathrm{K}_n\lc X\ct{t}\rc=\mathrm{K}_n\lc Y\ct{t}\rc\rb.
\end{equation}
\end{corollary}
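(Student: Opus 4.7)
The plan is to derive Corollary~\ref{coro:curvineq} as a direct consequence of the structural Theorem~\ref{thm:ugh-structure} together with the elementary observation that curvature sets are metric invariants. First I would invoke Theorem~\ref{thm:ugh-structure} to rewrite
\[\ugh(X,Y) = \min\{t \geq 0 : X\ct{t} \cong Y\ct{t}\}.\]
Then I would establish the set inclusion
\[\{t \geq 0 : X\ct{t} \cong Y\ct{t}\} \;\subseteq\; \{t \geq 0 : \mathrm{K}_n(X\ct{t}) = \mathrm{K}_n(Y\ct{t}) \text{ for all } n\in\mathbb{N}\},\]
whose verification reduces to remarking that if $\phi:X\ct{t}\to Y\ct{t}$ is any isometry, then $\Psi^{(n)}_{X\ct{t}}(x_1,\ldots,x_n) = \Psi^{(n)}_{Y\ct{t}}(\phi(x_1),\ldots,\phi(x_n))$ for every $n$-tuple in $(X\ct{t})^n$, hence $\mathrm{K}_n(X\ct{t})\subseteq \mathrm{K}_n(Y\ct{t})$, with the reverse inclusion following by applying the same argument to $\phi^{-1}$. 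Taking infima across the displayed set inclusion, and using that the infimum of a larger set is no greater than that of a smaller one, yields the desired inequality.

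I do not anticipate any genuine obstacle: the corollary is essentially a one-line deduction from the structural theorem combined with the functorial behavior of $\mathrm{K}_n$ under isometry. In particular, Gromov's reconstruction theorem (Theorem~\ref{thm:gromovcurv}) is not required to prove the stated `$\geq$' direction; it would only be needed if one wished to upgrade the inequality to an equality, by arguing that coincidence of all curvature sets of $X\ct{t}$ and $Y\ct{t}$ already forces $X\ct{t}\cong Y\ct{t}$ and hence that the two infima coincide.
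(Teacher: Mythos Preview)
Your proposal is correct and matches the paper's approach: the paper states the corollary as a direct consequence of the structural Theorem~\ref{thm:ugh-structure} together with the observation that $\mathrm{K}_n$ is a metric invariant, which is exactly your argument. Your remark that Gromov's reconstruction theorem (Theorem~\ref{thm:gromovcurv}) is unnecessary for the $\geq$ direction is a sharper observation than the paper's own attribution; the paper cites both theorems somewhat loosely, but indeed the reconstruction theorem only enters later in the proof of Theorem~\ref{thm:ughcurvset} to obtain equality after taking the supremum over $n$.
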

This corollary vastly generalizes \cite[item (1) of Theorem 4.2]{qiu2009geometry} which proves the following inequality similar to the case $n=2$ of (\ref{eq:curvugh}): 

\begin{corollary}\label{coro:app-ugh}
Given $X,Y\in\ums$, we have
$$\ugh(X,Y)\geq { \inf}\{\eps\geq 0:\,\spec_\eps(X)=\spec_\eps(Y)\}, $$
where for $\eps\geq 0$, $\spec_\eps(X)\coloneqq\{t\in\spec(X):\,t\geq \eps\}$.
\end{corollary}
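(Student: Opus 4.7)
The plan is to specialize Corollary \ref{coro:curvineq} to the case $n=2$ and then translate the resulting condition on curvature sets into one on the truncated spectra $\spec_\eps$. First I would observe that $\mathrm{K}_2(X)$ is in obvious bijection with $\spec(X)$, since every matrix in $\mathrm{K}_2(X)$ is of the form $\begin{pmatrix}0&d\\ d&0\end{pmatrix}$ with $d\in\spec(X)$ and conversely. Hence the $n=2$ case of Corollary \ref{coro:curvineq} yields
\[\ugh(X,Y)\geq \inf\bigl\{t\geq 0 : \spec(X\ct{t})=\spec(Y\ct{t})\bigr\}.\]

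Next I would compute $\spec(X\ct{t})$ explicitly. From the closed-quotient formula \eqref{eq:closed quotient} together with \Cref{rmk:relation with closed ball} (which gives that $[x]\ct{t}\neq[x']\ct{t}$ exactly when $u_X(x,x')>t$), one reads off
\[\spec(X\ct{t})=\{0\}\cup\bigl(\spec(X)\cap(t,\infty)\bigr),\]
and analogously for $Y$. Consequently $\spec(X\ct{t})=\spec(Y\ct{t})$ is equivalent to $\spec(X)\cap(t,\infty)=\spec(Y)\cap(t,\infty)$.

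To conclude, I would compare the two infima. If $t\geq 0$ is such that $\spec(X)\cap(t,\infty)=\spec(Y)\cap(t,\infty)$, then for every $\eps>t$ the inclusion $[\eps,\infty)\subseteq(t,\infty)$ yields $\spec_\eps(X)=\spec_\eps(Y)$. Therefore, taking $t_0>\inf\{t\geq 0:\spec(X\ct{t})=\spec(Y\ct{t})\}$ arbitrary, one finds some admissible $t<t_0$ and then $\spec_{t_0}(X)=\spec_{t_0}(Y)$, which gives
\[\inf\{t\geq 0:\spec(X\ct{t})=\spec(Y\ct{t})\}\;\geq\;\inf\{\eps\geq 0:\spec_\eps(X)=\spec_\eps(Y)\}.\]
Chaining this with the inequality from the first paragraph proves the claim. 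I do not anticipate any substantive obstacle: the entire argument is a translation through definitions, with the only delicate point being the interplay between the strict inequality $s>t$ arising from the closed-quotient construction and the non-strict inequality $s\geq\eps$ defining $\spec_\eps$, which is absorbed by the slack $\eps>t$ in the last step.
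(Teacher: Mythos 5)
Your proposal is correct and follows essentially the same route as the paper, which presents this corollary precisely as the $n=2$ instance of Corollary \ref{coro:curvineq} via the identification of $\mathrm{K}_2$ with the spectrum (and otherwise leaves the translation implicit, attributing the statement to Qiu). Your explicit computation $\spec(X\ct{t})=\{0\}\cup\bigl(\spec(X)\cap(t,\infty)\bigr)$ and the careful handling of the mismatch between the open interval $(t,\infty)$ and the closed half-line $[\eps,\infty)$ correctly supply the details the paper omits, and the resulting inequality between the two infima goes in the required direction.
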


Actually, the infimum in the above two corollaries can be replaced by minimum due to the following result:

\begin{lemma}
Given $X,Y\in\ums$, the infimum in $\inf\lb t\geq 0:\,\mathrm{K}_n\lc X\ct{t}\rc=\mathrm{K}_n\lc Y\ct{t}\rc\rb$ is attainable.
\end{lemma}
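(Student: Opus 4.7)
The plan is to let $\delta \coloneqq \inf\lb t\geq 0 :\,\mathrm{K}_n\lc X\ct{t}\rc = \mathrm{K}_n\lc Y\ct{t}\rc\rb$ and pick a sequence $t_k\searrow\delta$ in the set (if $\delta$ already lies in the set, the claim is immediate). By symmetry it suffices to show $\mathrm{K}_n\lc X\ct{\delta}\rc \subseteq \mathrm{K}_n\lc Y\ct{\delta}\rc$, so the task reduces to taking an arbitrary $M \in \mathrm{K}_n\lc X\ct{\delta}\rc$ and exhibiting $y_1,\ldots,y_n\in Y$ whose distance matrix in $Y\ct{\delta}$ equals $M$.

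Write $M = \Psi_{X\ct{\delta}}^{(n)}\lc[x_1]\ct{\delta},\ldots,[x_n]\ct{\delta}\rc$ and let $d_{\min}$ denote the minimum positive entry of $M$, the case $M = 0$ being trivial by picking $n$ copies of any $y\in Y$. Since by \Cref{eq:closed quotient} every positive $M_{ij}$ equals $u_X(x_i,x_j)$ with $u_X(x_i,x_j)>\delta$, we have $d_{\min}>\delta$. The first key observation is that for every $t\in(\delta,d_{\min})$,
\[\Psi_{X\ct{t}}^{(n)}\lc[x_1]\ct{t},\ldots,[x_n]\ct{t}\rc = M,\]
because positive entries of $M$ survive (their defining $u_X$-distances exceed $t$) while zero entries remain zero (the corresponding pairs are already identified at level $\delta\leq t$). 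Consequently $M \in \mathrm{K}_n(X\ct{t_k}) = \mathrm{K}_n(Y\ct{t_k})$ whenever $t_k\in(\delta,d_{\min})$, so one may choose $y_1^k,\ldots,y_n^k\in Y$ with $\Psi_{Y\ct{t_k}}^{(n)}\lc[y_1^k]\ct{t_k},\ldots,[y_n^k]\ct{t_k}\rc = M$.

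The second step is a compactness extraction: since $Y$ is compact, pass to a subsequence so that $y_i^k\to y_i\in Y$ for each $i$. To verify $\Psi_{Y\ct{\delta}}^{(n)}\lc[y_1]\ct{\delta},\ldots,[y_n]\ct{\delta}\rc = M$, one would split into two types of off-diagonal entries. When $M_{ij}>0$, \Cref{eq:closed quotient} forces $u_Y(y_i^k,y_j^k)=M_{ij}$ for every $k$, so by continuity of the metric $u_Y(y_i,y_j)=M_{ij}>\delta$, which makes $[y_i]\ct{\delta}\neq[y_j]\ct{\delta}$ with the correct quotient distance. When $M_{ij}=0$ and $i\neq j$, $[y_i^k]\ct{t_k}=[y_j^k]\ct{t_k}$ gives $u_Y(y_i^k,y_j^k)\leq t_k\to\delta$, hence $u_Y(y_i,y_j)\leq\delta$ and $[y_i]\ct{\delta}=[y_j]\ct{\delta}$, producing a zero entry.

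The main obstacle I expect is the case $\delta=0$, in which $Y\ct{0}=Y$ is typically infinite and one cannot invoke the right-continuity condition (4') of \Cref{def:proper dendrogram} as was done in the proof of \Cref{thm:ugh-structure}; the genuine compactness of $Y$ is precisely what closes this gap, and one must take care that zero entries of $M$ are not confused with small-but-positive distances in $Y$, which is why the limit argument has to track the two entry types separately. The same extraction simultaneously handles $\delta>0$ (which admits a shorter route via \Cref{lm:compact-quotient} and condition (4')), so no explicit case split is necessary.
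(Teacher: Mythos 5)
Your proof is correct, but it takes a genuinely different route from the paper's. The paper argues by cases: for $\delta>0$ it uses \Cref{lm:compact-quotient} to note that the \emph{finite} quotients $X\ct{t}$ and $Y\ct{t}$ are constant on some interval $[\delta,\delta+\eps]$, so equality of curvature sets at some $t$ slightly above $\delta$ transfers verbatim to $\delta$; for $\delta=0$ it uses what you call the ``first key observation'' for each fixed tuple, together with the inclusion $\mathrm{K}_n\lc Y\ct{t}\rc\subseteq\mathrm{K}_n(Y)$ (obtained by choosing a common representative for identified classes), to conclude $\mathrm{K}_n(X)=\mathrm{K}_n(Y)$ directly. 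You instead give a single unified argument: fix $M\in\mathrm{K}_n\lc X\ct{\delta}\rc$, push it into $\mathrm{K}_n\lc Y\ct{t_k}\rc$ for $t_k\in(\delta,d_{\min})$, and extract a convergent subsequence of realizing tuples in the compact space $Y$, tracking positive and zero entries of $M$ separately in the limit. What your approach buys is uniformity (no case split on $\delta$) at the price of invoking sequential compactness of $Y$; what the paper's buys is that it never needs to pass to limits of points. It is worth noting that your limit extraction is in fact avoidable: the same representative-choosing trick shows $\mathrm{K}_n\lc Y\ct{t_k}\rc\subseteq\mathrm{K}_n\lc Y\ct{\delta}\rc$ for $t_k\geq\delta$ (positive entries exceed $t_k\geq\delta$ and survive the finer quotient, while identified classes can be realized by a single point), which together with your first observation already places $M$ in $\mathrm{K}_n\lc Y\ct{\delta}\rc$. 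Nonetheless, your argument as written is complete and correct.
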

\begin{proof}
Let $t_0\coloneqq \inf\lb t\geq 0:\,\mathrm{K}_n\lc X\ct{t}\rc=\mathrm{K}_n\lc Y\ct{t}\rc\rb$. If $t_0>0$, then by compactness of $X$ and $Y$ as well as \Cref{lm:compact-quotient}, there exists $\eps>0$ such that for any $t\in[t_0,t_0+\eps]$, $X\ct{t}\cong X\ct{t_0}$ and $Y\ct{t}\cong Y\ct{t_0}$. Then, it is obvious that the infimum is attainable. Now, assume that $t_0=0$. For any tuple $(x_1,\ldots,x_n)$ in $X$, there exists $t_1>0$ such that for any $t\in[0,t_1]$, $u_{X\ct{t}}\lc[x_i]\ct{t},[x_j]\ct{t}\rc=u_X(x_i,x_j)$ for all $i,j$. Then, we choose any $t_2\in [0,t_1]$ such that $\mathrm{K}_n\lc X\ct{t_2}\rc=\mathrm{K}_n\lc Y\ct{t_2}\rc$. This implies that
\[\lc u_X(x_i,x_j)\rc_{i,j=1}^n=\lc u_{X\ct{t_2}}\lc[x_i]\ct{t_2},[x_j]\ct{t_2}\rc\rc_{i,j=1}^n\in\mathrm{K}_n\lc Y\ct{t_2}\rc\subseteq\mathrm{K}_n(Y).\]
Therefore, $\mathrm{K}_n(X)\subseteq \mathrm{K}_n(Y)$. Similarly, $\mathrm{K}_n(Y)\subseteq \mathrm{K}_n(X)$. Therefore,
\[\mathrm{K}_n\lc X\ct{t_0}\rc=\mathrm{K}_n(X)=\mathrm{K}_n(Y)=\mathrm{K}_n\lc Y\ct{t_0}\rc.\]
\end{proof}

The following example shows that the equality in \Cref{coro:curvineq} does not hold in general.

\begin{example}
Let $X=\Delta_2(1)$ and $Y=\Delta_3(1)$ be the 2-point space and 3-point space with distance 1 respectively. A simple calculation shows $\mathrm{K}_2(X)=\{0,1\}=\mathrm{K}_2(Y)$. Since $X=X_0$ and $Y=Y_0$, we have $\min\lb t\geq 0:\,\mathrm{K}_2\lc X\ct{t}\rc=\mathrm{K}_2\lc Y\ct{t}\rc\rb=0<1=\ugh(X,Y).$ Similarly, if we take $X=\Delta_n(1)$ and $Y=\Delta_{n+1}(1)$ for arbitrary $n\in\mathbb{N}$, then $\mathrm{K}_n(X)=\mathrm{K}_n(Y)$. Thus, $\min\lb t\geq 0:\,\mathrm{K}_n\lc X\ct{t}\rc=\mathrm{K}_n\lc Y\ct{t}\rc\rb=0<1=\ugh(X,Y).$
\end{example}

Taking one step further from the case $X=\Delta_2(1)$ and $Y=\Delta_3(1)$, we will see that $\mathrm{K}_3(X_0)\neq \mathrm{K}_3(Y_0)$ and in fact $\inf\lb t\geq 0:\,\mathrm{K}_3\lc X\ct{t}\rc=\mathrm{K}_3\lc Y\ct{t}\rc\rb=1=\ugh(X,Y).$ In fact, this phenomenon is not a coincidence. If we consider $\mathrm{K}_n$ for all $n\in\mathbb{N}$, then we will recover $\ugh$:

\begin{theorem}\label{thm:ughcurvset}
Given two compact ultrametric spaces $X$ and $Y$, we have
$$\ugh(X,Y) = \sup_{n\in\mathbb{N}}\min\lb t\geq 0:\,\mathrm{K}_n\lc X\ct{t}\rc=\mathrm{K}_n\lc Y\ct{t}\rc\rb.$$
\end{theorem}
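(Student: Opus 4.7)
My plan is to prove the two inequalities separately. For each $n\in\mathbb{N}$, let $t_n\coloneqq\min\lb t\geq 0:\,\mathrm{K}_n\lc X\ct{t}\rc=\mathrm{K}_n\lc Y\ct{t}\rc\rb$ (attainability of the minimum comes from the lemma just proved). The inequality $\ugh(X,Y)\geq \sup_n t_n$ is essentially already \Cref{coro:curvineq}, since for each single $n$ that corollary gives $\ugh(X,Y)\geq t_n$ and we may then take the supremum over $n$.

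For the reverse inequality, I set $T\coloneqq\sup_n t_n$. First I observe that $T<\infty$: once $t\geq\max(\diam(X),\diam(Y))$, both $X\ct{t}$ and $Y\ct{t}$ are singletons, hence $\mathrm{K}_n\lc X\ct{t}\rc=\mathrm{K}_n\lc Y\ct{t}\rc=\{\mathbf{0}\}$, forcing $t_n\leq\max(\diam(X),\diam(Y))$ for every $n$. The plan is then to show that $\mathrm{K}_n\lc X\ct{T}\rc=\mathrm{K}_n\lc Y\ct{T}\rc$ for every $n$; by Gromov's reconstruction theorem (\Cref{thm:gromovcurv}), this yields $X\ct{T}\cong Y\ct{T}$, and the structural theorem \Cref{thm:ugh-structure} then gives $\ugh(X,Y)\leq T$ as desired.

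The key step is a compatibility between curvature sets at different thresholds. For $t\leq s$, define the \emph{threshold map} $\Phi_{t,s}:\mathbb{R}_{\geq 0}^{n\times n}\to\mathbb{R}_{\geq 0}^{n\times n}$ that replaces every entry with absolute value at most $s$ by $0$ and leaves the rest unchanged. Using the explicit formula \Cref{eq:closed quotient} for $u_{X\ct{t}}$, a direct case check on a tuple $(x_1,\ldots,x_n)\in X^n$ verifies that applying $\Phi_{t,s}$ to the matrix $\lc u_{X\ct{t}}([x_i]\ct{t},[x_j]\ct{t})\rc_{i,j}$ yields exactly $\lc u_{X\ct{s}}([x_i]\ct{s},[x_j]\ct{s})\rc_{i,j}$, hence $\Phi_{t,s}\lc\mathrm{K}_n\lc X\ct{t}\rc\rc=\mathrm{K}_n\lc X\ct{s}\rc$ and likewise for $Y$. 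Applying this with $t=t_n$ and $s=T$ (using $t_n\leq T$) and using that $\Phi_{t_n,T}$ acts purely on matrices without reference to the underlying space, the assumed equality $\mathrm{K}_n\lc X\ct{t_n}\rc=\mathrm{K}_n\lc Y\ct{t_n}\rc$ passes through $\Phi_{t_n,T}$ to give $\mathrm{K}_n\lc X\ct{T}\rc=\mathrm{K}_n\lc Y\ct{T}\rc$.

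The main obstacle I anticipate is psychological rather than technical: it is tempting to worry that equality of curvature sets at each $t_n$ does not obviously propagate to a single threshold $T$. The threshold map $\Phi_{t_n,T}$ is precisely the device that makes this propagation automatic, reducing the argument to the purely combinatorial observation that ``zeroing out small entries'' is a well-defined operation on matrices that commutes with taking curvature sets of closed quotients.
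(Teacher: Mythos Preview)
Your proof is correct and follows essentially the same route as the paper: both use that equality of curvature sets at a threshold $t$ propagates to any higher threshold $s\geq t$, then invoke Gromov's reconstruction theorem and the structural theorem for $\ugh$. The only cosmetic differences are that the paper phrases the propagation as a standalone claim and argues by contradiction at a level $T+\eps$, whereas you encode the propagation via the threshold map $\Phi_{t,s}$ (which, incidentally, depends only on $s$) and argue directly at $T$; your direct version is slightly cleaner but the underlying idea is identical.
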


\begin{proof}
Due to Corollary \ref{coro:curvineq}, we only need to show that 
$$\ugh(X,Y) \leq \sup_{n\in\mathbb{N}}\min\lb t\geq 0:\,\mathrm{K}_n\lc X\ct{t}\rc=\mathrm{K}_n\lc Y\ct{t}\rc\rb.$$
We begin with a simple observation following directly from the definition of curvature sets and the quotient construction described in Definition \ref{def:ultraquotient}. 
\begin{claim}\label{claim:easy}
If $\mathrm{K}_n\lc X\ct{t}\rc=\mathrm{K}_n\lc Y\ct{t}\rc$, then $\mathrm{K}_n(X\ct{s})=\mathrm{K}_n(Y\ct{s})$ for $s>t$.
\end{claim}

Suppose to the contrary that $\sup_{n\in\mathbb{N}}\min\{t\geq 0:\,\mathrm{K}_n\lc X\ct{t}\rc=\mathrm{K}_n\lc Y\ct{t}\rc\}=t_0<\ugh(X,Y)$. Then, by the claim above, there exists $\eps>0$ such that $t_1\coloneqq t_0+\eps<\ugh(X,Y)$ and $\mathrm{K}_n(X\ct{t_1})=\mathrm{K}_n(Y\ct{t_1})$ for all $n\in\mathbb{N}$. According to Gromov's reconstruction theorem (Theorem \ref{thm:gromovcurv}), one has that $X\ct{t_1}\cong Y\ct{t_1}$. This implies that $\ugh(X,Y)\leq t_1$ by Theorem \ref{thm:ugh-structure}, which contradicts the fact that $\ugh(X,Y)>t_1$.

\begin{proof}[Proof of Claim \ref{claim:easy}]
Given $\left(u_{X\ct{s}}\lc[x_i]^X\ct{s},[x_j]^X\ct{s}\rc\right)_{i,j=1}^n\in \mathrm{K}_n(X\ct{s}),$ consider
$$\left(u_{X\ct{t}}\lc [x_i]^X\ct{t},[x_j]^X\ct{t}\rc\right)_{i,j=1}^n\in \mathrm{K}_n\lc X\ct{t}\rc=\mathrm{K}_n\lc Y\ct{t}\rc.$$
Then, there exists a tuple $([y_1]^Y\ct{t},\cdots,[y_n]\ct{t}^Y)$ such that for any $1\leq i,j\leq n$ we have
$$u_{X\ct{t}}\lc [x_i]^X\ct{t},[x_j]^X\ct{t}\rc=u_{Y\ct{t}}\lc [y_i]^Y\ct{t},[y_j]^Y\ct{t}\rc. $$
We have the following two cases:
\begin{enumerate}
    \item $[x_i]^X\ct{t}\neq [x_j]^X\ct{t}$. Then, by construction of $u_{X\ct{t}}$ we have 
    $$u_X(x_i,x_j)=u_{X\ct{t}}\lc [x_i]^X\ct{t},[x_j]^X\ct{t}\rc=u_{Y\ct{t}}\lc [y_i]^Y\ct{t},[y_j]^Y\ct{t}\rc=u_Y(y_i,y_j). $$
    Hence for $s>t$, we have $u_{X\ct{s}}\lc[x_i]^X\ct{s},[x_j]^X\ct{s}\rc=u_{Y\ct{s}}\lc[y_i]^Y\ct{s},[y_j]^Y\ct{s}\rc. $
    \item $[x_i]^X\ct{t}= [x_j]^X\ct{t}$. Then, $u_{X\ct{t}}\lc [x_i]^X\ct{t},[x_j]^X\ct{t}\rc=u_{Y\ct{t}}\lc [y_i]^Y\ct{t},[y_j]^Y\ct{t}\rc$ also implies that $[y_i]^Y\ct{t}=[y_j]^Y\ct{t}$. Then, obviously for $s>t$ we have $[x_i]^X\ct{s}=[x_j]^X\ct{s}$ and $[y_i]^Y\ct{s}=[y_j]^Y\ct{s}$ and thus $u_{X\ct{s}}\lc[x_i]^X\ct{s},[x_j]^X\ct{s}\rc=0=u_{Y\ct{s}}\lc[y_i]^Y\ct{s},[y_j]^Y\ct{s}\rc. $
\end{enumerate}
The previous discussion then shows that $u_{X\ct{s}}\lc[x_i]^X\ct{s},[x_j]^X\ct{s}\rc=u_{Y\ct{s}}\lc[y_i]^Y\ct{s},[y_j]^Y\ct{s}\rc $ for any $1\leq i,j\leq n$. Therefore,
$$\left(u_{X\ct{s}}\lc[x_i]^X\ct{s},[x_j]^X\ct{s}\rc\right)_{i,j=1}^n=\left(u_{Y\ct{s}}\lc[y_i]^Y\ct{s},[y_j]^Y\ct{s}\rc\right)_{i,j=1}^n\in \mathrm{K}_n(Y\ct{s}),$$
so $\mathrm{K}_n(X\ct{s})\subseteq \mathrm{K}_n(Y\ct{s})$. Similarly, $\mathrm{K}_n(Y\ct{s})\subseteq \mathrm{K}_n(X\ct{s})$ and thus $\mathrm{K}_n(X\ct{s})=\mathrm{K}_n(Y\ct{s}).$
\end{proof}
\end{proof}

\subsection{A structural theorem for the Hausdorff distance on ultrametric spaces} 

There exists a structural theorem for the Hausdorff distance on ultrametric spaces in a spirit similar to the one in the structural theorem for $\ugh$ (cf. \Cref{thm:ugh-structure}).

\begin{theorem}\label{thm:hdf-strt}
Let $X$ be a (non-necessarily compact) ultrametric space. For any closed subsets $A,B\subseteq X$, we have that
$$\dH^X(A,B)=\inf\lb t\geq 0: A\ct{t}=B\ct{t}\rb, $$
where $A\ct{t}\coloneqq\lb[x]\ct{t}^X:\,x\in A\rb\subseteq X\ct{t}.$ When $X$ is compact, the infimum above can be replaced by minimum.
\end{theorem}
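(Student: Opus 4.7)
Let $\delta \coloneqq \inf\{t \geq 0 : A\ct{t} = B\ct{t}\}$. My plan is to establish $\dH^X(A,B) = \delta$ by proving both inequalities directly from the definition of the $t$-closed equivalence relation (exploiting the identification in \Cref{rmk:relation with closed ball} that $[x]\ct{t}^X$ is the closed ball of radius $t$ around $x$), and then separately handle the attainment statement when $X$ is compact.

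For $\dH^X(A,B) \leq \delta$, I would fix any $t$ with $A\ct{t} = B\ct{t}$. Given $a \in A$, the class $[a]\ct{t}^X$ lies in $A\ct{t} = B\ct{t}$, so there exists $b \in B$ with $[b]\ct{t}^X = [a]\ct{t}^X$, which is precisely $u_X(a,b) \leq t$. By symmetry every $b \in B$ is within distance $t$ of some $a \in A$, so $\dH^X(A,B) \leq t$; taking the infimum over admissible $t$ gives $\dH^X(A,B) \leq \delta$. For the reverse direction, I would fix $t > \dH^X(A,B)$ and pick $r \in (\dH^X(A,B), t)$; the inclusion $A \subseteq B^r$ provides, for each $a \in A$, some $b \in B$ with $u_X(a,b) \leq t$ (since $\inf_{b \in B} u_X(a,b) \leq r < t$), hence $[a]\ct{t}^X = [b]\ct{t}^X \in B\ct{t}$, giving $A\ct{t} \subseteq B\ct{t}$; the reverse inclusion is symmetric. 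Thus $\delta \leq t$ for every $t > \dH^X(A,B)$, and letting $t \searrow \dH^X(A,B)$ completes this direction.

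For attainment when $X$ is compact, the case $\delta = 0$ is immediate: $\dH^X(A,B) = 0$ combined with closedness of $A$ and $B$ gives $A = B$, so $A\ct{0} = B\ct{0}$ trivially. If $\delta > 0$, I would take a sequence $t_n \searrow \delta$ with $A\ct{t_n} = B\ct{t_n}$; by \Cref{lm:compact-quotient}, $X\ct{\delta}$ is finite, and since $\{X\ct{t}\}_{t \geq 0}$ is a nested family coarsening with $t$, finiteness forces $t \mapsto X\ct{t}$ to be constant on some right-neighborhood $[\delta, \delta + \epsilon]$ of $\delta$. For $n$ large, $t_n \in [\delta, \delta + \epsilon]$, so $A\ct{t_n} = A\ct{\delta}$ and similarly for $B$, yielding $A\ct{\delta} = B\ct{\delta}$. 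The only subtle step I foresee is this right-continuity/stabilization of $t \mapsto X\ct{t}$ at $\delta$; it is precisely condition (4') of the dendrogram formulation, and it is the only place where compactness enters in an essential way. Everything else in the argument is a direct unraveling of the definitions.
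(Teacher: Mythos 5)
Your proof is correct, but it takes a genuinely different route from the paper's. The paper first rewrites $\dH^X(A,B)=\inf\lb t\geq 0: A^t=B^t\rb$ using the ultrametric idempotency $(A^t)^t=A^t$, then argues that $A^t=B^t$ if and only if $A\ct{t}=B\ct{t}$ for each fixed $t$ (using closedness of $A$ and $B$), and finally obtains attainment by extracting a limit point of a sequence $\{b_n\}\subseteq B$ inside the compact set $B$. You instead compare the two infima directly, and your insertion of an intermediate radius $r\in(\dH^X(A,B),t)$ is a real improvement: it guarantees an actual point $b\in B$ with $u_X(a,b)<t$, whereas the paper's pointwise step ``$x\in A^t$ and $A$ closed imply there exists $x_0\in A$ with $u_X(x,x_0)\leq t$'' tacitly assumes the infimum $u_X(x,A)$ is attained, which closedness alone does not ensure in a non-compact ultrametric space (one can build a closed $A$ with $u_X(x,A)=t$ but $u_X(x,a)>t$ for every $a\in A$); your argument is immune to this because only strict inequalities are used, while the paper's route has the side benefit of the extra identity $\dH^X(A,B)=\inf\lb t\geq 0:A^t=B^t\rb$. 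For attainment you use finiteness of $X\ct{\delta}$ (\Cref{lm:compact-quotient}) together with stabilization of the quotient partitions on a right-neighborhood of $\delta$ --- this is exactly the mechanism the paper uses to upgrade the infimum to a minimum in \Cref{thm:ugh-structure}, and it is sound here (the finitely many pairwise distances between distinct blocks of $X\ct{\delta}$ all exceed $\delta$, which yields the required $\eps$), so your one flagged ``subtle step'' does go through; the paper's own attainment argument for this theorem is the sequential-compactness one instead. Both approaches are valid in the compact case; yours is the more elementary and, for the non-compact infimum statement, the more careful of the two.
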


\begin{proof}
We need the following obvious observation regarding ultrametric spaces.

\begin{claim}\label{claim:at=bt}
$A\subseteq B^t,B\subseteq A^t$ if and only if $ A^t=B^t$. Here $A^t\coloneqq\{x\in X:\, u_X(x,A)\leq t\}$.
\end{claim}
\begin{proof}
The if part is obvious. As for the only if part, first note that $(A^t)^t=A^t$ since $X$ is an ultrametric. Then, $A\subseteq B^t$ implies $A^t\subseteq (B^t)^t=B^t$. Similarly, $B^t\subseteq A^t$ and thus $A^t=B^t$.
\end{proof}
Then, we have that
$$\dH^X(A,B)=\inf\lb t\geq 0: A^t\subseteq B^t, \, B^t\subseteq A^t\rb=\inf\lb t\geq 0: A^t=B^t\rb. $$
It remains to prove that $A^t=B^t$ if and only if $A\ct{t}=B\ct{t}$.
\begin{enumerate}
    \item Suppose $A^t=B^t$. For any $x\in A^t$, by closeness of $A$, there exists $x_0\in A$ such that $u_X(x,x_0)\leq t$. Hence, $[x]\ct{t}^X=[x_0]\ct{t}^X$. Thus, $(A^t)\ct{t}=A\ct{t}$. Similarly, $(B^t)\ct{t}=B\ct{t}$. Therefore, 
    $$A\ct{t}=(A^t)\ct{t}=(B^t)\ct{t}=B\ct{t}. $$
    \item Suppose that $A\ct{t}=B\ct{t}$. Since $A$ is closed, one has $A^t=\bigcup_{x\in A}[x]\ct{t}^X=\bigcup_{[x]\ct{t}^X\in A\ct{t}}[x]\ct{t}^X$. Therefore, $$A^t=\bigcup_{[x]\ct{t}^X\in A\ct{t}}[x]\ct{t}^X=\bigcup_{[x]\ct{t}^X\in B\ct{t}}[x]\ct{t}^X=B^t.$$ 
\end{enumerate}

Now, assume that $X$ is compact. Let $\delta \coloneqq\inf\lbrace t\geq 0 :\,A^t=B^t\rbrace$. Let $\{t_n\}_{n\in\mathbb{N}}$ be a decreasing sequence converging to $\delta$ such that $A^{t_n}= B^{t_n}$ for all $t_n$. Then, $A^\delta\subseteq A^{t_n}= B^{t_n}$ for all $t_n$. For any $x\in A^\delta$ and for each $n\in \N$, there exists $b_n\in B$ such that $u_X(x,b_n)\leq t_n$. Since $B$ is compact, the sequence $\{b_n\}_{n\in\N}$ contains limit points. Let $b\in B$ be such a limit point. Then, it is easy to see that $u_X(x,b)\leq\lim_{n\rightarrow\infty}t_n=\delta$. This implies that $x\in B^\delta$ and thus $A^\delta\subseteq B^\delta$. Similarly, $B^\delta\subseteq A^\delta$ and thus $A^\delta=B^\delta$. Therefore,
\[\dH^X(A,B)=\min\lb t\geq 0: A^t=B^t\rb=\min\lb t\geq 0: A\ct{t}=B\ct{t}\rb.\]
\end{proof}

Recall from \Cref{ex:pm is pm} that $\Lambda_\infty^n$ defines an ultrametric on $\R^n_{\geq 0}$. 
For any $n\in\N$ and any $X\in \ums$, the curvature set $\mathrm{K}_n(X)=\Psi_X^{(n)}(X^n)$ is obviously a closed subset of $\lc\R^{n^2}_{\geq 0},\Lambda_\infty^n\rc$. So we can compare curvature sets $\mathrm{K}_n(X)$ and $\mathrm{K}_n(Y)$ of two compact ultrametric spaces $X$ and $Y$ via the Hausdorff distance on $(\R^{n^2}_{\geq 0},\Lambda_\infty^{n^2})$. 

\begin{corollary}
For any $X,Y\in\ums$, we have that
$$\ugh(X,Y)=\sup_{n\in\mathbb{N}} \dH^{\left(\R^{n^2}_{\geq 0},\Lambda_\infty^{n^2}\right)}(\mathrm{K}_n(X),\mathrm{K}_n(Y)). $$
\end{corollary}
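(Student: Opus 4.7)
The plan is to reduce the identity to the curvature-set characterization of $\ugh$ in Theorem \ref{thm:ughcurvset} by applying the structural theorem for the Hausdorff distance on ultrametric spaces (Theorem \ref{thm:hdf-strt}) inside the ambient ultrametric space $(\R^{n^2}_{\geq 0},\Lambda_\infty^{n^2})$ (which is indeed an ultrametric space by Example \ref{ex:pm is pm}). Concretely, the goal is to show, for every $n\in\mathbb{N}$,
\begin{equation*}
\dH^{(\R^{n^2}_{\geq 0},\Lambda_\infty^{n^2})}(\mathrm{K}_n(X),\mathrm{K}_n(Y)) = \min\{t\geq 0:\mathrm{K}_n(X\ct{t}) = \mathrm{K}_n(Y\ct{t})\},
\end{equation*}
after which taking the supremum over $n$ and invoking Theorem \ref{thm:ughcurvset} will finish the proof.

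First I would verify the preliminaries needed to invoke Theorem \ref{thm:hdf-strt}. Namely, $\Psi_X^{(n)}:X^n\to(\R^{n^2}_{\geq 0},\Lambda_\infty^{n^2})$ is $1$-Lipschitz with respect to the product ultrametric on $X^n$, because Lemma \ref{lm:trick p triangle} applied coordinatewise with $p=\infty$ gives
\begin{equation*}
\Lambda_\infty(u_X(x_i,x_j),u_X(x_i',x_j')) \leq u_X(x_i,x_i')\ps{\infty} u_X(x_j,x_j') \leq \max_k u_X(x_k,x_k').
\end{equation*}
Hence $\mathrm{K}_n(X)$ and $\mathrm{K}_n(Y)$ are continuous images of compact spaces, so they are closed in $(\R^{n^2}_{\geq 0},\Lambda_\infty^{n^2})$, and Theorem \ref{thm:hdf-strt} yields
\begin{equation*}
\dH^{(\R^{n^2}_{\geq 0},\Lambda_\infty^{n^2})}(\mathrm{K}_n(X),\mathrm{K}_n(Y)) = \min\{t\geq 0:\mathrm{K}_n(X)\ct{t} = \mathrm{K}_n(Y)\ct{t}\}.
\end{equation*}

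The core step is then to identify the quotient $\mathrm{K}_n(X)\ct{t}$ with the curvature set $\mathrm{K}_n(X\ct{t})$. For a matrix $M=(m_{ij})\in\R^{n^2}_{\geq 0}$, define its \emph{$t$-truncation} $M^{*,t}$ by $(M^{*,t})_{ij}:=m_{ij}$ if $m_{ij}>t$ and $0$ otherwise. Using the definition of $\Lambda_\infty$ one checks that $M\sim\ct{t} M'$ in $(\R^{n^2}_{\geq 0},\Lambda_\infty^{n^2})$ if and only if $M^{*,t}=M'^{*,t}$, so each closed equivalence class is canonically represented by its truncation. Meanwhile, Equation \ref{eq:closed quotient} gives exactly the identity
\begin{equation*}
\Psi_{X\ct{t}}^{(n)}([x_1]\ct{t},\ldots,[x_n]\ct{t}) = \big(\Psi_X^{(n)}(x_1,\ldots,x_n)\big)^{*,t},
\end{equation*}
so truncation maps $\mathrm{K}_n(X)$ onto $\mathrm{K}_n(X\ct{t})$. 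Combining these two observations, $\mathrm{K}_n(X)\ct{t}=\mathrm{K}_n(Y)\ct{t}$ if and only if $\mathrm{K}_n(X\ct{t})=\mathrm{K}_n(Y\ct{t})$, which yields the displayed per-$n$ identity and closes the argument.

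I expect the bookkeeping in the truncation identification to be the main obstacle: one has to be careful to distinguish equivalence classes in the ambient ultrametric space of matrices from the concrete matrices comprising the curvature set $\mathrm{K}_n(X\ct{t})$, and to verify that truncation is simultaneously a canonical choice of $\sim\ct{t}$-representative \emph{and} the very map encoded by the closed quotient ultrametric on $X$. Once this is checked, both the outer $\min\{t\geq 0:\cdot\}$ reformulation and the outer $\sup_n$ reduce the statement to Theorem \ref{thm:ughcurvset} without further work.
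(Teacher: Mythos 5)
Your proposal is correct and follows the same overall skeleton as the paper's proof --- reduce via Theorem \ref{thm:ughcurvset} and Theorem \ref{thm:hdf-strt} to a per-$n$, per-$t$ equivalence between a condition on the curvature sets $\mathrm{K}_n(X),\mathrm{K}_n(Y)$ inside $\lc\R^{n^2}_{\geq 0},\Lambda_\infty^{n^2}\rc$ and the condition $\mathrm{K}_n\lc X\ct{t}\rc=\mathrm{K}_n\lc Y\ct{t}\rc$ --- but you realize the key step differently. The paper works with the \emph{$t$-neighborhood} formulation $(\mathrm{K}_n(X))^t=(\mathrm{K}_n(Y))^t$ from the proof of Theorem \ref{thm:hdf-strt} and establishes the equivalence by a two-sided element-chasing argument with $\Lambda_\infty$ inequalities; you instead use the \emph{quotient} formulation $\mathrm{K}_n(X)\ct{t}=\mathrm{K}_n(Y)\ct{t}$ and produce a canonical bijection between $\sim\ct{t}$-classes of matrices and their $t$-truncations, together with the identity $\Psi_{X\ct{t}}^{(n)}\lc[x_1]\ct{t},\ldots,[x_n]\ct{t}\rc=\bigl(\Psi_X^{(n)}(x_1,\ldots,x_n)\bigr)^{*,t}$, which follows immediately from Equation (\ref{eq:closed quotient}). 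Both hinge on the same two theorems, but your truncation identification makes the equivalence essentially definitional rather than requiring the double inclusion, at the cost of the bookkeeping you flag; your checks (that $\sim\ct{t}$-classes are determined by truncation, and that truncation intertwines $\Psi^{(n)}$ with the closed quotient) are sound. One small point: Theorem \ref{thm:hdf-strt} guarantees attainability of the infimum only for a compact ambient space, and $\R^{n^2}_{\geq 0}$ is not compact; this is harmless because the attainability argument in that proof only uses compactness of the subsets (and $\mathrm{K}_n(X),\mathrm{K}_n(Y)$ are compact, being continuous images of $X^n$ and $Y^n$ as you note), and in any case the paper's separate lemma already shows that $\inf\lb t\geq 0:\,\mathrm{K}_n\lc X\ct{t}\rc=\mathrm{K}_n\lc Y\ct{t}\rc\rb$ is attained, which is all that Theorem \ref{thm:ughcurvset} needs.
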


\begin{proof}
By Theorem \ref{thm:ughcurvset} and the proof of Theorem \ref{thm:hdf-strt}, we only need to prove that for any $t\geq 0$, $\mathrm{K}_n\lc X\ct{t}\rc=\mathrm{K}_n\lc Y\ct{t}\rc$ if and only if $(\mathrm{K}_n(X))^t=(\mathrm{K}_n(Y))^t$.

Assume that $\mathrm{K}_n\lc X\ct{t}\rc=\mathrm{K}_n\lc Y\ct{t}\rc$. For any $(a_{ij})_{i,j=1}^n\in(\mathrm{K}_n(X))^t$, there exist $x_1,\cdots,x_n\in X$ such that $\Lambda_\infty(a_{ij},u_X(x_i,x_j))\leq t$ for all $1\leq i,j\leq n$ by definition of $\Lambda_\infty^{n^2}$ (Remark \ref{rmk:product-pm}). Then, we have
\begin{align*}
 &\Lambda_\infty\lc a_{ij},u_{X\ct{t}}\lc [x_i]^X\ct{t},[x_j]^X\ct{t}\rc\rc\\
 \leq&\max\left(\Lambda_\infty(a_{ij},u_X(x_i,x_j)), \Lambda_\infty\lc u_{X\ct{t}}\lc [x_i]^X\ct{t},[x_j]^X\ct{t}\rc,u_X(x_i,x_j)\rc\right) \\
 \leq& \max\left(t, \Lambda_\infty\lc u_{X\ct{t}}\lc [x_i]^X\ct{t},[x_j]^X\ct{t}\rc,u_X(x_i,x_j)\rc\right)\leq t.
\end{align*}
The first inequality follows from the fact that $\Lambda_\infty$ is an ultrametric on $\R$. The last inequality follows from the definition of $(X\ct{t},u_{X\ct{t}})$ (Definition \ref{def:ultraquotient}) and the definition of $\Lambda_\infty$. Since $\mathrm{K}_n\lc X\ct{t}\rc=\mathrm{K}_n\lc Y\ct{t}\rc$, there exist $y_1,\cdots,y_n\in Y$ such that $u_{X\ct{t}}\lc[x_i]\ct{t}^X,[x_j]\ct{t}^X\rc=u_{Y\ct{t}}\lc[y_i]\ct{t}^Y,[y_j]\ct{t}^Y\rc.$ Hence, $\Lambda_\infty\lc a_{ij},u_{Y\ct{t}}\lc [y_i]^Y\ct{t},[y_j]^Y\ct{t}\rc\rc\leq t$ for all $i,j=1,\cdots,n$. Then, by an argument similar to the one above, we conclude that $\Lambda_\infty(a_{ij},u_Y(y_i,y_j))\leq t$ and thus $(a_{ij})_{i,j=1}^n\in (\mathrm{K}_n(Y))^t$. Therefore,  $(\mathrm{K}_n(X))^t\subseteq(\mathrm{K}_n(Y))^t$. Similarly  $(\mathrm{K}_n(Y))^t\subseteq(\mathrm{K}_n(X))^t$, so  $(\mathrm{K}_n(X))^t=(\mathrm{K}_n(Y))^t$.

Conversely, assume that  $(\mathrm{K}_n(X))^t=(\mathrm{K}_n(Y))^t$. Hence, for any sequence $x_1,\cdots,x_n\in X$, there exists a sequence $y_1,\cdots,y_n\in Y$ such that $\Lambda_\infty(u_X(x_i,x_j),u_Y(y_i,y_j))\leq t$ for all $1\leq i,j\leq n$. For any given $i,j$, we have the following two cases.
\begin{enumerate}
    \item If both $u_X(x_i,x_j),u_Y(y_i,y_j)\leq t$, then $u_{X\ct{t}}\lc[x_i]\ct{t}^X,[x_j]\ct{t}^X\rc=0=u_{Y\ct{t}}\lc[y_i]\ct{t}^Y,[y_j]\ct{t}^Y\rc$.
    \item If one of $u_X(x_i,x_j),u_Y(y_i,y_j)$ is greater than $t$, then by definition of $\Lambda_\infty$, we must have $u_X(x_i,x_j)=u_Y(y_i,y_j)$. Hence, $u_{X\ct{t}}\lc[x_i]\ct{t}^X,[x_j]\ct{t}^X\rc=u_{Y\ct{t}}\lc[y_i]\ct{t}^Y,[y_j]\ct{t}^Y\rc$.
\end{enumerate}
Therefore, $\lc u_{X\ct{t}}\lc[x_i]\ct{t}^X,[x_j]\ct{t}^X\rc\rc_{i,j=1}^n\in\mathrm{K}_n\lc Y\ct{t}\rc$ and thus $\mathrm{K}_n\lc X\ct{t}\rc\subseteq\mathrm{K}_n\lc Y\ct{t}\rc$. Similarly, $\mathrm{K}_n\lc Y\ct{t}\rc\subseteq\mathrm{K}_n\lc X\ct{t}\rc$ and thus $\mathrm{K}_n\lc X\ct{t}\rc=\mathrm{K}_n\lc Y\ct{t}\rc$.
\end{proof}


\subsection{A modified version of $\ugh$.} Theorem \ref{thm:ughcurvset} actually suggests a connection with a modified version of Gromov-Hausdorff distance introduced in \cite{memoli2012some}, which also possesses a characterization via curvature sets. We now describe this connection.

It is known from \Cref{eq:distmap} that $\dgh$ has the following distortion formula:
\[{\dgh}(X,Y) = \frac{1}{2}\inf_{\substack{\varphi:X\rightarrow Y \\ \psi:Y\rightarrow X}}\max\left(\mathrm{dis}(\varphi),\mathrm{dis}(\psi),\mathrm{codis}(\varphi,\psi)\right).\]
By omitting the codistortion part, the computation can be reduced to solving two decoupled problems which will allow acceleration in practical applications. Hence in \cite{memoli2012some}, the author proposed the following distance as a lower bound of $\dgh$:
$$\hdgh(X,Y) = \frac{1}{2}\inf_{\substack{\varphi:X\rightarrow Y \\ \psi:Y\rightarrow X}}\max\left(\mathrm{dis}(\varphi),\mathrm{dis}(\psi)\right)=\frac{1}{2}\max\left(\inf_{\varphi:X\rightarrow Y}\dis(\varphi),\inf_{\psi:Y\rightarrow X}\dis(\psi)\right).$$
It is shown that $\hdgh$ is a legitimate distance on the collection of isometry classes of $\ms$ and $\hdgh\leq\dgh$ whereas an inverse inequality does not exist in general. In fact, it was shown in \cite{memoli2012some} that there exist finite metric spaces for which the inequality is strict. 

This new distance is related to curvature sets via a structural theorem (Theorem 5.1 in \cite{memoli2012some}) which shows that $\hdgh$ is completely characterized by curvature sets of $X$ and $Y$.

Inspired by the construction of $\hdgh$, it is natural to consider the following modified version of $\ugh$:
$$\widehat{u}_{GH}(X,Y) = \inf_{\substack{\varphi:X\rightarrow Y \\ \psi:Y\rightarrow X}}\max\left(\mathrm{dis}_\infty(\varphi),\mathrm{dis}_\infty(\psi)\right)=\max\left(\inf_{\varphi:X\rightarrow Y}\disp{\infty}(\varphi),\inf_{\psi:Y\rightarrow X}\disp{\infty}(\psi)\right).$$

It is then an interesting fact that in contrast to  $\dgh\gneq \hat{d}_{\mathrm{GH}} $ in general, the modified distance $\widehat{u}_{\mathrm{GH}}$ always coincides with $\ugh$.
\begin{theorem}\label{thm:ughm}
For all $X$ and $Y$ in $\mathcal{U}$, we have that $$\widehat{u}_{GH}(X,Y)=\ugh(X,Y).$$
\end{theorem}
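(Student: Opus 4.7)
The plan is to prove the nontrivial inequality $\ugh(X,Y)\le \widehat{u}_{\mathrm{GH}}(X,Y)$; the reverse inequality is immediate from \Cref{thm:dist}, since the max over three terms dominates the max over two of them. The key observation is that, in the ultrametric (i.e.\ $p=\infty$) setting, the condition $\disp{\infty}(\varphi)\le\eps$ is surprisingly rigid: because $\Lambda_\infty(a,b)\le\eps$ forces either $a=b$ or $\max(a,b)\le\eps$, the map $\varphi:X\to Y$ acts on each pair $(x,x')$ either as a strict isometry or as a pair ``below scale $\eps$''.

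Fix $\eps>\widehat{u}_{\mathrm{GH}}(X,Y)$ and choose $\varphi:X\to Y$ and $\psi:Y\to X$ with $\disp{\infty}(\varphi),\disp{\infty}(\psi)<\eps$. Whenever $u_X(x,x')\le\eps$ the dichotomy above yields $u_Y(\varphi(x),\varphi(x'))\le\eps$, so $\varphi$ respects the relation $\sim\ct{\eps}$ and therefore descends (as in \Cref{lm:induce}) to a well-defined map $\varphi_\eps:X\ct{\eps}\to Y\ct{\eps}$. If on the other hand $[x]\ct{\eps}\ne[x']\ct{\eps}$, then $u_X(x,x')>\eps$, so the dichotomy forces $u_Y(\varphi(x),\varphi(x'))=u_X(x,x')>\eps$; hence $[\varphi(x)]\ct{\eps}\ne[\varphi(x')]\ct{\eps}$ and, by the formula defining $u_{Y\ct{\eps}}$ in \Cref{eq:closed quotient},
\[
u_{Y\ct{\eps}}\bigl(\varphi_\eps([x]\ct{\eps}),\varphi_\eps([x']\ct{\eps})\bigr)=u_Y(\varphi(x),\varphi(x'))=u_X(x,x')=u_{X\ct{\eps}}([x]\ct{\eps},[x']\ct{\eps}).
\]
Thus $\varphi_\eps$ is an isometric embedding. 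Symmetrically, $\psi$ induces an isometric embedding $\psi_\eps:Y\ct{\eps}\hookrightarrow X\ct{\eps}$.

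By \Cref{lm:compact-quotient}, both $X\ct{\eps}$ and $Y\ct{\eps}$ are \emph{finite} sets. Isometric embeddings are injective, so the existence of embeddings in both directions forces $\#X\ct{\eps}=\#Y\ct{\eps}$, and then $\varphi_\eps$ is an injection between finite sets of equal cardinality, hence a bijection, hence an isometry. Therefore $X\ct{\eps}\cong Y\ct{\eps}$, and the structural theorem \Cref{thm:ugh-structure} gives $\ugh(X,Y)\le\eps$. Letting $\eps\downarrow\widehat{u}_{\mathrm{GH}}(X,Y)$ completes the proof.

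The argument is essentially a single application of the structural theorem once one notices the $\Lambda_\infty$ dichotomy, so there is no real obstacle — the only point that deserves care is checking that $\disp{\infty}(\varphi)<\eps$ (as opposed to the 1-Lipschitz hypothesis in \Cref{lm:induce}) already suffices to push $\varphi$ through the $\eps$-closed quotient, which is exactly what the dichotomy above supplies. This also explains why the analogous identity fails for $\dgh$: in the $p=1$ case, $|d_X(x,x')-d_Y(\varphi(x),\varphi(x'))|\le\eps$ gives only approximate, not exact, preservation of distances above scale $\eps$, so there is no finite quotient to which mutual embeddability can be upgraded to isometry.
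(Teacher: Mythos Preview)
Your proof is correct and follows essentially the same route as the paper's: both use the $\Lambda_\infty$ dichotomy to descend $\varphi,\psi$ to isometric embeddings of the $\eps$-closed quotients, then upgrade mutual embeddability to isometry and invoke the structural theorem. The only minor difference is that you exploit finiteness of the quotients (via \Cref{lm:compact-quotient}) and a cardinality count to get bijectivity, whereas the paper appeals to the general fact for compact spaces (mutual isometric embeddings imply isometry, \`a la Burago--Burago--Ivanov, Theorem~1.6.14); both arguments accomplish the same step.
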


\begin{proof}
By \Cref{thm:dist}, we have that 
$$ \ugh(X,Y)=\min_{\varphi,\psi}\max\left(\disp{\infty}(\varphi),\disp{\infty}(\psi),\codis_\infty(\varphi,\psi)\right).$$
Hence, $\ugh(X,Y)\geq\widehat{u}_{GH}(X,Y)$. 

Conversely, if there exist $\varphi,\psi$ such that $\max\left(\disp{\infty}(\varphi),\disp{\infty}(\psi)\right)\leq\eta$, we need to show that $\ugh(X,Y)\leq\eta$. Since $\disp{\infty}(\varphi)\leq\eta$, we have that for any $x,x'\in X$,
$$\Lambda_\infty(u_X(x,x'), u_Y(\varphi(x),\varphi(x')))\leq\eta.$$ 
Thus, we have the following two possibilities:
\begin{enumerate}
\item  $u_X(x,x')\neq u_Y(\varphi(x),\varphi(x')),$ and in this case neither of them is larger than $\eta$; or
\item $u_X(x,x')=u_Y(\varphi(x),\varphi(x')).$ 
\end{enumerate}
In either case, whenever $u_X(x,x')\leq\eta$, we have that $u_Y(\varphi(x),\varphi(x'))\leq\eta$. This is equivalent to saying that $\varphi$ is 1-Lipschitz and thus $\varphi$ canonically induces a map $\varphi_\eta:X\ct{\eta}\rightarrow Y\ct{\eta}$ by Lemma \ref{lm:induce}. 

For any $x,x'$ such that $u_X(x,x')>\eta$, we have that $u_Y(\varphi(x),\varphi(x'))=u_X(x,x')$. Then, we have that 
\begin{align*}
u_{Y\ct{\eta}}([\varphi(x)]\ct{\eta},[\varphi(x')]\ct{\eta})=u_Y(\varphi(x),\varphi(x'))=u_X(x,x')=u_{X\ct{\eta}}([x]\ct{\eta},[x']\ct{\eta}).
\end{align*}
Therefore, $\varphi_\eta$ is an isometric embedding. Similarly, we can prove that $\psi_\eta$ is an isometric embedding. This implies, by a standard argument in \cite[Theorem 1.6.14]{burago2001course}, that both $\varphi_\eta$ and $\psi_\eta$ are isometries, which shows $X\ct{\eta}\cong Y\ct{\eta}$. Then, by Theorem \ref{thm:ugh-structure}, we have that $\ugh(X,Y)\leq\eta$.
\end{proof}

\section{Relationship between $\dghp p$ and interleaving type  distances}  \label{sec:interleavings}

The interleaving distance has been widely used in the community of topological data analysis for comparing persistence modules \cite{chazal2009proximity,morozov2013interleaving,bubenik2014categorification,bubenik2015metrics}. Since a dendrogram is a special persistence module, it is also natural to compare dendrograms via the interleaving distance. In this section, we introduce an interleaving distance between ultrametric spaces through the interleaving distance between dendrograms. We further study the relationship between the interleaving distance and the Gromov-Hausdorff distance between ultrametric spaces.

We first introduce the notion of interleaving between two given dendrograms (cf. \Cref{def:proper dendrogram}). This definition is adapted from \cite{morozov2013interleaving,bubenik2014categorification}. To introduce this notion, we first introduce the category $\mathbf{Part}$ of all partitions as follows. The objects are pairs $(X,P_X)$ where $X$ is a set and $P_X\in \mathbf{Part}(X)$ is a partition. A morphism $\varphi:(X,P_X)\rightarrow(Y,P_Y)$ is any set map $\varphi:X\rightarrow Y$ such that for any block $B$ in $P_X$, $\varphi(B)\subseteq C$ for some block $C\in P_Y$.

\begin{definition}[Interleaving I]\label{def:catint}
Given two dendrograms $(X,\theta_X)$ and $(Y,\theta_Y)$, we say they are \emph{$\varepsilon$-interleaved} for a fixed $\eps\geq 0$, if for each $t\geq 0$, there exist morphisms $\varphi_t :(X,\theta_X(t))\rightarrow (Y,\theta_Y(t+\eps))$ and $\psi_t :(Y,\theta_Y(x))\rightarrow (X,\theta_X(t+\eps))$ such that for any  $x\in X$ and $y\in Y$, and any $0\leq s\leq t<\infty$,
\begin{enumerate}
    \item $[\varphi_s(x)]^{\theta_Y}_{t+\eps}=[\varphi_{t}(x)]^{\theta_Y}_{t+\eps}$ and $[\psi_t(y)]^{\theta_X}_{s+\eps}=[\psi_{s}(y)]^{\theta_X}_{s+\eps}$;
    \item $\psi_{t+\eps}\circ\varphi_t\!\lc [x]^{\theta_X}_{t}\rc\subseteq[x]^{\theta_X}_{t+2\varepsilon}$ and $\varphi_{t+\eps}\circ\psi_t\!\lc [y]^{\theta_Y}_{t}\rc\subseteq [y]^{\theta_Y}_{t+2\varepsilon}.$
\end{enumerate}
\end{definition}

\begin{remark}\label{rmk:interleaving of dendrogram and merge tree}
As mentioned in \cite[Remark 62]{memoli2021gromov}, when $X$ is finite, the dendrogram $(X,\theta_X)$ maps to a canonical merge tree $(M_X,h_X)$. It is not hard to see that two finite dendrograms are $\eps$-interleaved if and only if their corresponding merge trees are $\eps$-interleaved in the sense as described in \cite{morozov2013interleaving}.
\end{remark}

It turns out that the family of morphisms $\{\varphi_t\}_{t\in[0,\infty)}$ in the above definition can be replaced by a single set map $\varphi:X\rightarrow Y$. This observation leads to the following definition. See also \Cref{fig:interleaving} for an illustration.

\begin{figure}[ht]
    \centering
    \includegraphics[width=0.5\textwidth]{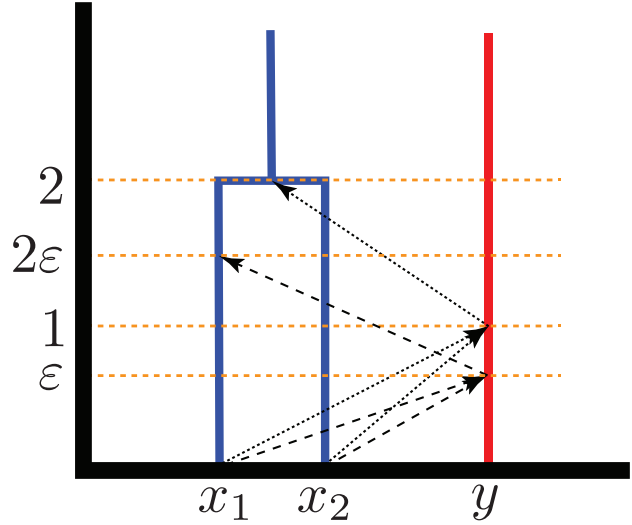}
    \caption{\textbf{Illustration of Definition \ref{def:int}.} Here we have two dendrograms with underlying sets $X=\{x_1,x_2\}$ and $Y=\{y\},$ respectively. There is only one set map $\varphi:X\rightarrow Y$ sending both points to $y$, while there are two set maps from $Y$ to $X$ sending $y$ to either $x_1$ or $x_2$. Without loss of generality, we assume that $\psi:Y\rightarrow X$ sends $y$ to $x_1$. Then, it is easy to see from the figure that when $\eps\geq 1$, $\varphi$ and $\psi$ will satisfy the conditions in Definition \ref{def:int}. However, for $\eps<1$, we see that $\psi\circ\varphi([x_2]_0^{\theta_X})=\{x_1\}$ is not a subset of $ [x_2]_{2\eps}^{\theta_X}=\{x_2\}$. Therefore, $X$ and $Y$ are not $\eps$-interleaved for $\eps<1$.}
    
    \label{fig:interleaving}
\end{figure}

\begin{definition}[Interleaving II]\label{def:int}
Given two dendrograms $(X,\theta_X)$ and $(Y,\theta_Y)$, we say they are \emph{$\varepsilon$-interleaved} for a fixed $\eps\geq 0$ if there exist set maps $\varphi :X\rightarrow Y$ and $\psi :Y\rightarrow X$ such that $\forall t\geq 0,x\in X$ and $y\in Y$ we have
\begin{enumerate}
\item $\varphi\lc[x]^{\theta_X}_t\rc\subseteq[\varphi(x)]^{\theta_Y}_{t+\varepsilon}$ and $\psi\lc[y]^{\theta_Y}_t\rc\subseteq[\psi(y)]^{\theta_X}_{t+\varepsilon},$
\item
$\psi\circ\varphi\!\lc [x]^{\theta_X}_{t}\rc\subseteq[x]^{\theta_X}_{t+2\varepsilon}$ and $\varphi\circ\psi\!\lc [y]^{\theta_Y}_{t}\rc\subseteq [y]^{\theta_Y}_{t+2\varepsilon}.$
\end{enumerate}
\end{definition}

\begin{proposition}[Equivalence of the two notions of interleaving]
Given two dendrograms $(X,\theta_X)$ and $(Y,\theta_Y)$ and $\eps\geq 0$, they are $\eps$-interleaved as in Definition \ref{def:catint} if and only if they are $\eps$-interleaved as in Definition \ref{def:int}.
\end{proposition}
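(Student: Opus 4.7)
The plan is to prove the two implications separately, exploiting the pointwise structure of each definition.

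For Definition \ref{def:int} $\Rightarrow$ Definition \ref{def:catint}, starting from set maps $\varphi:X\to Y$ and $\psi:Y\to X$ satisfying Definition \ref{def:int}, I would take the constant families $\varphi_t\coloneqq\varphi$ and $\psi_t\coloneqq\psi$ for every $t\geq 0$. Condition 1 of Definition \ref{def:int} says exactly that each such $\varphi_t$ is a morphism $(X,\theta_X(t))\to(Y,\theta_Y(t+\eps))$ in $\mathbf{Part}$ (and similarly for $\psi_t$); the consistency condition 1 of Definition \ref{def:catint} is trivial because the families are constant; and condition 2 of Definition \ref{def:catint} is nothing but condition 2 of Definition \ref{def:int}.

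For the reverse direction, starting from families $\{\varphi_t\}_{t\geq 0}$ and $\{\psi_t\}_{t\geq 0}$ satisfying Definition \ref{def:catint}, I would take $\varphi\coloneqq\varphi_0$ and $\psi\coloneqq\psi_0$ and verify Definition \ref{def:int}. To check Condition 1 for $\varphi$, fix $x'\in[x]_t^{\theta_X}$ and use the morphism property of $\varphi_t$ to obtain $[\varphi_t(x')]_{t+\eps}^{\theta_Y}=[\varphi_t(x)]_{t+\eps}^{\theta_Y}$; then $\varphi$-consistency at $(s,t)=(0,t)$ gives $[\varphi_0(x')]_{t+\eps}^{\theta_Y}=[\varphi_t(x')]_{t+\eps}^{\theta_Y}$ and likewise for $x$, yielding $\varphi_0(x')\in[\varphi_0(x)]_{t+\eps}^{\theta_Y}$. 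The check for $\psi$ is analogous but uses $\psi$-consistency at $(s,t)=(0,t)$, which places $\psi_t(y)$ and $\psi_0(y)$ in a common block of $\theta_X(\eps)$; since the partitions coarsen with the parameter, they remain in a common block of $\theta_X(t+\eps)$.

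Condition 2 of Definition \ref{def:int} is the main verification. To prove $\psi(\varphi(x))\in[x]_{t+2\eps}^{\theta_X}$, I begin with condition 2 of Definition \ref{def:catint} at level $t$, namely $\psi_{t+\eps}(\varphi_t(x))\in[x]_{t+2\eps}^{\theta_X}$, and transfer this to $\psi_0(\varphi_0(x))$ in two steps. First, $\varphi$-consistency places $\varphi_0(x)$ in $[\varphi_t(x)]_{t+\eps}^{\theta_Y}$, and applying the morphism $\psi_{t+\eps}$ yields $\psi_{t+\eps}(\varphi_0(x))\in[x]_{t+2\eps}^{\theta_X}$. Second, $\psi$-consistency at $(s,t)=(0,t+\eps)$ puts $\psi_0(\varphi_0(x))$ and $\psi_{t+\eps}(\varphi_0(x))$ in a common block of $\theta_X(\eps)$, and hence of $\theta_X(t+2\eps)$. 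Extending from $x$ to an arbitrary $x'\in[x]_t^{\theta_X}$ then follows immediately from Condition 1 already established, applied first to $\varphi$ and then to $\psi$.

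The only subtle point, and the one I expect to require the most care in writing, is the asymmetry between the two consistency conditions: $\varphi$-consistency identifies images at the \emph{coarser} level $t+\eps$, whereas $\psi$-consistency identifies them at the \emph{finer} level $s+\eps$. Both flavors are exactly what is needed for the argument above, since a partition can always be coarsened by raising its parameter. With this observation, the whole equivalence reduces to a disciplined bookkeeping of equivalence classes through the morphism and consistency conditions, and there is no deeper obstacle.
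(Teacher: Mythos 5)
Your proposal is correct and follows essentially the same route as the paper's proof: both directions use the constant families $\varphi_t\coloneqq\varphi$, $\psi_t\coloneqq\psi$ one way and $\varphi\coloneqq\varphi_0$, $\psi\coloneqq\psi_0$ the other, with the same bookkeeping through the morphism and consistency conditions. The only immaterial difference is that for Condition 2 you first establish the pointwise containment $\psi_0(\varphi_0(x))\in[x]^{\theta_X}_{t+2\eps}$ and then extend over the block, whereas the paper argues directly at the level of blocks via the already-established Condition 1 for $\psi$; your remark about the asymmetry of the two consistency conditions (identification at the coarser level $t+\eps$ for $\varphi$ versus the finer level $s+\eps$ for $\psi$) is accurate and is implicitly exploited in the paper's chain of equalities as well.
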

\begin{proof}
The `$\Leftarrow$' direction follows easily by taking $\varphi_t\coloneqq\varphi$ and $\psi_t\coloneqq\psi$ for all $t\geq 0$.

As for the `$\Rightarrow$' direction, let $\varphi\coloneqq\varphi_0$ and $\psi\coloneqq\psi_0$. For any $t\geq 0$ and $x\in X$, we have that $[\varphi(x)]_{t+\eps}^{\theta_Y}=[\varphi_0(x)]^{\theta_Y}_{t+\eps}=[\varphi_t(x)]^{\theta_Y}_{t+\eps}$ by item 1 in Definition \ref{def:catint}. Consider the set $[x]^{\theta_X}_t$. Since $[x']_0^{\theta_X}=\{x'\}$ holds for all $x'\in X$, we have that $[x]^{\theta_X}_t=\cup_{x'\in [x]^{\theta_X}_t}[x']_0^{\theta_X}$. Then, we have
\begin{align*}
    \varphi\lc[x]^{\theta_X}_t\rc&=\varphi_0\lc[x]^{\theta_X}_t\rc=\bigcup_{x'\in [x]^{\theta_X}_t}\varphi_0([x']_0^{\theta_X})\subseteq\bigcup_{x'\in [x]^{\theta_X}_t}[\varphi_0(x')]_\eps^{\theta_Y}\\
    &\subseteq \bigcup_{x'\in [x]^{\theta_X}_t}[\varphi_0(x')]_{t+\eps}^{\theta_Y}=\bigcup_{x'\in [x]^{\theta_X}_t}[\varphi_t(x')]_{t+\eps}^{\theta_Y}\subseteq[\varphi_t(x)]_{t+\eps}^{\theta_Y}=[\varphi_0(x)]_{t+\eps}^{\theta_Y}=[\varphi(x)]_{t+\eps}^{\theta_Y},
\end{align*}
where the last inclusion map follows from the assumption that $\varphi_t:(X,\theta_X(t))\rightarrow (Y,\theta_Y(t+\eps))$ is a morphism. Therefore, $\varphi$ satisfies the condition 1 in Definition \ref{def:int}. The same result also holds for $\psi$.

As for the second condition in Definition \ref{def:int}, we have the following relations:
\begin{align*}
    \psi\circ\varphi\lc[x]^{\theta_X}_t\rc\subseteq\psi\lc[\varphi_0(x)]_{t+\eps}^{\theta_Y}\rc=\psi\lc[\varphi_t(x)]_{t+\eps}^{\theta_Y}\rc\subseteq[\psi_{0}\circ\varphi_t(x)]_{t+2\eps}^{\theta_X}=[\psi_{t+\eps}\circ\varphi_t(x)]_{t+2\eps}^{\theta_X}.
\end{align*}
Then, since $\psi_{t+\eps}\circ\varphi_t\lc[x]^{\theta_X}_t\rc\subseteq[x]_{t+2\eps}^{\theta_X}$, we have that $\psi_{t+\eps}\circ\varphi_t(x)\in[x]_{t+2\eps}^{\theta_X}$. Thus $[x]_{t+2\eps}^{\theta_X}=[\psi_{t+\eps}\circ\varphi_t(x)]_{t+\eps}^{\theta_X}$.
\end{proof}

\begin{framed}
In the rest of this section, we will adopt Definition \ref{def:int} as our definition of interleaving, which is easier to analyze.
\end{framed}

\begin{definition}[Interleaving distance]\label{def:int-dist-ddg}
Given two dendrograms $(X,\theta_X)$ and $(Y,\theta_Y)$, we define the \emph{interleaving distance} $\dI$ between them as
$$\dI\left((X,\theta_X),(Y,\theta_Y)\right)\coloneqq \inf\{\varepsilon>0:\,(X,\theta_X)\text{ and $(Y,\theta_Y)$ are $\varepsilon-$interleaved}\}.$$
\end{definition}

\subsection{Interleaving distance between ultrametric spaces} Given the equivalence between dendrograms and compact ultrametric spaces (cf. \Cref{thm:compact ultra-dendro}), we define the interleaving distance $\dI$ between compact ultrametric spaces $(X,u_X)$ and $(Y,u_Y)$ as the interleaving distance between their corresponding dendrograms $\theta_X$ and $\theta_Y$:

$$\dI((X,u_X),(Y,u_Y))\coloneqq \dI\lc(X,\theta_X),(Y,\theta_Y)\rc.$$

The following theorem characterizes interleaving between ultrametric spaces completely in terms of distance functions.

\begin{theorem}[Interleaving between ultrametric spaces]\label{thm:ibu}
Two compact ultrametric spaces $(X,u_X)$ and $(Y,u_Y)$ are $\eps$-interleaved if and only if there exist set maps $\varphi :X\rightarrow Y$ and $\psi :Y\rightarrow X$ such that for any $x,x'\in X$ and $y,y'\in Y$
\begin{enumerate}
\item $u_Y(\varphi(x),\varphi(x'))\leq u_X(x,x')+\eps$ and $u_X(\psi(y),\psi(y'))\leq u_Y(y,y')+\eps$.
\item $u_X\left(x,\psi\circ\varphi(x)\right)\leq 2\eps$ and $u_Y\left(y,\varphi\circ\psi(y)\right)\leq 2\eps. $
\end{enumerate}
\end{theorem}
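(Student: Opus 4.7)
The plan is to translate Definition~\ref{def:int} directly, using the explicit description of dendrogram blocks associated to a compact ultrametric space. By Theorem~\ref{thm:compact ultra-dendro} and Remark \textit{Relationship with closed balls}, for all $x \in X$ and $t \geq 0$ we have
\[[x]^{\theta_X}_t = [x]\ct{t}^X = \{x' \in X : u_X(x,x') \leq t\},\]
and identically for $Y$. With this identification, each of the four set-theoretic inclusions in Definition~\ref{def:int} becomes a statement about ultrametric distances, which is exactly what the theorem asserts.

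For the ``only if'' direction, assume $\varphi:X\to Y$ and $\psi:Y\to X$ implement an $\varepsilon$-interleaving. To obtain the first family of inequalities in item~1 of the theorem, fix $x,x'\in X$ and set $t := u_X(x,x')$ so that $x'\in [x]^{\theta_X}_t$. The inclusion $\varphi([x]^{\theta_X}_t)\subseteq [\varphi(x)]^{\theta_Y}_{t+\varepsilon}$ then forces $u_Y(\varphi(x),\varphi(x'))\leq t+\varepsilon = u_X(x,x')+\varepsilon$. The inequality for $\psi$ is symmetric. For item~2, specialize $t=0$: since $\theta_X(0)$ consists of singletons, $[x]^{\theta_X}_0=\{x\}$, and the inclusion $\psi\circ\varphi([x]^{\theta_X}_0)\subseteq [x]^{\theta_X}_{2\varepsilon}$ directly yields $u_X(x,\psi\circ\varphi(x))\leq 2\varepsilon$. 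Again, the bound on $\varphi\circ\psi$ follows by symmetry.

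For the ``if'' direction, assume the distance inequalities in items~1 and~2 hold. To verify the first inclusion of Definition~\ref{def:int}, let $x'\in [x]^{\theta_X}_t$, i.e.\ $u_X(x,x')\leq t$. Then item~1 gives $u_Y(\varphi(x),\varphi(x'))\leq u_X(x,x')+\varepsilon \leq t+\varepsilon$, whence $\varphi(x')\in [\varphi(x)]^{\theta_Y}_{t+\varepsilon}$. The analogous inclusion for $\psi$ is identical. For the two-fold inclusion $\psi\circ\varphi([x]^{\theta_X}_t)\subseteq [x]^{\theta_X}_{t+2\varepsilon}$, take $x'\in [x]^{\theta_X}_t$ and invoke the strong triangle inequality together with item~2:
\[u_X(x,\psi\circ\varphi(x'))\leq \max\bigl(u_X(x,x'),\,u_X(x',\psi\circ\varphi(x'))\bigr)\leq \max(t,2\varepsilon)\leq t+2\varepsilon,\]
so $\psi\circ\varphi(x')\in [x]^{\theta_X}_{t+2\varepsilon}$. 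The inclusion involving $\varphi\circ\psi$ is proved in the same way.

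I do not expect any genuine obstacle here: once the blocks are identified with closed balls, the equivalence of the set-theoretic interleaving conditions with the distance inequalities is a matter of choosing the right values of $t$ on one side and applying ultrametric transitivity on the other. The one subtle point to double-check is the use of $[x]^{\theta_X}_0=\{x\}$ when extracting item~2 from the interleaving; this is exactly condition (1) of Definition~\ref{def:proper dendrogram}.
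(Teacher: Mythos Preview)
Your proof is correct and follows essentially the same approach as the paper's: both directions hinge on identifying $[x]^{\theta_X}_t$ with the closed ball $\{x': u_X(x,x')\le t\}$, specializing $t=u_X(x,x')$ (resp.\ $t=0$) to extract the distance inequalities, and using the strong triangle inequality via $\max(t,2\eps)\le t+2\eps$ for the converse.
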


\begin{proof}
$(X,u_X)$ and $(Y,u_Y)$ are $\eps$-interleaved if and only if the dendrograms $(X,\theta_X)$ and $(Y,\theta_Y)$ are $\eps$-interleaved. This is equivalent to the condition that there exist set maps $\varphi :X\rightarrow Y$ and $\psi :Y\rightarrow X$ such that $\forall t\geq 0,x\in X$ and $y\in Y$ we have
\begin{enumerate}
\item $\varphi\lc[x]^{\theta_X}_t\rc\subseteq[\varphi(x)]^{\theta_Y}_{t+\varepsilon}$ and $\psi\lc[y]^{\theta_Y}_t\rc\subseteq[\psi(y)]^{\theta_X}_{t+\varepsilon},$
\item
$\psi\circ\varphi\lc[x]_{t}^{\theta_X}\rc\subseteq[x]^{\theta_X}_{t+2\varepsilon}$ and $\varphi\circ\psi\lc[y]^{\theta_Y}_{t}\rc\subseteq[y]^{\theta_Y}_{t+2\varepsilon},$
\end{enumerate}

Since $x'\in[x]_t^{\theta_X}$ if and only if $u_X(x,x')\leq t$, the first item implies that for any $x'\in X$ such that $u_X(x,x')\leq t$, we have $u_Y(\varphi(x'),\varphi(x))\leq t+\eps$. By letting $t \coloneqq u_X(x,x')$, then $u_Y(\varphi(x'),\varphi(x))\leq u_X(x,x')+\eps$ and symmetrically, $u_X(\psi(y),\psi(y'))\leq u_Y(y,y')+\eps$. 
It is easy to derive from item 2 that $u_X(x,\psi\circ\varphi(x))\leq t+2\eps$ for any $t\geq 0$ and thus by taking $t=0$, we obtain $u_X(x,\psi\circ\varphi(x))\leq 2\eps$ and similarly $u_Y\left(y,\varphi\circ\psi(y)\right)\leq 2\eps. $

Conversely, let $\varphi:X\rightarrow Y$ and $\psi:Y\rightarrow X$ be such that the conditions in the theorem hold. Given any $t\geq 0$ and $x\in X$, if $x'\in X$ is such that $u_X(x,x')\leq t$, then
$$u_Y( \varphi (x), \varphi (x'))\leq u_X(x,x')+\eps\leq t+\eps ,$$
which implies that $ \varphi (x')\in[ \varphi (x)]^{\theta_Y}_{t+\eps}$. Therefore, $ \varphi \lc[x]^{\theta_X}_t\rc\subseteq[ \varphi (x)]^{\theta_Y}_{t+\eps}.$ Moreover,
\begin{align*}
    u_X(\psi\circ \varphi(x'),x)&\leq \max\left(u_X(\psi\circ \varphi(x'),x'),u_X(x',x)\right)\\
    &\leq \max(2\eps,t)\leq t+2\eps.
\end{align*}
Hence $\psi\circ \varphi\lc[x]^{\theta_X}_t\rc\subseteq [x]^{\theta_X}_{t+2\eps}$. Similarly for any $y\in Y$, $\psi\lc[y]^{\theta_Y}_t\rc\subseteq[\psi(y)]^{\theta_Y}_{t+\eps}$ and $\varphi\circ \psi\lc[y]^{\theta_Y}_t\rc\subseteq[y]^{\theta_Y}_{t+2\eps}.$ This shows that $\varphi$ and $\psi$ induce an $\eps$-interleaving between $(X,\theta_X)$ and $(Y,\theta_Y).$
\end{proof}

Due to the above theorem, we propose the following alternative definition for the interleaving distance $\dI$ on the collection $\ums$ of all compact ultrametric spaces. 
\begin{definition}\label{def:interleaving ums}
Given $X,Y\in\ums$ and $\eps\geq 0$, we say $X$ and $Y$ are $\eps$-interleaved if there exist set maps $\varphi :X\rightarrow Y$ and $\psi :Y\rightarrow X$ such that for any $x,x'\in X$ and $y,y'\in Y$
\begin{enumerate}
\item $u_Y(\varphi(x),\varphi(x'))\leq u_X(x,x')+\eps$ and $u_X(\psi(y),\psi(y'))\leq u_Y(y,y')+\eps$.
\item $u_X\left(x,\psi\circ\varphi(x)\right)\leq 2\eps$ and similarly, $u_Y\left(y,\varphi\circ\psi(y)\right)\leq 2\eps. $
\end{enumerate}
We define the interleaving distance between $X$ and $Y$ as follows:
$$\dI\left((X,u_X),(Y,u_Y)\right)\coloneqq \inf\{\varepsilon>0:\,(X,u_X)\text{ and $(Y,u_Y)$ are $\varepsilon-$interleaved}\}.$$
\end{definition}

\subsection{Characterization of the interleaving distance between ultrametric spaces} Given compact ultrametric spaces $X$ and $Y$ and a map $\varphi:X\rightarrow Y$,  we define the \emph{I-distortion} of $\varphi$ as follows: 
\begin{equation}\label{eq:i-distortion}
    \disi(\varphi,u_X,u_Y)\coloneqq \inf\left\{\eps\geq 0:\, u_Y(\varphi(x),\varphi(x'))\leq u_X(x,x')+\eps,\,\,\forall x,x'\in X\right\}.
\end{equation}
Given another map $\psi:Y\rightarrow X$, we define the I-codistortion of $(\varphi,\psi)$ as follows:
\begin{equation}\label{eq:i-codistortion}
    \codisi(\varphi,\psi,u_X,u_Y)\coloneqq \frac{1}{2}\max\left(\sup_{x\in X}u_X(x,\psi\circ\varphi(x)),\sup_{y\in Y}u_Y(y,\varphi\circ\psi(y))\right).
\end{equation}
We will use the abbreviations $\disi(\varphi)$ and $\codisi(\varphi,\psi)$ when the underlying metric structures are clear.

\begin{remark}\label{rmk:disanddisi}
It is easy to check that 
$$\disi(\varphi)=\sup_{x,x'\in X}\lc u_Y(\varphi(x),\varphi(x'))-u_X(x,x')\rc. $$
Hence, by Equation (\ref{eq:dismap}), we have that $\disi(\varphi)\leq \dis(\varphi)$. Moreover,
$$2\,\codisi(\varphi,\psi)=\sup\left\{|u_X(x,\psi(y))-u_Y(\varphi(x),y)|:\, {x\in X,y=\varphi(x)\text{ or }y\in Y, x=\psi(x)}\right\}.$$
Hence, by Equation (\ref{eq:codismap}), we have that $2\,\codisi\leq \codis$.
\end{remark}

\begin{theorem}\label{thm:intdist}
Given $X,Y\in\ums$,
$$ \dI(X,Y)=\inf_{\varphi,\psi}\max\left(\disi(\varphi),\disi(\psi),\codisi(\varphi,\psi)\right),$$
where the infimum is taken over all maps $\varphi:X\rightarrow Y$ and $\psi:Y\rightarrow X$.
\end{theorem}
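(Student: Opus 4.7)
The plan is a direct unpacking of definitions: both sides are infima, and I will show that the underlying sets of admissible data coincide. More precisely, for a fixed $\eps \geq 0$ and a fixed pair of maps $(\varphi,\psi)$, I will show that $(\varphi,\psi)$ witnesses an $\eps$-interleaving between $X$ and $Y$ in the sense of Definition \ref{def:interleaving ums} if and only if $\max(\disi(\varphi),\disi(\psi),\codisi(\varphi,\psi)) \leq \eps$. Granting this equivalence, the identification of the two infima is routine.

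To translate condition 1 of Definition \ref{def:interleaving ums}, I will invoke Remark \ref{rmk:disanddisi}, which identifies $\disi(\varphi)$ with $\sup_{x,x'\in X}(u_Y(\varphi(x),\varphi(x')) - u_X(x,x'))$. Thus the inequality $u_Y(\varphi(x),\varphi(x')) \leq u_X(x,x')+\eps$ holds for all $x,x'$ if and only if $\disi(\varphi) \leq \eps$, and symmetrically for $\psi$. For condition 2, the requirement that $u_X(x,\psi\circ\varphi(x))\leq 2\eps$ for all $x\in X$ and $u_Y(y,\varphi\circ\psi(y))\leq 2\eps$ for all $y\in Y$ becomes, after taking suprema, combining into a maximum, and dividing by $2$, exactly $\codisi(\varphi,\psi)\leq \eps$ by (\ref{eq:i-codistortion}).

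With the equivalence in hand, the computation of $\dI(X,Y)$ concludes as follows. If $(\varphi,\psi)$ is any pair of maps, set $\eta \coloneqq \max(\disi(\varphi),\disi(\psi),\codisi(\varphi,\psi))$; then for every $\eps > \eta$, the equivalence shows that $(\varphi,\psi)$ induces an $\eps$-interleaving, so $\dI(X,Y) \leq \eta$, hence $\dI(X,Y)$ is bounded above by the right-hand side. Conversely, whenever $X$ and $Y$ are $\eps$-interleaved for some $\eps > 0$, there exists a pair $(\varphi,\psi)$ with $\max(\disi(\varphi),\disi(\psi),\codisi(\varphi,\psi)) \leq \eps$, so the right-hand side is bounded above by $\dI(X,Y)$.

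I do not anticipate any serious obstacle: the proof is essentially bookkeeping, and the only point that requires a small verification is that the strict inequality $\eps > 0$ in the definition of $\dI$ matches the infimum on the right (which could a priori be attained at $\eps=0$). This is handled by observing that the set of admissible $\eps$'s is upward closed, so the infimum over $\eps > 0$ and the infimum over $\eps \geq 0$ coincide.
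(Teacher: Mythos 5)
Your proposal is correct and follows essentially the same route as the paper: both arguments reduce the theorem to the pointwise equivalence, for a fixed pair $(\varphi,\psi)$ and fixed $\eps$, between the two conditions of Definition \ref{def:interleaving ums} and the inequality $\max\left(\disi(\varphi),\disi(\psi),\codisi(\varphi,\psi)\right)\leq\eps$, and then identify the two infima. Your extra remark about the upward-closedness of the admissible set of $\eps$'s (handling $\eps>0$ versus $\eps\geq 0$) is a small point the paper glosses over, but it does not change the substance.
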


Notice that the structure of the right-hand side in the above equation is almost the same (up to a $\frac{1}{2}$ factor) as in the Gromov-Hausdorff distance formula given in \Cref{eq:distmap}. This characterization allows us to directly compare $\dI$ and $\dgh$ (see \Cref{coro:int}).

\begin{proof}[Proof of Theorem \ref{thm:intdist}]
We first assume that $X$ and $Y$ are $\eps$-interleaved through the maps $\varphi:X\rightarrow Y$ and $\psi:Y\rightarrow X$ for some $\eps\geq 0$. Then, by condition 1 of \Cref{def:interleaving ums}, one has $u_Y(\varphi(x),\varphi(x'))\leq u_X(x,x')+\eps$ for any $x,x'\in X$ and thus $\disi(\varphi)\leq \eps.$ Similarly, $\disi(\psi)\leq \eps.$ Directly from condition 2 of \Cref{def:interleaving ums}, we conclude that $\codisi(\varphi,\psi)\leq \eps$. Therefore, $\max\left(\disi(\varphi),\disi(\psi),\codisi(\varphi,\psi)\right)\leq \eps$ and thus $\max\left(\disi(\varphi),\disi(\psi),\codisi(\varphi,\psi)\right)\leq \dI(X,Y)$.

Conversely, assume that $\max\left(\disi(\varphi),\disi(\psi),\codisi(\varphi,\psi)\right)\leq\eps$ for $\varphi:X\rightarrow Y$ and $\psi:Y\rightarrow X$ and some $\eps\geq 0$. Then, by \Cref{eq:i-distortion} and \Cref{eq:i-codistortion}, it is easy to check the following claims.
\begin{enumerate}
    \item $u_Y( \varphi(x), \varphi(x'))\leq u_X(x,x')+\eps$ and $u_X( \psi(y), \psi(y'))\leq u_Y(y,y')+\eps$.
\item  $u_X\left(x, \psi\circ \varphi(x)\right)\leq 2\eps $ and similarly, $u_Y\left(y, \varphi\circ \psi(y)\right)\leq 2\eps. $
\end{enumerate}
Then, we conclude that $\dI(X,Y)\leq \eps$ and thus $\dI(X,Y)\leq\max\left(\disi(\varphi),\disi(\psi),\codisi(\varphi,\psi)\right)$.
\end{proof}

\begin{corollary}[Bi-Lipschitz equivalence with $\dgh$]\label{coro:int}
For any $X,Y\in\ums$, we have
$$\frac{1}{2}\dI(X,Y)\leq \dgh(X,Y)\leq \dI(X,Y). $$
\end{corollary}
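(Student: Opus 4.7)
The plan is to prove both inequalities by comparing the distortion-based formula for $\dgh$ from Equation (\ref{eq:distmap}) with Theorem \ref{thm:intdist} for $\dI$, leveraging the pointwise bounds $\disi(\varphi)\leq \dis(\varphi)$ and $2\codisi(\varphi,\psi)\leq \codis(\varphi,\psi)$ recorded in Remark \ref{rmk:disanddisi}.

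The lower bound $\tfrac{1}{2}\dI\leq \dgh$ is the easier direction. For any maps $\varphi:X\rightarrow Y$ and $\psi:Y\rightarrow X$, Remark \ref{rmk:disanddisi} gives the pointwise inequality $\max(\disi(\varphi),\disi(\psi),\codisi(\varphi,\psi))\leq \max(\dis(\varphi),\dis(\psi),\codis(\varphi,\psi))$. Taking the infimum over all such pairs $(\varphi,\psi)$ and invoking Theorem \ref{thm:intdist} together with Equation (\ref{eq:distmap}), we conclude $\dI(X,Y)\leq 2\,\dgh(X,Y)$.

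For the upper bound $\dgh\leq \dI$, I will show that if maps $\varphi,\psi$ satisfy $\max(\disi(\varphi),\disi(\psi),\codisi(\varphi,\psi))\leq \varepsilon$, then $\max(\dis(\varphi),\dis(\psi),\codis(\varphi,\psi))\leq 2\varepsilon$, from which Equation (\ref{eq:distmap}) yields $\dgh(X,Y)\leq \varepsilon$. The bound $u_Y(\varphi(x),\varphi(x'))-u_X(x,x')\leq \varepsilon$ is immediate from $\disi(\varphi)\leq \varepsilon$. For the reverse direction I apply the strong triangle inequality through $\psi\circ\varphi(x)$ and $\psi\circ\varphi(x')$, obtaining $u_X(x,x')\leq \max\bigl(u_X(x,\psi\varphi(x)),\, u_X(\psi\varphi(x),\psi\varphi(x')),\, u_X(\psi\varphi(x'),x')\bigr)$, and each term is bounded by $2\varepsilon$, $u_Y(\varphi(x),\varphi(x'))+\varepsilon$, and $2\varepsilon$ respectively, using $\codisi(\varphi,\psi)\leq \varepsilon$ and $\disi(\psi)\leq \varepsilon$. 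A symmetric argument handles $\dis(\psi)$, and the codistortion bound $|u_X(x,\psi(y))-u_Y(\varphi(x),y)|\leq 2\varepsilon$ follows from the parallel three-term decomposition $u_X(x,\psi(y))\leq \max\bigl(u_X(x,\psi\varphi(x)),\, u_X(\psi\varphi(x),\psi(y))\bigr)$ together with $\disi(\psi)\leq \varepsilon$, and its mirror image using $\varphi\circ\psi$.

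The main obstacle is the upper bound direction: recovering a two-sided distortion estimate from the one-sided I-distortion inequality. This is precisely where the ultrametricity of $X$ and $Y$ is essential, since the strong triangle inequality lets the composition $\psi\circ\varphi$ be absorbed without an additive triangle penalty, so that the passage from I-distortion to ordinary distortion costs only a factor of $2$ rather than something unbounded in the diameter.
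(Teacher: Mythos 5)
Your proof is correct and follows essentially the same route as the paper: the lower bound is verbatim the paper's argument via Remark \ref{rmk:disanddisi}, and the upper bound uses the identical key estimates (the three-term strong triangle inequality through $\psi\circ\varphi$, with the $2\eps$ codistortion terms absorbed into a single additive $2\eps$). The only difference is cosmetic: the paper packages these estimates as a bound on $\dis(R)$ for the correspondence $R=\{(x,y):\varphi(x)=y\text{ or }\psi(y)=x\}$ and invokes \Cref{eq:dgh-distortion}, whereas you bound $\dis(\varphi)$, $\dis(\psi)$, $\codis(\varphi,\psi)$ separately and invoke the Kalton--Ostrovskii formula \Cref{eq:distmap}; the case analysis is the same either way.
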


\begin{example}[Tightness of the coefficients] Consider the two-point spaces $\Delta_2(2)$ and $\Delta_2(4)$. It is not hard to check that $\dI(\Delta_2(2),*)=1=\dgh(\Delta_2(2),*)$ and $\dI(\Delta_2(2),\Delta_2(4))=2=2\,\dgh(\Delta_2(2),\Delta_2(4)).$
This means that both inequalities above can in fact be equalities. 
\end{example}

\begin{proof}[Proof of Corollary \ref{coro:int}]
We first prove the rightmost inequality. Assume that there exist $\varphi:X\rightarrow Y$ and $\psi:Y\rightarrow X$ inducing an $\eps$-interleaving between $X$ and $Y$. Consider the correspondence between $X$ and $Y$ generated by the interleaving maps $\varphi$ and $\psi$: 
\[R\coloneqq\{(x,y)\in X\times Y:\, \varphi(x)=y\text{ or }\psi(y)=x\}.\]
Now, we prove that $\dis(R)\leq 2\varepsilon$ which will imply that $\dgh(X,Y)\leq \eps$. It suffices to prove for any given $(x,y),(x',y')\in R$ that $\big|u_X(x,x')-u_Y(y,y')\big|\leq 2\eps$. Due to the symmetric role of $\varphi$ and $\psi$, it suffices to check the following two cases:

\begin{enumerate}
\item $y=\varphi(x)$ and $y'=\varphi(x')$. By \Cref{def:interleaving ums} we have that $u_X(x,x')+\eps\geq u_Y(\varphi(x),\varphi(x'))=u_Y(y,y').$ On the other hand, we have that 
\begin{align*}
    u_X(x,x')&\leq\max\left(u_X(x,\psi\circ\varphi(x)),u_X(\psi\circ\varphi(x),\psi\circ\varphi(x')),u_X(\psi\circ\varphi(x'),x')\right)\\
    &\leq \max\left(2\eps,u_Y(\varphi(x),\varphi(x'))+\eps,2\eps\right)\leq u_Y(y,y')+2\eps.
\end{align*}
Hence $\big|u_X(x,x')-u_Y(y,y')\big|\leq 2\eps$.

\item $y=\varphi(x)$ and $x'=\psi(y')$. Then, 
\begin{align*}
    u_X(x,x')&\leq\max\left(u_X(x,\psi\circ\varphi(x)),u_X(\psi\circ\varphi(x),\psi(y'))\right)\\
    &\leq \max\left(2\eps,u_Y(\varphi(x),y')+\eps\right)\leq u_Y(y,y')+2\eps.
\end{align*}
Similarly, $u_Y(y,y')\leq u_X(x,x')+2\eps$, and thus $\big|u_X(x,x')-u_Y(y,y')\big|\leq 2\eps$.
\end{enumerate}

The leftmost inequality follows directly from Theorem \ref{thm:intdist}. Assume that $\dgh(X,Y)\leq \eps$, then by Equation (\ref{eq:distmap}) there are two maps $\varphi:X\rightarrow Y$ and $\psi:Y\rightarrow X$ such that 
$$\dis(\varphi),\dis( \psi),\codis(\varphi, \psi)\leq 2\eps.$$
Then, it is immediate that $\disi(\varphi)\leq \dis(\varphi)\leq 2\eps$ where the first inequality follows from Remark \ref{rmk:disanddisi}. Similarly, $\disi( \psi)\leq2\eps$. As for $\codisi,$ we have by Remark \ref{rmk:disanddisi} again that $\codisi(\varphi,\psi)\leq \frac{1}{2}\codis(\varphi,\psi)\leq \eps$. Thus, $\dI(X,Y)\leq 2\eps$ and since $\eps\geq \dgh(X,Y)$ was arbitrary, we obtain that $\dI(X,Y)\leq 2\,\dgh(X,Y).$
\end{proof}

\subsection{$p$-interleaving distance for dendrograms and compact ultrametric spaces}\label{sec:p-int}

In the definition (cf. \Cref{def:int}) of $\dI$ between dendrograms, we implicitly used a shift operator, namely we considered a $+\eps$ shift of dendrograms. Replacing $+$ with $\ps{p}$, we will obtain the so-called $p$-interleaving. 

\begin{definition}\label{def:int-dis-p}
Given two dendrograms $(X,\theta_X)$ and $(Y,\theta_Y)$, we say they are $(\varepsilon,p)$-interleaved for some $\eps>0$ and $p\in[1,\infty]$ if there exist set maps $\varphi :X\rightarrow Y$ and $\psi :Y\rightarrow X$ such that $\forall t\geq 0,x\in X$ and $y\in Y$ we have
\begin{enumerate}
\item $\varphi\lc[x]^{\theta_X}_t\rc\subseteq[\varphi(x)]^{\theta_Y}_{t\ps{p} \varepsilon}$ and $\psi\lc[y]^{\theta_Y}_t\rc\subseteq[\psi(y)]^{\theta_X}_{t\ps{p} \varepsilon},$
\item
$[x]^{\theta_X}_{t\ps{p} \varepsilon\ps{p} \eps}=[\psi\circ\varphi(x)]^{\theta_X}_{t\ps{p} \varepsilon\ps{p} \eps}$ and $[y]^{\theta_Y}_{t\ps{p} \varepsilon\ps{p} \eps}=[\varphi\circ\psi(y)]^{\theta_Y}_{t\ps{p} \eps\ps{p} \eps}.$
\end{enumerate}

We then define the $p$-interleaving distance between $(X,\theta_X)$ and $(Y,\theta_Y)$ as 
$$\dIp{p}\left((X,\theta_X),(Y,\theta_Y)\right)\coloneqq \inf\{\varepsilon>0:\,(X,\theta_X)\text{ and $(Y,\theta_Y)$ are $(\varepsilon,p)-$interleaved}.\}$$
\end{definition}

\begin{remark}
Note that when $p=1$, $(\eps,1)$-interleaving is exactly the $\eps$-interleaving given in Definition \ref{def:int}. When $p=\infty$, the two conditions become
\begin{enumerate}
\item $\varphi\lc[x]^{\theta_X}_t\rc\subseteq[\varphi(x)]^{\theta_Y}_{\max(t,\eps)}$ and $\psi\lc[y]^{\theta_Y}_t\rc\subseteq[\psi(y)]^{\theta_X}_{\max(t,\eps)},$
\item
$[x]^{\theta_X}_{\max(t,\eps)}=[\psi\circ\varphi(x)]^{\theta_X}_{\max(t,\eps)}$ and $[y]^{\theta_Y}_{\max(t,\eps)}=[\varphi\circ\psi(y)]^{\theta_Y}_{\max(t,\eps)}.$
\end{enumerate}
It is easy to check that if both conditions hold for $t=\eps$, then they hold for all $0\leq t\leq \eps$. This indicates that $(\eps,\infty)$-interleaving is performing some sort of coarsening of dendrograms in that information corresponding to $t<\eps$ is discarded. Careful readers may notice a similar phenomenon in Definition \ref{def:ultraquotient}. Given the later structural result (\Cref{thm:ugh-structure}) characterizing $\ugh$ via closed quotients, this resemblance actually hints at a close relation between $\dIp{\infty}$ and $\ugh$. See also later Remark \ref{rmk:infint}.
\end{remark}

A characterization result similar to Theorem \ref{thm:intdist} also holds for $p$-interleaving distance. We first define the \emph{$p$-I-distortion} of a map $\varphi:X\rightarrow Y$:

$$\disip{p}(\varphi,u_X,u_Y)\coloneqq \sup_{x,x'\in X}A_p\left(u_Y(\varphi(x),\varphi(x')),u_X(x,x')\right).$$
Recall that $A_p$ is the asymmetric $p$-difference defined in Equation (\ref{eq:asymmetric}). 

Similarly given $\psi:Y\rightarrow X$, we define the \emph{$p$-I-codistortion} between $\varphi$ and $\psi$ by $$\codisip{p}(\varphi,\psi,u_X,u_Y)\coloneqq 2^{-\frac{1}{p}}\max\left(\max_{x\in X}u_X(x,\psi\circ\varphi(x)),\max_{y\in Y}u_Y(y,\varphi\circ\psi(y))\right).$$ 
Same as before, we will use the abbreviations $\disip{p}(\varphi)$ and $\codisip{p}(\varphi,\psi)$ when the underlying metric structures are clear.

\begin{theorem}\label{thm:pintdist}
Given $X,Y\in\ums$ and $p\in[1,\infty]$,
$$ \dIp{p}(X,Y)=\inf_{\varphi,\psi}\max\left(\disip{p}(\varphi),\disip{p}(\psi),\codisip{p}(\varphi,\psi)\right),$$
where the infimum is taken over all maps $\varphi:X\rightarrow Y$ and $\psi:Y\rightarrow X$.  
\end{theorem}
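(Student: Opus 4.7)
The plan is to mirror the proof of Theorem \ref{thm:intdist} (the $p=1$ case), replacing classical arithmetic by the $p$-arithmetic and using the asymmetric $p$-difference $A_p$ in place of the one-sided $+$. The key translation, already implicit in Theorem \ref{thm:ibu}, is that a point $x'$ belongs to the block $[x]^{\theta_X}_t$ in the dendrogram of $(X,u_X)$ precisely when $u_X(x,x')\leq t$, so that the set-theoretic inclusions of Definition \ref{def:int-dis-p} encode pointwise distance inequalities that involve $\ps{p}$.

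For the forward direction, assume $X$ and $Y$ are $(\eps,p)$-interleaved via $\varphi,\psi$. Condition (1) of Definition \ref{def:int-dis-p} applied to $t=u_X(x,x')$ yields $u_Y(\varphi(x),\varphi(x'))\leq u_X(x,x')\ps{p}\eps$ for all $x,x'\in X$. When $u_Y(\varphi(x),\varphi(x'))\leq u_X(x,x')$ we have $A_p=0\leq\eps$; otherwise $\Lambda_p(u_Y,u_X)\leq\eps$ follows either by a direct rearrangement when $p<\infty$ or from the case analysis of $\Lambda_\infty$ when $p=\infty$, so that in all cases $A_p\bigl(u_Y(\varphi(x),\varphi(x')),u_X(x,x')\bigr)\leq\eps$. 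Taking the supremum gives $\disip{p}(\varphi)\leq\eps$, and symmetrically $\disip{p}(\psi)\leq\eps$. Setting $t=0$ in condition (2) of Definition \ref{def:int-dis-p} and noting that $0\ps{p}\eps\ps{p}\eps=2^{1/p}\eps$ (with the convention $\tfrac{1}{\infty}=0$) yields $u_X(x,\psi\circ\varphi(x))\leq 2^{1/p}\eps$ and analogously for $Y$, hence $\codisip{p}(\varphi,\psi)\leq\eps$.

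For the backward direction, suppose $\max(\disip{p}(\varphi),\disip{p}(\psi),\codisip{p}(\varphi,\psi))\leq\eps$. The distortion bound $\disip{p}(\varphi)\leq\eps$ gives $u_Y(\varphi(x),\varphi(x'))\leq u_X(x,x')\ps{p}\eps$ for all $x,x'$: automatic when $u_Y\leq u_X$, and when $u_Y>u_X$ the bound $A_p(u_Y,u_X)=\Lambda_p(u_Y,u_X)\leq\eps$ rearranges to exactly this inequality. By monotonicity of $\ps{p}$, for any $t\geq 0$ and any $x'$ with $u_X(x,x')\leq t$ we obtain $u_Y(\varphi(x),\varphi(x'))\leq t\ps{p}\eps$, establishing condition (1) of Definition \ref{def:int-dis-p}; the same argument applies to $\psi$. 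The codistortion bound gives $u_X(x,\psi\circ\varphi(x))\leq 2^{1/p}\eps=0\ps{p}\eps\ps{p}\eps\leq t\ps{p}\eps\ps{p}\eps$ for all $t\geq 0$, which forces $\psi\circ\varphi(x)\in[x]^{\theta_X}_{t\ps{p}\eps\ps{p}\eps}$ and so these blocks coincide; symmetrically for $Y$. Hence $\dIp{p}(X,Y)\leq\eps$.

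The main obstacle is the uniform handling of the endpoint $p=\infty$: because $\Lambda_\infty$ and $A_\infty$ are defined by cases, the equivalence ``$u_Y\leq u_X\ps{\infty}\eps$ iff $A_\infty(u_Y,u_X)\leq\eps$'' must be verified directly (the key point being that if $u_Y>u_X$ then $u_Y\leq\max(u_X,\eps)$ forces $u_Y\leq\eps$). Once this bookkeeping is dispatched, the rest of the argument is a mechanical translation of the $p=1$ proof of Theorem \ref{thm:intdist}, with the triangle inequality replaced by the monotonicity and arithmetic properties of $\ps{p}$ established in Section \ref{sec:p-arithmetic}, in particular Propositions \ref{prop:simple-inv-triang} and \ref{prop:asyineq}.
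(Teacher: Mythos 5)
Your proposal is correct and follows exactly the route the paper intends: the paper omits the proof of Theorem \ref{thm:pintdist}, stating only that it is ``essentially the same as the proof of Theorem \ref{thm:intdist},'' and your argument is precisely that adaptation, combining the dendrogram-to-ultrametric translation of Theorem \ref{thm:ibu} with the distortion bookkeeping of Theorem \ref{thm:intdist} under $p$-arithmetic. Your explicit verification of the equivalence $a\leq b\ps{p}\eps\iff A_p(a,b)\leq\eps$, including the case analysis at $p=\infty$, correctly supplies the one detail the paper leaves implicit.
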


The proof of the theorem is essentially the same as the proof of Theorem \ref{thm:intdist} so we omit it.

With this theorem, it is easy to derive the following relation between $\dIp{p}$ and $\dghp{p}$ in analogy with Corollary \ref{coro:int}.

\begin{corollary}\label{coro:pint vs dghp}
For any $X,Y\in\ums$, one has for any $p\in[1,\infty]$
$$2^{-\frac{1}{p}}\dIp{p}(X,Y)\leq \dghp{p} (X,Y)\leq  \dIp{p}(X,Y).$$
\end{corollary}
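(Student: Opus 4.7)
The plan is to mirror the proof of Corollary~\ref{coro:int}, replacing ordinary sums with $p$-sums and carefully tracking the factor $2^{1/p}$. Two complementary characterizations of $\dIp{p}$ will be used: the pairwise-distance form (extracted directly from Definition~\ref{def:int-dis-p} applied to the dendrograms of $X$ and $Y$ via Theorem~\ref{thm:compact ultra-dendro}, in the spirit of Theorem~\ref{thm:ibu}) for the upper bound, and the distortion characterization of Theorem~\ref{thm:pintdist} for the lower bound.

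For the rightmost inequality $\dghp{p}(X,Y)\leq \dIp{p}(X,Y)$, given an $(\eps,p)$-interleaving realized by maps $\varphi:X\to Y$ and $\psi:Y\to X$, I first unwind the two interleaving conditions to pairwise estimates: item (1) of Definition~\ref{def:int-dis-p} gives $u_Y(\varphi(x),\varphi(x'))\leq u_X(x,x')\ps{p}\eps$ and $u_X(\psi(y),\psi(y'))\leq u_Y(y,y')\ps{p}\eps$, while item (2) specialized to $t=0$ gives $u_X(x,\psi\circ\varphi(x))\leq 2^{1/p}\eps$ and $u_Y(y,\varphi\circ\psi(y))\leq 2^{1/p}\eps$. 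I then form the correspondence $R\coloneqq\{(x,y)\in X\times Y:\varphi(x)=y\text{ or }\psi(y)=x\}$ and bound $\disp{p}(R)\leq 2^{1/p}\eps$ by a four-case split on how $(x,y),(x',y')$ are generated (mirroring the case analysis in the proof of Corollary~\ref{coro:int}). In each case, the ultrametric strong triangle inequality together with the above estimates produces an upper bound of the form $\max(2^{1/p}\eps,u_Y(y,y')\ps{p}\eps)$ for $u_X(x,x')$; the elementary inequality $\max(2^{1/p}\eps,a\ps{p}\eps)\leq a\ps{p} 2^{1/p}\eps$ (valid for every $p\in[1,\infty]$, since $(a^p+2\eps^p)^{1/p}$ dominates both summands) combined with Proposition~\ref{prop:simple-inv-triang} yields $\Lambda_p(u_X(x,x'),u_Y(y,y'))\leq 2^{1/p}\eps$. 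Plugging into \Cref{eq:dghp distortion formula} gives $\dghp{p}(X,Y)\leq \eps$, and infimizing over $\eps$ completes this direction.

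For the leftmost inequality $2^{-1/p}\dIp{p}(X,Y)\leq \dghp{p}(X,Y)$, I will rely on Proposition~\ref{thm:dist} combined with Theorem~\ref{thm:pintdist}. Fix $\eta>\dghp{p}(X,Y)$ and choose $\varphi,\psi$ with $\max(\disp{p}(\varphi),\disp{p}(\psi),\codis_p(\varphi,\psi))\leq 2^{1/p}\eta$. Two pointwise comparisons are needed: $\disip{p}(\varphi)\leq \disp{p}(\varphi)$, immediate from $A_p\leq \Lambda_p$ (cf.\ \Cref{eq:asymmetric}); and $\codisip{p}(\varphi,\psi)\leq 2^{-1/p}\,\codis_p(\varphi,\psi)$, obtained by specializing the supremum defining $\codis_p$ to the pairs $(x,\varphi(x))$ and $(\psi(y),y)$, where one of the two distances vanishes and $\Lambda_p$ collapses to the remaining distance. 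These yield $\max(\disip{p}(\varphi),\disip{p}(\psi),\codisip{p}(\varphi,\psi))\leq 2^{1/p}\eta$, so Theorem~\ref{thm:pintdist} gives $\dIp{p}(X,Y)\leq 2^{1/p}\eta$; letting $\eta\searrow \dghp{p}(X,Y)$ concludes. The main technical obstacle lies in the four-case analysis for the upper bound, where the constant $2^{1/p}\eps$ must be tracked precisely despite the non-additivity of $\ps{p}$ when $1<p<\infty$; the lower bound, once the two pointwise comparisons are observed, reduces to bookkeeping.
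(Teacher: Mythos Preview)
Your proposal is correct and follows essentially the same approach the paper intends: the paper omits the proof and says it is ``in analogy with Corollary~\ref{coro:int}'' using Theorem~\ref{thm:pintdist}, which is precisely what you do. The only additional care you take beyond the $p=1$ case is the elementary inequality $\max(2^{1/p}\eps,\,a\ps{p}\eps)\leq a\ps{p}2^{1/p}\eps$ to handle the non-additivity of $\ps{p}$, and this is exactly the right patch.
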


\begin{remark}[Relation with $\ugh$]\label{rmk:infint}
Note that when $p=\infty$, we have that $2^{-\frac{1}{\infty}}=1$ and thus $$\dIp{\infty}=\dghp{\infty}=\ugh.$$ This statement provides us an alternative proof to Theorem \ref{thm:ugh-structure}:

Given two maps $\varphi:X\rightarrow Y$ and $\psi:Y\rightarrow X$ such that $X$ and $Y$ are $(t,\infty)$-interleaved (thus $\ugh(X,Y)=\dIp{\infty}(X,Y)\leq t$), we construct $\varphi_t:X_t\rightarrow Y_t$ and $\psi_t:Y_t\rightarrow X_t$ as $\varphi_t([x]^X_t)=[\varphi(x)]^Y_t$ and $\psi_t([y]^Y_t)=[\psi(y)]^X_t$ for $x\in X$ and $y\in Y$. Then, it is easy to show that these two maps are isometries  and $\varphi_t=\psi_t^{-1}$. Conversely, if there are isometries $\varphi_t:X_t\rightarrow Y_t$ and $\psi_t:Y_t\rightarrow X_t$ such that $\varphi_t=\psi_t^{-1}$ at $t\geq 0$, then we construct $\varphi:X\rightarrow Y$ as follows: $\varphi(x)=y$, where $y$ is arbitrarily chosen such that $y\in\varphi_t([x]^X_t)$. We construct $\psi:Y\rightarrow X$ similarly. Then, it is easy to check that $\varphi$ and $ \psi$ make $X$ and $Y$ be $(t,\infty)$-interleaved and thus $\ugh(X,Y)=\dIp{\infty}(X,Y)\leq t$.

\end{remark}

\begin{example}\label{rmk:intdiam}
If $X=*$ is the one point space, then for any $Y\in\ums$, we have
$$\dIp{p}(X,Y)=2^{-\frac{1}{p}}\diam(Y). $$
Indeed, there exists only one map $\psi:Y\rightarrow X$. For any map $\varphi:X\rightarrow Y$, it is easy to check that $\disip{p}(\varphi)=\disip{p}(\psi)=0$. Let $z=\varphi\circ\psi(y)$, which is invariant of choice of $y\in Y$. Since $\max_{y\in Y}u_Y(y,z)=\diam(Y)$, we have that $\codisip{p}(\varphi,\psi)=2^{-\frac{1}{p}}\diam(Y)$ and thus by Theorem \ref{thm:pintdist} we have that $\dIp{p}(X,Y)=2^{-\frac{1}{p}}\diam(Y). $
\end{example}

\section{Topological and geodesic properties of $\left(\msp,\dghp{p}\right)$} \label{sec:topology}
In this section, we study topological and geodesic properties of $\left(\msp,\dghp{p}\right)$. We will characterize convergence sequences in $(\msp,\dghp{p})$ and derive a pre-compactness result. We show that $(\msp,\dghp{p})$ is a complete and separable space when $p<\infty$. Recall from \Cref{prop:inclusionpq} that $\msp\subseteq\ms_q$ when $q<p$. This leads us to also study the subspace topology of $\msp$ inside $\ms_q$. Finally, we study geodesic properties of $\msp$.

\subsection{Convergence under $\dghp{p}$}
In this section, we will study convergent sequences in $\left(\msp,\dghp{p}\right)$. 

\begin{definition}
Let $1\leq p\leq\infty$. We say a sequence $\{X_n\}_{n=1}^\infty$ in $\msp$ converges to $X\in\msp$ if $\dghp{p}(X_n,X)\rightarrow 0$ as $n\rightarrow\infty$. Since $\dghp{p}$ is a $p$-metric, the limit is unique up to isometry. We call $X$ the $\dghp{p}$-limit of $\{X_n\}_{n=1}^\infty$. 
\end{definition}

We have the following convergence criterion for $\dghp{p}$ generalizing the convergence criterion for $\dgh=\dgh^{\scriptscriptstyle{(1)}}$ mentioned in Section 7.4.1 of \cite{burago2001course}: a sequence $\{X_n\}_{n\in\N}$ of $p$-metric spaces converges to a $p$-metric space $X$ if and only if there are a sequence $\{\eps_n\}_{n\in\N}$ of positive numbers and a sequence of maps $\{f_n:X_n\rightarrow X\}_{n\in\N}$ (or, alternatively, $f_n:X\rightarrow X_n$) such that every $f_n$ is an $(\eps_n,p)$-isometry and $\eps_n\rightarrow 0$.

\begin{example}\label{ex:conv-pms}
Every compact $p$-metric space $X$ is the $\dghp{p}$-limit of some sequence of finite $p$-metric spaces. This is the counterpart to \cite[Example 7.4.9]{burago2001course} and the proof is similar (it follows by considering $\eps$-nets of $X$).
\end{example}

This example actually indicates that convergence of compact $p$-metric spaces may reduce to convergence of their corresponding $\eps$-nets. To make this precise, we define the notion of $(\eps,\delta,p)$-approximation as follows:

\begin{definition}\label{def:approximation}
Fix $1\leq p\leq \infty$. Let $X$ and $Y$ be two compact $p$-metric spaces, and $\eps,\delta\geq0$. We say that $X$ and $Y$ are \emph{$(\eps,\delta,p)$-approximation} of each other if there exist finite sets $\{x_i\}_{i=1}^N\subseteq X$ and $\{y_i\}_{i=1}^N \subseteq Y$ such that:
\begin{enumerate}
    \item $\{x_i\}_{i=1}^N$ is an $\eps$-net for $X$ and $\{y_i\}_{i=1}^N$ is an $\eps$-net for $Y$.
    \item $\Lambda_p(d_X(x_i,x_j), d_Y(y_i,y_j))\leq\delta$ for all $i,j=1,\cdots,N$.
\end{enumerate}
\end{definition}

\begin{proposition}\label{thm:approxdlp}
Fix $1\leq p\leq\infty$. Let $X$ and $Y$ be two compact $p$-metric spaces.
\begin{enumerate}
    \item If $X$ and $Y$ are $(\eps,\delta,p)$-approximation of each other, then $\dghp{p}(X,Y)\leq \delta\ps{p}2^\frac{1}{p}\eps$.
    \item If $\dghp{p}(X,Y)\leq\eps$, then $Y$ (resp. $X$) is a $\left(5^\frac{1}{p}\eps,2^\frac{1}{p}\eps,p\right)$-approximation of $X$ (resp. $Y$).
\end{enumerate}
\end{proposition}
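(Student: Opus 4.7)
The plan is to prove the two items independently: construct an explicit correspondence from the approximating finite sets in Part 1, and conversely extract approximating finite sets from an optimal correspondence in Part 2. Both arguments rely on two facts about the absolute $p$-difference: the $p$-triangle inequality on $(\Rp, \Lambda_p)$ (cf.\ \Cref{ex:pm is pm}) and \Cref{lm:trick p triangle}, which transfers $\Lambda_p$-bounds between original pairs and nearby witness pairs in the nets.

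For Part 1, let $\{x_i\}_{i=1}^N \subseteq X$ and $\{y_i\}_{i=1}^N \subseteq Y$ be the $\eps$-nets provided by the hypothesis, and define
\[R \coloneqq \{(x,y) \in X \times Y : \exists\, i \text{ with } d_X(x, x_i) \leq \eps \text{ and } d_Y(y, y_i) \leq \eps\}.\]
Because $\{x_i\}$ and $\{y_i\}$ are $\eps$-nets, $R$ is a correspondence. For any $(x,y),(x',y') \in R$ with associated indices $i,j$, two applications of \Cref{lm:trick p triangle} yield $\Lambda_p(d_X(x,x'), d_X(x_i, x_j)) \leq d_X(x,x_i) \ps{p} d_X(x', x_j) \leq 2^{1/p}\eps$ and the analogous bound on the $Y$-side. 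Combining these with the hypothesis $\Lambda_p(d_X(x_i, x_j), d_Y(y_i, y_j)) \leq \delta$ through the $p$-triangle inequality on $(\Rp, \Lambda_p)$ produces $\disp{p}(R) \leq 2^{1/p}\eps \ps{p} \delta \ps{p} 2^{1/p}\eps$, and dividing by $2^{1/p}$ (using $\delta^p/2 \leq \delta^p$ for $p<\infty$, and a direct check for $p=\infty$) gives $\dghp{p}(X,Y) \leq \delta \ps{p} 2^{1/p}\eps$.

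For Part 2, I would first observe that the infimum in \eqref{eq:dghp distortion formula} is attained by a closed correspondence $R^*$: for $p<\infty$ this follows from the corresponding result for $\dgh$ in \cite{chowdhury2018explicit} applied to $S_p(X)$ and $S_p(Y)$ via \Cref{thm:dgh-dlp-eq} (since $\disp{p}(R, d_X, d_Y) = \dis(R, d_X^p, d_Y^p)^{1/p}$), and for $p=\infty$ it follows from \Cref{thm:ugh-structure}. Thus we may fix $R^*$ with $\disp{p}(R^*) \leq 2^{1/p}\eps$. Pick any finite $\eps$-net $\{x_i\}_{i=1}^N$ of $X$ (by compactness), and for each $i$ choose $y_i \in Y$ such that $(x_i, y_i) \in R^*$; the bound $\Lambda_p(d_X(x_i, x_j), d_Y(y_i, y_j)) \leq 2^{1/p}\eps$ is then immediate. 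To verify that $\{y_i\}$ is a $5^{1/p}\eps$-net of $Y$, for any $y \in Y$ pick $x \in X$ with $(x,y) \in R^*$ and then $x_i$ with $d_X(x, x_i) \leq \eps$; since $(x,y),(x_i,y_i) \in R^*$, unraveling $\Lambda_p(d_X(x, x_i), d_Y(y, y_i)) \leq 2^{1/p}\eps$ gives $d_Y(y, y_i) \leq 3^{1/p}\eps \leq 5^{1/p}\eps$ when $p<\infty$ (with the $p=\infty$ case producing the sharper $d_Y(y, y_i) \leq \eps$ directly from the definition of $\Lambda_\infty$). Since $\{x_i\}$ is trivially a $5^{1/p}\eps$-net as well, this finishes the proof.

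The main obstacle is purely algebraic bookkeeping with $\ps{p}$ and $\Lambda_p$, in particular matching the coefficients of $2^{1/p}$ throughout and separately handling $p=\infty$, where $\Lambda_\infty$ is defined piecewise. The constant $5^{1/p}$ in Part 2 is not tight---the argument actually yields $3^{1/p}$---but $5^{1/p}$ aligns with the classical Burago--Burago--Ivanov bound at $p=1$.
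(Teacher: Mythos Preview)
Your argument is correct in both parts, but takes a slightly different route from the paper's.

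For Part 1, the paper does not build a correspondence on the full spaces; instead it observes that the diagonal pairing of the two finite nets gives $\dghp{p}(X_0,Y_0)\leq 2^{-1/p}\delta\leq\delta$, invokes \Cref{ex:dlpnet} for $\dghp{p}(X,X_0),\dghp{p}(Y,Y_0)\leq\eps$, and concludes via the $p$-triangle inequality for $\dghp{p}$. Your direct correspondence reaches the same bound and is equally clean; the paper's version has the mild advantage of making the three contributions $\eps,\delta,\eps$ visible as three edges of a $p$-triangle in $\msp$ rather than as a single distortion estimate.

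For Part 2, the paper passes through \Cref{coro:isometrychar} to obtain a $(2^{1/p}\eps,p)$-isometry $f:X\to Y$, sets $y_i\coloneqq f(x_i)$, and then bounds $d_Y(y,y_i)$ by routing through $f(x)$ with the $p$-triangle inequality, picking up the constant $5^{1/p}$ from $2^{1/p}\eps\ps{p}\eps\ps{p}2^{1/p}\eps$. You instead appeal to attainment of the infimum (via the snowflake and \cite{chowdhury2018explicit} for $p<\infty$, and via \Cref{thm:ugh-structure} for $p=\infty$) and work directly with an optimal correspondence, which as you note yields the sharper $3^{1/p}$ and also handles the non-strict hypothesis $\dghp{p}(X,Y)\leq\eps$ without any approximation. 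The trade-off is that the paper's route is more self-contained (no external attainment result needed), while yours gives a better constant and a cleaner treatment of the boundary case.
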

The proof is similar to the one for \cite[Proposition 7.4.11]{burago2001course}.
\begin{proof}
\begin{enumerate}
    \item Let $X_0=\{x_i\}_{i=1}^N$ and $Y_0=\{y_i\}_{i=1}^N$ be as in Definition \ref{def:approximation}. Then, the second condition in that definition implies that the correspondence $\{(x_i,y_i):\,i=1,\cdots,N\}$ between $X_0$ and $Y_0$ has $p$-distortion bounded above by $\delta$. Hence, $\dghp{p}(X_0,Y_0)\leq 2^{-\frac{1}{p}}\delta\leq \delta$. By Example \ref{ex:dlpnet} we know that $\dghp{p}(X_0,X),\dghp{p}(Y_0,Y)\leq \eps$. Thus 
    $$\dghp{p}(X,Y)\leq \dghp{p}(X,X_0)\ps{p}\dghp{p}(X_0,Y_0)\ps{p}\dghp{p}(Y_0,Y)\leq\delta\ps{p}2^\frac{1}{p}\eps. $$
    
    \item By \Cref{coro:isometrychar} there exists a $(2^\frac{1}{p}\eps,p)$-isometry $f:X\rightarrow Y$. Let $X_0=\{x_i\}_{i=1}^N$ be an $\eps$-net in $X$ and let $y_i\coloneqq f(x_i)$ for each $i=1,\cdots,N$. Then, $\Lambda_p(d_X(x_i,x_j), d_Y(y_i,y_j))\leq \disp{p}(f)\leq 2^\frac{1}{p}\eps$ for all $i,j$. Now, since $f(X)$ is a $2^\frac{1}{p}\eps$-net of $Y$, for any $y\in Y$, there exists $x\in X$ such that $d_Y(f(x),y)\leq 2^\frac{1}{p}\eps$. Since $X_0$ is an $\eps$-net in $X$, we can choose $x_i\in X_0$ such that $d_X(x,x_i)\leq \eps$. Then, we have
    \begin{align*}
        d_Y(y,y_i)&\leq d_Y(y,f(x))\ps{p} d_Y(f(x),f(x_i))\\
        &\leq 2^\frac{1}{p}\eps\ps{p} d_X(x,x_i)\ps{p} 2^\frac{1}{p}\eps\leq 5^\frac{1}{p}\eps
    \end{align*}
    Thus $\{y_i\}_{i=1}^N=f(X_0)$ is a $5^\frac{1}{p}\eps$-net of $Y$.
\end{enumerate}
\end{proof}

In \cite{qiu2009geometry}, Qiu introduced a notion called \emph{strong $\eps$-approximation}, which is exactly the $(\eps,0,\infty)$-approximation using our language. The following corollary is a restatement of \cite[Theorem 3.5]{qiu2009geometry} regarding the strong $\eps$-approximation. Though the corollary seems stronger than the result in the case $p=\infty$ of our \Cref{thm:approxdlp}, it turns out that they are equivalent which will be clarified in the proof. We will include a proof of the following corollary for completeness.

\begin{corollary}
Let $X$ and $Y$ be two compact ultrametric spaces.
\begin{enumerate}
    \item If $X$ and $Y$ are $(\eps,0,\infty)$-approximation of each other, then $\ugh(X,Y)\leq \eps$.
    \item If $\ugh(X,Y)\leq\eps$, then $Y$ is a $\left(\eps,0,\infty\right)$-approximation of $X$.
\end{enumerate}
\end{corollary}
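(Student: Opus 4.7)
The corollary splits into two parts, and the key observation is that the condition $\Lambda_\infty(u_X(x_i,x_j),u_Y(y_i,y_j))\leq 0$ in a $(\eps,0,\infty)$-approximation is the same as $u_X(x_i,x_j)=u_Y(y_i,y_j)$ for all $i,j$. So we need to produce (or exploit) isometric finite $\eps$-nets.

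\textbf{Part 1.} This is an immediate specialization of \Cref{thm:approxdlp}(1) at $p=\infty$. Indeed, with $\delta=0$ one has $\delta\ps{\infty}2^{1/\infty}\eps=\max(0,\eps)=\eps$, so the proposition yields $\dghp{\infty}(X,Y)\leq\eps$; by \Cref{rmk:ugh zarichnyi}, $\ugh(X,Y)\leq\eps$.

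\textbf{Part 2.} The bound provided by \Cref{thm:approxdlp}(2) is too weak (it gives $\delta=2^{1/p}\eps$, which is $\eps$ at $p=\infty$, not $0$). Instead I will use the structural theorem for $\ugh$ (\Cref{thm:ugh-structure}): the hypothesis $\ugh(X,Y)\leq\eps$ yields some $t_0\leq\eps$ with an isometry $f_{t_0}:X\ct{t_0}\to Y\ct{t_0}$. First I would observe that for any $t\geq t_0$ this implies $X\ct{t}\cong Y\ct{t}$; the cleanest way is to note that $(X\ct{t_0})\ct{t}$ is canonically isometric to $X\ct{t}$ (two classes $[x]\ct{t_0},[x']\ct{t_0}$ become identified in the $t$-closed quotient of $X\ct{t_0}$ exactly when $u_X(x,x')\leq t$, i.e., $[x]\ct{t}=[x']\ct{t}$), and likewise for $Y$; the isometry $f_{t_0}$ then descends to an isometry $f_\eps:X\ct{\eps}\to Y\ct{\eps}$.

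Now $X\ct{\eps}$ is finite by \Cref{lm:compact-quotient}, so I can enumerate $X\ct{\eps}=\{[x_1]\ct{\eps},\ldots,[x_N]\ct{\eps}\}$ by fixing representatives $x_1,\ldots,x_N\in X$ and set $y_i$ to be any representative of $f_\eps([x_i]\ct{\eps})\in Y\ct{\eps}$. Then $\{x_i\}$ is an $\eps$-net of $X$ because every $x\in X$ satisfies $[x]\ct{\eps}=[x_i]\ct{\eps}$ for some $i$, hence $u_X(x,x_i)\leq\eps$; bijectivity of $f_\eps$ ensures that $\{y_i\}$ is likewise an $\eps$-net of $Y$. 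Finally, for $i\neq j$ the chosen representatives lie in distinct $\eps$-classes, so by \Cref{eq:closed quotient} and the fact that $f_\eps$ is an isometry,
\[
u_X(x_i,x_j)=u_{X\ct{\eps}}([x_i]\ct{\eps},[x_j]\ct{\eps})=u_{Y\ct{\eps}}([y_i]\ct{\eps},[y_j]\ct{\eps})=u_Y(y_i,y_j),
\]
and the equality is trivial when $i=j$. This exhibits $Y$ as an $(\eps,0,\infty)$-approximation of $X$.

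\textbf{Main obstacle.} The only nontrivial technical point is the descent step showing $X\ct{\eps}\cong Y\ct{\eps}$ from knowledge of $X\ct{t_0}\cong Y\ct{t_0}$ at the (possibly strictly smaller) level $t_0$ produced by \Cref{thm:ugh-structure}. Once that is in hand, the rest is direct bookkeeping with representatives.
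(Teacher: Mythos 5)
Your proof is correct, and Part 2 takes a genuinely different route from the paper. For Part 1 you and the paper do the same thing: specialize \Cref{thm:approxdlp}(1) at $p=\infty$, $\delta=0$. For Part 2 the paper does \emph{not} abandon \Cref{thm:approxdlp}(2): it first uses that proposition to get an $(\eps,\eps,\infty)$-approximation, and then upgrades $\delta=\eps$ to $\delta=0$ by a pruning argument — whenever $u_X(x_i,x_j)\leq\eps$ the condition $\Lambda_\infty\leq\eps$ forces $u_Y(y_i,y_j)\leq\eps$ as well, the corresponding closed balls coincide (\Cref{prop:basic property ultrametric}), so $x_j,y_j$ can be discarded without destroying the net property; after pruning, all surviving pairwise distances exceed $\eps$ and $\Lambda_\infty\leq\eps$ then forces exact equality. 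You instead go straight to the structural theorem \Cref{thm:ugh-structure}, descend the isometry from level $t_0=\ugh(X,Y)$ to level $\eps$ via the canonical identification $(X\ct{t_0})\ct{t}\cong X\ct{t}$, and read off the nets as systems of representatives of the finitely many $\eps$-classes (\Cref{lm:compact-quotient}). Both arguments are sound. The paper's version has the virtue of showing the slightly stronger statement that \emph{any} $(\eps,\eps,\infty)$-approximation is automatically an $(\eps,0,\infty)$-approximation, independent of how it was produced; yours is more direct, produces nets of minimal cardinality $N=\#X\ct{\eps}$, and makes the link to the closed-quotient picture explicit, at the cost of the small descent lemma, which you correctly identify and verify. (Both proofs, yours and the paper's, tacitly assume $\eps>0$ so that finite $\eps$-nets exist; this is the only degenerate case.)
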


\begin{proof}
The first claim follows directly from claim 1 of \Cref{thm:approxdlp}. 

For the second claim, first note that claim 2 of \Cref{thm:approxdlp} shows that $Y$ is a $(\eps,\eps,\infty)$-approximation of $X$. To conclude the proof, we only need to show that an $(\eps,\eps,\infty)$-approximation is automatically an $(\eps,0,\infty)$-approximation.

Since $Y$ is a $(\eps,\eps,\infty)$-approximation of $X$, there exist $\eps$-nets $\{x_i\}_{i=1}^N\subseteq X$ and $\{y_i\}_{i=1}^N \subseteq Y$ such that $\Lambda_\infty(u_X(x_i,x_j),u_Y(y_i,y_j))\leq \eps$ for all $i,j$. If $u_X(x_i,x_j)\leq \eps$, then we also have that $u_Y(y_i,y_j)\leq\eps$. Then, $B_\eps(x_i)=B_\eps(x_j)$ and $B_\eps(y_i)=B_\eps(y_j)$ by \Cref{prop:basic property ultrametric}. This implies that after discarding $x_j$ and $y_j$, $\{x_i\}_{i\neq j}$ and $\{y_i\}_{i\neq j} $ remain $\eps$-nets of $X$ and of $Y$, respectively. We continue this process to obtain two subsets $\{x_{n_i}\}_{i=1}^M$ and $\{y_{n_i}\}_{i=1}^M$ which are still $\eps$-nets of $X$ and of $Y$, respectively, while $u_X(x_{n_i},x_{n_j}),u_Y(y_{n_i},y_{n_j})>\eps$ for all $i\neq j$. Then, by $\Lambda_\infty(u_X(x_{n_i},x_{n_j}), u_Y(y_{n_i},y_{n_j}))\leq \eps$ we have that $u_X(x_{n_i},x_{n_j})=u_Y(y_{n_i},y_{n_j})$ and thus $\Lambda_\infty(u_X(x_{n_i},x_{n_j}), u_Y(y_{n_i},y_{n_j}))\leq 0$. Then, we conclude that $Y$ is an $(\eps,0,\infty)$-approximation of $X$.
\end{proof}

\subsection{Pre-compactness theorems} In \cite{gromov1981groups}, Gromov proved a well known pre-compactness theorem stating that any uniformly totally bounded collections of compact metric spaces are pre-compact in the Gromov-Hausdorff sense. We included the result as follows for completeness.

\begin{definition}[Uniformly totally bounded class]\label{def:CND}
We say a class $\mathcal{K}$ of compact metric spaces is \emph{uniformly totally bounded}, if there exist a bounded function $Q:\left(0,\infty\right)\rightarrow\mathbb{N}$ and $D>0$ such that each $X\in\mathcal{K}$ satisfies the following properties: 
\begin{enumerate}
    \item $\diam\left(X\right)\leq D$,
    \item for any $\eps>0$, $\mathrm{cov}_\eps\left(X\right)\leq Q\left(\eps\right)$.
\end{enumerate}
We denote by $\mathcal{K}\left(Q,D\right)$ the uniformly totally bounded class consisting of all $X\in\ms$ satisfying the conditions above.
\end{definition}

\begin{theorem}[Gromov's pre-compactness theorem]\label{thm:pre-compact}
For any given bounded function $Q:\left(0,\infty\right)\rightarrow\mathbb{N}$ and $D>0$, the class $\mathcal{K}\left(Q,D\right)$ is pre-compact in $\left(\ms,\dgh\right)$, i.e., any sequence in $\mathcal{K}\left(Q,D\right)$ has a convergent subsequence.
\end{theorem}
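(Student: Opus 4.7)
The approach is a diagonal extraction combined with the $(\eps,\delta)$-approximation criterion supplied by Proposition \ref{thm:approxdlp} (applied here with $p=1$). Given a sequence $\{X_n\}_{n\in\N}\subseteq \mathcal{K}(Q,D)$, the plan is to extract a subsequence whose finite $(1/k)$-nets stabilize in cardinality and whose distance matrices converge scale by scale, and then to assemble the limit data into a compact metric space $X$. The Gromov-Hausdorff convergence $\dgh(X_n,X)\to 0$ will then be read off because each pair $(X_n,X)$ will be an $(1/k,\delta_n,1)$-approximation with $\delta_n\to 0$.

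Concretely, I would begin by picking for each $k\in\N$ and each $n$ a $(1/k)$-net $S_n^k\subseteq X_n$ of size at most $Q(1/k)$, and then arrange nesting $S_n^1\subseteq S_n^2\subseteq\cdots$ by iteratively replacing $S_n^{k+1}$ with $S_n^{k+1}\cup S_n^k$. The cardinality bound becomes $|S_n^k|\leq \sum_{j=1}^k Q(1/j)$, still finite for each $k$. A pigeonhole-then-diagonal argument then yields a subsequence along which $|S_n^k|=M_k$ is independent of $n$ for every $k$. Enumerating the nets consistently with the nesting, the distance matrix $(d_{X_n}(s_{n,i}^k,s_{n,j}^k))_{i,j}$ lives in the compact box $[0,D]^{M_k\times M_k}$, so a second diagonal extraction produces a subsequence (still denoted $\{X_n\}$) along which every such matrix converges to some $D^k=(d^k_{ij})$.

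The nesting makes the family $\{D^k\}_k$ into a pseudometric on the countable set $\bigcup_k\{1,\ldots,M_k\}$ (symmetry, non-negativity and triangle inequality survive the entrywise limit). Quotienting by zero-distance pairs as in \Cref{sec:pseudometric space} and then taking the metric completion produces a complete metric space $X$. For each $k$, the image of $\{1,\ldots,M_k\}$ is a finite $(1/k)$-net of $X$ (uniform continuity of the distance extends the net property to the completion), so $X$ is totally bounded; combined with completeness this forces $X$ to be compact, hence $X\in\ms$. Finally, given $k$ and $\delta>0$, for all sufficiently large $n$ the distance matrix of $S_n^k\subseteq X_n$ and that of its image $T^k\subseteq X$ differ entrywise by at most $\delta$, exhibiting $X_n$ and $X$ as $(1/k,\delta,1)$-approximations of each other. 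Proposition \ref{thm:approxdlp} then gives $\dgh(X_n,X)\leq \delta+2/k$; letting $\delta\to 0$ and then $k\to\infty$ yields $\dgh(X_n,X)\to 0$.

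The chief technical nuisance will be the simultaneous bookkeeping of three requirements with a \emph{single} subsequence: nesting of the nets in $n$, pigeonhole stabilization of the cardinalities $|S_n^k|$, and convergence of the distance matrices across infinitely many scales. A careful diagonal extraction handles all three at once. Once this nested limiting data is in hand, compactness of the limit follows from total boundedness plus completeness, and the Gromov-Hausdorff convergence follows mechanically from the already-established approximation criterion.
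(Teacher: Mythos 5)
Your argument is correct and complete: the nested-net diagonal extraction, the assembly of the limit pseudometric from the convergent distance matrices, compactness of the limit via total boundedness plus completeness, and the final appeal to the $(\eps,\delta,1)$-approximation criterion of Proposition \ref{thm:approxdlp} all go through as you describe. The paper does not prove this theorem itself but defers to \cite[Section 7.4.2]{burago2001course}, and your proof is essentially that standard argument, so there is nothing to reconcile.
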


Interested readers are referred to \cite[Section 7.4.2]{burago2001course} for a proof.

In this section, we generalize \Cref{thm:pre-compact} to the setting of $p$-metric spaces and the $\dghp{p}$ distance, for $1<p<\infty$, by invoking both Theorem \ref{thm:eqdgh} and the following lemma.

\begin{lemma}
For any $1\leq p\leq \infty$, $(\msp,\dgh)$ is a closed subspace of $(\ms,\dgh)$.
\end{lemma}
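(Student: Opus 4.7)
The plan is to show that the $p$-triangle inequality is preserved under $\dgh$-limits. Suppose $\{X_n\}_{n=1}^\infty\subseteq \msp$ and $X\in \ms$ with $\dgh(X_n,X)\to 0$; we aim to verify that $X$ satisfies Definition \ref{def:pms} for the given $p$.

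Fix arbitrary points $x_1,x_2,x_3\in X$. For each $n$, invoke the distortion characterization \eqref{eq:dgh-distortion} to pick a correspondence $R_n\in\mathcal{R}(X_n,X)$ with $\dis(R_n)\le 2\,\dgh(X_n,X)+\tfrac{1}{n}$, and then choose points $y_i^n\in X_n$ ($i=1,2,3$) so that $(y_i^n,x_i)\in R_n$. Since $X_n\in\msp$, the $p$-triangle inequality gives
\[
d_{X_n}(y_1^n,y_3^n)\ \le\ d_{X_n}(y_1^n,y_2^n)\ps{p} d_{X_n}(y_2^n,y_3^n).
\]
From the distortion bound, $\bigl|d_{X_n}(y_i^n,y_j^n)-d_X(x_i,x_j)\bigr|\le \dis(R_n)\to 0$ for each pair $(i,j)$, so
\[
d_{X_n}(y_i^n,y_j^n)\ \longrightarrow\ d_X(x_i,x_j)\qquad\text{as } n\to\infty.
\]

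The final step is to pass to the limit in the $p$-triangle inequality. Since the operation $(a,b)\mapsto a\ps{p} b$ is jointly continuous on $\Rp\times\Rp$ for every $p\in[1,\infty]$ (including the $\max$ case when $p=\infty$), the limit of the right-hand side equals $d_X(x_1,x_2)\ps{p} d_X(x_2,x_3)$, while the left-hand side converges to $d_X(x_1,x_3)$. Hence
\[
d_X(x_1,x_3)\ \le\ d_X(x_1,x_2)\ps{p} d_X(x_2,x_3),
\]
and since $x_1,x_2,x_3$ were arbitrary, $X\in\msp$. I do not expect any genuine obstacle here; the only point that requires minor care is the $p=\infty$ case, but continuity of $\max$ dispenses with it immediately.
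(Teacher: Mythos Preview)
Your proof is correct. The core strategy---pull back a triple of points to $X_n$ via a correspondence, apply the $p$-triangle inequality there, and pass to the limit---is exactly what underlies the paper's argument as well. The paper, however, does not prove this lemma directly: it observes that the statement is the special case $q=1$ of the more general Proposition~\ref{prop:closed-psub}, which shows that $(\msp,\dghp{q})$ is closed in $(\ms_q,\dghp{q})$ for all $1\le q<p\le\infty$. That general version requires the technical Lemma~\ref{lm:pqineq} on the interaction of $A_q$ with $\ps{p}$, because for arbitrary $q$ the distortion bound controls $\Lambda_q$ rather than the ordinary absolute difference. Your direct route via continuity of $(a,b)\mapsto a\ps{p}b$ is cleaner for the $q=1$ case at hand, but it does not immediately generalize to $q>1$ since $\Lambda_q$-closeness does not give convergence of the individual distances; the paper's detour through $A_q$ is what buys the general statement.
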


This lemma is in fact a special case of the more general \Cref{prop:closed-psub} so we omit its proof here.

\begin{proposition}[$\dghp{p}$ pre-compactness theorem]\label{coro:precpt}
Fix $1\leq p<\infty$. Any uniformly totally bounded collection $\mathcal{K}$ of compact $p$-metric spaces is pre-compact, i.e., any sequence in $\mathcal{K}$ has a convergent subsequence in the sense of $\dghp{p}$. 
\end{proposition}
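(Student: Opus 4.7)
The plan is to reduce the $\dghp{p}$ pre-compactness statement to the classical Gromov pre-compactness theorem (\Cref{thm:pre-compact}) by exploiting the Hölder-type control of $\dghp{p}$ by $\dgh$ furnished by \Cref{thm:eqdgh}, together with the fact that $(\msp,\dgh)$ sits as a closed subspace of $(\ms,\dgh)$.

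Concretely, let $\mathcal{K}\subseteq\msp$ be uniformly totally bounded, i.e., $\mathcal{K}\subseteq\mathcal{K}(Q,D)$ for some bounding function $Q$ and diameter bound $D$. Given any sequence $\{X_n\}_{n\in\mathbb{N}}$ in $\mathcal{K}$, I would first view it as a sequence in $(\ms,\dgh)$. By \Cref{thm:pre-compact} there exists a subsequence $\{X_{n_k}\}_{k\in\mathbb{N}}$ and some $X\in\ms$ with $\dgh(X_{n_k},X)\to 0$. Since $(\msp,\dgh)$ is a closed subspace of $(\ms,\dgh)$ (the lemma stated just above the proposition), one obtains automatically that $X\in\msp$, so the candidate limit lives in the right space.

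Next, I would upgrade this $\dgh$-convergence to $\dghp{p}$-convergence using \Cref{thm:eqdgh}: since $X,X_{n_k}\in\msp$,
\[
\dghp{p}(X_{n_k},X)\;\leq\; C(p)\,\max\!\bigl(\diam(X_{n_k}),\diam(X)\bigr)^{D(p)}\,\bigl(\dgh(X_{n_k},X)\bigr)^{E(p)}.
\]
The first factor is uniformly bounded: $\diam(X_{n_k})\leq D$ by assumption, while $\diam(X)\leq D$ follows from \Cref{rem:lb}, which gives $\tfrac{1}{2}|\diam(X_{n_k})-\diam(X)|\leq \dgh(X_{n_k},X)\to 0$, so $\diam(X)=\lim_k\diam(X_{n_k})\leq D$. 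With $E(p)>0$, the right-hand side tends to $0$, so $\dghp{p}(X_{n_k},X)\to 0$ as desired.

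I do not anticipate any real obstacle here since all ingredients are already in place; the mildly delicate point is just the verification that $\diam(X)\leq D$, so that the Hölder-type bound of \Cref{thm:eqdgh} can actually be applied uniformly along the convergent subsequence. This explains the hypothesis $p<\infty$: the estimate in \Cref{thm:eqdgh} degenerates for $p=\infty$ (as shown by \Cref{ex:2-pt-ms}, $\ugh$ is not topologically equivalent to $\dgh$), so the case $p=\infty$ would require a genuinely different argument and is excluded from the present statement.
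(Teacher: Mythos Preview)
Your proof is correct and follows essentially the same route as the paper: pass to a $\dgh$-convergent subsequence via Gromov's pre-compactness theorem, use closedness of $\msp$ in $(\ms,\dgh)$ to place the limit in $\msp$, and then upgrade to $\dghp{p}$-convergence via \Cref{thm:eqdgh}. You are in fact slightly more careful than the paper in explicitly checking $\diam(X)\leq D$ so that the diameter factor in \Cref{thm:eqdgh} is uniformly bounded.
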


\begin{proof}
Given any sequence $\{X_n\}_{n=1}^\infty$ in $\mathcal{K}$, by Gromov's pre-compactness theorem, there exists a $\dgh$ convergent subsequence. Without loss of generality, we assume that $\{X_n\}_{n=1}^\infty$ is itself a convergent sequence and that $X$ is its Gromov-Hausdorff limit. By the previous lemma, we have that $X\in\msp$. For any $n\in\mathbb{N}$, since $\diam(X_n)\leq D$ for some $D>0$, by Theorem \ref{thm:eqdgh}, we have that $\{X_n\}_{n=1}^\infty$ will also converge to $X$ in the sense of $\dghp{p}$. 
\end{proof}

Note that the uniformly totally boundedness condition does not guarantee pre-compactness of collections of ultrametric spaces. 

\begin{example}
Consider the collection of 2-point spaces $\left\{\Delta_2\left(1+\frac{1}{n}\right)\right\}_{n=1}^\infty$. This collection is obviously uniformly totally bounded. However, for any $n,m\in\mathbb{N}$, we have $$\ugh\left(\Delta_2\left(1+\frac{1}{n}\right),\Delta_2\left(1+\frac{1}{m}\right)\right)=1+\max\left(\frac{1}{n},\frac{1}{m}\right)>1.$$

Therefore, $\left\{\Delta_2\left(1+\frac{1}{n}\right)\right\}_{n=1}^\infty$ contains no Cauchy subsequence and thus it is not pre-compact.
\end{example}

Under a certain variant of the notion of uniformly totally boundedness, in \cite{qiu2009geometry} Qiu proved a pre-compactness theorem for $\ugh$. We  include it here for completeness.

\begin{definition}
A collection $\mathcal{K}$ of compact ultrametric spaces is called \emph{strongly uniformly totally bounded}, if for any $\eps>0$, there exist a positive integer $N=N(\eps)$ and a finite set $R(\eps)\subseteq\R_{\geq 0}$ such that every $X\in\mathcal{K}$ contains an $\eps$-net $S_X$ with $\#S_X\leq N$ and $\mathrm{spec}(S_X)\subseteq R(\eps)$.

\end{definition}

\begin{theorem}[$\ugh$ pre-compactness theorem, \cite{qiu2009geometry}]
Any strongly uniformly totally bounded collection $\mathcal{K}$ of compact ultrametric spaces is pre-compact.
\end{theorem}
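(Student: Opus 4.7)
The plan is to show that every sequence $\{X_n\}_{n\in\N}\subseteq\mathcal{K}$ admits an $\ugh$-Cauchy subsequence, and then to conclude via completeness of $(\ums,\ugh)$ (established by Zarichnyi). The key structural observation driving the argument is that strong uniform total boundedness forces the family of approximating nets to fall into only \emph{finitely many} isometry classes at each scale, which is far stronger than what mere uniform total boundedness guarantees and precisely compensates for the fact that the (counter)\Cref{ex:2-pt-ms}-style obstructions to $\ugh$-compactness rule out the usual Gromov-style argument.

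The first step is a finite-classification lemma. Fix $\eps>0$; by strong uniform total boundedness, each $X\in\mathcal{K}$ admits an $\eps$-net $S_X\subseteq X$ with $\#S_X\leq N(\eps)$ and $\spec(S_X)\subseteq R(\eps)$. Since $S_X$ inherits the ultrametric from $X$, it is itself an object of $\ums$, and the number of isometry classes of finite ultrametric spaces on at most $N(\eps)$ points with all distances drawn from the finite set $R(\eps)$ is bounded above by $|R(\eps)|^{N(\eps)^{2}}$. Applied to any sequence $\{X_n\}\subseteq\mathcal{K}$, pigeonhole then produces a subsequence $\{X_{n_k}\}$ along which all the $\eps$-nets $S_{X_{n_k}}$ are mutually isometric. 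Combining \Cref{ex:dlpnet} (which gives $\ugh(S_{X_{n_k}},X_{n_k})\leq\eps$) with the strong triangle inequality for $\ugh$ yields
$$\ugh(X_{n_i},X_{n_j})\leq\max\!\bigl(\ugh(X_{n_i},S_{X_{n_i}}),\,\ugh(S_{X_{n_i}},S_{X_{n_j}}),\,\ugh(S_{X_{n_j}},X_{n_j})\bigr)\leq\eps.$$

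The second step is a standard diagonal extraction. Apply the preceding construction iteratively with $\eps_k\coloneqq 1/k$: starting from $\{X_n\}$, extract a subsequence $\{X_n^{(1)}\}$ on which all $1$-nets are isometric, then a further subsequence $\{X_n^{(2)}\}$ on which all $1/2$-nets are isometric, and so on. The diagonal $Y_k\coloneqq X_k^{(k)}$ has the property that for $i,j\geq k$ both $Y_i$ and $Y_j$ lie in the $k$-th subsequence, so $\ugh(Y_i,Y_j)\leq 1/k$. Hence $\{Y_k\}$ is Cauchy in $(\ums,\ugh)$, and by completeness it converges to some $Y\in\ums$, establishing pre-compactness.

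No step is a serious obstacle: the heart of the proof is the finite-classification observation, which is essentially a bookkeeping argument once one is willing to exploit the discreteness of $R(\eps)$. The only subtlety worth double-checking is that the finite subset $S_X$ equipped with the restricted distance is truly an ultrametric space (which follows from the hereditary nature of the strong triangle inequality), and that the pigeonhole step at scale $\eps_{k+1}$ is performed \emph{within} the subsequence already extracted at scale $\eps_k$ so the diagonal inherits all prior approximations. The completeness of $(\ums,\ugh)$ is invoked as a black box.
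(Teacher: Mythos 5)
Your proof is correct. Note that the paper itself does not prove this statement: it is quoted from Qiu's work and included ``for completeness,'' so there is no in-paper argument to compare against. Your argument is the natural one and is self-contained modulo two black boxes that the paper does make available, namely that $\ugh$ is an ultrametric on $\ums$ (so the strong triangle inequality can be applied to the chain $X_{n_i},S_{X_{n_i}},S_{X_{n_j}},X_{n_j}$) and that $(\ums,\ugh)$ is complete (Zarichnyi, cited in the paper). The two essential points are both handled correctly: (i) the finiteness of the set of isometry classes of ultrametric spaces on at most $N(\eps)$ points with spectrum contained in the finite set $R(\eps)$, which is exactly where the ``strong'' hypothesis enters and which fails for merely uniformly totally bounded families (as the $\Delta_2(1+\tfrac1n)$ example in the paper shows); and (ii) the nested extraction at scales $1/k$ followed by a diagonal, which gives $\ugh(Y_i,Y_j)\leq 1/k$ for all $i,j\geq k$ because the subsequences are nested and the scale-$1/k$ estimate holds for every pair within the $k$-th subsequence. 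The only cosmetic imprecision is the counting bound $|R(\eps)|^{N(\eps)^2}$, which should allow for the value $0$ on the diagonal and for nets of cardinality strictly less than $N(\eps)$; any crude finite bound suffices, so this does not affect the argument.
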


\subsection{Completeness and separability of $(\msp,\dghp{p})$} With the tools we have developed so far, we can establish the following theorem.

\begin{theorem}\label{thm:complete-sep}
For each $1\leq p< \infty$, $(\msp,\dghp{p})$ is complete and separable.
\end{theorem}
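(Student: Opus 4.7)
The strategy is to reduce both claims to the corresponding well-known properties of the classical space $(\ms,\dgh)$ via the snowflake identification. By Corollary \ref{thm:isompms}, the $p$-snowflake transform $S_p$ is an isometry from $(\msp,\dghp{p})$ onto $(\ms,(\dgh)^{1/p})$, with inverse $S_{1/p}$. Since the function $t \mapsto t^{1/p}$ is a strictly increasing homeomorphism of $[0,\infty)$ fixing $0$, a sequence $\{X_n\}$ in $\ms$ is Cauchy (resp.\ convergent to a given limit) under $\dgh$ if and only if it is Cauchy (resp.\ convergent) under $(\dgh)^{1/p}$. Hence $(\ms,(\dgh)^{1/p})$ shares completeness and separability with $(\ms,\dgh)$, and then so does $(\msp,\dghp{p})$ by transport through $S_{1/p}$.

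For completeness, a self-contained argument is also available, which does not rely on citing completeness of $(\ms,\dgh)$. Let $\{X_n\} \subseteq \msp$ be a Cauchy sequence under $\dghp{p}$. Since $\dghp{p} \geq \dgh$ by Remark \ref{rmk:increading dghp}, the sequence is also Cauchy in $(\ms,\dgh)$. Any Cauchy sequence in $\dghp{p}$ has uniformly bounded diameters (using the $p$-metric triangle inequality on $\dghp{p}$ together with Remark \ref{rem:lb} applied via the snowflake identification), and, by a standard Cauchy-plus-compactness argument, is uniformly totally bounded; Proposition \ref{coro:precpt} then extracts a subsequence converging in $\dghp{p}$ to some $X \in \msp$, where membership of the limit in $\msp$ uses closedness of $(\msp,\dgh)$ in $(\ms,\dgh)$ (cited just before Proposition \ref{coro:precpt}). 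Since the original sequence is Cauchy, the whole sequence converges to $X$. Alternatively, once $\dgh$-convergence to some $X \in \msp$ is in hand, Theorem \ref{thm:eqdgh} immediately upgrades it: $\dghp{p}(X_n,X) \leq C(p) M^{D(p)} (\dgh(X_n,X))^{E(p)} \to 0$, where $M$ uniformly bounds the relevant diameters.

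For separability, one uses the classical fact that finite metric spaces with rational interpoint distances form a countable dense subset of $(\ms,\dgh)$: given $X \in \ms$ and $\eps>0$, one takes a finite $\eps$-net $S \subseteq X$ (Example \ref{ex:dlpnet}) and replaces $d_S$ by $d_S' \coloneqq \lceil d_S/\delta\rceil \cdot \delta$ for a small rational $\delta>0$; ceiling rounding preserves the triangle inequality and adds at most $\delta$ per distance, so $\dgh(X,(S,d_S')) \leq \eps+\delta$. Pulling this countable dense set back through $S_{1/p}$ (or, what is the same, noting that $S_{1/p}$ is a homeomorphism from $(\ms,\dgh)$ onto $(\msp,\dghp{p})$ up to a harmless power) produces a countable dense subset of $(\msp,\dghp{p})$, whose members are finite $p$-metric spaces with distances in $\{q^{1/p}:q\in\mathbb{Q}_{\geq 0}\}$.

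The only step requiring any genuine care is the rational approximation argument within $(\ms,\dgh)$, which is entirely classical; everything else is mechanical once the snowflake isometry of Corollary \ref{thm:isompms} is in place.
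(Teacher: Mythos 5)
Your proposal is correct, but your primary route differs from the paper's. The paper gives a self-contained completeness argument: it takes a Cauchy sequence in $(\msp,\dghp{p})$, uses the approximation result (\Cref{thm:approxdlp}) to transfer a fixed $\eps$-net of one term to $\eps$-nets of uniformly bounded cardinality on all later terms, concludes uniform total boundedness, and then invokes its own pre-compactness theorem (\Cref{coro:precpt}) to extract a convergent subsequence -- exactly the skeleton of your ``alternative'' self-contained argument. Your main route instead pushes everything through the snowflake isometry of \Cref{thm:isompms} and the observation that $\dgh$ and $(\dgh)^{1/p}$ have the same Cauchy sequences, convergent sequences, and dense subsets; this is shorter and cleaner, but it rests on the completeness and separability of $(\ms,\dgh)$, a classical fact the paper nowhere states or proves (it only records that $\dgh$ is a metric, Gromov's pre-compactness, and geodesicity), so you are importing an external result where the paper deliberately stays internal. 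On separability your treatment is actually more careful than the paper's: the paper asserts in one line that finite $p$-metric spaces with \emph{rational} distances are dense, which requires checking that the rational perturbation preserves the $p$-triangle inequality; your construction -- round $d^p$ up to multiples of $\delta$ in $(\ms,\dgh)$, where ceiling rounding manifestly preserves the ordinary triangle inequality, then pull back through $S_{1/p}$ -- guarantees membership in $\msp$ for free, at the cost of landing in the countable set $\{q^{1/p}:q\in\mathbb{Q}_{\geq 0}\}$ rather than $\mathbb{Q}$. Both routes are valid; the paper's buys self-containedness, yours buys brevity and a cleaner density argument.
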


\begin{proof}
Fix a Cauchy sequence $\{X_n\}_{n\in\N}$ in $\msp$. Then, obviously there exists $D>0$ such that $\diam(X_n)\leq D$ for any $n\in\mathbb{N}$. Given any $\eps>0$, let $M\in\N$ be such that for any $n>M$, one has $\dghp{p}(X_M,X_n)<\eps$. Then, by \Cref{thm:approxdlp} we have that $X_M$ is a $(5^\frac{1}{p}\eps,2^\frac{1}{p}\eps,p)$-approximation of $X_n$. Fix an $\eps$-net $M_\eps$ of $X_M$. Then, as shown in the proof of item 2 of \Cref{thm:approxdlp}, there exists an $\eps$-net in $X_n$ with the same cardinality as $M_\eps$. This implies that there exists $N=N(\eps)$ such that for all $n\in \mathbb{N}$ there exists an $\eps$-net in $X_n$ with cardinality bounded by $N$. Applying the $\dghp{p}$-pre-compactness theorem (\Cref{coro:precpt}) we have that there exists a convergent subsequence of $\{X_n\}_{n\in\N}$, which implies that $\{X_n\}_{n\in\N}$ itself is convergent since it is Cauchy. Therefore, $(\msp,\dghp{p})$ is complete.

By $\msp^{(n)}$ denote the set of all $n$-point $p$-metric spaces with rational distances. Then, it is easy to check that $\bigcup_{n=1}^\infty\msp^{(n)}$ is a countable dense set in $\msp$ and thus $(\msp,\dghp{p})$ is separable.
\end{proof}

\begin{remark}
The proof above does not directly apply to the case when $p=\infty$. In fact, by using Qiu's pre-compactness theorem and the notion of strong approximation in \cite{qiu2009geometry}, a slight modification of the above proof will establish the completeness of $\ums$. Interested readers are also referred to \cite{zarichnyi2005gromov} for a different method that proves completeness of $(\ums,\ugh)$. However, it is shown in \cite{zarichnyi2005gromov} that $(\ums,\ugh)$ is not a separable space, which suggests that $\ums$ enjoys some special properties over all other $\msp$.

\end{remark}


\subsection{Subspace topology}\label{sec:subspace topology}
As shown in Proposition \ref{prop:inclusionpq} that $\msp\subseteq \ms_q$ when $1\leq q<p\leq \infty$, we now study the topology of $(\msp,\dghp{q})$ as a subspace of $(\ms_q,\dghp{q}).$ We need the following technical lemma about the relation between $ A_q$ and $\ps{p}$ when $p\neq q$ (see \Cref{eq:asymmetric} for the definition of $A_q$).

\begin{lemma}\label{lm:pqineq}
For $1\leq q<p\leq\infty$ and $a,b,c\geq 0$, we have
$$ A_q(a,c)\ps{p} A_q(b,c)\geq  A_q\lc a\ps{p} b, c\ps{p} c\rc. $$
\end{lemma}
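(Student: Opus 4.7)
The plan is to treat $p=\infty$ as a trivial case and, for $1\le q<p<\infty$, reduce the inequality to a direct application of Minkowski's inequality (with exponent $r=p/q\ge 1$) via the substitution $(u,v,w)=(a^q,b^q,c^q)$.

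For $p=\infty$, since $a\ps{\infty}b=\max(a,b)$ and $c\ps{\infty}c=c$, the claim reads $\max(A_q(a,c),A_q(b,c))\ge A_q(\max(a,b),c)$; this is immediate because, without loss of generality $a\ge b$, so the right-hand side equals $A_q(a,c)$, which is at most the maximum on the left.

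For $1\le q<p<\infty$, set $r:=p/q\ge 1$ and $u:=a^q$, $v:=b^q$, $w:=c^q$; then $a^p=u^r$, $b^p=v^r$, $c^p=w^r$, and $A_q(a,c)^p=(u-w)_+^{r}$ (writing $(t)_+:=\max(t,0)$), $A_q(b,c)^p=(v-w)_+^{r}$, and
\[
A_q(a\ps{p}b,\,c\ps{p}c)^p \;=\; \bigl((u^r+v^r)^{1/r}-2^{1/r}w\bigr)_+^{r}.
\]
Raising the target inequality to the $p$-th power, it suffices to show
\[
(u-w)_+^{r}+(v-w)_+^{r}\;\ge\;\bigl((u^r+v^r)^{1/r}-2^{1/r}w\bigr)_+^{r}.
\]
If $(u^r+v^r)^{1/r}\le 2^{1/r}w$ the right-hand side is $0$ and we are done. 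Otherwise, observe that $u\le (u-w)_++w$ and $v\le (v-w)_++w$ (both trivially true whether or not $u,v$ exceed $w$), so by monotonicity and then Minkowski's inequality for the exponent $r\ge 1$,
\[
(u^r+v^r)^{1/r}\;\le\;\bigl(((u-w)_++w)^r+((v-w)_++w)^r\bigr)^{1/r}\;\le\;\bigl((u-w)_+^r+(v-w)_+^r\bigr)^{1/r}+2^{1/r}w.
\]
Rearranging gives $(u^r+v^r)^{1/r}-2^{1/r}w\le\bigl((u-w)_+^r+(v-w)_+^r\bigr)^{1/r}$; since both sides are nonnegative in this remaining sub-case, raising to the $r$-th power yields the desired bound.

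The main step is the identification of the correct substitution that turns the $A_q$-vs-$\ps{p}$ inequality into the sub-additivity of $x\mapsto x^{1/r}$ applied componentwise, so that the only nontrivial inequality needed is Minkowski on two-dimensional nonnegative vectors. The use of $(\cdot)_+$ lets us avoid a separate case analysis on whether $a$ or $b$ exceeds $c$, which would otherwise be the main bookkeeping obstacle.
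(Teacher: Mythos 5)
Your proof is correct and rests on the same key ingredient as the paper's: after passing to $q$-th powers, the inequality reduces to Minkowski's inequality with exponent $p/q\ge 1$ on two-dimensional nonnegative vectors (the paper applies it to the vectors $(a^q-c^q,\,b^q-c^q)$ and $(c^q,c^q)$ in its case $a,b\ge c$). The only difference is cosmetic: your use of $(\cdot)_+$ together with the bounds $u\le (u-w)_++w$, $v\le (v-w)_++w$ collapses the paper's three-case analysis (both below $c$, both above $c$, mixed) into a single argument.
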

\begin{proof}
When $p=\infty$, it is easy to see that $\max\lc A_q(a,c), A_q(b,c)\rc= A_q\lc \max(a,b),c\rc$, which is exactly what we want.

When $p<\infty$, we have the following cases:
\begin{enumerate}
    \item $a,b\leq c$. Then, both sides of the inequality become 0, and thus the equality holds
    \item $a,b\geq c$. Then, we need to prove the following:
    $$\lc\lc a^q-c^q\rc^\frac{p}{q}+\lc b^q-c^q\rc^\frac{p}{q}\rc^\frac{1}{p}\geq \lc\lc a^p+b^p\rc^\frac{q}{p}-(c^p+c^p)^\frac{q}{p}\rc^\frac{1}{q}, $$
    which is equivalent to 
    $$\lc\lc a^q-c^q\rc^\frac{p}{q}+\lc b^q-c^q\rc^\frac{p}{q}\rc^\frac{q}{p}+(c^p+c^p)^\frac{q}{p}\geq \lc a^p+b^p\rc^\frac{q}{p}. $$
    This inequality follows directly from Minkowski inequality with the power $\frac{p}{q}>1$.
    \item $a\leq c, b>c$. It is easy to see that $ A_q\lc c\ps{p} b,c\ps{p} c\rc\geq  A_q\lc a\ps{p} b,c\ps{p} c\rc$. Then, it suffices to show that $ A_q(b,c)= A_q(c,c)\ps{p} A_q(b,c)\geq  A_q\lc c\ps{p} b,c\ps{p} c\rc$, which follows from case 2. 
\end{enumerate}
\end{proof}

\begin{proposition}\label{prop:closed-psub}
For $1\leq q<p\leq\infty$, $(\msp,\dghp{q})$ is a closed subspace of $(\ms_q,\dghp{q})$.
\end{proposition}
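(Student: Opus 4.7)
The plan is to show that the $p$-triangle inequality passes to $\dghp{q}$-limits. Suppose $\{X_n\}_{n\in\N} \subseteq \msp$ converges to some $X\in\ms_q$ in $\dghp{q}$, so there is a sequence $\eps_n\searrow 0$ together with correspondences $R_n\in\mathcal{R}(X_n,X)$ such that $\disp{q}(R_n)\leq\eps_n$ (using \Cref{eq:dghp distortion formula}). It suffices to verify that $X$ satisfies the $p$-triangle inequality, since $\dghp q$ convergence in particular implies that all the $X_n$ converge in $\dgh$ to $X$, so $X$ remains a compact metric space.

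Fix arbitrary $x_1,x_2,x_3\in X$. For each $n$, choose $x_i^n\in X_n$ with $(x_i^n,x_i)\in R_n$, so that for all $i,j\in\{1,2,3\}$,
\[
\Lambda_q\bigl(d_{X_n}(x_i^n,x_j^n),\,d_X(x_i,x_j)\bigr)\leq\eps_n.
\]
By the definition of $\Lambda_q$, this yields both $d_X(x_i,x_j)\leq d_{X_n}(x_i^n,x_j^n)\ps{q}\eps_n$ and $d_{X_n}(x_i^n,x_j^n)\leq d_X(x_i,x_j)\ps{q}\eps_n$. Since $X_n\in\msp$, we have the $p$-triangle inequality $d_{X_n}(x_1^n,x_3^n)\leq d_{X_n}(x_1^n,x_2^n)\ps{p} d_{X_n}(x_2^n,x_3^n)$. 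Chaining these three bounds (and using monotonicity of $\ps p$ and $\ps q$ in each argument) gives
\[
d_X(x_1,x_3)\;\leq\;\Bigl(\bigl(d_X(x_1,x_2)\ps{q}\eps_n\bigr)\ps{p}\bigl(d_X(x_2,x_3)\ps{q}\eps_n\bigr)\Bigr)\ps{q}\eps_n.
\]

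Finally, we let $n\to\infty$. For $q<\infty$ (which is forced by $q<p\leq\infty$), both $\ps{q}$ and $\ps{p}$ are jointly continuous in their arguments on $\Rp\times\Rp$ (including the case $p=\infty$, where $\ps{\infty}=\max$), so $a\ps{q}\eps_n\to a$ for each $a\geq 0$ and the right-hand side converges to $d_X(x_1,x_2)\ps{p}d_X(x_2,x_3)$. This gives the desired inequality $d_X(x_1,x_3)\leq d_X(x_1,x_2)\ps{p}d_X(x_2,x_3)$, proving $X\in\msp$.

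The argument is essentially routine once the two asymmetric bounds $d_X\leq d_{X_n}\ps{q}\eps_n$ and $d_{X_n}\leq d_X\ps{q}\eps_n$ are extracted from $\Lambda_q\leq\eps_n$; there is no real obstacle. (As a variant, one could instead invoke \Cref{lm:pqineq} together with \Cref{prop:asyineq} to do the whole chain in one step, but the direct limiting argument above is cleaner and avoids having to manipulate nested $A_q$'s.)
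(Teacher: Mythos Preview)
Your proof is correct and follows the same overall skeleton as the paper's (pick three points in the limit, pull them back through correspondences, use the $p$-triangle inequality in $X_n$, then let the error go to $0$), but the mechanics differ in a useful way. The paper works with the asymmetric $p$-difference $A_q$: from $\Lambda_q\leq\eps$ it extracts lower bounds of the form $d_X(x_i,x_j)\geq A_q(d_{X_n}(x_i^n,x_j^n),\eps)$, then invokes the technical \Cref{lm:pqineq} to push $A_q$ past $\ps{p}$, obtaining a chain of inequalities for $d_X(x_1,x_2)\ps{p}d_X(x_2,x_3)$ that terminates in $A_q(d_X(x_1,x_3),\cdot)$. You instead extract the two-sided additive bounds $d_X\leq d_{X_n}\ps{q}\eps_n$ and $d_{X_n}\leq d_X\ps{q}\eps_n$ directly from \Cref{prop:simple-inv-triang}, chain them with monotonicity, and finish by continuity of $\ps{q}$ and $\ps{p}$. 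Your route is more elementary---it bypasses \Cref{lm:pqineq} entirely---at the mild cost of relying on continuity of the operations rather than an explicit algebraic bound; the paper's version has the minor advantage that the final inequality before taking $\eps\to 0$ is already in closed algebraic form. As you note yourself in the closing parenthetical, the two arguments are interchangeable.
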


\begin{proof}
Given any $\dghp{q}$ convergent sequence $\{X_n\}_{n=1}^\infty$ with $X_n\in\msp\subseteq\ms_q$ for all $n\in \mathbb{N}$, we need to show that $X=\lim_{n\rightarrow\infty}X_n$ belongs to $\msp$. Take arbitrarily three distinct points $x_1,x_2,x_3\in X$. For any small $\eps>0$, there exists $N>0$, such that for any $n>N$, we have $\dghp{q}(X_n,X)\leq \frac{\eps}{2}.$ Hence, there exists correspondence $R_n\in\mathcal{R}(X_n,X)$ such that $\dis_q(R_n)\leq\eps$. Choose $x_1^n,x_2^n,x_3^n\in X_n$ such that $(x_i^n,x_i)\in R_n $ for $i=1,2,3$. Then, for $i,j=1,2,3$, we have that 
$$\Lambda_q\left(d_X(x_i,x_j), d_{X_n}\!\left(x_i^n,x_j^n\right)\right)\leq\eps.$$ 
Therefore, we have that
\begin{align*}
    d_X(x_1,x_2)\ps{p} d_X(x_2,x_3)&\geq   A_q\lc d_{X_n}( x_1^n,x_2^n),\eps\rc\ps{p} A_q\lc d_{X_n}( x_2^n,x_3^n),\eps\rc\\
    &\geq  A_q\lc d_{X_n}( x_1^n,x_2^n)\ps{p} d_{X_n}( x_2^n,x_3^n), \eps\ps{p} \eps\rc\\
    &\geq  A_q\lc d_{X_n}(x_1^n,x_3^n),\eps\ps{p}\eps \rc\\
    &\geq  A_q\lc d_X(x_1,x_3),\eps\ps{q}(\eps\ps{p}\eps) \rc.
\end{align*}
The first and the last inequalities follow from Proposition \ref{prop:asyineq}. The second inequality follows from Lemma \ref{lm:pqineq}.
Since $\eps>0$ is arbitrary, we have that 
$$d_X(x_1,x_2)\ps{p} d_X(x_2,x_3)\geq d_X(x_1,x_3),$$
and thus $X\in\msp$.
\end{proof}

\begin{proposition}\label{prop:nowhere-dense}
Given $1\leq q<p\leq\infty$, $(\msp,\dghp{q})$ is a nowhere dense subset of $(\ms_q,\dghp{q})$, i.e., the closure $\overline{(\msp,\dghp{q})}$ has no interior in $(\ms_q,\dghp{q})$.
\end{proposition}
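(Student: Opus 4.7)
By Proposition \ref{prop:closed-psub}, $\msp$ is already closed in $(\ms_q,\dghp{q})$, so its closure coincides with itself and the claim reduces to showing that $\msp$ has empty interior in $\ms_q$. Concretely, I plan to prove that for every $X\in\msp$ and every $\varepsilon>0$ one can exhibit a space $Y\in\ms_q\setminus\msp$ with $\dghp{q}(X,Y)<\varepsilon$.

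Since $1\leq q<p\leq\infty$, the interval $\bigl(2^{1/p},2^{1/q}\bigr]$ is non-empty; I will fix a constant $c$ in this interval once and for all. Given $X\in\msp$, pick a basepoint $x_0\in X$ and a small parameter $\delta>0$ to be tuned at the end. The perturbation $Y_\delta$ is obtained by adjoining two new points $t_1,t_3$ to $X$ and defining
\[
d_Y|_{X\times X}\coloneqq d_X,\quad d_Y(t_i,x)\coloneqq d_X(x_0,x)\ps{q}\delta\;(i\in\{1,3\},\,x\in X),\quad d_Y(t_1,t_3)\coloneqq c\delta.
\]
The construction is engineered so that $t_1,x_0,t_3$ witness a failure of the $p$-triangle inequality while every other triple continues to satisfy the $q$-triangle inequality.

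The verification then proceeds in three steps. \emph{Step 1:} check that $Y_\delta$ is a $q$-metric space. This amounts to verifying the $q$-triangle inequality for each triple type (two points in $X$ and one new point, one point in $X$ and the pair $\{t_1,t_3\}$, etc.); each case reduces, via the associativity and monotonicity of $\ps{q}$ together with the inclusion $\msp\subseteq\ms_q$ (Proposition \ref{prop:inclusionpq}) applied to $X$, to an elementary comparison of $q$-sums. \emph{Step 2:} observe that $Y_\delta\notin\msp$, since
\[
d_Y(t_1,x_0)\ps{p}d_Y(x_0,t_3)=\delta\ps{p}\delta=2^{1/p}\delta<c\delta=d_Y(t_1,t_3).
\]
\emph{Step 3:} bound $\dghp{q}(X,Y_\delta)$ using the correspondence $R\coloneqq\{(x,x):x\in X\}\cup\{(x_0,t_1),(x_0,t_3)\}$; a direct case analysis shows $\disp{q}(R)=c\delta$ (the only pairs contributing are the diagonal pairs against the new pairs, which yield $\delta$, and $(x_0,t_1)$ against $(x_0,t_3)$, which yields $c\delta$), so that $\dghp{q}(X,Y_\delta)\leq 2^{-1/q}c\delta$. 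Choosing $\delta$ so that $2^{-1/q}c\delta<\varepsilon$ completes the argument.

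The only mildly delicate point is Step 1: one must ensure that no triple of the form $\{t_i,t_j,x\}$ or $\{t_i,x,x'\}$ accidentally fails the $q$-triangle inequality, and this relies crucially on the choice $d_Y(t_i,x)=d_X(x_0,x)\ps{q}\delta$ rather than any simpler prescription. The computation there boils down to identities such as $(c\delta\ps{q}\delta)\ps{q}d_X(x_0,x)\geq d_X(x_0,x)\ps{q}\delta$, which reduce to $c^q+1\geq 1$ and analogous trivialities, so the step is routine but does require writing out the cases carefully.
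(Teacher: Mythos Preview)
Your proposal is correct and follows essentially the same idea as the paper's proof: adjoin two new points close to a fixed basepoint so that the triple $\{t_1,x_0,t_3\}$ fails the $p$-triangle inequality while the enlarged space remains in $\ms_q$, then use the obvious correspondence to show the perturbation is $\dghp{q}$-close to $X$. The one noteworthy difference is that the paper first reduces to finite $X$ (so that $\mathrm{sep}(X)>0$) and sets $d(t_i,x)=d_X(x_0,x)$ for $x\neq x_0$, which forces a smallness constraint on the parameter; your choice $d_Y(t_i,x)=d_X(x_0,x)\ps{q}\delta$ is cleaner in that it makes the $q$-triangle inequality hold automatically for every compact $X$ without passing through finite approximations.
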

\begin{proof}
By Proposition \ref{prop:closed-psub}, $\overline{(\msp,\dghp{q})}=(\msp,\dghp{q})$. Assume that $X\in\msp$ is an interior point. Without loss of generality, by Example \ref{ex:conv-pms}, we can assume that $(X,d_X)$ is a finite space. Define a set $\hat{X}\coloneqq X\cup\{x_1,x_2\}$ where $x_1$ and $x_2$ are two additional points. Pick an arbitrary point $x_0\in X$. For any $\eps>0$, define a symmetric function $d_\eps:\hat{X}\times\hat{X}\rightarrow\Rp$ as follows:
\begin{enumerate}
    \item $d_\eps|_{X\times X}\coloneqq d_X$;
    \item for any $x\in X$ such that $x\neq x_0$, let $d_\eps(x,x_i)\coloneqq d_X(x,x_0)$ for $i=1,2$.
    \item $d_\eps(x_0,x_i)\coloneqq\eps$ for $i=1,2$ and $d_\eps(x_1,x_2)\coloneqq\eps\ps{q} \eps$.
    
\end{enumerate}
Recall that $\mathrm{sep}(X)\coloneqq\min\{d_X(x,x'):\,x,x'\in X\}$. Then, it is easy to verify that when $\eps\leq \mathrm{sep}(X)\ps{q}\mathrm{sep}(X)$, $X_\eps\coloneqq\lc\hat{X},d_\eps\rc\in\ms_q$. Moreover, $X_\eps\notin\msp$ since $x_0,x_1,x_2$ does not satisfy the $p$-triangle inequality: 
$$d_\eps(x_0,x_1)\ps{p} d_\eps(x_0,x_2)=\eps\ps{p} \eps=2^\frac{1}{p}\eps<2^\frac{1}{q}\eps=\eps\ps{q}\eps=d_\eps(x_1,x_2). $$
Consider the correspondence $R$ between $X$ and $X_\eps$ defined by
$$R=\{(x,x):\,x\in X\}\bigcup\{(x_0,x_i):\,i=1,2\}.$$
Then, we have $\dis_q(R)=\eps\ps{q}\eps$. Thus
$$ \lim_{\eps\rightarrow 0}\dghp{q}(X_\eps,X)=0.$$
This contradicts the assumption that $X$ is an interior point
\end{proof}

Proposition \ref{prop:nowhere-dense} indicates the following result stating that $\msp$ is a very `thin' subset of $\ms_q$ for $1\leq q<p\leq\infty$. In fact, we have the following stronger result.

\begin{theorem}
Let $q\in[1,\infty)$, then $\bigcup_{p\in(q,\infty]}\msp\subsetneq\ms_q$. In particular when $q=1$, we have $\bigcup_{p\in(1,\infty]}\msp\subsetneq\ms$. 
\end{theorem}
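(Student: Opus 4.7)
The inclusion $\bigcup_{p\in(q,\infty]}\mathcal{M}_p \subseteq \mathcal{M}_q$ is immediate from Proposition \ref{prop:inclusionpq}, so the entire task is to produce a single space that lies in $\mathcal{M}_q$ but in no $\mathcal{M}_p$ for $p>q$. My plan is to exhibit a minimal example: a three-point $q$-metric space on which the $q$-triangle inequality is tight, so that any strengthening of the $p$-triangle inequality is necessarily violated.

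Concretely, let $X_q = \{x_1,x_2,x_3\}$ with
\[
d(x_1,x_2)=d(x_2,x_3)=1, \qquad d(x_1,x_3)=2^{1/q}.
\]
First I would verify that $(X_q,d)\in\mathcal{M}_q$. The only nontrivial instance of the $q$-triangle inequality is
\[
d(x_1,x_3) \;\leq\; d(x_1,x_2)\ps{q} d(x_2,x_3) \;=\; (1^q+1^q)^{1/q} \;=\; 2^{1/q},
\]
which holds with equality; the other two instances reduce to $1\leq 3^{1/q}$, which is trivial. In particular $X_q$ is a genuine compact metric space (it suffices to check this for $q=1$, where $2^{1/q}=2\leq 1+1$).

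Next I would show that $X_q\notin \mathcal{M}_p$ for every $p\in(q,\infty]$. If $X_q$ were a $p$-metric space, then applying the $p$-triangle inequality to the triple $(x_1,x_2,x_3)$ would give
\[
2^{1/q} \;=\; d(x_1,x_3)\;\leq\; d(x_1,x_2)\ps{p} d(x_2,x_3) \;=\; 2^{1/p}.
\]
However, since $p>q\geq 1$, we have $1/p<1/q$ and hence $2^{1/p}<2^{1/q}$, a contradiction. Therefore $X_q\in \mathcal{M}_q\setminus \bigcup_{p\in(q,\infty]}\mathcal{M}_p$, establishing strict inclusion.

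There is no serious obstacle here: once one thinks of the $q$-triangle inequality as a constraint that is \emph{tight} on isoceles configurations of the form $(1,1,2^{1/q})$, the construction is forced. The only detail worth taking care over is to note that saturating the $q$-triangle inequality on a \emph{single} triple already destroys membership in $\mathcal{M}_p$ for \emph{all} $p>q$ simultaneously, so one example suffices to rule out the entire union. For $q=\infty$ no analogous statement is possible, which is consistent with the theorem restricting to $q\in[1,\infty)$.
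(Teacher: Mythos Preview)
Your proof is correct and considerably more elementary than the paper's. You give an explicit three-point witness $X_q$ whose $q$-triangle inequality is saturated on one triple, which immediately kills the $p$-triangle inequality for every $p>q$ (including $p=\infty$). One cosmetic point: the parenthetical ``it suffices to check this for $q=1$'' is unnecessary, since you have already shown $X_q\in\ms_q$ and Proposition~\ref{prop:inclusionpq} then gives $X_q\in\ms$ automatically.

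The paper's argument is genuinely different: instead of an explicit example, it reduces the union to a countable union $\bigcup_{n}\ms_{p_n}$ along a sequence $p_n\searrow q$, invokes Proposition~\ref{prop:nowhere-dense} to see that each $\ms_{p_n}$ is nowhere dense in $(\ms_q,\dghp{q})$, and then applies the Baire category theorem (using completeness of $(\ms_q,\dghp{q})$ from Theorem~\ref{thm:complete-sep}). What your approach buys is directness and independence from the preceding topological machinery (Propositions~\ref{prop:closed-psub}, \ref{prop:nowhere-dense}, and Theorem~\ref{thm:complete-sep}). What the paper's approach buys is a stronger qualitative conclusion: the union $\bigcup_{p>q}\msp$ is meager in $\ms_q$, so ``generic'' $q$-metric spaces fail to be $p$-metric for any $p>q$---information your single example cannot supply.
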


\begin{proof}
Obviously, by Proposition \ref{prop:inclusionpq}, $\bigcup_{p\in(q,\infty]}\msp\subseteq\ms_q$.

Let $\{p_n\}_{n=1}^\infty$ be a strictly decreasing sequence with $q$ being the limit point. Let $p_0=\infty$. Then, we have the sequence $\ms_{p_0}\subseteq\ms_{p_1}\subseteq\cdots$. By Proposition \ref{prop:inclusionpq}, we know $\bigcup_{p\in(q,\infty]}\msp=\bigcup_{n=0}^\infty\ms_{p_n}$, which is a countable union of nowhere dense sets. Since $(\ms_q,\dghp{q})$ is a complete metric space (Theorem \ref{thm:complete-sep}), by the  Baire category theorem, $\ms_q\neq\bigcup_{p\in(q,\infty]}\msp$. 
\end{proof}

In the proof we know that $\bigcup_{p\in(q,\infty]}\msp$ is actually a meager set of $\ms_q$, which implies that most elements of $\ms_q$ cannot be captured by $p$-metric spaces with $p>q$.

\begin{example}
Consider the interval $[0,1]\subseteq\R$. Then, $[0,1]\in\ms\backslash\bigcup_{p>1}\msp$. This fact implies that any geodesic space $X\in\ms\backslash\bigcup_{p>1}\msp$.
\end{example}

\begin{example}
Consider the unit circle $\mathbb{S}^1=\{(x,y):\,x^2+y^2=1\}$ on $\R^2$ with the Euclidean distance. Then, this is a non-geodesic space and $\mathbb{S}^1\in\ms\backslash\bigcup_{p>1}\msp$.
\end{example}

\subsection{Geodesic properties}\label{sec:geodesic}

In this section, we will discuss geodesic properties of $\msp$. In particular, we study the notion of \emph{$p$-geodesic property} of $\msp$. Unless otherwise specified, we always assume that $p\in[1,\infty)$ in this section.

\begin{definition}[$p$-length]
For a $p$-metric space $(X,d_X)$ and a continuous curve $\gamma:[0,1]\rightarrow X$, we define its $p$-length as
$$ \mathrm{length}_p(\gamma)\coloneqq\sup\left\{\mathop{\ps{\mathrlap{p}}}_{i=1}^{n-1}\, d_X(\gamma(t_i),\gamma(t_{i+1})):\,0=t_0<t_1<\cdots<t_n=1\right\}.$$
\end{definition}

\begin{remark}\label{rmk:lpdx}
It is  clear that for any continuous curve $\gamma:[0,1]\rightarrow X$, 
$$d_X(\gamma(0),\gamma(1))\leq\mathrm{length}_p(\gamma).$$
\end{remark}

\begin{definition}[$p$-geodesic]
Given any $p$-metric space $X$, a continuous curve $\gamma:[0,1]\rightarrow X$ is called a \emph{$p$-geodesic}, if 
$$d_X(\gamma(s),\gamma(t))=|s-t|^\frac{1}{p}\,d_X(\gamma(0),\gamma(1)),\forall s,t\in[0,1]. $$
We say $X$ is $p$-geodesic, if any two points in $X$ can be connected by a $p$-geodesic.
\end{definition}
\begin{remark}
Note that when $p=1$, the notion of $p$-geodesics coincides with the usual notion of geodesics.
\end{remark}

\begin{remark}[What if $p=\infty$?]\label{rmk:why not ult}
Ultrametric spaces are \emph{totally disconnected}, i.e., any subspace with at least two elements is disconnected \cite{semmes2007introduction}. This in turn implies that each continuous curve in an ultrametric space is constant. Therefore, it is meaningless to discuss about $\infty$-geodesic property for ultrametric spaces.
\end{remark}

Geodesics are also known to be the shortest path connecting points. A property similar to this holds for $p$-geodesics.
\begin{lemma}
Consider any $p$-metric space $X$. Let $x$ and $x'$ be two distinct points in $X$. Then, among all curves connecting $x$ and $x'$, a $p$-geodesic has the smallest $p$-length.
\end{lemma}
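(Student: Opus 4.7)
The plan is to show that any $p$-geodesic $\gamma$ joining $x$ and $x'$ achieves $\mathrm{length}_p(\gamma) = d_X(x,x')$, and then invoke \Cref{rmk:lpdx} to conclude that this value is a universal lower bound on the $p$-length of any continuous curve from $x$ to $x'$.

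First I would fix a $p$-geodesic $\gamma:[0,1]\to X$ with $\gamma(0)=x$ and $\gamma(1)=x'$, and for an arbitrary partition $0=t_0<t_1<\cdots<t_n=1$ compute
\[
\mathop{\ps{\mathrlap{p}}}_{i=0}^{n-1} d_X(\gamma(t_i),\gamma(t_{i+1}))
= \mathop{\ps{\mathrlap{p}}}_{i=0}^{n-1} (t_{i+1}-t_i)^{1/p}\, d_X(x,x')
= \left(\sum_{i=0}^{n-1}(t_{i+1}-t_i)\right)^{1/p} d_X(x,x')
= d_X(x,x'),
\]
using the definition of a $p$-geodesic at the first equality and the definition of $\boxplus_p$ at the second. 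Taking supremum over all partitions gives $\mathrm{length}_p(\gamma)=d_X(x,x')$.

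Next, for any other continuous curve $\sigma:[0,1]\to X$ connecting $x$ to $x'$, \Cref{rmk:lpdx} yields $d_X(x,x')\leq \mathrm{length}_p(\sigma)$. Combining with the computation above gives
\[
\mathrm{length}_p(\gamma)=d_X(x,x')\leq \mathrm{length}_p(\sigma),
\]
which is exactly the claim.

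There is essentially no obstacle here: the only subtlety is recognizing that the $p$-sum of the terms $(t_{i+1}-t_i)^{1/p}d_X(x,x')$ collapses precisely because the $p$-th powers $(t_{i+1}-t_i)d_X(x,x')^p$ telescope additively over any partition of $[0,1]$, which is the whole point of the $p$-geodesic parametrization. Everything else is a one-line invocation of \Cref{rmk:lpdx}.
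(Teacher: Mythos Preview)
Your proof is correct and follows exactly the same approach as the paper: establish $\mathrm{length}_p(\gamma)=d_X(x,x')$ for a $p$-geodesic $\gamma$, then invoke \Cref{rmk:lpdx} to conclude this is minimal among all connecting curves. The paper simply asserts the first equality as ``easy to show,'' whereas you have spelled out the partition computation explicitly.
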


\begin{proof}
It is easy to show that $\mathrm{length}_p(\gamma)=d_X\lc\gamma(0),\gamma(1)\rc$. Then, by Remark \ref{rmk:lpdx}, we know that $\gamma$ is a curve connecting $x$ and $x'$ with smallest $p$-length.
\end{proof}

The notions of $p$-geodesic and geodesic are related by the snowflake transform (Example \ref{ex:snowflake}).

\begin{proposition}\label{thm:geotopgeo}
Let $X$ be a metric space. If $X$ is geodesic, then $S_\frac{1}{p}(X)$ is $p$-geodesic.
\end{proposition}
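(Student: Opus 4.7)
The plan is to construct the desired $p$-geodesic as a straightforward reparametrization --- in fact, no reparametrization at all --- of an existing geodesic in $X$. Given arbitrary $x,x'\in X$, since $(X,d_X)$ is geodesic, there exists a continuous curve $\gamma:[0,1]\rightarrow X$ with $\gamma(0)=x$, $\gamma(1)=x'$, satisfying
\[
d_X(\gamma(s),\gamma(t)) \;=\; |s-t|\,d_X(x,x'), \qquad \forall\, s,t\in[0,1].
\]
I would propose that the very same map $\gamma$, now viewed as a curve $\gamma:[0,1]\rightarrow S_{1/p}(X)$, is a $p$-geodesic between $x$ and $x'$ in $S_{1/p}(X)$.

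The verification reduces to a single algebraic identity. Since $d_{S_{1/p}(X)}=(d_X)^{1/p}$ by the definition of the snowflake transform (Definition \ref{ex:snowflake}), I would compute
\[
d_{S_{1/p}(X)}(\gamma(s),\gamma(t)) \;=\; \bigl(d_X(\gamma(s),\gamma(t))\bigr)^{1/p} \;=\; |s-t|^{1/p}\bigl(d_X(x,x')\bigr)^{1/p} \;=\; |s-t|^{1/p}\,d_{S_{1/p}(X)}(\gamma(0),\gamma(1)),
\]
which is exactly the defining equation for a $p$-geodesic. The continuity of $\gamma$ with respect to the new metric $d_X^{1/p}$ comes for free from this same identity: letting $s\rightarrow t$ forces $d_{S_{1/p}(X)}(\gamma(s),\gamma(t))\rightarrow 0$.

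There is essentially no obstacle here; the only implicit point worth flagging is that $S_{1/p}(X)$ is indeed a $p$-metric space (so that the notion of $p$-geodesic applies), which is a standard and elementary consequence of the concavity of $t\mapsto t^{1/p}$ on $[0,\infty)$ for $p\geq 1$ and has already been invoked in the paper. Since the construction is uniform in the chosen pair $(x,x')$, this shows every two points of $S_{1/p}(X)$ are joined by a $p$-geodesic, completing the proof.
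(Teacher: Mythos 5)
Your proposal is correct and follows exactly the paper's own argument: take a geodesic $\gamma$ in $X$ and verify the $p$-geodesic identity $d_X^{1/p}(\gamma(s),\gamma(t))=|s-t|^{1/p}\,d_X^{1/p}(x,x')$ directly from the definition of the snowflake transform. Your added remarks on continuity and on $S_{1/p}(X)$ being a $p$-metric space are fine but not needed beyond what the paper already establishes.
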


\begin{proof}
Given two point $x,x'\in X$, there exists a geodesic $\gamma:[0,1]\rightarrow X$ connecting them. Then, for any $s,t\in[0,1]$, we have
$$(d_X)^\frac{1}{p}(\gamma(s),\gamma(t))=\left(|s-t|\,d_X(x,x')\right)^\frac{1}{p}=|s-t|^\frac{1}{p}\,(d_X)^\frac{1}{p}(x,x'). $$
This implies that $\gamma$ is a $p$-geodesic in $S_\frac{1}{p}(X)$ connecting $x$ and $x'$. Therefore, $S_\frac{1}{p}(X)$ is $p$-geodesic.
\end{proof}

\begin{example}
For any $l>0$, the subspace $([0,l],d)\subseteq\R$ is geodesic. Then, by \Cref{thm:geotopgeo},
$([0,l],d^\frac{1}{p})$ is $p$-geodesic for any $1\leq p<\infty$. 
\end{example}

As a partial generalization of midpoint criterion (cf. \cite[Theorem 2.4.16]{burago2001course}), we have the following necessary condition for the $p$-geodesic property.
\begin{theorem}\label{lm:midgeo}
Let $X$ be complete $p$-metric space. Then, $X$ is a $p$-geodesic space if and only if for any two distinct points $x,x'\in X$, there exists $m\in X$ such that 
$$ d_X(x,m)=d_X(x',m)=\lc\frac{1}{2}\rc^\frac{1}{p}\,d_X(x,x').$$
Any such point $m$ is called a \emph{$p$-midpoint} between $x$ and $x'$.
\end{theorem}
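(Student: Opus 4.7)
The forward direction is immediate: given a $p$-geodesic $\gamma:[0,1]\to X$ with $\gamma(0)=x$ and $\gamma(1)=x'$, taking $m\coloneqq\gamma(1/2)$ and plugging $s,t\in\{0,1/2,1\}$ into the defining identity $d_X(\gamma(s),\gamma(t))=|s-t|^{1/p}\,d_X(x,x')$ gives $d_X(x,m)=d_X(m,x')=(1/2)^{1/p}\,d_X(x,x')$.

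For the converse I would avoid the usual dyadic-bisection argument by passing through the snowflake transform. Consider the space $S_p(X)=(X,d_X^p)$, which is an honest metric space since $X\in\msp$. Observe that $X$ complete $\iff$ $S_p(X)$ complete, because the identity map $X\to S_p(X)$ is a homeomorphism and sends Cauchy sequences to Cauchy sequences (the inequality $d_X(x,y)\le\varepsilon$ is equivalent to $d_X^p(x,y)\le\varepsilon^p$). The $p$-midpoint hypothesis in $X$ transfers: if $m$ satisfies $d_X(x,m)=d_X(m,x')=(1/2)^{1/p}\,d_X(x,x')$, then raising to the $p$-th power yields $d_X^p(x,m)=d_X^p(m,x')=\tfrac12\,d_X^p(x,x')$, i.e.\ $m$ is an ordinary midpoint in $S_p(X)$. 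Applying the classical midpoint criterion (\cite[Theorem 2.4.16]{burago2001course}) to the complete metric space $S_p(X)$ produces a geodesic $\gamma:[0,1]\to S_p(X)$ between any two prescribed points. Proposition \ref{thm:geotopgeo} then applies to $S_p(X)$ to conclude that $S_{1/p}(S_p(X))=(X,(d_X^p)^{1/p})=(X,d_X)=X$ is $p$-geodesic, delivering the desired $p$-geodesic between $x$ and $x'$.

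The only step requiring mild care is verifying that the snowflake/unsnowflake round-trip recovers exactly $X$ at the level of the metric and that Proposition \ref{thm:geotopgeo} is applicable; both are routine, so I do not expect a substantive obstacle. As a sanity check, one can also prove the converse directly: iteratively apply the midpoint hypothesis to define $\gamma(k/2^n)$ on dyadic rationals, verify by induction that $d_X(\gamma(k/2^n),\gamma(\ell/2^n))=|k-\ell|^{1/p}\,2^{-n/p}\,d_X(x,x')$ using the $p$-triangle inequality together with the midpoint identity to rule out a strict inequality (otherwise the $p$-length of the dyadic polyline would be less than $d_X(x,x')$, contradicting Remark \ref{rmk:lpdx}), and then extend uniformly continuously to $[0,1]$ using completeness of $X$. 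This alternative confirms that the snowflake route is not hiding anything subtle; the only place the $p$-triangle inequality is genuinely used is in the bisection estimate or, equivalently, in ensuring that $S_p(X)$ is a metric space so that the classical criterion applies.
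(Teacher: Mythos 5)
Your proposal is correct and follows essentially the same route as the paper: the forward direction via $m=\gamma(1/2)$, and the converse by transferring the $p$-midpoint hypothesis to an ordinary midpoint in the snowflaked space $S_p(X)$, invoking the classical midpoint criterion of \cite[Theorem 2.4.16]{burago2001course}, and then applying \Cref{thm:geotopgeo} to recover $X=S_{\frac{1}{p}}(S_p(X))$. You have merely spelled out the completeness and midpoint transfers that the paper leaves implicit.
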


\begin{proof}
We first assume that $X$ is $p$-geodesic. Then, for any two distinct points $x,x'\in X$, there exists a $p$-geodesic $\gamma:[0,1]\rightarrow X$ connecting them. Consider $m\coloneqq\gamma\lc\frac{1}{2}\rc$. By definition of $p$-geodesic, we have
$$d_X(x,m)=d_X(x',m)=\lc\frac{1}{2}\rc^\frac{1}{p}\,d_X(x,x'). $$

Conversely, by the midpoint criterion \cite[Theorem 2.4.16]{burago2001course}, it is easy to check that $S_p(X)$ is a geodesic space. Then, by \Cref{thm:geotopgeo} we have that $X=S_\frac{1}{p}(S_p(X))$ is $p$-geodesic.
\end{proof}

\begin{proposition}\label{prop:p metric not geodesic}
Let $X$ be a $p$-metric space. If $X$ is $p$-geodesic, then for any $1\leq q<p$, $X$ is not $q$-geodesic.
\end{proposition}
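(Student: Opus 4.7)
The plan is to obtain a contradiction by showing that a $q$-midpoint between two distinct points of $X$ would be too close to both endpoints to satisfy the $p$-triangle inequality. The statement implicitly assumes $X$ contains at least two points; otherwise the proposition is trivially false since a one-point space is $r$-geodesic for every $r$.

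First, I would pick any two distinct points $x,x' \in X$ (such a pair exists by the implicit nontriviality assumption) and write $\ell \coloneqq d_X(x,x') > 0$. Suppose, for contradiction, that $X$ is also $q$-geodesic for some $q \in [1,p)$. Then there exists a $q$-geodesic $\gamma : [0,1] \to X$ joining $x$ and $x'$, and taking $m \coloneqq \gamma(1/2)$ we obtain, directly from the definition of $q$-geodesic,
\[
d_X(x,m) \;=\; d_X(x',m) \;=\; \bigl(\tfrac{1}{2}\bigr)^{1/q}\,\ell.
\]
(Equivalently, $m$ is a $q$-midpoint in the sense of \Cref{lm:midgeo}; note that we need only the trivial direction of that lemma, so completeness is not required here.)

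Second, I would apply the $p$-triangle inequality, which holds since $X \in \msp$:
\[
\ell \;=\; d_X(x,x') \;\leq\; d_X(x,m) \ps{p} d_X(m,x') \;=\; 2^{1/p}\,\bigl(\tfrac{1}{2}\bigr)^{1/q}\,\ell \;=\; 2^{\,1/p - 1/q}\,\ell.
\]
Since $q < p$, we have $1/p - 1/q < 0$, so $2^{1/p - 1/q} < 1$. This yields $\ell < \ell$, which is the desired contradiction. Hence $X$ cannot be $q$-geodesic.

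There is essentially no serious obstacle here: the whole argument is driven by the elementary inequality $2^{1/p} (1/2)^{1/q} < 1$ for $q < p$, which captures the fact that the $p$-triangle inequality is strictly stronger than the $q$-triangle inequality on equal-sided configurations. The only delicate point worth flagging in the write-up is the implicit assumption that $X$ has at least two points; with that noted, the proof reduces to one application of the $p$-triangle inequality to a $q$-midpoint.
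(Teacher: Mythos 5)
Your proof is correct and follows essentially the same route as the paper's: extract a $q$-midpoint from a $q$-geodesic (the paper does this via the easy direction of \Cref{lm:midgeo}, you via $\gamma(1/2)$ directly) and observe that it violates the $p$-triangle inequality because $2^{1/p-1/q}<1$. Your remark that the one-point space is a trivial exception is a fair edge case the paper's statement glosses over, but it does not affect the substance of the argument.
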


\begin{proof} Note that the proposition is trivially true when $p=1$. 
Suppose on the contrary that $p>1$ and that $X$ is $q$-geodesic for some $1\leq q <p$. Then, by \Cref{lm:midgeo}, for any two distinct points $x,x'\in X$, there exists a $q$-midpoint $x''\in X$ between $x$ and $x'$, such that 
$$d_X(x,x'')=d(x',x'')=\lc\frac{1}{2}\rc^\frac{1}{q}d_X(x,x'). $$
Therefore, 
$$d_X(x,x'')\ps{p} d_X(x'',x')=2^\frac{1}{p}\cdot\lc\frac{1}{2}\rc^\frac{1}{q}d(x,x')< d(x,x'),$$
which contradicts the fact that $X$ is a $p$-metric space.
\end{proof}

Next, we establish ($p$-)geodesic properties regarding $\msp$ and $\ums$.

\subsubsection{$p$-metric spaces}
We know from \cite{ivanov2016gromov,chowdhury2018explicit} that $(\ms,\dgh)$ is a geodesic space. This leads us to wondering whether $(\msp,\dghp{p})$ is a geodesic space as well. The following theorem provides a complete answer.

\begin{theorem}
$(\msp,\dghp{p})$ is $p$-geodesic but not $q$-geodesic for any $q<p$. In particular, $(\msp,\dghp{p})$ is not geodesic when $p>1$.
\end{theorem}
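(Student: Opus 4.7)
The plan is to leverage the isometric identification established in Corollary \ref{thm:isompms}, namely $(\msp,\dghp{p})\cong(\ms,(\dgh)^{1/p})$ through the snowflake transform $S_p$, together with the known fact that $(\ms,\dgh)$ is geodesic \cite{ivanov2016gromov,chowdhury2018explicit}.

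\textbf{Step 1 ($p$-geodesic property).} I would first invoke the isometry $S_p:(\msp,\dghp{p})\to(\ms,(\dgh)^{1/p})$ from Corollary \ref{thm:isompms}. Since $(\ms,\dgh)$ is a geodesic metric space, Proposition \ref{thm:geotopgeo} applied to $X=(\ms,\dgh)$ tells us that $S_{1/p}(\ms,\dgh)=(\ms,(\dgh)^{1/p})$ is $p$-geodesic. Being $p$-geodesic is an invariant under isometry of $p$-metric spaces (this follows at once from the definition, since $p$-geodesics are characterized by a distance identity that isometries preserve), so pulling the $p$-geodesics back through $S_p^{-1}=S_{1/p}$ yields that $(\msp,\dghp{p})$ itself is $p$-geodesic. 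Concretely, given $X,Y\in\msp$, a $p$-geodesic connecting $X$ and $Y$ is obtained by applying $S_{1/p}$ to any geodesic in $(\ms,\dgh)$ joining $S_p(X)$ and $S_p(Y)$, which exists because of the geodesic property of $(\ms,\dgh)$.

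\textbf{Step 2 (failure of $q$-geodesic property for $q<p$).} For this part, it suffices to invoke Proposition \ref{prop:p metric not geodesic}: any $p$-metric space that is $p$-geodesic cannot be $q$-geodesic for $q<p$, since the existence of a $q$-midpoint between two distinct points would violate the $p$-triangle inequality. Applying this to the $p$-metric space $(\msp,\dghp{p})$, which we have just shown is $p$-geodesic in Step 1, immediately gives that $(\msp,\dghp{p})$ is not $q$-geodesic for any $1\leq q<p$. Taking $q=1$ specializes to the statement that $(\msp,\dghp{p})$ is not geodesic when $p>1$.

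\textbf{Main obstacle.} There is essentially no deep obstacle here: the two-line argument rests on the isometry with $(\ms,(\dgh)^{1/p})$ and on the snowflake-geodesic correspondence, both of which are already in place. The only point that deserves care is to verify (perhaps as a brief remark before the proof) that the $p$-geodesic property transfers across the isometry of $p$-metric spaces $S_p$, and to confirm that Proposition \ref{prop:p metric not geodesic} applies verbatim, which requires only that the ambient space be a $p$-metric space — a condition guaranteed by Remark \ref{rmk:msp is pms}.
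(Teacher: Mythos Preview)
Your proposal is correct and follows essentially the same approach as the paper: the paper also reduces the $p$-geodesic claim to the isometry $(\msp,\dghp{p})\cong(\ms,(\dgh)^{1/p})$ from Corollary~\ref{thm:isompms} combined with Proposition~\ref{thm:geotopgeo} and the known geodesicity of $(\ms,\dgh)$, and dispatches the $q<p$ part via Proposition~\ref{prop:p metric not geodesic}.
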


\begin{proof}

By the previous proposition, we only need to show that $(\msp,\dghp{p})$ is $p$-geodesic. Since by \Cref{thm:geotopgeo}, $(\ms,\dgh)$ is geodesic, the proof follows from $(\msp,\dghp{p})\cong \left(\ms,(\dgh)^\frac{1}{p}\right)$ (cf. \Cref{thm:isompms}). 
\end{proof}

\subsubsection{Ultrametric spaces}
We know from \Cref{rmk:why not ult} that each continuous curve in an ultrametric space is trivial due to total disconnectedness. We provide an alternative proof of the fact via $\mathfrak{S}_\infty$ as follows.

\begin{proposition}\label{prop:trivialcurve}
If $X$ is an ultrametric space, then any continuous curve $\gamma:[0,1]\rightarrow X$ is a trivial curve, i.e., there exists $x\in X$ such that $\gamma(t)\equiv x$ for all $t\in[0,1]$.
\end{proposition}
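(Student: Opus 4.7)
The plan is to leverage the projection $\mathfrak{S}_\infty:\ms\rightarrow\ums$ together with the characterization of its kernel from \Cref{prop:inftykernel}. Concretely, given a continuous curve $\gamma:[0,1]\rightarrow X$, I would work with its image $K\coloneqq\gamma([0,1])\subseteq X$, viewed as a metric subspace, and derive a contradiction unless $K$ is a singleton.

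First, I would observe that $K$ is compact (as the continuous image of the compact interval $[0,1]$) and connected (as the continuous image of the connected set $[0,1]$). Hence $K\in\ms^{\mathrm{conn}}$, and by \Cref{prop:inftykernel} we conclude $K\in\ker(\mathfrak{S}_\infty)$, i.e., $\mathfrak{S}_\infty(K)\cong\ast$, the one point metric space.

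On the other hand, $K$ is a subspace of the ultrametric space $X$, so $K$ itself satisfies the strong triangle inequality and therefore $K\in\ums\subseteq\msp$ for every $p\in[1,\infty]$ (cf. \Cref{prop:inclusionpq}). By item 1 of \Cref{prop:propofsp}, $\mathfrak{S}_\infty$ restricted to $\ums$ is the identity, so $\mathfrak{S}_\infty(K)\cong K$. Combining this with the previous step gives $K\cong\ast$, so $\gamma$ is constant, as desired.

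I do not foresee any real obstacle: the only point worth spelling out carefully is that the quoted kernel description is stated for compact metric spaces, so one must justify up front that $K$ is compact before invoking \Cref{prop:inftykernel}. Everything else is a direct bookkeeping application of two facts already established in the paper, namely the identification of $\ker(\mathfrak{S}_\infty)$ with $\ms^{\mathrm{conn}}$ and the fact that $\mathfrak{S}_\infty$ acts trivially on ultrametric spaces.
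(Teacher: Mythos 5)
Your argument is correct and is essentially identical to the paper's own proof: both restrict the ultrametric to the image of $\gamma$, use connectedness of that image together with \Cref{prop:inftykernel} to conclude $\mathfrak{S}_\infty(K)\cong\ast$, and then use item 1 of \Cref{prop:propofsp} to identify $\mathfrak{S}_\infty(K)$ with $K$. Your explicit remark that compactness of $K$ must be checked before invoking \Cref{prop:inftykernel} is a small but welcome point of care that the paper leaves implicit.
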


\begin{proof}
Let $X_0\coloneqq\mathrm{image}(\gamma)$. We then obtain an ultrametric space $(X_0,u_X|_{X_0\times X_0})$ by restricting $u_X$ to $X_0\times X_0$. Since $\gamma$ is continuous, we have that $X_0$ is path-connected. By Proposition \ref{prop:inftykernel} we have that $\mathfrak{S}_\infty(X_0)=*$. By Proposition \ref{prop:propofsp}, $\mathfrak{S}_\infty(X_0)=X_0$. Therefore, $X_0=*$ is a one point space and thus $\gamma$ is a trivial curve. 
\end{proof}

We know that $(\ums,\ugh)$ is an ultrametric space. Then, the proposition above precludes $(\ums,\ugh)$ from being geodesic. However, if we consider other distance functions on $\ums$, there may still exist geodesic structure on $\ums$. In fact, we have:

\begin{theorem}\label{thm:ums-geod}
$(\ums, \dgh)$ is geodesic. 
\end{theorem}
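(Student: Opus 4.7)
The plan is to build a geodesic inside $(\ums,\dgh)$ between arbitrary $X,Y\in\ums$ by pulling back a geodesic in the ambient space $(\ms,\dgh)$ along the projection $\mathfrak{S}_\infty$. The crucial point to emphasize is that here we equip $\ums$ with the restriction of $\dgh$ (not with $\ugh$); the latter cannot be geodesic in view of \Cref{prop:trivialcurve}, since $(\ums,\ugh)$ is itself an ultrametric space and thus totally disconnected. So the argument must really exploit that $\dgh$ on $\ums$ is only the ambient distance, not an ultrametric.

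The first step would be to invoke the known fact that $(\ms,\dgh)$ is a geodesic space (cited in the paper via \cite{ivanov2016gromov,chowdhury2018explicit}). Given $X,Y\in\ums\subseteq\ms$, pick a geodesic $\gamma:[0,1]\to\ms$ with $\gamma(0)=X$, $\gamma(1)=Y$, and $\dgh(\gamma(s),\gamma(t))=|s-t|\,\dgh(X,Y)$ for all $s,t\in[0,1]$. Note $\gamma(t)$ generally sits in $\ms\setminus\ums$, so we must push it into $\ums$.

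Next I would define $\tilde\gamma:[0,1]\to\ums$ by $\tilde\gamma(t)\coloneqq\mathfrak{S}_\infty(\gamma(t))$. By item 1 of \Cref{prop:propofsp}, $\mathfrak{S}_\infty$ fixes ultrametric spaces, so $\tilde\gamma(0)=\mathfrak{S}_\infty(X)=X$ and $\tilde\gamma(1)=\mathfrak{S}_\infty(Y)=Y$. By \Cref{thm:slstab}, $\mathfrak{S}_\infty$ is $1$-Lipschitz with respect to $\dgh$ on both source and target, hence for all $s,t\in[0,1]$,
\[
\dgh(\tilde\gamma(s),\tilde\gamma(t))\;\leq\;\dgh(\gamma(s),\gamma(t))\;=\;|s-t|\,\dgh(X,Y).
\]

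Finally, I would force these upper bounds to be equalities by a standard triangle-inequality squeeze: for $0\leq s\leq t\leq 1$,
\[
\dgh(X,Y)\;\leq\;\dgh(X,\tilde\gamma(s))+\dgh(\tilde\gamma(s),\tilde\gamma(t))+\dgh(\tilde\gamma(t),Y)\;\leq\;\bigl(s+(t-s)+(1-t)\bigr)\dgh(X,Y)\;=\;\dgh(X,Y),
\]
so every intermediate inequality must be an equality. In particular, $\dgh(\tilde\gamma(s),\tilde\gamma(t))=|s-t|\,\dgh(X,Y)=|s-t|\,\dgh(\tilde\gamma(0),\tilde\gamma(1))$, which is exactly the geodesic condition (and yields continuity for free). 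This concludes the proof.

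Little obstacle is expected: both cited ingredients, geodesicity of $(\ms,\dgh)$ and $1$-Lipschitzness of $\mathfrak{S}_\infty$, do all the work. The only conceptual care is to remember that $\tilde\gamma$ stays in $\ums$ precisely because $\mathfrak{S}_\infty$ lands in $\ums$, and that the endpoints are preserved only because $X,Y$ were already ultrametric. The argument is robust enough that the same template would show that any $1$-Lipschitz retraction from a geodesic metric space onto a subspace makes that subspace geodesic in the restricted metric.
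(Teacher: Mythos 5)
Your proposal is correct and follows essentially the same route as the paper: take a geodesic in $(\ms,\dgh)$ between $X$ and $Y$, push it into $\ums$ via $\mathfrak{S}_\infty$, and use that $\mathfrak{S}_\infty$ fixes ultrametric spaces and is $1$-Lipschitz (\Cref{prop:propofsp} and \Cref{thm:slstab}). The only difference is that you spell out the standard triangle-inequality squeeze upgrading the Lipschitz upper bound to the geodesic equality, a step the paper leaves implicit.
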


\begin{proof}
Let $X$ and $Y$ be two compact ultrametric spaces. Let $\gamma:[0,1]\rightarrow\ms$ be a geodesic connecting $X$ and $Y$ in $(\ms,\dgh)$. Let $\tilde{\gamma}\coloneqq\mathfrak{S}_\infty\circ\gamma:[0,1]\rightarrow\ums$. Then, by Proposition \ref{prop:propofsp} we have that $\tilde{\gamma}(0)=\gamma(0)=X$ and $\tilde{\gamma}(1)=\gamma(1)=Y$. By \Cref{thm:slstab}, we have that for any $s,t\in[0,1]$,
$$\dgh(\tilde{\gamma}(s),\tilde{\gamma}(t))\leq\dgh(\gamma(s),\gamma(t))\leq |t-s|\,\dgh(\gamma(0),\gamma(1))=|t-s|\,\dgh(\tilde{\gamma}(0),\tilde{\gamma}(1)). $$
This shows that $\tilde{\gamma}$ is a geodesic connecting $X$ and $Y$ and thus $(\ums,\dgh)$ is geodesic.
\end{proof}

\begin{remark}
More generally, one can show that $(\ums,\dghp{p})$ is $p$-geodesic by modifying the previous proof slightly, e.g., replacing \Cref{thm:slstab} in the proof with \Cref{thm:p-stable}.
\end{remark}

We can also consider the  $p$-interleaving distance for $p\in[1,\infty]$ on $\ums$. However, none of $\dIp{p}$ will impose geodesic structure on $\ums$.

\begin{proposition}
$(\ums,d_{\mathrm{I},p})$ is not geodesic for any $ p\in[1,\infty]$. 
\end{proposition}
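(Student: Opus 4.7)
The plan is to split into the cases $p=\infty$ and $p\in[1,\infty)$, and in each case exhibit a pair of spaces in $\ums$ that cannot be joined by a geodesic in $(\ums,\dIp{p})$.

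For $p=\infty$, \Cref{rmk:infint} gives $\dIp{\infty}=\ugh$, so the ambient space is itself ultrametric. By the argument of \Cref{prop:trivialcurve}, every continuous curve valued in an ultrametric space is constant, since ultrametric spaces are totally disconnected. Consequently, no two non-isometric compact ultrametric spaces (for instance $\ast$ and $\Delta_2(1)$) can be joined by a continuous curve in $(\ums,\dIp{\infty})$, let alone a geodesic.

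For $p\in[1,\infty)$, I would run a midpoint obstruction with the distinguished pair $X=\ast$ and $Y=\Delta_2(1)$. By \Cref{rmk:intdiam}, $\dIp{p}(X,Y)=2^{-1/p}$. If a geodesic from $X$ to $Y$ existed, its value at $t=1/2$ would be a midpoint $Z\in\ums$ satisfying $\dIp{p}(X,Z)=\dIp{p}(Z,Y)=2^{-1/p-1}$. Applying \Cref{rmk:intdiam} to the first identity forces $\diam(Z)=1/2$. The heart of the proof is to derive a contradiction by showing $\dIp{p}(Z,Y)>2^{-1/p-1}$ for \emph{every} compact ultrametric space $Z$ with $\diam(Z)=1/2$.

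Using \Cref{thm:pintdist}, I would dichotomize on any map $\varphi:Z\to Y$. If $\varphi$ is constant, then $\varphi\circ\psi$ is constant on $Y$ for any $\psi:Y\to Z$; since the two points of $Y$ are at distance $1$, $\max_{y\in Y}u_Y(y,\varphi\circ\psi(y))=1$ and hence $\codisip{p}(\varphi,\psi)\geq 2^{-1/p}$. If instead $\varphi$ is non-constant, pick $z,z'\in Z$ with $\varphi(z)\neq\varphi(z')$; then $u_Y(\varphi(z),\varphi(z'))=1$ while $u_Z(z,z')\leq\diam(Z)=1/2$, giving $\disip{p}(\varphi)\geq A_p(1,1/2)=(1-2^{-p})^{1/p}$. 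In either branch the maximum of \Cref{thm:pintdist} is at least $\min\bigl(2^{-1/p},(1-2^{-p})^{1/p}\bigr)$, and passing to the infimum yields the same lower bound on $\dIp{p}(Z,Y)$.

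It then remains to verify the elementary inequality $\min\bigl(2^{-1/p},(1-2^{-p})^{1/p}\bigr)>2^{-1/p-1}$ for all $p\in[1,\infty)$. The first term clearly dominates $2^{-1/p-1}$; the second, after raising to the $p$th power, reduces to $2^{p+1}>3$, which is automatic for $p\geq 1$. This contradicts the midpoint requirement $\dIp{p}(Z,Y)=2^{-1/p-1}$, so no midpoint exists and $(\ums,\dIp{p})$ cannot be geodesic. The main subtlety is the dichotomy on $\varphi$: constant maps inflate $\codisip{p}$ because $Y$ is too spread out, while non-constant maps inflate $\disip{p}$ because $Z$ is too compressed, and neither contribution can be driven down to the threshold required by a midpoint.
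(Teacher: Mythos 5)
Your proof is correct, and it takes a noticeably different route from the paper's in the finite-$p$ range. The paper uses the same midpoint obstruction (with $\ast$ and $\Delta_2(2)$ rather than $\Delta_2(1)$, which is immaterial), but it splits into two cases: for $p>1$ it passes through the comparison $\dIp{p}\geq\dlp$ and the diameter lower bound $\dlp(Y,Z)\geq 2^{-1/p}\Lambda_p(\diam(Y),\diam(Z))$, which kills the midpoint because $\Lambda_p(2,1)>1$ when $p>1$; for $p=1$ that bound degenerates ($\Lambda_1(2,1)=1$), so the paper falls back on a map dichotomy much like yours. Your argument runs the constant/non-constant dichotomy on $\varphi$ directly at the level of Theorem \ref{thm:pintdist} and the quantities $\disip{p}$, $\codisip{p}$, and the resulting bound $\min\bigl(2^{-1/p},(1-2^{-p})^{1/p}\bigr)>2^{-1/p-1}$ survives at $p=1$, so a single computation covers all $p\in[1,\infty)$; this uniformity, and the fact that you never need the comparison with $\dlp$, is what your approach buys. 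Your treatment of $p=\infty$ via $\dIp{\infty}=\ugh$ and total disconnectedness (Proposition \ref{prop:trivialcurve}) is also sound and in fact stronger than a midpoint obstruction, since it rules out all nonconstant continuous curves; the paper instead absorbs $p=\infty$ into its $p>1$ diameter argument. All the individual steps check out: the identity $\dIp{p}(\ast,Z)=2^{-1/p}\diam(Z)$ forces $\diam(Z)=1/2$, a constant $\varphi$ forces $\codisip{p}(\varphi,\psi)\geq 2^{-1/p}$ because the two points of $\Delta_2(1)$ cannot both be fixed by a constant composite, a non-constant $\varphi$ forces $\disip{p}(\varphi)\geq(1-2^{-p})^{1/p}$ since $A_p(1,b)$ is decreasing in $b$, and the final inequality reduces to $2^{p+1}>3$.
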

\begin{proof}
Let $X=*$ be the one point space and $Y=\Delta_2(2)$ be the two-point space with inter-point distance 2. We prove that there is no midpoint between $X$ and $Y$. Then, by Lemma \ref{lm:midgeo} we have that there is no geodesic connecting $X$ and $Y$.

Fix $p\in[1,\infty]$. It is easy to show that $\dip(X,Y)=2^{1-\frac{1}{p}}$. Suppose there exists a 1-midpoint $Z\in\mathcal{U}$ such that $\dip(X,Z)=\dip(Y,Z)=2^{-\frac{1}{p}}$. Then, by Remark \ref{rmk:intdiam}, $\diam(Z)=2^{\frac{1}{p}}\,\dip(X,Z)=1$. 

First consider the case when $p>1$. By \Cref{coro:pint vs dghp}, $\dip(Y,Z)\geq\dlp(Y,Z).$ By \Cref{prop:diam-dghp}, we have $$\dlp(Y,Z)\geq2^{-\frac{1}{p}}\Lambda_p(\diam(Y),\diam(Z))=2^{-\frac{1}{p}}\Lambda_p(2,1)>2^{-\frac{1}{p}}.$$
Hence $\dip(Y,Z)\geq\dlp(Y,Z)>2^{\frac{1}{p}}$, contradiction!

Now suppose $p=1$, then the argument above does not work since $\Lambda_1(2,1)=1$. Consider any two maps $\varphi:Y\rightarrow Z,\psi:Z\rightarrow Y$. If $\varphi(y_1)=\varphi(y_2)$, then 
$$\codis_{\mathrm{I},1}(\varphi,\psi)\geq \frac{1}{2}\max(u_Y(y_1,\psi\circ\varphi(y_1)),u_Y(y_2,\psi\circ\varphi(y_2))=1.$$
Otherwise suppose $z_1\coloneqq\varphi(y_1)\neq\varphi(y_2)\eqqcolon z_2$. Since $\diam(Z)=1$, we have that $u_Z(z_1,z_2)\leq 1$. If $\psi(z_i)=y_i$ for $i=1,2$, then $\dis_{\mathrm{I},i}(\psi)\geq 1$. Otherwise, $$\codis_{\mathrm{I},1}(\varphi,\psi)\geq \frac{1}{2}\max(u_Y(y_1,\psi\circ\varphi(y_1)),u_Y(y_2,\psi\circ\varphi(y_2)))=1.$$ 
In conclusion, $\dint(Y,Z)\geq 1>\frac{1}{2}$ by Theorem \ref{thm:pintdist}, contradiction!
\end{proof}

\begin{remark}
We can modify the case of $p=1$ in the proof above to show that $(\ums,\dip)$ is not $p$-geodesic for all $p\in[1,\infty)$.
\end{remark}

\section{Discussion}

We introduced a one parameter family of Gromov-Hausdorff like distances $\dghp{p}$ which defines a $p$-metric on the collection $\ms$ of all compact metric spaces and the collection $\msp$ of all compact $p$-metric spaces. We studied the convergence of $p$-metric spaces under $\dlp$ and established a pre-compactness theorem for $(\msp,\dlp)$. When $p=\infty$, $\dghp{\infty}$ coincides with the Gromov-Hausdorff ultrametric $\ugh$. We established a special structural result for $\ugh$ which in turn gives rise to several characterizations of $\ugh$ via, e.g., curvature sets. On the collection $\ums$ of all ultrametric spaces, there is a natural extant distance called the interleaving distance. We found a distortion characterization for this interleaving distance in Theorem \ref{thm:intdist}. We further generalized the interleaving distance to $p$-interleaving distances and establish its Lipschitz equivalence with $\dlp$ for all $p\in[1,\infty]$.

\subsection*{Acknowledgements} We thank Prof. Phillip Bowers from FSU for posing questions leading to the results in \Cref{sec:subspace topology}. We also thank Samir Chowdhury for interesting conversations about geodesics on Gromov-Hausdorff space. We thank Zane Smith who suggested studying the notion of kernel of the projection maps $\mathfrak{S}_p$ which we discussed in  \Cref{sec:kernel}.
This work was partially supported by the NSF through grants  DMS-1723003, CCF-1740761,  and CCF-1526513.


\bibliography{biblio-dghp}
\bibliographystyle{alpha}


\end{document}